\newtheorem{theorem}{Theorem}[section]
\newtheorem{remark}{Remark}[section]
\newtheorem{assumption}{Assumption}[section]
\newtheorem{proof}{Proof}[section]
\title{A Domain Decomposition Deep Neural Network Method with Multi-Activation Functions for Solving Elliptic and Parabolic Interface Problems}
\author[a]{Qijia Zhai \thanks{zhaiqijia@163.com}}
\affil[a]{School of Mathematics, Sichuan University, Chengdu 610064, China}
\begin{document}

\maketitle

\begin{abstract}
We present a domain decomposition-based deep learning method for solving elliptic and parabolic interface problems with discontinuous coefficients in two to ten dimensions. Our Multi-Activation Function (MAF) approach employs two independent neural networks, one for each subdomain, coupled through interface conditions in the loss function. The key innovation is a multi-activation mechanism within each subdomain network that adaptively blends multiple activation functions (e.g., $\tanh$ and Gaussian-type) with interface-aware weighting, enhancing learning efficiency near interfaces where coupling constraints are most demanding. We prove conditional error bounds relating solution accuracy to trained loss values and quadrature errors. Numerical experiments on elliptic and parabolic interface problems with various interface geometries (2D--10D) validate the effectiveness and accuracy of the proposed method.
\end{abstract}

\section{Introduction}

In recent years, great attention has been directed towards a new category of computational techniques known as deep neural networks (DNNs), especially physics-informed neural networks (PINNs) \cite{raissi2019physics, cuomo2022scientific}. These methods utilize feed-forward neural networks to approximate unknown field variables by minimizing loss functions derived from the residuals of governing field equations, initial and boundary conditions, as well as observed data. The loss functions are evaluated at randomly selected collocation points distributed throughout the computational domain where the field variables are to be determined. By incorporating the governing equations directly into the loss functions, PINNs effectively overcome a major limitation of traditional data-driven modeling approaches: the requirement for large volumes of high-quality training data, which is often difficult and costly to obtain. Since their development, PINNs have found application across a wide range of engineering problems, including solid mechanics \cite{haghighat2020deep, haghighat2021physics, bai2023physics}, fluid mechanics \cite{cai2021physics, mao2020physics, zhang2024priori}, flow in porous media \cite{almajid2022prediction, fraces2021physics, zhang2023physics}, and heat transfer \cite{cai2021physicsheat}.

Elliptic interface problems form a class of partial differential equations (PDEs) that are distinguished by the presence of discontinuities in both the solution and its derivatives across interfaces. These problems are particularly common in a wide range of scientific and engineering fields, such as fluid dynamics \cite{chern2007coupling}, material science \cite{wang2010embedded}, electromagnetics \cite{costabel1988strongly}, and bio-mechanics \cite{elhaddad2018multi}, where different physical properties are present in adjacent subdomains. The interfaces between these subdomains often correspond to sharp changes in material properties, leading to complex mathematical formulations that are challenging to solve accurately and efficiently.

In addition to elliptic interface problems, parabolic interface problems also arise in numerous real-world applications, for instance in the modeling of heat conduction in composite materials \cite{ahmadikia2012analytical} or diffusion-driven phenomena with discontinuities \cite{gennari2022phase}. Like their elliptic counterparts, parabolic interface problems involve sharp changes in material parameters across interfaces; however, these problems include an additional time dimension. This temporal component further complicates the numerical treatment, as one must accurately capture both the spatial discontinuities and the evolving solution behavior over time. Handling these coupled spatial-temporal complexities often demands refined meshes or special numerical techniques that can adapt to both the moving solution fronts and the interface geometry, significantly increasing computational cost and implementation challenges \cite{chen1998finite, astrakhantsev1971finite}.

Traditional mesh-based methods, such as the finite element method (FEM) \cite{costabel1990coupling, ciarlet2002finite, mu2016new, gong2008immersed, wang2018conforming, he2011immersed, chen1998finite, lin2013method, zhu2020stable}, finite difference method (FDM) \cite{li1990numerical,iliev2003generalized,feng2022high,feng2024sixth, cao2024finite, astrakhantsev1971finite}, and finite volume method (FVM) \cite{oevermann2009sharp, liu2019analysis, sevilla2018face, remmerswaal2022parabolic}, have been widely used to solve elliptic interface problems. These methods typically rely on mesh generation, where the domain is discretized into smaller subdomains (elements or grids). However, accurately capturing the behavior of solutions near interfaces, especially when the geometry of the interface is irregular or high-dimensional, poses significant difficulties. In particular, these conventional methods require fine meshes around the interfaces to resolve the discontinuities, dramatically increasing the computational cost. Moreover, mesh generation can be highly non-trivial, particularly in complex geometries, often leading to issues related to stability and convergence.

Unlike mesh-based methods, deep learning approaches, especially PINNs, can operate in a mesh-free manner, which significantly alleviates the difficulties associated with mesh generation and refinement \cite{cuomo2022scientific, raissi2019physics}. Additionally, neural networks possess inherent flexibility, allowing them to approximate complex solution spaces, even in high-dimensional domains. Their ability to learn from data also makes them well-suited for problems where analytical solutions are difficult to obtain or where experimental data is available for training. Furthermore, the parallelization capabilities of deep learning models, coupled with advances in computational hardware, such as GPUs and TPUs, contribute to their efficiency in solving large-scale and high-dimensional problems.

The application of deep learning to elliptic interface problems, however, is still in its early stages, and several challenges remain to be addressed \cite{cuomo2022scientific}. These include the design of neural network architectures that can effectively capture discontinuities, the development of efficient training strategies that ensure convergence, and the integration of physical constraints into the learning process to ensure that the solutions remain physically meaningful. Nevertheless, there have been many representative works that have successfully used deep learning methods to solve elliptic interface problems \cite{zhang2022multi, sarma2024interface, roy2024adaptive, he2022mesh, hu2022discontinuity, tseng2023cusp, jiang2024generalization, ying2024accurate, li2025local, li2025two, zeng2025high, fan2025novel, zhang2025immersed}.

We now review several representative neural network-based methods for solving PDEs with interfaces or domain decomposition. Extended Physics-Informed Neural Networks (XPINN) \cite{jagtap2020extended} is a domain decomposition framework developed primarily for parallel computing and accelerating computations, rather than specifically for interface problems with discontinuous coefficients. While XPINNs divide the computational domain into subdomains handled by separate neural networks with continuity conditions at boundaries, they do not explicitly address discontinuities in solution gradients or material coefficients.

Several specialized methods have been developed for interface problems with discontinuous coefficients. Multi-domain PINNs (M-PINN) \cite{zhang2022multi} employ separate networks for each subdomain with standard activation functions, coupled through interface loss terms. Interface PINNs (I-PINN) \cite{sarma2024interface} incorporate interface conditions directly into both network architecture and loss formulation. Adaptive Interface-PINNs (AdaI-PINN) \cite{roy2024adaptive} further enhance I-PINNs with adaptive sampling strategies near the interface. Mesh-free methods (MFM) \cite{he2022mesh} use scattered points with radial basis functions, offering flexibility for irregular domains but with limited application to parabolic problems. Discontinuity Capturing Shallow Neural Networks (DCSNN) \cite{hu2022discontinuity, tseng2023cusp} transform problems to higher-dimensional spaces to capture jump discontinuities with shallow architectures, though they focus primarily on elliptic problems.

In this paper, we propose a Multi-Activation Function (MAF) method for solving elliptic and parabolic interface problems. Our method employs two independent neural networks, one for each subdomain $\Omega_1$ and $\Omega_2$, coupled through interface conditions in the loss function. The key innovation lies in the multi-activation mechanism within each subdomain network: instead of using a single fixed activation function, we adaptively combine multiple activation functions (e.g., $\tanh$ and Gaussian-type) with interface-aware weighting. This design allows each network to efficiently capture sharp variations required to satisfy interface constraints near $\Gamma$, while maintaining smooth approximations in the bulk. Our approach offers: (i) unified treatment of elliptic and parabolic systems through space-time formulation; (ii) geometric flexibility for arbitrary interface configurations (static, moving, rotating, deforming) without remeshing; (iii) high-dimensional capability up to 10D; and (iv) conditional error estimates linking network parameters to solution accuracy.

Unlike XPINN, which was designed for general domain decomposition, our method explicitly handles discontinuous coefficients and flux jumps. Compared to M-PINN, I-PINN, and AdaI-PINN, which address interface problems through standard activation functions, architectural modifications, or adaptive sampling respectively, MAF introduces learnable, spatially-adaptive activation function blending within each subdomain network. Unlike DCSNN, which transforms to higher-dimensional spaces, our method works directly in the original space with interface-aware activation weighting. Importantly, while existing methods (MFM, DCSNN, M-PINN, I-PINN, AdaI-PINN) have not been demonstrated for parabolic interface problems with moving interfaces, our MAF method handles such problems effectively.

Our theoretical contributions include conditional error bounds adapted from the generalization error framework of Mishra and Molinaro \cite{mishra2023estimates} for interface problems with jump conditions. These bounds relate solution accuracy to trained loss values, quadrature errors, and sampling density, though practical performance depends on network architecture, optimization algorithms, and hyperparameters best validated through experiments.

The remainder of this paper is organized as follows: Section \ref{ModelProblem} formulates the model elliptic and parabolic interface problems. Section \ref{Multi-ActivationFunction} details our multi-activation DNN architecture and interface-aware training methodology. Section \ref{sec:error-estimation} presents the error analysis framework and convergence theorems. Section \ref{NumericalResults} demonstrates numerical performance across 2D-10D static/moving interface problems. Section \ref{Conclusion} discusses broader implications and future research directions.

\section{Model Problem}
\label{ModelProblem}

In this section, we present the model problems that motivate our multi-activation function approach. Let $\Omega \subset \mathbb{R}^d$ with $d \ge 2$ be a bounded open domain whose boundary $\partial \Omega$ is Lipschitz. Denote by $\Gamma$ a closed interface inside $\Omega$, which partitions $\Omega$ into two disjoint open subdomains $\Omega_1$ and $\Omega_2$ (see Fig.~\ref{computationaldomain} for an illustrative example). We denote by $\overline{\Omega}_i$ the closure of $\Omega_i$ for $i = 1, 2$, so that $\Gamma \subset \overline{\Omega}_1 \cap \overline{\Omega}_2$. Throughout this paper, we denote the unit normal vector $\boldsymbol{n}$ on $\Gamma$ pointing from $\Omega_1$ to $\Omega_2$. Although we assume that $\Gamma$ is Lipschitz, our method can accommodate more general interfaces in practice.

\begin{figure}[H]
    \centering
    \subfigure[The immersed case]{\includegraphics[width=6cm,height=4cm]{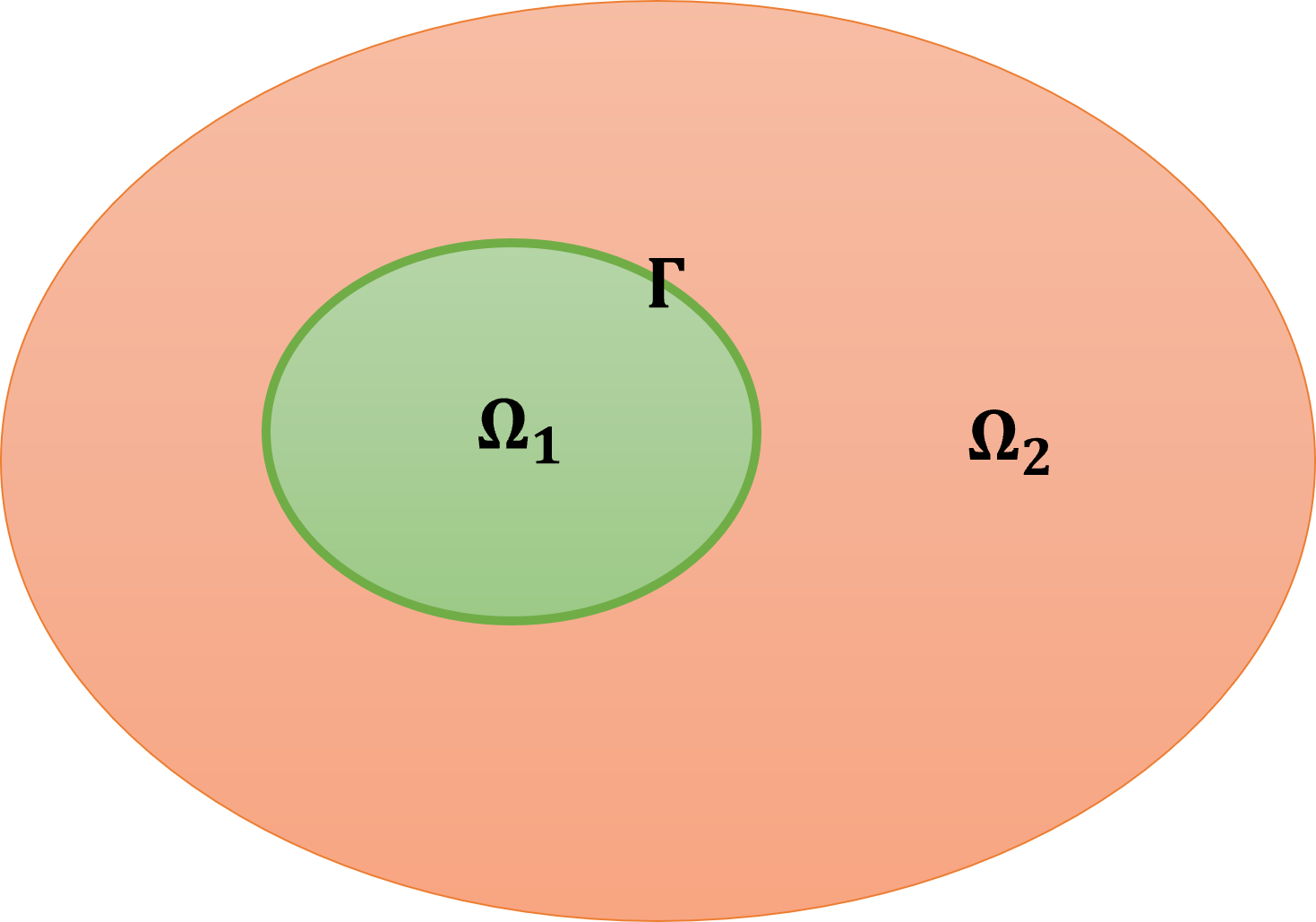}}
    \subfigure[The back-to-back case]{\includegraphics[width=4cm,height=4cm]{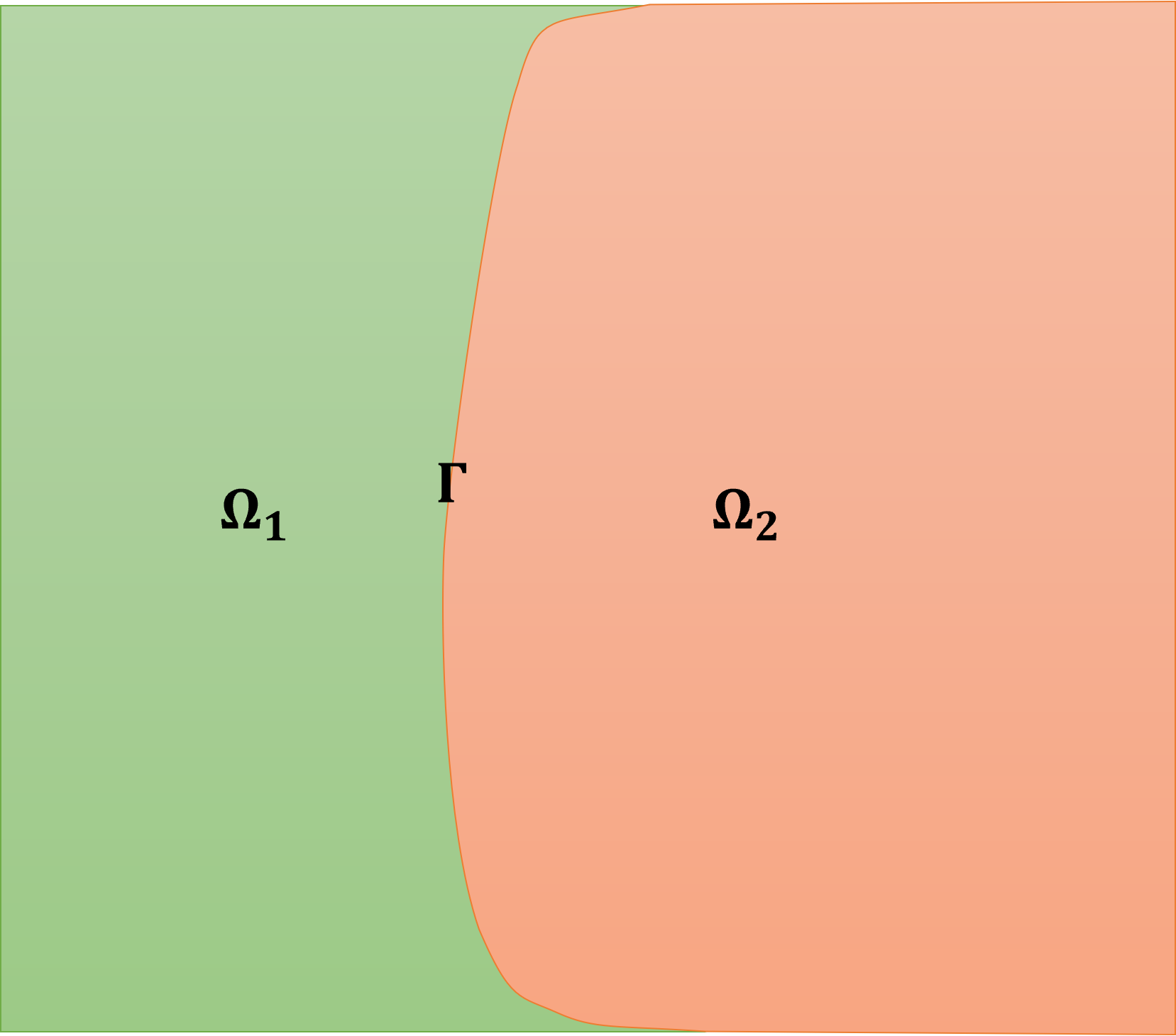}}
    \caption{Illustration of the computational domain. The interface $\Gamma$ divides the domain into two sub-domains $\Omega_1$ and $\Omega_2$.}
    \label{computationaldomain}
\end{figure}

We begin by considering the following second-order scalar elliptic interface problem:
\begin{equation}\label{modelpextenddnn}
\begin{aligned}
-\nabla \cdot \bigl(\beta(\boldsymbol{x}) \nabla u\bigr) & = f, & & \text{in } \Omega_1 \cup \Omega_2, \\
\llbracket u \rrbracket & = g_1, & & \text{on } \Gamma, \\
\llbracket \beta(\boldsymbol{x}) \nabla u \cdot \boldsymbol{n} \rrbracket & = g_2, & & \text{on } \Gamma, \\
u & = g_D, & & \text{on } \partial \Omega,
\end{aligned}
\end{equation}
where $f \in L^2(\Omega)$, $g_D \in H^{1/2}(\partial \Omega)$, and $\boldsymbol{n}$ denotes the unit normal vector on $\Gamma$ pointing outward from $\Omega_1$. The function $\beta(\boldsymbol{x}) \ge \beta_0 > 0$ is piecewise constant:
\[
\beta(\boldsymbol{x}) = 
\begin{cases}
\beta_1, & \text{if } \boldsymbol{x} \in \Omega_1,\\
\beta_2, & \text{if } \boldsymbol{x} \in \Omega_2,
\end{cases}
\]
thus exhibiting a finite jump across the interface $\Gamma$. The notation $\llbracket \cdot \rrbracket$ indicates the jump in a function across $\Gamma$. In particular, for any scalar function $u$, we write
\[
\left.\llbracket u(\boldsymbol{x}) \rrbracket\right|_{\Gamma}
= \left.u_1(\boldsymbol{x})\right|_{\Gamma} - \left.u_2(\boldsymbol{x})\right|_{\Gamma},
\]
where
\[
u(\boldsymbol{x})=
\begin{cases}
u_1(\boldsymbol{x}), & \text{if } \boldsymbol{x} \in \Omega_1,\\
u_2(\boldsymbol{x}), & \text{if } \boldsymbol{x} \in \Omega_2.
\end{cases}
\]

Problems of this type arise in a variety of applications, including composite materials, fluid dynamics with interfaces, and heat transfer in layered media. The jump conditions on $\Gamma$ reflect discontinuities in the solution and/or its flux due to contrasting material properties. Our goal is to accurately capture these jumps without resorting to body-fitted meshes or complicated mesh refinement near $\Gamma$.

\medskip
Next, we consider the corresponding parabolic interface problem, which extends the elliptic model above to include time dependence. Let $T>0$ be a fixed final time, and let $\Omega_1(t)$ and $\Omega_2(t)$ denote the time-dependent sub-domains as determined by the interface $\Gamma(t)$. The parabolic interface problem takes the form:
\begin{equation}\label{modelpextenddnnpaowu}
\begin{aligned}
u_t - \nabla \cdot \bigl(\beta(\boldsymbol{x}) \nabla u\bigr) & = f, & & \text{in } \Omega_1(t) \cup \Omega_2(t), \\
\llbracket u \rrbracket & = g_1, & & \text{on } \Gamma(t), \\
\llbracket \beta(\boldsymbol{x}) \nabla u \cdot \boldsymbol{n} \rrbracket & = g_2, & & \text{on } \Gamma(t), \\
u_0 & = g_0, & & \text{on } \Omega_1(0) \cup \Omega_2(0),\\
u & = g_D, & & \text{on } \partial \Omega(t),
\end{aligned}
\end{equation}
where $f \in L^2\bigl(0,T; L^2(\Omega)\bigr)$, $g_D \in L^2\bigl(0,T; H^{1/2}(\partial \Omega)\bigr)$, and $\boldsymbol{n}$ is the unit normal vector on $\Gamma$ pointing outward from $\Omega_1$. As in the elliptic problem, the diffusion coefficient $\beta(\boldsymbol{x}) \ge \beta_0 > 0$ remains piecewise constant:
\[
\beta(\boldsymbol{x}) = 
\begin{cases}
\beta_1, & \text{if } \boldsymbol{x} \in \Omega_1,\\
\beta_2, & \text{if } \boldsymbol{x} \in \Omega_2,
\end{cases}
\]
and the notation for the jump condition is given by
\[
\left.\llbracket u(\boldsymbol{x}, t) \rrbracket\right|_{\Gamma}
= \left.u_1(\boldsymbol{x}, t)\right|_{\Gamma} - \left.u_2(\boldsymbol{x}, t)\right|_{\Gamma},
\]
with
\[
u(\boldsymbol{x}, t)=
\begin{cases}
u_1(\boldsymbol{x}, t), & \text{if } \boldsymbol{x} \in \Omega_1,\\
u_2(\boldsymbol{x}, t), & \text{if } \boldsymbol{x} \in \Omega_2.
\end{cases}
\]

These parabolic problems model time-dependent processes, such as heat diffusion or mass transfer, where different regions of the domain possess distinct material properties. The additional jump conditions on $\Gamma$ are analogous to those in the elliptic case, reflecting the discontinuities in the solution and flux across the interface. The initial condition $u_0 = g_0$ prescribes the state of the system at $t = 0$, while $u = g_D$ on the boundary $\partial \Omega_2(t)$ (or equivalently on $\partial \Omega$ if the interface does not intersect $\partial \Omega$) provides a time-dependent boundary condition. 

In the subsequent sections, we present a novel deep learning framework that employs multiple activation functions to address both the elliptic interface problem \eqref{modelpextenddnn} and the parabolic interface problem \eqref{modelpextenddnnpaowu}. Our proposed methodology circumvents the traditional challenges associated with interface problems by introducing a specialized neural network architecture that naturally accommodates jump conditions. This approach eliminates the need for body-fitted meshes or complex geometric adaptations, offering a more flexible and computationally efficient solution strategy. The framework's ability to handle both stationary and time-dependent interfaces, coupled with its capacity to capture sharp discontinuities in the solution and its derivatives, makes it particularly well-suited for a wide range of practical applications.
%in materials science, fluid dynamics, and heat transfer problems.

\section{Multi-Activation Function Method}
\label{Multi-ActivationFunction}

In this section, we describe our multi-activation function approach for solving interface problems using deep neural networks (DNNs). Our method adopts a domain decomposition strategy where two independent neural networks are employed, one for each subdomain $\Omega_1$ and $\Omega_2$. These networks share the same architecture but have independently trained parameters, and they are coupled through interface conditions in the loss function. Each network incorporates a multi-activation function mechanism to enhance learning efficiency, particularly for satisfying the interface constraints.

Throughout this section, we use the following notation conventions: $u_{\boldsymbol{\theta}_i}$ denotes the neural network function for subdomain $\Omega_i$ as a mapping, while $u_{\boldsymbol{\theta}_i}(\boldsymbol{x})$ denotes its evaluation at a point $\boldsymbol{x} \in \overline{\Omega}_i$. When discussing function properties (e.g., approximation, hypothesis space), we use $u_{\boldsymbol{\theta}_i}$; when discussing pointwise values (e.g., loss function evaluations), we use $u_{\boldsymbol{\theta}_i}(\boldsymbol{x})$.

\subsection{Deep Neural Network Architecture}

We employ two neural networks $u_{\boldsymbol{\theta}_1}$ and $u_{\boldsymbol{\theta}_2}$ to approximate the solution in $\Omega_1$ and $\Omega_2$, respectively. Each network has $(L-1)$ hidden layers with general activation functions. We treat the $d$-dimensional input as the 0-th layer and the one-dimensional output as the $L$-th layer. Let $m_l$ be the number of neurons in the $l$-th layer. In particular, we set $m_0 = d$ and $m_L = 1$. For any $k \in \mathbb{N}$, we denote $[k] := \{1,2,\cdots, k\}$.

\paragraph{Parameterization.} 
For each subdomain $\Omega_i$ ($i=1,2$), we collect all the parameters of the corresponding network into a vector 
\[
\boldsymbol{\theta}_i = \bigl(\operatorname{vec}(\boldsymbol{W}_i^{[1]}),\, \operatorname{vec}(\boldsymbol{b}_i^{[1]}),\, \cdots,\,
\operatorname{vec}(\boldsymbol{W}_i^{[L]}),\, \operatorname{vec}(\boldsymbol{b}_i^{[L]})\bigr),
\]
where for each layer $l \in [L]$, we have $\boldsymbol{W}_i^{[l]} \in \mathbb{R}^{m_l \times m_{l-1}}$ and $\boldsymbol{b}_i^{[l]} \in \mathbb{R}^{m_l}$. The size $M$ of each network is the total number of parameters:
\begin{equation}
   M \;=\; \sum_{l=0}^{L-1} \bigl(m_l+1\bigr)\, m_{l+1}.
\end{equation}
We call these parameters $W_{i,jk}^{[l]}$ (weights) and $b_{i,j}^{[l]}$ (biases). Note that $\boldsymbol{\theta}_1$ and $\boldsymbol{\theta}_2$ are independently trainable, allowing each network to specialize in approximating the solution within its respective subdomain. The hypothesis space for each subdomain is $\mathcal{H}_i = \{u_{\boldsymbol{\theta}_i} : \boldsymbol{\theta}_i \in \mathbb{R}^M\}$.

\paragraph{Activation Functions.}
For each subdomain network $u_{\boldsymbol{\theta}_i}$ ($i=1,2$), we introduce activation functions of the form
\[
\sigma_j^{[l]}: \mathbb{R} \;\to\; \mathbb{R}, 
\quad\text{for } l \in [L-1],\; j \in [m_l].
\]
We interpret the 0-th layer output as the identity map on $\mathbb{R}^d$, i.e., 
\[
u_{\boldsymbol{\theta}_i}^{[0]}(\boldsymbol{x}) = \boldsymbol{x}
\quad \text{for all } \boldsymbol{x} \in \mathbb{R}^d.
\]
Then for $l \in [L-1]$, the layer output $u_{\boldsymbol{\theta}_i}^{[l]}: \mathbb{R}^d \to \mathbb{R}^{m_l}$ is defined componentwise by
\begin{equation}
    \begin{aligned}
    \bigl(u_{\boldsymbol{\theta}_i}^{[l]}(\boldsymbol{x})\bigr)_j
    \;=\;
    \sigma_j^{[l]}
    \Bigl(\bigl(\boldsymbol{W}_i^{[l]}\,u_{\boldsymbol{\theta}_i}^{[l-1]}(\boldsymbol{x}) + \boldsymbol{b}_i^{[l]}\bigr)_j\Bigr),
    \quad j \in [m_l].
    \end{aligned}
\end{equation}
For the final layer $l = L$, we set
\begin{equation}
u_{\boldsymbol{\theta}_i}^{[L]}(\boldsymbol{x}) 
\;=\;
\boldsymbol{W}_i^{[L]} \,\bigl(u_{\boldsymbol{\theta}_i}^{[L-1]}(\boldsymbol{x})\bigr) \;+\; b_i^{[L]},
\end{equation}
yielding a scalar output $u_{\boldsymbol{\theta}_i}(\boldsymbol{x}) := u_{\boldsymbol{\theta}_i}^{[L]}(\boldsymbol{x})$ for subdomain $\Omega_i$. Although both networks share the same architecture, their parameters $\boldsymbol{\theta}_1$ and $\boldsymbol{\theta}_2$ are trained independently, allowing each network to specialize in its respective subdomain. In many applications, the activation functions $\sigma_j^{[l]}$ are chosen to be the same (e.g., $\tanh$ or Sigmoid) for simplicity. Note that for second-order PDEs, smooth activation functions such as $\tanh$ are preferred over non-smooth ones like ReLU, since the PDE residual requires computing second-order derivatives of the network output.

\subsection{Discrete Loss Function for the Elliptic Interface Problem}

We now introduce the loss function used in our MAF formulation. Our goal is to approximate the interface problem \eqref{modelpextenddnn} by enforcing the PDE and jump conditions in a collocation sense. Specifically, we sample points in the sub-domains $\Omega_1$ and $\Omega_2$, on the boundary $\partial \Omega$, and on the interface $\Gamma$, then define a discrete loss that couples the two subdomain networks $u_{\boldsymbol{\theta}_1}$ and $u_{\boldsymbol{\theta}_2}$.

\paragraph{Interior Losses.}
Let $\{\boldsymbol{x}_k^{\Omega_i}\}_{k=1}^{M_i} \subset \Omega_i$, $i=1,2$, be sets of collocation points in each sub-domain. For each subdomain, we approximate the PDE residual term using the corresponding network:
\begin{equation}
\mathcal{L}_i(\boldsymbol{\theta}_i)
\;:=\;
\frac{1}{M_i} \sum_{k=1}^{M_i}
\Bigl|\,-\nabla \cdot \bigl(\beta_i \nabla u_{\boldsymbol{\theta}_i}\bigr)(\boldsymbol{x}_k^{\Omega_i})
\;-\;f(\boldsymbol{x}_k^{\Omega_i})\Bigr|^2,
\end{equation}
where $\beta_i$ is the constant diffusion coefficient in $\Omega_i$. The derivatives here can be computed by automatic differentiation.

\paragraph{Boundary Loss.}
To enforce the Dirichlet boundary condition on $\partial \Omega$, we sample $M_{\partial \Omega_i}$ points $\{\boldsymbol{x}_k^{\partial \Omega_i}\}_{k=1}^{M_{\partial \Omega_i}} \subset \partial \Omega_i \setminus \Gamma$ for each subdomain and define
\begin{equation}
\mathcal{L}_{\partial \Omega_i}(\boldsymbol{\theta}_i)
\;:=\;
\frac{1}{M_{\partial \Omega_i}} 
\sum_{k=1}^{M_{\partial \Omega_i}}
\Bigl|\,u_{\boldsymbol{\theta}_i}\bigl(\boldsymbol{x}_k^{\partial \Omega_i}\bigr)
\;-\;
g_D\bigl(\boldsymbol{x}_k^{\partial \Omega_i}\bigr)\Bigr|^2.
\end{equation}

\paragraph{Interface Loss.}
The interface loss is crucial as it couples the two independent networks by enforcing the jump conditions. We sample $M_{\Gamma}$ points 
\(\{\boldsymbol{x}_k^{\Gamma}\}_{k=1}^{M_{\Gamma}} \subset \Gamma\)
and define:
\[
\mathcal{L}_{\Gamma}(\boldsymbol{\theta}_1, \boldsymbol{\theta}_2)
\;:=\;
\sum_{k=1}^{M_{\Gamma}}
\Bigl(\frac{\gamma_1}{M_{\Gamma}} \,\bigl|u_{\boldsymbol{\theta}_1}\bigl(\boldsymbol{x}_k^{\Gamma}\bigr) - u_{\boldsymbol{\theta}_2}\bigl(\boldsymbol{x}_k^{\Gamma}\bigr)
- g_1\bigl(\boldsymbol{x}_k^{\Gamma}\bigr)\bigr|^2
\]
\[
\;+\;
\frac{\gamma_2}{M_{\Gamma}}\bigl|\beta_1\nabla u_{\boldsymbol{\theta}_1}\bigl(\boldsymbol{x}_k^{\Gamma}\bigr) \cdot \boldsymbol{n}
- \beta_2\nabla u_{\boldsymbol{\theta}_2}\bigl(\boldsymbol{x}_k^{\Gamma}\bigr) \cdot \boldsymbol{n}
- g_2\bigl(\boldsymbol{x}_k^{\Gamma}\bigr)\bigr|^2\Bigr).
\]
Here, $\gamma_1$ and $\gamma_2$ are weights to balance the importance of each jump condition. Note that $u_{\boldsymbol{\theta}_1}$ and $u_{\boldsymbol{\theta}_2}$ are evaluated at the same interface points but represent the solution limits from each subdomain.

\paragraph{Total Loss.}
Combining the above terms, the total loss for our elliptic interface problem couples both networks:
\begin{equation}\label{totelloss}
\mathcal{L}_{\text{total}}(\boldsymbol{\theta}_1, \boldsymbol{\theta}_2)
\;:=\;
\mathcal{L}_1(\boldsymbol{\theta}_1)
\;+\;
\mathcal{L}_2(\boldsymbol{\theta}_2)
\;+\;
\mathcal{L}_{\Gamma}(\boldsymbol{\theta}_1, \boldsymbol{\theta}_2)
\;+\;
\mathcal{L}_{\partial \Omega_1}(\boldsymbol{\theta}_1)
\;+\;
\mathcal{L}_{\partial \Omega_2}(\boldsymbol{\theta}_2).
\end{equation}
We seek to find
\[
\min_{(\boldsymbol{\theta}_1, \boldsymbol{\theta}_2) \in \mathbb{R}^M \times \mathbb{R}^M} 
\mathcal{L}_{\text{total}}(\boldsymbol{\theta}_1, \boldsymbol{\theta}_2).
\]
If $(\boldsymbol{\theta}_1^*, \boldsymbol{\theta}_2^*)$ denotes the minimizer, then the corresponding neural network solutions are $u_{\boldsymbol{\theta}_1^*}$ for $\Omega_1$ and $u_{\boldsymbol{\theta}_2^*}$ for $\Omega_2$.

\paragraph{Training Dynamics.}
In practice, gradient-based methods are used to update both parameter sets $\boldsymbol{\theta}_1$ and $\boldsymbol{\theta}_2$ jointly according to
\begin{equation}\label{trainingode}
\begin{cases}
\dfrac{\mathrm{d}\boldsymbol{\theta}_i}{\mathrm{d}t}
\;=\;
-\,\nabla_{\boldsymbol{\theta}_i}\,
\mathcal{L}_{\text{total}}\bigl(\boldsymbol{\theta}_1, \boldsymbol{\theta}_2\bigr), \quad i = 1, 2,
\\[6pt]
\boldsymbol{\theta}_i(0) \;=\;\boldsymbol{\theta}_{i,0},
\end{cases}
\end{equation}
during training. Variants of (stochastic) gradient descent (SGD), ADAM or second-order methods can be employed, and automatic differentiation helps compute the gradients efficiently. Note that the interface loss $\mathcal{L}_{\Gamma}$ introduces coupling between the two networks, ensuring consistency of the solution and flux across the interface.

\subsection{Extension to Parabolic Interface Problems}
For parabolic interface problems, we extend our approach by treating time as an additional dimension in the neural network architecture, effectively working in $\mathbb{R}^{d+1}$. The domain $\Omega$ is divided into two subdomains $\Omega_1$ and $\Omega_2$ separated by an interface $\Gamma(t)$. This formulation allows us to handle various interface configurations:
This formulation allows us to handle static interfaces where $\Gamma(t) = \Gamma$ remains fixed, moving interfaces where $\Gamma(t)$ translates according to a prescribed velocity field, and rotating interfaces where $\Gamma(t)$ undergoes rotational motion about a fixed axis.
Each subdomain network now accepts input $(\mathbf{x}, t) \in \mathbb{R}^{d+1}$. The loss function for the parabolic case in each subdomain $\Omega_i$ ($i=1,2$) includes an additional temporal derivative term:
\begin{equation}
\mathcal{L}_i^p(\boldsymbol{\theta}_i) := \frac{1}{M_i} \sum_{k=1}^{M_i}
\Bigl|\partial_t u_{\boldsymbol{\theta}_i} - \nabla \cdot \bigl(\beta_i \nabla u_{\boldsymbol{\theta}_i}(\boldsymbol{x}_k^{\Omega_i}, t_k)\bigr)
- f(\boldsymbol{x}_k^{\Omega_i}, t_k)\Bigr|^2,
\end{equation}
with the interface conditions coupling the two networks:
\begin{equation}
\begin{aligned}
    \mathcal{L}_{\Gamma}^p(\boldsymbol{\theta}_1, \boldsymbol{\theta}_2) &:= \sum_{k=1}^{M_{\Gamma}}
\Bigl(\frac{\gamma_1}{M_{\Gamma}} \bigl|u_{\boldsymbol{\theta}_1}\bigl(\boldsymbol{x}_k^{\Gamma}, t\bigr) - u_{\boldsymbol{\theta}_2}\bigl(\boldsymbol{x}_k^{\Gamma}, t\bigr)
- g_1\bigl(\boldsymbol{x}_k^{\Gamma}, t\bigr)\bigr|^2 \\
&+ \frac{\gamma_2}{M_{\Gamma}}\bigl|\beta_1 \nabla u_{\boldsymbol{\theta}_1}\bigl(\boldsymbol{x}_k^{\Gamma}, t\bigr) \cdot \boldsymbol{n} - \beta_2 \nabla u_{\boldsymbol{\theta}_2}\bigl(\boldsymbol{x}_k^{\Gamma}, t\bigr) \cdot \boldsymbol{n}
- g_2\bigl(\boldsymbol{x}_k^{\Gamma}, t\bigr)\bigr|^2\Bigr),
\end{aligned}
\end{equation}
where $M_1$, $M_2$, $M_{\Gamma}$ are the numbers of sampling points for the interior domains and interface condition.

The initial condition loss is defined as:
\begin{equation}
\mathcal{L}_0(\boldsymbol{\theta}_1, \boldsymbol{\theta}_2) := \frac{1}{M_0} \sum_{k=1}^{M_0}
\bigl|u_{\boldsymbol{\theta}_i}(\boldsymbol{x}_k, 0) - g_0(\boldsymbol{x}_k)\bigr|^2, \quad \text{where } \boldsymbol{x}_k \in \Omega_i(0),
\end{equation}
where $M_0$ is the number of sampling points for the initial condition.

For the Dirichlet boundary condition on $\partial\Omega(t)$, we define:
\begin{equation}
\mathcal{L}_{\partial \Omega}(\boldsymbol{\theta}_1, \boldsymbol{\theta}_2) := \frac{1}{M_{\partial \Omega}} \sum_{k=1}^{M_{\partial \Omega}}
\bigl|u_{\boldsymbol{\theta}_i}(\boldsymbol{x}_k^{\partial\Omega}, t_k) - g_D(\boldsymbol{x}_k^{\partial\Omega}, t_k)\bigr|^2, \quad \text{where } \boldsymbol{x}_k \in \Omega_i(t_k).
\end{equation}

The total loss function combines all these components:
\begin{equation}
\mathcal{L}(\boldsymbol{\theta}_1, \boldsymbol{\theta}_2) := \mathcal{L}_1^p(\boldsymbol{\theta}_1) + \mathcal{L}_2^p(\boldsymbol{\theta}_2) + \mathcal{L}_{\Gamma}^p(\boldsymbol{\theta}_1, \boldsymbol{\theta}_2) + \mathcal{L}_0(\boldsymbol{\theta}_1, \boldsymbol{\theta}_2) + \mathcal{L}_{\partial \Omega}(\boldsymbol{\theta}_1, \boldsymbol{\theta}_2).
\end{equation}
%\begin{equation}
%\mathcal{L}(\boldsymbol{\theta}) := \lambda_1\mathcal{L}_1^p(\boldsymbol{\theta}) + \lambda_2\mathcal{L}_2^p(\boldsymbol{\theta}) + \lambda_{\Gamma}\mathcal{L}_{\Gamma}^p(\boldsymbol{\theta}) + \lambda_0\mathcal{L}_0(\boldsymbol{\theta}) + \lambda_D\mathcal{L}_D(\boldsymbol{\theta}).
%\end{equation}
%where $\lambda_1$, $\lambda_2$, $\lambda_{\Gamma}$, $\lambda_0$, and $\lambda_D$ are positive weighting parameters.

\subsection{Multi-Activation for Enhanced Learning Near Interfaces}\label{Multi-Activation}

Although our domain decomposition approach employs separate networks for each subdomain---where each network only needs to approximate a smooth solution within its domain---the \emph{interface loss term} $\mathcal{L}_{\Gamma}(\boldsymbol{\theta}_1, \boldsymbol{\theta}_2)$ imposes strong constraints that require both networks to rapidly adapt their behavior near the interface $\Gamma$. To facilitate efficient learning under these interface constraints, we propose incorporating \emph{multi-activation functions} within each subdomain network.

While the true solution is smooth (e.g., $C^2(\overline{\Omega}_i)$) within each subdomain, the interface conditions create a challenging optimization landscape where networks must satisfy jump conditions exactly at the interface while maintaining smooth approximations in the bulk. Standard single-activation networks using a single smooth activation (e.g., $\sigma^{[l]} = \tanh$) may require excessive parameters to achieve this dual objective efficiently.

We propose a multi-activation mechanism that allows each layer $l$ to blend multiple candidate activations. Concretely, for each subdomain network, we define
    \[
    \sigma^{[l]}(\mathbf{x})
    \;=\;
    \omega_1(\mathbf{x})
\,\tanh\bigl(W_1^{[l]}\,\mathbf{x} + \mathbf{b}_1^{[l]}\bigr)
    \;+\;
    \omega_2(\mathbf{x})
\,\exp\!\Bigl(-\frac{(W_2^{[l]}\,\mathbf{x})^2}{\gamma}\Bigr),
\]
where $\gamma$ is a positive constant, $W_1^{[l]}$ and $W_2^{[l]}$ are learnable weights, and $\omega_1(\mathbf{x})$, $\omega_2(\mathbf{x})$ are interface-aware weighting functions satisfying
\[
\omega_1(\mathbf{x}) \to 0 \text{ as } \mathbf{x} \to \Gamma, \quad
\omega_1(\mathbf{x}) \to 1 \text{ as } \mathbf{x} \to \partial \Omega,
    \]
    and
    \[
\omega_2(\mathbf{x}) \to 1 \text{ as } \mathbf{x} \to \Gamma, \quad
\omega_2(\mathbf{x}) \to 0 \text{ as } \mathbf{x} \to \partial \Omega.
\]
These weighting functions can be derived from a level set or distance function indicating proximity to $\Gamma$. This design allocates specialized activation behavior near the interface where satisfying the jump conditions is most critical.

The multi-activation design enhances each subdomain network's ability to efficiently satisfy the interface constraints while maintaining accurate approximations in the bulk. By combining different activation functions and weighting them adaptively based on location, each network can dedicate appropriate representational capacity to regions where the interface loss imposes the strongest gradients during training.

The combination of $\tanh$ and Gaussian-type activation functions provides several advantages for learning near interfaces:
\begin{itemize}
    \item The $\tanh$ function is infinitely differentiable ($C^\infty$), ensuring that second-order derivatives required for evaluating the PDE residual are well-defined throughout the domain.
    \item The Gaussian-type activation $\exp\!\bigl(-(W_2^{[l]}\,\mathbf{x})^2/\gamma\bigr)$ introduces \emph{localized sensitivity} near the interface, enabling efficient satisfaction of jump conditions without disrupting the solution approximation far from $\Gamma$.  
    \item The parameter $\gamma$ provides an additional degree of freedom to control the spatial extent of interface-focused learning.
    \item The bounded outputs of both activations contribute to numerical stability during training, particularly when the interface conditions involve flux matching with high-contrast coefficients.
    \item The Gaussian-like decay characteristics complement the global smoothness of $\tanh$, creating a more expressive function space that can efficiently represent solutions satisfying both bulk PDE residuals and interface constraints.
\end{itemize}

In summary, while each subdomain network approximates a smooth solution, the multi-activation mechanism \emph{improves training efficiency} by providing specialized representational capacity near the interface where the coupling constraints are enforced. This reduces the total number of parameters and training iterations needed to achieve a given accuracy compared to standard single-activation architectures.

%\begin{remark}
%To improve the effectiveness of our multi-activation approach, we implement several key enhancements:
%\begin{equation}
%\sigma_{\text{combined}}^{[l]}(\mathbf{x}) = \sum_{j=1}^2 \omega_j(\mathbf{x}) \sigma_j^{[l]}(\mathbf{x}),
%\end{equation}
%where $\{\sigma_j^{[l]}\}_{j=1}^2$ represents a collection of candidate activation functions, and $\{\omega_j(\mathbf{x})\}_{j=1}^2$ are corresponding weight functions satisfying $\sum_{j=1}^2 \omega_j(\mathbf{x}) = 1$. The weight functions are constructed using distance-based metrics:
%\begin{equation}
%\omega_j(\mathbf{x}) = \frac{\exp(-d_j(\mathbf{x})/\epsilon)}{\sum_{k=1}^2 \exp(-d_k(\mathbf{x})/\epsilon)},
%\end{equation}
%where $d_j(\mathbf{x})$ represents the distance to relevant geometric features (interface, boundary, etc.), and $\epsilon$ is a smoothing parameter.
%\end{remark}

In the subsequent section, we present a comprehensive error analysis of our multi-activation function method. We establish theoretical convergence rates and estimate a priori error in appropriate function spaces. The analysis considers both the neural network architecture's approximation capabilities and the impact of the discrete loss function minimization. We derive bounds for the solution error regarding the network width, depth, and the number of training points, accounting for the presence of interface discontinuities and the effectiveness of our multi-activation strategy.

\section{Error Estimation for Multi-Activation Function Method for Elliptic Interface Problem and Parabolic Interface Problem}
\label{sec:error-estimation}

In this section, we present an error analysis framework for the proposed multi-activation function method. Our analysis is based on the generalization error estimation framework for physics-informed neural networks developed by Mishra and Molinaro \cite{mishra2023estimates}, which we adapt to interface problems with jump conditions. We also draw upon related error analysis techniques from \cite{de2022error, zhang2024priori}.

Throughout this section, we use the following notation:
\begin{itemize}
    \item $\mathbb{X}(\overline{\Omega}_i)$ denotes the function space for the solution in subdomain $\Omega_i$. For elliptic problems with classical solutions, we take $\mathbb{X}(\overline{\Omega}_i) = C^2(\overline{\Omega}_i)$, the space of twice continuously differentiable functions on $\overline{\Omega}_i$. More generally, $\mathbb{X}(\overline{\Omega}_i) = H^s(\Omega_i)$ for appropriate Sobolev regularity $s \geq 2$.
    \item $\mathbb{X}(\overline{\Omega}_i(t) \times [0, T])$ denotes the corresponding space-time function space for parabolic problems.
    \item $\|\cdot\|_{\mathbb{X}(\overline{\Omega}_i)}$ denotes the norm in the space $\mathbb{X}(\overline{\Omega}_i)$.
\end{itemize}

\begin{remark}[Scope and Limitations of the Error Analysis]
The error bounds presented in this section provide \emph{upper bounds} on the generalization error, relating the solution error to the trained loss values and quadrature errors. These bounds are conditional on several assumptions:
\begin{itemize}
    \item The neural network has sufficient approximation capacity to represent the solution with small residuals;
    \item The optimization algorithm successfully finds parameters that minimize the loss function;
    \item The sampling points are distributed appropriately to approximate the relevant integrals.
\end{itemize}
The actual error in practice depends on factors not captured by these bounds, including the choice of network architecture, optimization algorithm, initialization, and hyperparameters. Therefore, while these theoretical results provide useful guidance, the numerical experiments in Section~\ref{NumericalResults} serve as the primary validation of our method's effectiveness.
\end{remark}

\subsection{The Least-Squares DNN Approach for Elliptic Interface Problems}\label{Least-SquaresE}

It is natural to rewrite the elliptic interface problem \eqref{modelpextenddnn} as a minimization problem in order to apply the DNN approach. There are different ways to do this, following similar approaches in the literature. In this work, we adopt the LS formulation which minimizes residuals by defining an LS functional that incorporates all equations introduced in \eqref{modelpextenddnn} together. This approach has been widely used in physics-informed neural networks for various PDEs, including elliptic interface problems.

In particular, for \eqref{modelpextenddnn}, we define a least-squares formulation as follows:

\begin{equation}
\begin{aligned}
    \mathcal{R}(\tilde{u}) &:= \sum_{i=1}^{2} \left(\omega_{\mathcal{L}_i} \|\mathcal{L}_i(\tilde{u})\|_{0,\Omega_i}^2 + \omega_{\mathcal{B}_i} \|\mathcal{B}_i(\tilde{u})\|_{0,\partial\Omega_i / \Gamma}^2\right)\\
    &+ \omega_{\Gamma_1} \|\mathcal{I}_1(\tilde{u})\|_{0,\Gamma}^2 + \omega_{\Gamma_2} \|\mathcal{I}_2(\tilde{u})\|_{0,\Gamma}^2,
\end{aligned}
\end{equation}
where $\omega_{\mathcal{L}_i}$, $\omega_{\mathcal{B}}$, $\omega_{\Gamma_1}$, and $\omega_{\Gamma_2}$ are prescribed weight coefficients for each corresponding term. The operators are defined as:
\begin{equation}
    \begin{aligned}
\mathcal{L}_i(\tilde{u}) &= -\nabla \cdot (\beta(\boldsymbol{x}) \nabla \tilde{u}) - f, \quad \text{in } \Omega_i, \quad i=1,2, \\
\mathcal{B}_i(\tilde{u}) &= \tilde{u} - g_D, \quad \text{on } \partial\Omega_i / \Gamma, \\
\mathcal{I}_1(\tilde{u}) &= \llbracket \tilde{u} \rrbracket - g_1, \quad \text{on } \Gamma, \\
\mathcal{I}_2(\tilde{u}) &= \llbracket \beta(\boldsymbol{x}) \nabla \tilde{u} \cdot \boldsymbol{n} \rrbracket - g_2, \quad \text{on } \Gamma.
\end{aligned}
\end{equation}

Then, the LS problem is to find $u \in \mathbb{V}$ such that
\begin{equation}
    \mathcal{R}(u) = \min_{\tilde{u} \in \mathbb{V}} \mathcal{R}(\tilde{u}),
\end{equation}
where $\mathbb{V}$ is a suitable space for the LS solution.

As shown in Section \ref{Multi-ActivationFunction}, we employ two independent neural networks to approximate the exact solution $u$ in \eqref{modelpextenddnn}:
$$
u_i \approx u_{\boldsymbol{\theta}_i}(\boldsymbol{x}), \quad \boldsymbol{x} \in \Omega_i, \quad i = 1, 2,
$$
where $\boldsymbol{x} \in \mathbb{R}^d$, and $d$ is the spatial dimension. Each network $u_{\boldsymbol{\theta}_i}$ has the same architecture but with independently trainable parameters $\boldsymbol{\theta}_i$.

Based on the LS formulation and the DNN approximation, we seek to find the parameter pairs $(\boldsymbol{\theta}_1, \boldsymbol{\theta}_2)$ such that:

$$
\min_{(\boldsymbol{\theta}_1, \boldsymbol{\theta}_2) \in \mathbb{R}^M \times \mathbb{R}^M} \mathcal{R}(\boldsymbol{x};\boldsymbol{\theta}_1, \boldsymbol{\theta}_2),
$$
and the total loss functional, $\mathcal{R}(\boldsymbol{x};\boldsymbol{\theta}_1, \boldsymbol{\theta}_2)$, according to the LS functional, is defined as:

\begin{equation}
    \mathcal{R}(\boldsymbol{x};\boldsymbol{\theta}_1, \boldsymbol{\theta}_2) := \mathcal{R}(u_{\boldsymbol{\theta}_1}(\boldsymbol{x}), u_{\boldsymbol{\theta}_2}(\boldsymbol{x})).
\end{equation}

Since the loss function contains integrals that cannot be computed exactly, we approximate them by the Monte Carlo integration based on a set of sampling points. More precisely, we introduce the following mean square error (MSE) formulations to approximate corresponding integrals:

\begin{equation}
    \begin{aligned}
\mathcal{F}_{\mathcal{L}_i}(\boldsymbol{\theta}_i) &:= \frac{1}{M_{i}} \sum_{k=1}^{M_{i}} |\mathcal{L}_i(\boldsymbol{x}_k; \boldsymbol{\theta}_i)|^2, \quad i=1,2, \\
\mathcal{F}_{\mathcal{B}_i}(\boldsymbol{\theta}_i) &:= \frac{1}{M_{\mathcal{B}_i}} \sum_{k=1}^{M_{\mathcal{B}_i}} |\mathcal{B}_i(\boldsymbol{x}_k; \boldsymbol{\theta}_i)|^2, \\
\mathcal{F}_{\mathcal{I}_1}(\boldsymbol{\theta}_1, \boldsymbol{\theta}_2) &:= \frac{1}{M_{\Gamma}} \sum_{k=1}^{M_{\Gamma}} |\mathcal{I}_1(\boldsymbol{x}_k; \boldsymbol{\theta}_1, \boldsymbol{\theta}_2)|^2, \\
\mathcal{F}_{\mathcal{I}_2}(\boldsymbol{\theta}_1, \boldsymbol{\theta}_2) &:= \frac{1}{M_{\Gamma}} \sum_{k=1}^{M_{\Gamma}} |\mathcal{I}_2(\boldsymbol{x}_k; \boldsymbol{\theta}_1, \boldsymbol{\theta}_2)|^2,
\end{aligned}
\end{equation}
where $M_{i}$, $M_{\mathcal{B}_i}$, and $M_{\Gamma}$ are the number of sampling points used for the interior part in each subdomain, the boundary part ($M_{\mathcal{B}_1} + M_{\mathcal{B}_2} = M_{\partial \Omega}$), and the two interface conditions, respectively. If one of them equals 0, then the corresponding loss functional or the MSE is dropped. By slightly abusing the notation, we use $\boldsymbol{x}_k$ $(k = 1, \cdots)$ to represent sampling points and drop the dependence of $\boldsymbol{x}$ on the discrete level.

Thus, the total discrete loss is defined as:
$$
\mathcal{F}(\boldsymbol{\theta}_1, \boldsymbol{\theta}_2) := \sum_{i=1}^{2} \left(\omega_{\mathcal{L}_i} \mathcal{F}_{\mathcal{L}_i}(\boldsymbol{\theta}_i) + \omega_{\mathcal{B}_i} \mathcal{F}_{\mathcal{B}_i}(\boldsymbol{\theta}_i)\right) + \omega_{\Gamma_1} \mathcal{F}_{\mathcal{I}_1}(\boldsymbol{\theta}_1, \boldsymbol{\theta}_2) + \omega_{\Gamma_2} \mathcal{F}_{\mathcal{I}_2}(\boldsymbol{\theta}_1, \boldsymbol{\theta}_2),
$$
and our DNN method for (1) solves the following minimization problem:
$$
\min_{(\boldsymbol{\theta}_1, \boldsymbol{\theta}_2) \in \mathbb{R}^M \times \mathbb{R}^M} \mathcal{F}(\boldsymbol{\theta}_1, \boldsymbol{\theta}_2) = \min_{(\boldsymbol{\theta}_1, \boldsymbol{\theta}_2)} \left[ \sum_{i=1}^{2} \left(\omega_{\mathcal{L}_i} \mathcal{F}_{\mathcal{L}_i}(\boldsymbol{\theta}_i) + \omega_{\mathcal{B}_i} \mathcal{F}_{\mathcal{B}_i}(\boldsymbol{\theta}_i)\right) + \omega_{\Gamma_1} \mathcal{F}_{\mathcal{I}_1}(\boldsymbol{\theta}_1, \boldsymbol{\theta}_2) + \omega_{\Gamma_2} \mathcal{F}_{\mathcal{I}_2}(\boldsymbol{\theta}_1, \boldsymbol{\theta}_2) \right].
$$

%To solve this minimization problem, we use standard optimization algorithms such as the SGD method. In practice, we use a variant of SGD, the so-called adaptive moment estimation (Adam). The gradients used in the gradient descent method are computed via backward propagation and automatic differentiation. When the approximated minimizer $\boldsymbol{\theta}^*$ is reached, we attain the desired DNN result, $u_{\boldsymbol{\theta}}(\boldsymbol{x}; \boldsymbol{\theta}^*)$, which is the numerical solution of the elliptic interface problem \eqref{modelpextenddnn}.

To solve the elliptic interface problem using the proposed method, we formulate the following total LS formulation for $\tilde{u} \in H^1(\Omega_1) \cup H^1(\Omega_2)$:
\begin{equation}
    \begin{aligned}
\mathcal{R}(\tilde{u}) := &\sum_{i=1}^{2} \left(\omega_{\mathcal{L}_i} \int_{\Omega_i} \left|-\nabla \cdot (\beta(\boldsymbol{x}) \nabla \tilde{u}_{i}) - f\right|^2 \, d\boldsymbol{x} \right.\\
&\left.+ \omega_{\mathcal{B}_i} \int_{\partial \Omega_i / \Gamma} \left|\tilde{u}_i - g_D\right|^2 \, ds\right) \\
&+ \omega_{\Gamma_1} \int_{\Gamma} \left|\tilde{u}_1 - \tilde{u}_2 - g_1\right|^2 \, ds \\
&+ \omega_{\Gamma_2} \int_{\Gamma} \left|\beta_1 \nabla \tilde{u}_1 \cdot \boldsymbol{n}_1 - \beta_2 \nabla \tilde{u}_2 \cdot \boldsymbol{n}_2 - g_2\right|^2 \, ds,
\end{aligned}
\end{equation}
where $\tilde{u}_i = \tilde{u}|_{\Omega_i}$ denotes the restriction of $\tilde{u}$ to subdomain $\Omega_i$. 
%The weight coefficients $\omega_{\mathcal{L}_i}$, $\omega_{\mathcal{B}}$, $\omega_{\Gamma_1}$, and $\omega_{\Gamma_2}$ balance the contribution of each term to the overall loss.
The DNN approximation of this problem defines $u_{\boldsymbol{\theta}_i}(\boldsymbol{x})$ as an approximation of $u$ in subdomain $\Omega_i$, leading to the discretized loss function:
\begin{equation}
    \begin{aligned}
\mathcal{F}(\boldsymbol{\theta}_1, \boldsymbol{\theta}_2) &= \sum_{i=1}^{2} \left(\omega_{\mathcal{L}_i} \mathcal{F}_{\mathcal{L}_i}(\boldsymbol{\theta}_i) + \omega_{\mathcal{B}_i} \mathcal{F}_{\mathcal{B}_i}(\boldsymbol{\theta}_i) \right)+ \omega_{\Gamma_1} \mathcal{F}_{\mathcal{I}_1}(\boldsymbol{\theta}_1, \boldsymbol{\theta}_2) + \omega_{\Gamma_2} \mathcal{F}_{\mathcal{I}_2}(\boldsymbol{\theta}_1, \boldsymbol{\theta}_2) \\
&= \sum_{i=1}^{2} \left(\omega_{\mathcal{L}_i} \frac{1}{M_{i}} \sum_{k=1}^{M_{i}} \left|-\nabla \cdot (\beta(\boldsymbol{x}_k) \nabla u_{\boldsymbol{\theta}_i}(\boldsymbol{x}_k)) - f(\boldsymbol{x}_k)\right|^2 \right.\\
&\left.+ \omega_{\mathcal{B}_i} \frac{1}{M_{\mathcal{B}_i}} \sum_{k=1}^{M_{\mathcal{B}_i}} \left|u_{\boldsymbol{\theta}_i}(\boldsymbol{x}_k) - g_D(\boldsymbol{x}_k)\right|^2 \right)\\
&+ \omega_{\Gamma_1} \frac{1}{M_{\Gamma}} \sum_{k=1}^{M_{\Gamma}} \left|u_{\boldsymbol{\theta}_1}(\boldsymbol{x}_k) - u_{\boldsymbol{\theta}_2}(\boldsymbol{x}_k) - g_1(\boldsymbol{x}_k)\right|^2 \\
&+ \omega_{\Gamma_2} \frac{1}{M_{\Gamma}} \sum_{k=1}^{M_{\Gamma}} \left|\beta_1 \nabla u_{\boldsymbol{\theta}_1}(\boldsymbol{x}_k) \cdot \boldsymbol{n} - \beta_2 \nabla u_{\boldsymbol{\theta}_2}(\boldsymbol{x}_k) \cdot \boldsymbol{n} - g_2(\boldsymbol{x}_k)\right|^2.
\end{aligned}
\end{equation}

The optimization problem is then solved using the Adam optimizer, with gradients computed through automatic differentiation. The optimal parameter sets $(\boldsymbol{\theta}_1^*, \boldsymbol{\theta}_2^*)$ give the approximate solutions $u_{\boldsymbol{\theta}_1^*}(\boldsymbol{x})$ and $u_{\boldsymbol{\theta}_2^*}(\boldsymbol{x})$ for the elliptic interface problem in $\Omega_1$ and $\Omega_2$, respectively.

\subsection{Error Estimation for the Elliptic Interface Problem}

\subsubsection{Abstract Framework for Error Analysis}
Following the abstract framework developed by Mishra and Molinaro \cite{mishra2023estimates} for PINNs error analysis, we establish two major assumptions necessary for the error analysis of our two-network domain decomposition method applied to the elliptic interface problem \eqref{modelpextenddnn}. The key insight from \cite{mishra2023estimates} is to formulate stability-type conditions that relate the solution error to the PDE residuals. In our setting, we have two independent networks $u_{\boldsymbol{\theta}_1}$ and $u_{\boldsymbol{\theta}_2}$ approximating the solution in $\Omega_1$ and $\Omega_2$ respectively, coupled through interface conditions.
%[PDE Stability Condition for Two-Network Approximation]
\begin{assumption}\label{e1}
We assume that the elliptic interface problem \eqref{modelpextenddnn} satisfies a \emph{conditional stability estimate}: for the exact solution $u_i$ in subdomain $\Omega_i$ and any sufficiently regular approximation $v_i \in \mathbb{X}(\overline{\Omega}_i)$ (in our case, $v_i = u_{\boldsymbol{\theta}_i}$ from network $i$), $i = 1, 2$, there exist exponents $0 < \beta_{\mathcal{L}_i}, \beta_{\mathcal{B}_i}, \beta_{\Gamma} \leq 1$ such that
\begin{equation}
\begin{aligned}
    \sum_{i=1}^{2} \|u_i - v_i\|^2_{\mathbb{X}(\overline{\Omega}_i)} &\leq C_{pde} \left\{ \sum_{i=1}^{2} \left[ \left(\|-\nabla \cdot (\beta(\boldsymbol{x})\nabla u_i) - (-\nabla \cdot (\beta(\boldsymbol{x})\nabla v_i))\|^2_{L^2(\Omega_i)}\right)^{\beta_{\mathcal{L}_i}} \right. \right.\\
&\left. + \left(\|u_i - v_i\|^2_{L^2(\partial\Omega_i \setminus \Gamma)}\right)^{\beta_{\mathcal{B}_i}}  \right] + \left(\|\llbracket u \rrbracket - \llbracket v \rrbracket\|^2_{L^2(\Gamma)}\right)^{\beta_{\Gamma}}\\
&\left.  + \left(\|\llbracket \beta(\boldsymbol{x}) \nabla u \cdot \boldsymbol{n} \rrbracket - \llbracket \beta(\boldsymbol{x}) \nabla v \cdot \boldsymbol{n} \rrbracket\|^2_{L^2(\Gamma)}\right)^{\beta_{\Gamma}} \right\},
\end{aligned}
\end{equation}
where the constant $C_{pde}$ depends on $\|u_i\|_{\mathbb{X}(\overline{\Omega}_i)}$, $\|v_i\|_{\mathbb{X}(\overline{\Omega}_i)}$, the coefficient $\beta(\boldsymbol{x})$, the domain geometry, and the regularity of the underlying elliptic interface problem.

This assumption encapsulates a stability property of the PDE: small residuals in the PDE, boundary conditions, and interface conditions imply small errors in the solution. The exponents $\beta_{\mathcal{L}_i}, \beta_{\mathcal{B}_i}, \beta_{\Gamma}$ depend on the specific problem and may be less than 1 for ill-posed or weakly stable problems. For typical well-posed elliptic interface problems with smooth data, such stability estimates hold with $\beta_{\mathcal{L}_i} = \beta_{\mathcal{B}_i} = \beta_{\Gamma} = 1$; see, e.g., \cite{gilbarg2001elliptic, chen1998finite} for classical regularity theory of elliptic equations with discontinuous coefficients. Note that the constant $C_{pde}$ depends on the coefficient contrast ratio $\beta_1/\beta_2$; for high-contrast problems (e.g., $\beta_1/\beta_2 \gg 1$ or $\ll 1$), this constant may become large, which should be considered when interpreting the error bounds.
\end{assumption}

\begin{assumption}\label{e2}
Consider an integral $I(g) := \int_{\mathcal{D}} g(\boldsymbol{x}) \, d\boldsymbol{x}$ and a quadrature rule using $N$ quadrature points $\boldsymbol{x}_i \in \mathcal{D}$ $(1 \leq i \leq N)$, i.e., $I_N(g) := \sum_{i=1}^{N} w_i g(\boldsymbol{x}_i)$, $w_i \in \mathbb{R}^+$, $1 \leq i \leq N$. We assume the quadrature error is
\begin{equation}
|I(g) - I_N(g)| \leq C_{quad} N^{-\alpha}, \quad \alpha > 0,
\end{equation}
where the constant $C_{quad}$ depends on the dimension of the domain and the property of the integrand $g(\boldsymbol{x})$.
\end{assumption}

Based on the above two assumptions, we have the following framework of the error estimation for the proposed two-network domain decomposition method applied to the elliptic interface problem. The key feature is that the error bound explicitly accounts for the separate approximations $u_{\boldsymbol{\theta}_1}$ and $u_{\boldsymbol{\theta}_2}$ in each subdomain and their coupling through the interface loss terms.
%[Abstract Error Framework for Two-Network Domain Decomposition]
\begin{theorem}
Let $u_i \in \mathbb{X}(\overline{\Omega}_i)$, $i = 1, 2$, be the classical solution of the elliptic interface problem in subdomain $\Omega_i$. Let $u_{\boldsymbol{\theta}_i}^{*} \in \mathbb{X}(\overline{\Omega}_i)$ be the numerical approximation obtained by the neural network with parameters $\boldsymbol{\theta}_i^*$ for subdomain $\Omega_i$. Under Assumptions \ref{e1} and \ref{e2}, we have the following generalization error estimation:
\begin{equation}
\begin{aligned}
    \sum_{i=1}^{2} \|u_i - u_{\boldsymbol{\theta}_i}^{*}\|^2_{\mathbb{X}(\overline{\Omega}_i)} &\leq C_{\text{pde}} \left\{ \sum_{i=1}^{2} \left[ (\mathcal{F}_{\mathcal{L}_i}(\boldsymbol{\theta}_i^*))^{\beta_{\mathcal{L}_i}} + (\mathcal{F}_{\mathcal{B}_i}(\boldsymbol{\theta}_i^*))^{\beta_{\mathcal{B}_i}} \right] \right.\\
&\left. + (\mathcal{F}_{\mathcal{I}_1}(\boldsymbol{\theta}_1^*, \boldsymbol{\theta}_2^*))^{\beta_{\Gamma}} + (\mathcal{F}_{\mathcal{I}_2}(\boldsymbol{\theta}_1^*, \boldsymbol{\theta}_2^*))^{\beta_{\Gamma}}\right.\\
&\left. + \sum_{i=1}^{2} \left( C_{\text{quad}}^{\mathcal{L}_i}M_{i}^{-\alpha_{\mathcal{L}_i}\beta_{\mathcal{L}_i}} + C_{\text{quad}}^{\mathcal{B}_i}M_{\mathcal{B}_i}^{-\alpha_{\mathcal{B}_i}\beta_{\mathcal{B}_i}}  \right) \right.\\
&\left. + C_{\text{quad}}^{\mathcal{I}_1}M_{\Gamma}^{-\alpha_{\mathcal{I}_1}\beta_{\Gamma}} + C_{\text{quad}}^{{\mathcal{I}_2}}M_{\Gamma}^{-\alpha_{\mathcal{I}_2}\beta_{\Gamma}} \right\},
\end{aligned}
\end{equation}
where $C_{\text{pde}}$ is a positive constant depending on $\|u_i\|_{\mathbb{X}(\overline{\Omega}_i)}$, $\|u_{\boldsymbol{\theta}_i}^{*}\|_{\mathbb{X}(\overline{\Omega}_i)}$, $\beta(\boldsymbol{x})$, and the domains $\Omega_i$ $(i = 1, 2)$. Note that the interior and boundary loss terms $\mathcal{F}_{\mathcal{L}_i}$ and $\mathcal{F}_{\mathcal{B}_i}$ depend only on the parameters $\boldsymbol{\theta}_i$ of network $i$, while the interface loss terms $\mathcal{F}_{\mathcal{I}_1}$ and $\mathcal{F}_{\mathcal{I}_2}$ depend on both parameter sets $(\boldsymbol{\theta}_1, \boldsymbol{\theta}_2)$, reflecting the coupling between the two networks.
\end{theorem}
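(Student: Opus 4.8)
The plan is to combine the conditional stability estimate of Assumption~\ref{e1} with the quadrature bound of Assumption~\ref{e2}, using the trained network $(\boldsymbol{\theta}_1^*,\boldsymbol{\theta}_2^*)$ as the approximation $(v_1,v_2)=(u_{\boldsymbol{\theta}_1^*},u_{\boldsymbol{\theta}_2^*})$. First I would apply Assumption~\ref{e1} directly: since $u_{\boldsymbol{\theta}_i^*}\in\mathbb{X}(\overline{\Omega}_i)$ is a valid admissible approximation, the left-hand side $\sum_i\|u_i-u_{\boldsymbol{\theta}_i^*}\|^2_{\mathbb{X}(\overline{\Omega}_i)}$ is bounded by $C_{pde}$ times a sum of powers of the \emph{continuous} residual norms, namely $\|\mathcal{L}_i(u_{\boldsymbol{\theta}_i^*})\|^2_{L^2(\Omega_i)}$ (using that $\mathcal{L}_i u_i=0$ and linearity of the operator), $\|\mathcal{B}_i(u_{\boldsymbol{\theta}_i^*})\|^2_{L^2(\partial\Omega_i\setminus\Gamma)}$, $\|\mathcal{I}_1(u_{\boldsymbol{\theta}_1^*},u_{\boldsymbol{\theta}_2^*})\|^2_{L^2(\Gamma)}$, and $\|\mathcal{I}_2(u_{\boldsymbol{\theta}_1^*},u_{\boldsymbol{\theta}_2^*})\|^2_{L^2(\Gamma)}$, raised to the respective exponents $\beta_{\mathcal{L}_i},\beta_{\mathcal{B}_i},\beta_\Gamma$. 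Here I would note that each residual norm is exactly the continuous-integral version of the corresponding MSE functional: e.g. $\|\mathcal{L}_i(u_{\boldsymbol{\theta}_i^*})\|^2_{L^2(\Omega_i)}=\int_{\Omega_i}|\mathcal{L}_i(\boldsymbol{x};\boldsymbol{\theta}_i^*)|^2\,d\boldsymbol{x}=:I(g_{\mathcal{L}_i})$ with integrand $g_{\mathcal{L}_i}=|\mathcal{L}_i(\,\cdot\,;\boldsymbol{\theta}_i^*)|^2\ge 0$, and similarly for the boundary and interface terms.

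The second step is to replace each continuous residual norm by its Monte Carlo / quadrature approximation plus an error term. For each integrand $g\ge 0$ I would write $I(g)=I_N(g)+(I(g)-I_N(g))\le I_N(g)+|I(g)-I_N(g)|\le \mathcal{F}_\bullet(\boldsymbol{\theta}^*)+C_{quad}^\bullet N^{-\alpha_\bullet}$, invoking Assumption~\ref{e2} with the relevant sample count ($M_i$ for interior, $M_{\mathcal{B}_i}$ for boundary, $M_\Gamma$ for each interface term) and recognizing that $I_N(g_{\mathcal{L}_i})=\mathcal{F}_{\mathcal{L}_i}(\boldsymbol{\theta}_i^*)$ etc. since the MSE functionals are precisely the equal-weight quadrature sums $w_k=1/N$. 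Thus each continuous residual norm is bounded above by (discrete loss term) $+$ (quadrature error). Substituting these bounds into the output of Assumption~\ref{e1} gives, after using monotonicity of $t\mapsto t^{\beta}$ for $t\ge 0$ and $0<\beta\le 1$, a bound of the form $(\mathcal{F}_\bullet+C_{quad}^\bullet N^{-\alpha_\bullet})^{\beta_\bullet}$ for each term.

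The final, mostly cosmetic step is to split each such power using the elementary subadditivity inequality $(a+b)^\beta\le a^\beta+b^\beta$ for $a,b\ge 0$ and $0<\beta\le 1$, which yields $(\mathcal{F}_\bullet+C_{quad}^\bullet N^{-\alpha_\bullet})^{\beta_\bullet}\le \mathcal{F}_\bullet^{\beta_\bullet}+(C_{quad}^\bullet)^{\beta_\bullet}N^{-\alpha_\bullet\beta_\bullet}$; absorbing the harmless constants $(C_{quad}^\bullet)^{\beta_\bullet}$ into relabeled constants $C_{quad}^\bullet$ (or simply noting the stated bound holds with suitably defined constants) produces exactly the claimed inequality, with the exponents $-\alpha_{\mathcal{L}_i}\beta_{\mathcal{L}_i}$, $-\alpha_{\mathcal{B}_i}\beta_{\mathcal{B}_i}$, $-\alpha_{\mathcal{I}_1}\beta_\Gamma$, $-\alpha_{\mathcal{I}_2}\beta_\Gamma$ on the sample counts. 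I expect no serious obstacle here: the result is essentially a transcription of the Mishra--Molinaro argument to the two-subdomain interface setting, and the only thing requiring a little care is bookkeeping — correctly matching each operator residual to its MSE functional and its sample count, and tracking which terms depend on $\boldsymbol{\theta}_1^*$ alone, $\boldsymbol{\theta}_2^*$ alone, or the pair $(\boldsymbol{\theta}_1^*,\boldsymbol{\theta}_2^*)$ (the interface terms). One should also state explicitly that the constant $C_{pde}$ in the conclusion is the same as in Assumption~\ref{e1} evaluated at $v_i=u_{\boldsymbol{\theta}_i^*}$, so that its dependence on $\|u_{\boldsymbol{\theta}_i^*}\|_{\mathbb{X}(\overline{\Omega}_i)}$ is legitimate — this is the one genuinely conditional ingredient, since it presupposes the trained networks have controlled norms.
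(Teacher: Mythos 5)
Your proposal follows essentially the same route as the paper: apply Assumption \ref{e1} with $v_i = u_{\boldsymbol{\theta}_i^*}$ (the paper phrases this by writing the error equations with residual data, which is the same observation), then bound each continuous residual norm by its discrete MSE loss plus a quadrature error via Assumption \ref{e2}, and substitute term by term. Your explicit use of monotonicity and the subadditivity $(a+b)^{\beta}\le a^{\beta}+b^{\beta}$, with relabeled constants, merely fills in a step the paper leaves implicit, so the argument is correct and matches the paper's proof.
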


\begin{proof}
Let $e^u_i := u_i - u_{\boldsymbol{\theta}_i}^{*}$ be the error in subdomain $\Omega_i$, where $u_{\boldsymbol{\theta}_i}^{*}$ is the approximation from network $i$ with optimal parameters $\boldsymbol{\theta}_i^*$. By a direct calculation, we have, for $i = 1, 2$,
\begin{equation}
\begin{aligned}
-\nabla \cdot \bigl(\beta(\boldsymbol{x}) \nabla e^u_i\bigr) &= \mathcal{R}^f_i \quad \text{in } \Omega_i, \\
 e^u_1 - e^u_2&= \mathcal{R}^{g_1} \quad \text{on } \Gamma, \\
 \beta_1 \nabla e^u_1 \cdot \boldsymbol{n} - \beta_2 \nabla e^u_2 \cdot \boldsymbol{n}&= \mathcal{R}^{g_2} \quad \text{on } \Gamma, \\
e^u_i &= \mathcal{R}^{g_D}_i \quad \text{on } \partial \Omega_i \setminus \Gamma,
\end{aligned}
\end{equation}
where $\mathcal{R}^f_i := \nabla \cdot \bigl(\beta_i \nabla u_{\boldsymbol{\theta}_i}^{*}\bigr) + f$ is the PDE residual for network $i$, $\mathcal{R}^{g_1} := u_{\boldsymbol{\theta}_1}^{*} - u_{\boldsymbol{\theta}_2}^{*} - g_1$ is the solution jump residual (coupling networks 1 and 2), $\mathcal{R}^{g_2} := \beta_1 \nabla u_{\boldsymbol{\theta}_1}^{*} \cdot \boldsymbol{n} - \beta_2 \nabla u_{\boldsymbol{\theta}_2}^{*} \cdot \boldsymbol{n} - g_2$ is the flux jump residual (also coupling both networks), and $\mathcal{R}^{g_D}_i := u_{\boldsymbol{\theta}_i}^{*} - g_D$ is the boundary residual for network $i$.

Using Assumption \ref{e1}, we have
\begin{equation}
\begin{aligned}
\sum_{i=1}^{2} \|u_i - u_{\boldsymbol{\theta}_i}^{*}\|^2_{\mathbb{X}(\overline{\Omega}_i)} &\leq C_{pde} \left\{ \sum_{i=1}^{2} \left[ \left(\|\mathcal{R}^f_i\|^2_{L^2(\Omega_i)}\right)^{\beta_{\mathcal{L}_i}} + \left(\|\mathcal{R}^{g_D}_i\|^2_{L^2(\partial\Omega_i \setminus \Gamma)}\right)^{\beta_{\mathcal{B}_i}}\right] \right. \\
&\quad \left. + \left(\|\mathcal{R}^{g_1}\|^2_{L^2(\Gamma)}\right)^{\beta_{\Gamma}} + \left(\|\mathcal{R}^{g_2}\|^2_{L^2(\Gamma)}\right)^{\beta_{\Gamma}} \right\}.
\end{aligned}
\end{equation}

To further estimate the terms on the right-hand side, we use Assumption \ref{e2}. For the interior residual of network $i$, we have
\begin{equation}
\left(\|\mathcal{R}^f_i\|^2_{L^2(\Omega_i)}\right)^{\beta_{\mathcal{L}_i}} \leq (\mathcal{F}_{\mathcal{L}_i}(\boldsymbol{\theta}_i^*))^{\beta_{\mathcal{L}_i}} + C_{\text{quad}}^{\mathcal{L}_i}M_{i}^{-\alpha_{\mathcal{L}_i}\beta_{\mathcal{L}_i}}.
\end{equation}

Similarly, for the boundary residual (depending on $\boldsymbol{\theta}_i$) and the interface residuals (depending on both $\boldsymbol{\theta}_1$ and $\boldsymbol{\theta}_2$):
\begin{equation}
\begin{aligned}
\left(\|\mathcal{R}^{g_D}_i\|^2_{L^2(\partial\Omega_i \setminus \Gamma)}\right)^{\beta_{\mathcal{B}_i}} &\leq (\mathcal{F}_{\mathcal{B}_i}(\boldsymbol{\theta}_i^*))^{\beta_{\mathcal{B}_i}} + C_{\text{quad}}^{\mathcal{B}_i}M_{\mathcal{B}_i}^{-\alpha_{\mathcal{B}_i}\beta_{\mathcal{B}_i}},  \\
\left(\|\mathcal{R}^{g_1}\|^2_{L^2(\Gamma)}\right)^{\beta_{\Gamma}} &\leq (\mathcal{F}_{\mathcal{I}_1}(\boldsymbol{\theta}_1^*, \boldsymbol{\theta}_2^*))^{\beta_{\Gamma}} + C_{\text{quad}}^{\mathcal{I}_1}M_{\Gamma}^{-\alpha_{\mathcal{I}_1}\beta_{\Gamma}},\\
\left(\|\mathcal{R}^{g_2}\|^2_{L^2(\Gamma)}\right)^{\beta_{\Gamma}} &\leq (\mathcal{F}_{\mathcal{I}_2}(\boldsymbol{\theta}_1^*, \boldsymbol{\theta}_2^*))^{\beta_{\Gamma}} + C_{\text{quad}}^{\mathcal{I}_2}M_{\Gamma}^{-\alpha_{\mathcal{I}_2}\beta_{\Gamma}}.
\end{aligned}
\end{equation}

Substituting these estimates into the previous inequality completes the proof. The key observation is that the interface loss terms couple both networks through the parameters $(\boldsymbol{\theta}_1^*, \boldsymbol{\theta}_2^*)$, while the interior and boundary terms depend only on the respective subdomain network.
\end{proof}

\subsubsection{Application to the Elliptic Interface Problem}

Following the abstract error analysis framework established in the previous section, we now apply our theoretical results to the specific elliptic interface problem \eqref{modelpextenddnn}. Let $e_i^u := u_i - u_{\boldsymbol{\theta}}^{i,*}$, $i = 1, 2$, be the errors in each subdomain. 

The following theorem provides an error bound under the stability assumption. We note that similar results have been established in \cite{mishra2023estimates} for standard PINNs; here we adapt the framework to interface problems.
%[Abstract Error Framework for Two-Network Domain Decomposition]
\begin{theorem}\label{thm:errorEstimation}
Let $u_i \in C^2(\overline{\Omega}_i)$, $i = 1, 2$, be the classical solution in each subdomain of the elliptic interface problem \eqref{modelpextenddnn}. Let $u_{\boldsymbol{\theta}_i}^{*} \in C^2(\overline{\Omega}_i)$ be the numerical approximations obtained by the DNN method. Under Assumptions~\ref{e1} and~\ref{e2}, with the stability exponents $\beta_{\mathcal{L}_i} = \beta_{\mathcal{B}_i} = \beta_{\Gamma} = 1$ (well-posed case), the following error bound holds:
\begin{equation}
\begin{aligned}
\sum_{i=1}^{2} \|e_i^u\|^2_{\mathbb{X}(\overline{\Omega}_i)} \leq C_{pde} \left[ \sum_{i=1}^{2} \mathcal{F}_{\mathcal{L}_i}(\boldsymbol{\theta}_i^*) + \sum_{i=1}^{2} \mathcal{F}_{\mathcal{B}_i}(\boldsymbol{\theta}_i^*) + \mathcal{F}_{\mathcal{I}_1}(\boldsymbol{\theta}_1^*, \boldsymbol{\theta}_2^*) + \mathcal{F}_{\mathcal{I}_2}(\boldsymbol{\theta}_1^*, \boldsymbol{\theta}_2^*) \right.\\
\left. + \sum_{i=1}^{2} C_{\text{quad}}^{\mathcal{L}_i}M_{i}^{-\alpha_{\mathcal{L}_i}} + \sum_{i=1}^{2} C_{\text{quad}}^{\mathcal{B}_i}M_{\mathcal{B}_i}^{-\alpha_{\mathcal{B}_i}} + C_{\text{quad}}^{\mathcal{I}_1}M_{\Gamma}^{-\alpha_{\mathcal{I}_1}} + C_{\text{quad}}^{\mathcal{I}_2}M_{\Gamma}^{-\alpha_{\mathcal{I}_2}} \right],
\end{aligned}
\end{equation}
where $C_{pde}$ is the stability constant from Assumption~\ref{e1}, depending on $\|u_i\|_{\mathbb{X}(\overline{\Omega}_i)}$, $\|u_{\boldsymbol{\theta}_i}^{*}\|_{\mathbb{X}(\overline{\Omega}_i)}$, the coefficient $\beta(\boldsymbol{x})$, and the domain geometry.
\end{theorem}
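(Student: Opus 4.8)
The plan is to treat Theorem~\ref{thm:errorEstimation} as the well-posed specialization ($\beta_{\mathcal{L}_i}=\beta_{\mathcal{B}_i}=\beta_{\Gamma}=1$) of the abstract generalization-error estimate proved just above it, so that the argument reuses the same three ingredients: (i) the residual identities satisfied by the subdomain errors, (ii) the conditional stability bound of Assumption~\ref{e1}, and (iii) the quadrature bound of Assumption~\ref{e2}. First I would set $e_i^u := u_i - u_{\boldsymbol{\theta}_i}^*$ and, subtracting the exact interface problem \eqref{modelpextenddnn} from the equations that $u_{\boldsymbol{\theta}_i}^*$ satisfies by construction, record that $e_i^u$ solves an elliptic interface problem whose data are the residuals: the PDE residual $\mathcal{R}_i^f = \nabla\cdot(\beta_i\nabla u_{\boldsymbol{\theta}_i}^*)+f$ in $\Omega_i$, the solution-jump residual $\mathcal{R}^{g_1} = u_{\boldsymbol{\theta}_1}^*-u_{\boldsymbol{\theta}_2}^*-g_1$ and flux-jump residual $\mathcal{R}^{g_2} = \beta_1\nabla u_{\boldsymbol{\theta}_1}^*\cdot\boldsymbol{n}-\beta_2\nabla u_{\boldsymbol{\theta}_2}^*\cdot\boldsymbol{n}-g_2$ on $\Gamma$, and the boundary residual $\mathcal{R}_i^{g_D} = u_{\boldsymbol{\theta}_i}^*-g_D$ on $\partial\Omega_i\setminus\Gamma$ — exactly as displayed in the proof of the abstract theorem, requiring no new computation.

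Next I would invoke Assumption~\ref{e1} with all stability exponents equal to $1$. Since $t\mapsto t^{\beta}$ is the identity when $\beta=1$ (so the subadditivity $(a+b)^{\beta}\le a^{\beta}+b^{\beta}$ used in the general case is not even needed here), the right-hand side collapses to a plain sum of squared $L^2$-norms of the residuals, namely
\[
\sum_{i=1}^2 \|e_i^u\|_{\mathbb{X}(\overline{\Omega}_i)}^2
\le C_{pde}\Bigl(\sum_{i=1}^2 \|\mathcal{R}_i^f\|_{L^2(\Omega_i)}^2 + \sum_{i=1}^2\|\mathcal{R}_i^{g_D}\|_{L^2(\partial\Omega_i\setminus\Gamma)}^2 + \|\mathcal{R}^{g_1}\|_{L^2(\Gamma)}^2 + \|\mathcal{R}^{g_2}\|_{L^2(\Gamma)}^2\Bigr).
\]
Then I would apply Assumption~\ref{e2} once to each of these six terms, viewing $\|\mathcal{R}\|_{L^2(\mathcal{D})}^2=\int_{\mathcal{D}}|\mathcal{R}|^2$ as an integral whose $N$-point quadrature approximation is precisely the corresponding MSE loss term: this gives $\|\mathcal{R}_i^f\|_{L^2(\Omega_i)}^2 \le \mathcal{F}_{\mathcal{L}_i}(\boldsymbol{\theta}_i^*) + C_{quad}^{\mathcal{L}_i} M_i^{-\alpha_{\mathcal{L}_i}}$, and likewise $\|\mathcal{R}_i^{g_D}\|^2 \le \mathcal{F}_{\mathcal{B}_i}(\boldsymbol{\theta}_i^*) + C_{quad}^{\mathcal{B}_i} M_{\mathcal{B}_i}^{-\alpha_{\mathcal{B}_i}}$, $\|\mathcal{R}^{g_1}\|^2 \le \mathcal{F}_{\mathcal{I}_1}(\boldsymbol{\theta}_1^*,\boldsymbol{\theta}_2^*) + C_{quad}^{\mathcal{I}_1} M_{\Gamma}^{-\alpha_{\mathcal{I}_1}}$, and $\|\mathcal{R}^{g_2}\|^2 \le \mathcal{F}_{\mathcal{I}_2}(\boldsymbol{\theta}_1^*,\boldsymbol{\theta}_2^*) + C_{quad}^{\mathcal{I}_2} M_{\Gamma}^{-\alpha_{\mathcal{I}_2}}$. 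Substituting these into the stability bound and collecting terms yields the claimed estimate verbatim.

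The main obstacle is not the algebra, which is entirely linear once $\beta=1$, but justifying that the quadrature constants $C_{quad}^{\bullet}$ are finite: this requires that each integrand $|\mathcal{R}|^2$ be regular enough for Assumption~\ref{e2} to apply. For the interior terms one needs $u_{\boldsymbol{\theta}_i}^*\in C^2(\overline{\Omega}_i)$ so that $\mathcal{R}_i^f$ is at least continuous — which the MAF architecture provides through its smooth $\tanh$ and Gaussian-type activations, though one must also assume $f$ (and $g_D$, $g_1$, $g_2$) smooth enough to be quadrated, or else read the bound in an averaged sense. For the interface and boundary terms one additionally needs the traces of $u_{\boldsymbol{\theta}_i}^*$ and of $\nabla u_{\boldsymbol{\theta}_i}^*$ on $\Gamma$ and $\partial\Omega_i$ to be regular, which again follows from $C^2$-regularity of the networks. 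A secondary point to state carefully is that $C_{pde}$ depends on $\|u_i\|_{\mathbb{X}(\overline{\Omega}_i)}$ and $\|u_{\boldsymbol{\theta}_i}^*\|_{\mathbb{X}(\overline{\Omega}_i)}$, so the estimate is a posteriori and conditional: it presupposes that training returns networks whose $\mathbb{X}$-norms remain bounded. With these regularity hypotheses in force, the proof reduces to a one-line specialization of the preceding abstract theorem.
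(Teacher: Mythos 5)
Your proposal matches the paper's own proof: the paper likewise writes the error equations with the residuals $\mathcal{R}_i^f$, $\mathcal{R}_i^{g_D}$, $\mathcal{R}^{g_1}$, $\mathcal{R}^{g_2}$ as data, applies Assumption~\ref{e1} with $\beta_{\mathcal{L}_i}=\beta_{\mathcal{B}_i}=\beta_{\Gamma}=1$ to bound $\sum_i\|e_i^u\|^2_{\mathbb{X}(\overline{\Omega}_i)}$ by the squared $L^2$ residual norms, and then applies Assumption~\ref{e2} term by term to replace each norm by the corresponding discrete loss plus a quadrature term. Your additional remarks on the regularity needed for the quadrature constants and on the a posteriori, conditional character of the bound are consistent with (and slightly more explicit than) the paper's surrounding remarks, but the core argument is identical.
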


\begin{proof}
The proof follows directly from applying Assumptions~\ref{e1} and~\ref{e2}. By Assumption~\ref{e1}, the solution error is bounded by the residuals:
\begin{equation}
\begin{aligned}
\sum_{i=1}^{2} \|e_i^u\|^2_{\mathbb{X}(\overline{\Omega}_i)} &\leq C_{pde} \left\{ \sum_{i=1}^{2} \|\mathcal{R}_i^f\|^2_{L^2(\Omega_i)} + \sum_{i=1}^{2} \|\mathcal{R}_i^{g_D}\|^2_{L^2(\partial\Omega_i \setminus \Gamma)} \right.\\
&\quad \left. + \|\mathcal{R}^{g_1}\|^2_{L^2(\Gamma)} + \|\mathcal{R}^{g_2}\|^2_{L^2(\Gamma)} \right\},
\end{aligned}
\end{equation}
where $\mathcal{R}_i^f$, $\mathcal{R}_i^{g_D}$, $\mathcal{R}^{g_1}$, and $\mathcal{R}^{g_2}$ are the PDE, boundary, and interface residuals of the neural network approximation.

By Assumption~\ref{e2}, the $L^2$ norms of the residuals can be related to the discrete loss functions plus quadrature errors. For example,
\[
\|\mathcal{R}_i^f\|^2_{L^2(\Omega_i)} \leq \mathcal{F}_{\mathcal{L}_i}(\boldsymbol{\theta}_i^*) + C_{\text{quad}}^{\mathcal{L}_i}M_{i}^{-\alpha_{\mathcal{L}_i}}.
\]
Applying similar bounds to all residual terms and combining yields the stated result.
\end{proof}

\begin{remark}[Interpretation and Limitations]
The error bound in Theorem~\ref{thm:errorEstimation} should be interpreted carefully. The theorem guarantees that \emph{if} a small-loss parameter pair $(\boldsymbol{\theta}_1^*, \boldsymbol{\theta}_2^*)$ is found, \emph{then} the generalization error is small, provided the stability constant $C_{pde}$ is not too large. Importantly, \emph{it does not guarantee that the optimization process will find such a parameter pair}. For interface problems with high-contrast coefficients (e.g., $\beta_1/\beta_2 \gg 1$ or $\ll 1$), the constant $C_{pde}$ may be large, potentially degrading the error bound. The theorem assumes that $(\boldsymbol{\theta}_1^*, \boldsymbol{\theta}_2^*)$ is a (local) minimizer of the joint loss function $\mathcal{F}(\boldsymbol{\theta}_1, \boldsymbol{\theta}_2)$; in practice, optimization algorithms may not find the global minimum, introducing an additional optimization error not captured by this bound. Note that since the two networks are coupled through the interface loss terms, the optimization must jointly minimize over both parameter sets. The bound also assumes both neural networks can achieve small residuals in their respective subdomains, requiring sufficient network width, depth, and appropriate activation functions---conditions that the multi-activation strategy in Section~\ref{Multi-Activation} is designed to facilitate.
\end{remark}

\begin{remark}[Role of Multi-Activation Functions]
The error bound in Theorem~\ref{thm:errorEstimation} requires activation functions that are sufficiently smooth to allow evaluation of second-order derivatives in the PDE residual. The $\tanh$ and Gaussian-type activations used in our multi-activation strategy (Section~\ref{Multi-Activation}) satisfy this requirement, being $C^\infty$ functions. In contrast, non-smooth activations such as ReLU are not suitable for second-order PDEs since their second derivatives are not well-defined (ReLU has a discontinuous first derivative at zero). The multi-activation strategy contributes to achieving smaller loss values $\mathcal{F}(\boldsymbol{\theta}^*)$ in practice by providing specialized activation behavior near the interface, where satisfying the jump conditions is most challenging. This practical benefit is demonstrated in the numerical experiments of Section~\ref{NumericalResults}.
\end{remark}

\subsection{The Least-Squares DNN Approach for Parabolic Interface Problems}\label{Least-SquaresP}

We extend the least-squares (LS) formulation approach to the time-dependent parabolic interface problem \eqref{modelpextenddnnpaowu}. This extension introduces additional complexity due to the temporal dimension and the potentially moving interface $\Gamma(t)$.

Following the approach presented for elliptic problems, we rewrite the parabolic interface problem as a minimization problem suitable for the DNN approach. We define an LS functional that incorporates all equations from \eqref{modelpextenddnnpaowu} as follows:

\begin{equation}
\begin{aligned}
    \mathcal{R}(\tilde{u}) &:= \sum_{i=1}^{2} \left(\omega_{\mathcal{L}_i} \|\mathcal{L}_i(\tilde{u})\|_{0,\Omega_i(t)}^2 + \omega_{\mathcal{B}_i} \|\mathcal{B}_i(\tilde{u})\|_{0,\partial\Omega_i(t) / \Gamma(t)}^2 + \omega_{\mathcal{T}_i} \|\mathcal{T}_i(\tilde{u})\|_{0,\Omega_i(0) }^2\right)  \\
    & + \omega_{\Gamma_1} \|\mathcal{I}_1(\tilde{u})\|_{0,\Gamma(t)}^2+ \omega_{\Gamma_2} \|\mathcal{I}_2(\tilde{u})\|_{0,\Gamma(t)}^2 ,
\end{aligned}
\end{equation}
where $\omega_{\mathcal{L}_i}$, $\omega_{\mathcal{B}_i}$, $\omega_{\Gamma_1}$, $\omega_{\Gamma_2}$, and $\omega_{\mathcal{T}_i}$ are prescribed weight coefficients for each corresponding term. The operators are defined as:
\begin{equation}
    \begin{aligned}
\mathcal{L}_i(\tilde{u}) &= \tilde{u}_t - \nabla \cdot (\beta(\boldsymbol{x}) \nabla \tilde{u}) - f, \quad \text{in } \Omega_i(t), \quad i=1,2, \\
\mathcal{B}_i(\tilde{u}) &= \tilde{u} - g_D, \quad \text{on } \partial\Omega_i(t) / \Gamma(t), \\
\mathcal{I}_1(\tilde{u}) &= \llbracket \tilde{u} \rrbracket - g_1, \quad \text{on } \Gamma(t), \\
\mathcal{I}_2(\tilde{u}) &= \llbracket \beta(\boldsymbol{x}) \nabla \tilde{u} \cdot \boldsymbol{n} \rrbracket - g_2, \quad \text{on } \Gamma(t), \\
\mathcal{T}_i(\tilde{u}) &= \tilde{u}(\boldsymbol{x},0) - g_0, \quad \text{on } \Omega_i(0).
\end{aligned}
\end{equation}

The LS problem is to find $u \in \mathbb{V}$ such that
\begin{equation}
    \mathcal{R}(u) = \min_{\tilde{u} \in \mathbb{V}} \mathcal{R}(\tilde{u}),
\end{equation}
where $\mathbb{V}$ is a suitable space to which the LS solution belongs, considering both spatial and temporal regularity requirements.

To approximate the exact solution $u$ in \eqref{modelpextenddnnpaowu}, we employ two independent neural networks:
$$
u_i \approx u_{\boldsymbol{\theta}_i}(\boldsymbol{x},t), \quad \boldsymbol{x} \in \Omega_i(t), \quad i = 1, 2,
$$
where $\boldsymbol{x} \in \mathbb{R}^d$, $t \in [0,T]$, and $d$ is the spatial dimension. Each network $u_{\boldsymbol{\theta}_i}$ has the same architecture but with independently trainable parameters $\boldsymbol{\theta}_i$.

Based on the LS formulation and the DNN approximation, our parabolic DNN method is defined as finding $(\boldsymbol{\theta}_1, \boldsymbol{\theta}_2)$ such that:

$$
\min_{(\boldsymbol{\theta}_1, \boldsymbol{\theta}_2) \in \mathbb{R}^M \times \mathbb{R}^M} \mathcal{R}(\boldsymbol{x},t;\boldsymbol{\theta}_1, \boldsymbol{\theta}_2),
$$
and the total loss functional is defined as:

\begin{equation}
    \mathcal{R}(\boldsymbol{x},t;\boldsymbol{\theta}_1, \boldsymbol{\theta}_2) := \mathcal{R}(u_{\boldsymbol{\theta}_1}(\boldsymbol{x},t), u_{\boldsymbol{\theta}_2}(\boldsymbol{x},t)).
\end{equation}

Since the loss function contains space-time integrals that cannot be computed exactly, we approximate them using Monte Carlo integration based on sampling points. We introduce the following mean square error (MSE) formulations:

\begin{equation}
    \begin{aligned}
\mathcal{F}_{\mathcal{L}_i}(\boldsymbol{\theta}_i) &:= \frac{1}{M_{i}} \sum_{k=1}^{M_{i}} |\mathcal{L}_i(\boldsymbol{x}_k,t_k; \boldsymbol{\theta}_i)|^2, \quad i=1,2, \\
\mathcal{F}_{\mathcal{B}_i}(\boldsymbol{\theta}_i) &:= \frac{1}{M_{\mathcal{B}_i}} \sum_{k=1}^{M_{\mathcal{B}_i}} |\mathcal{B}_i(\boldsymbol{x}_k,t_k; \boldsymbol{\theta}_i)|^2, \\
\mathcal{F}_{\mathcal{I}_1}(\boldsymbol{\theta}_1, \boldsymbol{\theta}_2) &:= \frac{1}{M_{\Gamma}} \sum_{k=1}^{M_{\Gamma}} |\mathcal{I}_1(\boldsymbol{x}_k,t_k; \boldsymbol{\theta}_1, \boldsymbol{\theta}_2)|^2, \\
\mathcal{F}_{\mathcal{I}_2}(\boldsymbol{\theta}_1, \boldsymbol{\theta}_2) &:= \frac{1}{M_{\Gamma}} \sum_{k=1}^{M_{\Gamma}} |\mathcal{I}_2(\boldsymbol{x}_k,t_k; \boldsymbol{\theta}_1, \boldsymbol{\theta}_2)|^2, \\
\mathcal{F}_{\mathcal{T}_i}(\boldsymbol{\theta}_i) &:= \frac{1}{M_{\mathcal{T}_i}} \sum_{k=1}^{M_{\mathcal{T}_i}} |\mathcal{T}_i(\boldsymbol{x}_k,0; \boldsymbol{\theta}_i)|^2,
\end{aligned}
\end{equation}
where $M_{i}$, $M_{\mathcal{B}_i}$, $M_{\Gamma}$, and $M_{\mathcal{T}_i}$ are the number of sampling points used for the interior part in each subdomain, the boundary part ($M_{\mathcal{B}_1} + M_{\mathcal{B}_2} = M_{\partial \Omega}$), the interface conditions, and the initial condition($M_{\mathcal{T}_1} + M_{\mathcal{T}_2} = M_{0}$), respectively.

Thus, the total discrete loss is defined as:
$$
\mathcal{F}(\boldsymbol{\theta}_1, \boldsymbol{\theta}_2) := \sum_{i=1}^{2} \left(\omega_{\mathcal{L}_i} \mathcal{F}_{\mathcal{L}_i}(\boldsymbol{\theta}_i) + \omega_{\mathcal{B}_i} \mathcal{F}_{\mathcal{B}_i}(\boldsymbol{\theta}_i) + \omega_{\mathcal{T}_i} \mathcal{F}_{\mathcal{T}_i}(\boldsymbol{\theta}_i)\right) + \omega_{\Gamma_1} \mathcal{F}_{\mathcal{I}_1}(\boldsymbol{\theta}_1, \boldsymbol{\theta}_2) + \omega_{\Gamma_2} \mathcal{F}_{\mathcal{I}_2}(\boldsymbol{\theta}_1, \boldsymbol{\theta}_2),
$$
and our DNN method for \eqref{modelpextenddnnpaowu} solves the following minimization problem:
$$
\min_{(\boldsymbol{\theta}_1, \boldsymbol{\theta}_2) \in \mathbb{R}^M \times \mathbb{R}^M} \mathcal{F}(\boldsymbol{\theta}_1, \boldsymbol{\theta}_2).
$$

To provide a more explicit representation, we express the total LS formulation for $\tilde{u} \in H^1(\Omega_1(t)) \cup H^1(\Omega_2(t))$ in the space-time domain:
\begin{equation}
    \begin{aligned}
\mathcal{R}(\tilde{u}) := &\sum_{i=1}^{2} \left(\omega_{\mathcal{L}_i} \int_{0}^{T}\int_{\Omega_i(t)} \left|\tilde{u}_{it} - \nabla \cdot (\beta(\boldsymbol{x}) \nabla \tilde{u}_{i}) - f\right|^2 \, d\boldsymbol{x}dt \right.\\
&\left.+ \omega_{\mathcal{B}_i} \int_{0}^{T}\int_{\partial \Omega_i(t) / \Gamma(t)} \left|\tilde{u}_i - g_D\right|^2 \, dsdt \right.\\
&\left. + \omega_{\mathcal{T}_i} \int_{\Omega_i(0)} \left|\tilde{u}_i(\boldsymbol{x},0) - g_0\right|^2 \, d\boldsymbol{x}\right) \\
&+ \omega_{\Gamma_1} \int_{0}^{T}\int_{\Gamma(t)} \left|\tilde{u}_1 - \tilde{u}_2 - g_1\right|^2 \, dsdt \\
&+ \omega_{\Gamma_2} \int_{0}^{T}\int_{\Gamma(t)} \left|\beta_1 \nabla \tilde{u}_1 \cdot \boldsymbol{n}_1 - \beta_2 \nabla \tilde{u}_2 \cdot \boldsymbol{n}_2 - g_2\right|^2 \, dsdt,
\end{aligned}
\end{equation}
where $\tilde{u}_i = \tilde{u}|_{\Omega_i(t)}$ denotes the restriction of $\tilde{u}$ to subdomain $\Omega_i(t)$ at time $t$.

The DNN approximation of this problem leads to the discretized loss function:
\begin{equation}
    \begin{aligned}
\mathcal{F}(\boldsymbol{\theta}_1, \boldsymbol{\theta}_2) &= \sum_{i=1}^{2} \left(\omega_{\mathcal{L}_i} \mathcal{F}_{\mathcal{L}_i}(\boldsymbol{\theta}_i) + \omega_{\mathcal{B}_i} \mathcal{F}_{\mathcal{B}_i}(\boldsymbol{\theta}_i) + \omega_{\mathcal{T}_i} \mathcal{F}_{\mathcal{T}_i}(\boldsymbol{\theta}_i) \right)\\
&\quad + \omega_{\Gamma_1} \mathcal{F}_{\mathcal{I}_1}(\boldsymbol{\theta}_1, \boldsymbol{\theta}_2) + \omega_{\Gamma_2} \mathcal{F}_{\mathcal{I}_2}(\boldsymbol{\theta}_1, \boldsymbol{\theta}_2) \\
&= \sum_{i=1}^{2} \left(\omega_{\mathcal{L}_i} \frac{1}{M_{i}} \sum_{k=1}^{M_{i}} \left|\frac{\partial u_{\boldsymbol{\theta}_i}}{\partial t}(\boldsymbol{x}_k,t_k) - \nabla \cdot (\beta_i \nabla u_{\boldsymbol{\theta}_i}(\boldsymbol{x}_k,t_k)) - f(\boldsymbol{x}_k,t_k)\right|^2 \right.\\
&\left.+ \omega_{\mathcal{B}_i} \frac{1}{M_{\mathcal{B}_i}} \sum_{k=1}^{M_{\mathcal{B}_i}} \left|u_{\boldsymbol{\theta}_i}(\boldsymbol{x}_k,t_k) - g_D(\boldsymbol{x}_k,t_k)\right|^2 + \omega_{\mathcal{T}_i} \frac{1}{M_{\mathcal{T}_i}} \sum_{k=1}^{M_{\mathcal{T}_i}} \left|u_{\boldsymbol{\theta}_i}(\boldsymbol{x}_k,0) - g_0(\boldsymbol{x}_k)\right|^2\right)\\
&+ \omega_{\Gamma_1} \frac{1}{M_{\Gamma}} \sum_{k=1}^{M_{\Gamma}} \left|u_{\boldsymbol{\theta}_1}(\boldsymbol{x}_k,t_k) - u_{\boldsymbol{\theta}_2}(\boldsymbol{x}_k,t_k) - g_1(\boldsymbol{x}_k,t_k)\right|^2 \\
&+ \omega_{\Gamma_2} \frac{1}{M_{\Gamma}} \sum_{k=1}^{M_{\Gamma}} \left|\beta_1 \nabla u_{\boldsymbol{\theta}_1}(\boldsymbol{x}_k,t_k) \cdot \boldsymbol{n} - \beta_2 \nabla u_{\boldsymbol{\theta}_2}(\boldsymbol{x}_k,t_k) \cdot \boldsymbol{n} - g_2(\boldsymbol{x}_k,t_k)\right|^2.
\end{aligned}
\end{equation}

\subsection{Error Estimation for the Parabolic Interface Problem}

\subsubsection{Abstract Framework for Error Analysis}
Following the abstract framework presented in Section \ref{Least-SquaresE}, we establish two major assumptions necessary for the error analysis of the DNN method applied to the parabolic interface problem \eqref{modelpextenddnnpaowu}.
%[Abstract Error Framework for Two-Network Domain Decomposition]
\begin{assumption}\label{p1}
For the exact solution $u_i$ in subdomain $\Omega_i(t)$ and any sufficiently regular approximation $v_i \in \mathbb{X}(\overline{\Omega}_i(t) \times [0, T])$ (in our case, $v_i = u_{\boldsymbol{\theta}_i}$ from network $i$), $i = 1, 2$, the differential operators in the parabolic interface model problem satisfy, for $0 < \beta_{\mathcal{L}_i}, \beta_{\mathcal{B}_i}, \beta_{\Gamma}, \beta_{\mathcal{T}} \leq 1$,
\begin{equation}
\begin{aligned}
    \sum_{i=1}^{2} \|u_i - v_i\|^2_{\mathbb{X}(\overline{\Omega}_i(t) \times [0,T])} &\leq C_{pde} \left\{ \sum_{i=1}^{2} \left[ \left(\|u_{i,t} - \nabla \cdot (\beta(\boldsymbol{x})\nabla u_i) - (v_{i,t} - \nabla \cdot (\beta(\boldsymbol{x})\nabla v_i))\|^2_{L^2(\Omega_i(t))}\right)^{\beta_{\mathcal{L}_i}} \right. \right.\\
&\left. + \left(\|u_i - v_i\|^2_{L^2(\partial\Omega_i(t) \setminus \Gamma(t))}\right)^{\beta_{\mathcal{B}_i}}  \right] + \left(\|\llbracket u \rrbracket - \llbracket v \rrbracket\|^2_{L^2(\Gamma(t))}\right)^{\beta_{\Gamma}}\\
&\left.  + \left(\|\llbracket \beta(\boldsymbol{x}) \nabla u \cdot \boldsymbol{n} \rrbracket - \llbracket \beta(\boldsymbol{x}) \nabla v \cdot \boldsymbol{n} \rrbracket\|^2_{L^2(\Gamma(t))}\right)^{\beta_{\Gamma}} \right.\\
&\left. + \sum_{i=1}^{2}\left(\|u_{i0} - v_{i0}\|^2_{L^2(\Omega_i(0))}\right)^{\beta_{\mathcal{T}_i}} \right\},
\end{aligned}
\end{equation}
where the constant $C_{pde}$ depends on $\|u_i\|_{\mathbb{X}(\overline{\Omega}_i(t) \times [0, T])}$ as well as the regularity property of the underlying parabolic interface model problem.
\end{assumption}

\begin{assumption}\label{p2}
Consider an integral $I(g) := \int_{\mathcal{D}} g(\boldsymbol{x},t) \, d\boldsymbol{x}dt$ and a quadrature rule using $N$ quadrature points $(\boldsymbol{x}_i,t_i) \in \mathcal{D}$ $(1 \leq i \leq N)$, i.e., $I_N(g) := \sum_{i=1}^{N} w_i g(\boldsymbol{x}_i,t_i)$, $w_i \in \mathbb{R}^+$, $1 \leq i \leq N$. We assume the quadrature error is
\begin{equation}
|I(g) - I_N(g)| \leq C_{quad} N^{-\alpha}, \quad \alpha > 0,
\end{equation}
where the constant $C_{quad}$ depends on the dimension of the domain and the property of the integrand $g(\boldsymbol{x},t)$.
\end{assumption}

Based on the above two assumptions, we have the following error estimation framework for the proposed two-network domain decomposition method applied to the parabolic interface problem.

%[Abstract Error Framework for Two-Network Parabolic Domain Decomposition]
\begin{theorem}
Let $u_i \in \mathbb{X}(\overline{\Omega}_i(t) \times [0, T])$, $i = 1, 2$, be the classical solution of the parabolic interface problem in subdomain $\Omega_i(t)$. Let $u_{\boldsymbol{\theta}_i}^{*} \in \mathbb{X}(\overline{\Omega}_i(t) \times [0, T])$ be the numerical approximation obtained by the neural network with parameters $\boldsymbol{\theta}_i^*$ for subdomain $\Omega_i(t)$. Under Assumptions \ref{p1} and \ref{p2}, we have the following generalization error estimation:
\begin{equation}
\begin{aligned}
    \sum_{i=1}^{2} \|u_i - u_{\boldsymbol{\theta}_i}^{*}\|^2_{\mathbb{X}(\overline{\Omega}_i(t) \times [0,T])} &\leq C_{\text{pde}} \left\{ \sum_{i=1}^{2} \left[ (\mathcal{F}_{\mathcal{L}_i}(\boldsymbol{\theta}_i^*))^{\beta_{\mathcal{L}_i}} + (\mathcal{F}_{\mathcal{B}_i}(\boldsymbol{\theta}_i^*))^{\beta_{\mathcal{B}_i}} + (\mathcal{F}_{\mathcal{T}_i}(\boldsymbol{\theta}_i^*))^{\beta_{\mathcal{T}_i}} \right] \right.\\
&\left.+ (\mathcal{F}_{\mathcal{I}_1}(\boldsymbol{\theta}_1^*, \boldsymbol{\theta}_2^*))^{\beta_{\Gamma}}  + (\mathcal{F}_{\mathcal{I}_2}(\boldsymbol{\theta}_1^*, \boldsymbol{\theta}_2^*))^{\beta_{\Gamma}} \right.\\
&\left. + \sum_{i=1}^{2} \left( C_{\text{quad}}^{\mathcal{L}_i}M_{i}^{-\alpha_{\mathcal{L}_i}\beta_{\mathcal{L}_i}} + C_{\text{quad}}^{\mathcal{B}_i}M_{\mathcal{B}_i}^{-\alpha_{\mathcal{B}_i}\beta_{\mathcal{B}_i}} + C_{\text{quad}}^{\mathcal{T}_i}M_{\mathcal{T}_i}^{-\alpha_{\mathcal{T}_i}\beta_{\mathcal{T}_i}}  \right) \right.\\
&\left. + C_{\text{quad}}^{\mathcal{I}_1}M_{\Gamma}^{-\alpha_{\mathcal{I}_1}\beta_{\Gamma}} + C_{\text{quad}}^{{\mathcal{I}_2}}M_{\Gamma}^{-\alpha_{\mathcal{I}_2}\beta_{\Gamma}} \right\},
\end{aligned}
\end{equation}
where $C_{\text{pde}}$ is a positive constant depending on $\|u_i\|_{\mathbb{X}(\overline{\Omega}_i(t) \times [0,T])}$, $\|u_{\boldsymbol{\theta}_i}^{*}\|_{\mathbb{X}(\overline{\Omega}_i(t) \times [0,T])}$, $\beta(\boldsymbol{x})$, and the domains $\Omega_i(t)$ $(i = 1, 2)$. Note that the interior, boundary, and initial loss terms depend only on the parameters $\boldsymbol{\theta}_i$ of network $i$, while the interface loss terms depend on both parameter sets $(\boldsymbol{\theta}_1, \boldsymbol{\theta}_2)$.
\end{theorem}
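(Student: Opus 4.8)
The plan is to mirror, essentially verbatim, the argument already carried out for the elliptic case in the first theorem of Section~\ref{Least-SquaresE}, but now tracking the extra initial-condition residual and all time integrals. First I would define the subdomain error functions $e_i^u := u_i - u_{\boldsymbol{\theta}_i}^{*}$ for $i=1,2$, and observe by linearity that they satisfy the ``residual system'': $e_{i,t}^u - \nabla\cdot(\beta(\boldsymbol{x})\nabla e_i^u) = \mathcal{R}_i^f$ in $\Omega_i(t)$, the solution-jump relation $e_1^u - e_2^u = \mathcal{R}^{g_1}$ and flux-jump relation $\beta_1\nabla e_1^u\cdot\boldsymbol{n} - \beta_2\nabla e_2^u\cdot\boldsymbol{n} = \mathcal{R}^{g_2}$ on $\Gamma(t)$, the boundary relation $e_i^u = \mathcal{R}_i^{g_D}$ on $\partial\Omega_i(t)\setminus\Gamma(t)$, and the new initial relation $e_i^u(\cdot,0) = \mathcal{R}_i^{g_0}$ on $\Omega_i(0)$, where each $\mathcal{R}$ is the corresponding neural-network residual (e.g.\ $\mathcal{R}_i^f := -u_{\boldsymbol{\theta}_i,t}^{*} + \nabla\cdot(\beta_i\nabla u_{\boldsymbol{\theta}_i}^{*}) + f$, and so on).

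Next I would invoke Assumption~\ref{p1} directly with $v_i = u_{\boldsymbol{\theta}_i}^{*}$: this is exactly the conditional stability estimate tailored to the parabolic interface problem, so the left-hand side $\sum_i \|e_i^u\|^2_{\mathbb{X}(\overline{\Omega}_i(t)\times[0,T])}$ is bounded by $C_{pde}$ times a sum of powers $\beta_{\mathcal{L}_i},\beta_{\mathcal{B}_i},\beta_{\Gamma},\beta_{\mathcal{T}_i}$ of the $L^2$-norms of precisely these residuals $\mathcal{R}_i^f,\mathcal{R}_i^{g_D},\mathcal{R}^{g_1},\mathcal{R}^{g_2},\mathcal{R}_i^{g_0}$. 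Then, term by term, I would apply Assumption~\ref{p2}: each squared $L^2$-norm of a residual over its domain is an integral which the Monte Carlo sum (the corresponding MSE functional $\mathcal{F}_{\mathcal{L}_i},\mathcal{F}_{\mathcal{B}_i},\mathcal{F}_{\mathcal{I}_1},\mathcal{F}_{\mathcal{I}_2},\mathcal{F}_{\mathcal{T}_i}$ evaluated at the trained parameters) approximates up to a quadrature error $C_{quad}N^{-\alpha}$ in the relevant number of points $M_i,M_{\mathcal{B}_i},M_{\Gamma},M_{\mathcal{T}_i}$. Raising the bound $\|\mathcal{R}\|^2_{L^2} \le \mathcal{F} + C_{quad}N^{-\alpha}$ to the power $\beta\in(0,1]$ and using $(a+b)^\beta \le a^\beta + b^\beta$ converts each term into $\mathcal{F}^\beta + (C_{quad}N^{-\alpha})^\beta$, which is exactly the structure of the claimed right-hand side; summing and absorbing constants into $C_{\text{pde}}$ finishes the argument. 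I would also note, as in the elliptic case, that the interior/boundary/initial functionals depend only on $\boldsymbol{\theta}_i$ while $\mathcal{F}_{\mathcal{I}_1},\mathcal{F}_{\mathcal{I}_2}$ couple both parameter sets.

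Honestly, there is no genuine mathematical obstacle here: the theorem is an immediate corollary of Assumptions~\ref{p1}--\ref{p2} combined in the same way as the elliptic proof, and the proof will be a few lines long. The only points requiring a modicum of care are bookkeeping ones: (i) making sure the time integrals $\int_0^T\!\int_{\Omega_i(t)}(\cdot)\,d\boldsymbol{x}\,dt$ and $\int_0^T\!\int_{\Gamma(t)}(\cdot)\,ds\,dt$ are the ones actually being quadrature-approximated by $\mathcal{F}_{\mathcal{L}_i}$ etc., so that Assumption~\ref{p2} (stated for a generic domain $\mathcal{D}$ in space-time) applies with the correct integrand and point count; (ii) the initial-condition integral is purely spatial over $\Omega_i(0)$ with no time variable, so the quadrature exponent $\alpha_{\mathcal{T}_i}$ reflects a $d$-dimensional rather than $(d{+}1)$-dimensional rule — this should be flagged but does not change the formal estimate; and (iii) since the moving interface $\Gamma(t)$ makes the subdomains time-dependent, one should assume (as is implicit in Assumption~\ref{p1}) that the norms and the conditional stability constant are well-defined for this geometry. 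With those remarks in place, substituting the per-term estimates into the Assumption~\ref{p1} bound yields the stated inequality.
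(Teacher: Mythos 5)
Your proposal is correct and follows essentially the same route as the paper's own proof: define the subdomain errors, write down the residual system (PDE, jump, boundary, and initial residuals), apply Assumption~\ref{p1} with $v_i = u_{\boldsymbol{\theta}_i}^{*}$, and then use Assumption~\ref{p2} to replace each squared $L^2$ residual norm by the corresponding discrete loss plus a quadrature term before substituting back. Your added bookkeeping remarks (subadditivity $(a+b)^\beta \le a^\beta + b^\beta$, the purely spatial quadrature over $\Omega_i(0)$, and the time-dependent geometry) are sound refinements of details the paper treats implicitly.
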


\begin{proof}
Let $e^u_i := u_i - u_{\boldsymbol{\theta}_i}^{*}$ be the error in subdomain $\Omega_i(t)$, where $u_{\boldsymbol{\theta}_i}^{*}$ is the approximation from network $i$ with optimal parameters $\boldsymbol{\theta}_i^*$. By a direct calculation, we have, for $i = 1, 2$,
\begin{equation}
\begin{aligned}
e^u_{i,t} - \nabla \cdot \bigl(\beta_i \nabla e^u_i\bigr) &= \mathcal{R}^f_i \quad \text{in } \Omega_i(t), \\
 e^u_1 - e^u_2&= \mathcal{R}^{g_1} \quad \text{on } \Gamma(t), \\
 \beta_1 \nabla e^u_1 \cdot \boldsymbol{n} - \beta_2 \nabla e^u_2 \cdot \boldsymbol{n}&= \mathcal{R}^{g_2} \quad \text{on } \Gamma(t), \\
e^u_i(0) &= \mathcal{R}^{g_0}_i \quad \text{on } \Omega_i(0), \\
e^u_i &= \mathcal{R}^{g_D}_i \quad \text{on } \partial \Omega_i(t) \setminus \Gamma(t),
\end{aligned}
\end{equation}
where $\mathcal{R}^f_i := \partial_t u_{\boldsymbol{\theta}_i}^{*} - \nabla \cdot \bigl(\beta_i \nabla u_{\boldsymbol{\theta}_i}^{*}\bigr) - f$ is the PDE residual for network $i$, $\mathcal{R}^{g_1} := u_{\boldsymbol{\theta}_1}^{*} - u_{\boldsymbol{\theta}_2}^{*} - g_1$ is the solution jump residual (coupling both networks), $\mathcal{R}^{g_2} := \beta_1 \nabla u_{\boldsymbol{\theta}_1}^{*} \cdot \boldsymbol{n} - \beta_2 \nabla u_{\boldsymbol{\theta}_2}^{*} \cdot \boldsymbol{n} - g_2$ is the flux jump residual (also coupling both networks), $\mathcal{R}^{g_0}_i := u_{\boldsymbol{\theta}_i}^{*}(\cdot,0) - g_0$ is the initial condition residual for network $i$, and $\mathcal{R}^{g_D}_i := u_{\boldsymbol{\theta}_i}^{*} - g_D$ is the boundary residual for network $i$.

Using Assumption \ref{p1}, we have
\begin{equation}
\begin{aligned}
\sum_{i=1}^{2} \|u_i - u_{\boldsymbol{\theta}_i}^{*}\|^2_{\mathbb{X}(\overline{\Omega}_i(t) \times [0,T])} &\leq C_{pde} \left\{ \sum_{i=1}^{2} \left[ \left(\|\mathcal{R}^f_i\|^2_{L^2(\Omega_i(t))}\right)^{\beta_{\mathcal{L}_i}} + \left(\|\mathcal{R}^{g_D}_i\|^2_{L^2(\partial\Omega_i(t) \setminus \Gamma(t))}\right)^{\beta_{\mathcal{B}_i}}\right] \right. \\
&\quad \left. + \left(\|\mathcal{R}^{g_1}\|^2_{L^2(\Gamma(t))}\right)^{\beta_{\Gamma}} + \left(\|\mathcal{R}^{g_2}\|^2_{L^2(\Gamma(t))}\right)^{\beta_{\Gamma}}\right. \\
&\quad \left.+\sum_{i=1}^{2} \left(\|\mathcal{R}^{g_0}_i\|^2_{L^2(\Omega_i(0))}\right)^{\beta_{\mathcal{T}_i}} \right\}.
\end{aligned}
\end{equation}

To further estimate the terms on the right-hand side, we use Assumption \ref{p2}. For the interior residual of network $i$, we have
\begin{equation}
\left(\|\mathcal{R}^f_i\|^2_{L^2(\Omega_i(t))}\right)^{\beta_{\mathcal{L}_i}} \leq (\mathcal{F}_{\mathcal{L}_i}(\boldsymbol{\theta}_i^*))^{\beta_{\mathcal{L}_i}} + C_{\text{quad}}^{\mathcal{L}_i}M_{i}^{-\alpha_{\mathcal{L}_i}\beta_{\mathcal{L}_i}}.
\end{equation}

Similarly, for the boundary and initial condition residuals (depending on $\boldsymbol{\theta}_i$) and the interface residuals (depending on both $\boldsymbol{\theta}_1$ and $\boldsymbol{\theta}_2$):
\begin{equation}
\begin{aligned}
\left(\|\mathcal{R}^{g_D}_i\|^2_{L^2(\partial\Omega_i(t) \setminus \Gamma(t))}\right)^{\beta_{\mathcal{B}_i}} &\leq (\mathcal{F}_{\mathcal{B}_i}(\boldsymbol{\theta}_i^*))^{\beta_{\mathcal{B}_i}} + C_{\text{quad}}^{\mathcal{B}_i}M_{\mathcal{B}_i}^{-\alpha_{\mathcal{B}_i}\beta_{\mathcal{B}_i}},  \\
\left(\|\mathcal{R}^{g_1}\|^2_{L^2(\Gamma(t))}\right)^{\beta_{\Gamma}} &\leq (\mathcal{F}_{\mathcal{I}_1}(\boldsymbol{\theta}_1^*, \boldsymbol{\theta}_2^*))^{\beta_{\Gamma}} + C_{\text{quad}}^{\mathcal{I}_1}M_{\Gamma}^{-\alpha_{\mathcal{I}_1}\beta_{\Gamma}},\\
\left(\|\mathcal{R}^{g_2}\|^2_{L^2(\Gamma(t))}\right)^{\beta_{\Gamma}} &\leq (\mathcal{F}_{\mathcal{I}_2}(\boldsymbol{\theta}_1^*, \boldsymbol{\theta}_2^*))^{\beta_{\Gamma}} + C_{\text{quad}}^{\mathcal{I}_2}M_{\Gamma}^{-\alpha_{\mathcal{I}_2}\beta_{\Gamma}},\\
\left(\|\mathcal{R}^{g_0}_i\|^2_{L^2(\Omega_i(0) )}\right)^{\beta_{\mathcal{T}_i}} &\leq (\mathcal{F}_{\mathcal{T}_i}(\boldsymbol{\theta}_i^*))^{\beta_{\mathcal{T}_i}} + C_{\text{quad}}^{\mathcal{T}_i}M_{\mathcal{T}}^{-\alpha_{\mathcal{T}_i}\beta_{\mathcal{T}_i}}.
\end{aligned}
\end{equation}

Substituting these estimates into the previous inequality completes the proof. As in the elliptic case, the interface loss terms couple both networks through the parameters $(\boldsymbol{\theta}_1^*, \boldsymbol{\theta}_2^*)$.
\end{proof}

\subsubsection{Application to the parabolic interface problem}
Let $e^u_i := u_i - u_{\boldsymbol{\theta}_i}^{*}$, $i = 1, 2$, be the errors of the parabolic interface problem \eqref{modelpextenddnnpaowu}. Following the abstract analysis presented in subsection \ref{Least-SquaresP} and adapting the framework from \cite{mishra2023estimates}, we state the following conditional error bound.

%[Conditional Error Bound for Parabolic Problems]
\begin{theorem}
Let $u_i \in C^2(\overline{\Omega}_i(t) \times [0, T])$, $i = 1, 2$, be the classical solution of the presented parabolic interface problem; correspondingly, let $u_{\boldsymbol{\theta}_i}^{*} \in C^2(\overline{\Omega}_i(t) \times [0, T])$ be the numerical approximations obtained by the DNN method. Under Assumptions~\ref{p1} and~\ref{p2}, we have the following conditional error bound:
\begin{equation}
\int_{0}^{T} \sum_{i=1}^{2} \int_{\Omega_i} |e_i^u(\boldsymbol{x}_i, t)|^2 d\boldsymbol{x}dt
\end{equation}

\begin{align}
\leq C_1 T(1+C_2Te^{C_2T}) \left[ \sum_{i=1}^{2} \mathcal{F}_{\mathcal{T}_i}(\boldsymbol{\Theta}^*) + \sum_{i=1}^{2} C_{quad}^{\mathcal{T}_i}M_{\mathcal{T}_i}^{-\alpha_{\mathcal{T}_i}} \right]
\end{align}

\begin{align}
+ C_1 T^{\frac{1}{2}}(1+C_2Te^{C_2T}) \left[ \sum_{i=1}^{2} \mathcal{F}_{\mathcal{L}_i}(\boldsymbol{\Theta}^*) + \sum_{i=1}^{2} \mathcal{F}_{\mathcal{B}_i}(\boldsymbol{\Theta}^*) + \mathcal{F}_{\Gamma}(\boldsymbol{\Theta}^*) \right.\\
\left. + \sum_{i=1}^{2} C_{quad}^{\mathcal{L}_i}M_{i}^{-\alpha_{\mathcal{L}_i}} + \sum_{i=1}^{2} C_{quad}^{\mathcal{B}_i}M_{\mathcal{B}_i}^{-\alpha_{\mathcal{B}_i}} + C_{quad}^{\Gamma}M_{\Gamma}^{-\alpha_{\Gamma}} \right]^{\frac{1}{2}},
\end{align}

where $C_1$ and $C_2$ are positive constants depending on $\|u_i\|_{C^2(\overline{\Omega}_i(t) \times [0,T])}$, $\|u_{\boldsymbol{\theta}}^{i,*}\|_{C^2(\overline{\Omega}_i(t) \times [0,T])}$, $\beta(\boldsymbol{x})$, and the domains $\Omega_i(t)$ $(i = 1, 2)$.
\end{theorem}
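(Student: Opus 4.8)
The plan is to prove the conditional stability estimate of Assumption~\ref{p1} \emph{directly} for the parabolic interface problem, with all stability exponents equal to $1$, by a standard energy argument for the error system, and then to invoke the quadrature bound of Assumption~\ref{p2} to pass from continuous residual norms to the trained losses plus quadrature errors. First I would recall the error equations for $e^u_i := u_i - u^*_{\boldsymbol{\theta}_i}$ established in the proof of the preceding theorem: $e^u_i$ solves a heat-type equation with source $\mathcal{R}^f_i$ in $\Omega_i(t)$, satisfies the interface residuals $\mathcal{R}^{g_1}$, $\mathcal{R}^{g_2}$ on $\Gamma(t)$, the initial residual $\mathcal{R}^{g_0}_i$ on $\Omega_i(0)$, and the Dirichlet residual $\mathcal{R}^{g_D}_i$ on $\partial\Omega_i(t)\setminus\Gamma(t)$. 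Then I would test the $i$-th error equation with $e^u_i$, integrate over $\Omega_i(t)$, and sum over $i=1,2$. Because the subdomains move, I would apply the Reynolds transport theorem to rewrite $\int_{\Omega_i(t)}(\partial_t e^u_i)\,e^u_i\,d\boldsymbol{x}$ as $\tfrac12\tfrac{\mathrm{d}}{\mathrm{d}t}\|e^u_i(t)\|^2_{L^2(\Omega_i(t))}$ minus a surface term $\tfrac12\int_{\partial\Omega_i(t)}|e^u_i|^2\,(\boldsymbol{V}\cdot\boldsymbol{n}_i)\,ds$, with $\boldsymbol{V}$ the prescribed (bounded) velocity of $\Gamma(t)$. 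Integration by parts in the diffusion term produces the nonnegative volume term $\beta_i\|\nabla e^u_i\|^2_{L^2(\Omega_i(t))}$, which I discard, plus boundary contributions on $\partial\Omega_i(t)\setminus\Gamma(t)$ and on $\Gamma(t)$; on the exterior boundary I would bound $\int_{\partial\Omega_i\setminus\Gamma}(\beta_i\nabla e^u_i\cdot\boldsymbol{n}_i)\,e^u_i\,ds$ by $C\,\|\mathcal{R}^{g_D}_i\|_{L^2(\partial\Omega_i\setminus\Gamma)}$, using $e^u_i=\mathcal{R}^{g_D}_i$ there and that $\|\nabla e^u_i\|_{L^\infty}$ is controlled by $\|u_i\|_{C^2}+\|u^*_{\boldsymbol{\theta}_i}\|_{C^2}$.

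The crucial step is the interface contribution. With $\boldsymbol{n}_1=-\boldsymbol{n}_2=:\boldsymbol{n}$, the two one-sided boundary integrals on $\Gamma(t)$ (together with the two Reynolds surface terms) combine into a single jump expression. I would then substitute $e^u_1=e^u_2+\mathcal{R}^{g_1}$ and $\beta_1\nabla e^u_1\cdot\boldsymbol{n}=\mathcal{R}^{g_2}+\beta_2\nabla e^u_2\cdot\boldsymbol{n}$ on $\Gamma(t)$; after cancellation every surviving term is linear or bilinear in $\|\mathcal{R}^{g_1}\|_{L^2(\Gamma(t))}$ and $\|\mathcal{R}^{g_2}\|_{L^2(\Gamma(t))}$, with coefficients controlled by $\|\boldsymbol{V}\|_{L^\infty}$ and the $C^2$ norms of $u_i$ and $u^*_{\boldsymbol{\theta}_i}$ (for instance $\int_{\Gamma}\mathcal{R}^{g_2}e^u_2\,ds\le\|\mathcal{R}^{g_2}\|_{L^2(\Gamma)}\|e^u_2\|_{L^2(\Gamma)}\le C\|\mathcal{R}^{g_2}\|_{L^2(\Gamma)}$, the last step using the a priori $C^0$ bound on $e^u_2$). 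Collecting everything, and bounding the interior source by $\int_{\Omega_i}\mathcal{R}^f_i\,e^u_i\,d\boldsymbol{x}\le\|\mathcal{R}^f_i\|_{L^2(\Omega_i)}\|e^u_i\|_{L^2(\Omega_i)}$, I obtain a differential inequality $\tfrac{\mathrm{d}}{\mathrm{d}t}y(t)\le C_2\,y(t)+\rho(t)$ for $y(t):=\sum_{i=1}^2\|e^u_i(t)\|^2_{L^2(\Omega_i(t))}$, where $\rho(t)$ is a sum (up to constants) of the residual norms $\|\mathcal{R}^f_i(t)\|_{L^2(\Omega_i)}$, $\|\mathcal{R}^{g_D}_i(t)\|_{L^2(\partial\Omega_i\setminus\Gamma)}$, $\|\mathcal{R}^{g_1}(t)\|_{L^2(\Gamma)}$, $\|\mathcal{R}^{g_2}(t)\|_{L^2(\Gamma)}$, and $y(0)=\sum_i\|\mathcal{R}^{g_0}_i\|^2_{L^2(\Omega_i(0))}$.

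Finally I would apply Grönwall's lemma to get $y(t)\le e^{C_2 t}\bigl(y(0)+\int_0^t\rho(s)\,\mathrm{d}s\bigr)$, integrate over $[0,T]$ to bound $\int_0^T y(t)\,\mathrm{d}t$ by $Te^{C_2T}\bigl(y(0)+\int_0^T\rho\bigr)$, and use the Cauchy--Schwarz inequality in time, $\int_0^T\rho(s)\,\mathrm{d}s\le T^{1/2}\bigl(\int_0^T\rho(s)^2\,\mathrm{d}s\bigr)^{1/2}$, noting that $\int_0^T\rho^2$ is controlled by the space-time $L^2$ norms of $\mathcal{R}^f_i$, $\mathcal{R}^{g_D}_i$, $\mathcal{R}^{g_1}$, $\mathcal{R}^{g_2}$. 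This yields exactly the claimed split: the initial term $y(0)$ enters linearly with a prefactor of order $T$, while the PDE, boundary, and interface residuals enter under a square root with a prefactor of order $T^{1/2}$, the growth factor $1+C_2Te^{C_2T}$ coming from the Grönwall estimate. The concluding step is to apply Assumption~\ref{p2} to each continuous residual norm, replacing e.g.\ $\|\mathcal{R}^f_i\|^2_{L^2(\Omega_i\times(0,T))}$ by $\mathcal{F}_{\mathcal{L}_i}(\boldsymbol{\Theta}^*)+C^{\mathcal{L}_i}_{quad}M_i^{-\alpha_{\mathcal{L}_i}}$, and analogously for $\mathcal{F}_{\mathcal{B}_i}$, $\mathcal{F}_{\Gamma}$, and $\mathcal{F}_{\mathcal{T}_i}$, which produces the stated bound after regrouping constants into $C_1$ and $C_2$.

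I expect the main obstacle to be the interface bookkeeping in the second step: one must verify carefully that, after combining the two one-sided boundary integrals (plus the moving-domain Reynolds terms) on $\Gamma(t)$ and eliminating $e^u_1$ and $\nabla e^u_1\cdot\boldsymbol{n}$ in favour of $e^u_2$, $\nabla e^u_2\cdot\boldsymbol{n}$, $\mathcal{R}^{g_1}$, and $\mathcal{R}^{g_2}$, \emph{no} uncontrolled trace of $\nabla e^u_2$ survives---each leftover term must be genuinely absorbed into the interface residual norms times the a priori $C^2$ and velocity constants. A secondary technicality is tracking the powers of $T$ and the exponential constant through the Grönwall step so that the final expression reproduces the stated $T(1+C_2Te^{C_2T})$ and $T^{1/2}(1+C_2Te^{C_2T})$ prefactors precisely.
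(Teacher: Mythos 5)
Your proposal is correct and follows essentially the same route as the paper's own proof: an energy estimate obtained by testing the error equations with $e^u_i$, integrating by parts and absorbing the interface and boundary contributions via the a priori $C^2$ bounds (the paper does this bookkeeping with the identity $ac+bd=\tfrac12[(a-b)(c-d)+(a+b)(c+d)]$ where you substitute $e^u_1=e^u_2+\mathcal{R}^{g_1}$ and the flux relation directly, an equivalent computation, and you are in fact more careful than the paper about the moving domains via the Reynolds transport term), followed by Grönwall, integration over $[0,T]$ with Cauchy--Schwarz in time, and finally Assumption~\ref{p2} to replace the continuous residual norms by trained losses plus quadrature errors. One minor slip: your opening claim of establishing Assumption~\ref{p1} ``with all stability exponents equal to $1$'' contradicts your own (correct) derivation --- the Grönwall/Cauchy--Schwarz structure gives exponent $1$ only for the initial-data term and $\tfrac12$ for the PDE, boundary and interface residuals, exactly as the paper records, which is what produces the square root in the stated bound.
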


\begin{proof}
Let $e^u_i := u_i - u_{\boldsymbol{\theta}}^{i,*}$, $i = 1, 2$, be the errors of the parabolic interface problem \eqref{modelpextenddnnpaowu}. By a direct calculation, we have, for $i = 1, 2$,
\begin{equation}
\begin{aligned}
\frac{\partial e^u_i}{\partial t} - \nabla \cdot \bigl(\beta(\boldsymbol{x}) \nabla e^u_i\bigr) &= \mathcal{R}^f_i \quad \text{in } \Omega_i(t) \times (0, T], \\
\nabla \cdot e^u_i &= \mathcal{R}^{\nabla}_i \quad \text{in } \Omega_i(t) \times (0, T], \\
e^u_1 - e^u_2 &= \mathcal{R}^v_{\Gamma} \quad \text{on } \Gamma(t) \times [0, T], \\
\sigma_1(e^u_1)\boldsymbol{n}_1 + \sigma_2(e^u_2)\boldsymbol{n}_2 &= \mathcal{R}^{\sigma}_{\Gamma} \quad \text{on } \Gamma(t) \times [0, T], \\
e^u_i &= \mathcal{R}^{B}_i \quad \text{on } \partial\Omega_i(t) \setminus \Gamma(t) \times [0, T], \\
e^u_i(\boldsymbol{x}_i, 0) &= \mathcal{R}^{\mathcal{T}}_i \quad \text{in } \Omega_i(0),
\end{aligned}
\end{equation}
where $\mathcal{R}^f_i := \frac{\partial u_{\boldsymbol{\theta}}^{i,*}}{\partial t} - \nabla \cdot \bigl(\beta(\boldsymbol{x}) \nabla u_{\boldsymbol{\theta}}^{i,*}\bigr) - f$, $\mathcal{R}^{\nabla}_i := \nabla \cdot u_{\boldsymbol{\theta}}^{i,*}$, $\mathcal{R}^v_{\Gamma} := u_{\boldsymbol{\theta}}^{2,*} - u_{\boldsymbol{\theta}}^{1,*} - g_1$, $\mathcal{R}^{\sigma}_{\Gamma} := \llbracket \beta(\boldsymbol{x}) \nabla u_{\boldsymbol{\theta}}^{*} \cdot \boldsymbol{n} \rrbracket - g_2$, $\mathcal{R}^{B}_i := u_{\boldsymbol{\theta}}^{i,*} - g_D$, and $\mathcal{R}^{\mathcal{T}}_i := u_{\boldsymbol{\theta}}^{i,*}(\boldsymbol{x}_i, 0) - g_0$.

Multiplying the first equation by $e^u_i$ and integrating over $\Omega_i(t)$, $i = 1, 2$, we have
\begin{equation}
\begin{aligned}
\frac{d}{dt} \int_{\Omega_i} \frac{|e^u_i|^2}{2} d\boldsymbol{x} + \int_{\Omega_i} \beta(\boldsymbol{x})|\nabla e^u_i|^2 d\boldsymbol{x} = \int_{\Omega_i} \mathcal{R}^f_i \cdot e^u_i d\boldsymbol{x} - \int_{\partial \Omega_i} \beta(\boldsymbol{x}) \nabla e^u_i \cdot \boldsymbol{n} \cdot e^u_i ds,
\end{aligned}
\end{equation}
where we applied integration by parts to the diffusion term. 

Summing up for $i = 1, 2$ and considering the interface and boundary conditions, we arrive at
\begin{equation}
\begin{aligned}
\frac{d}{dt}\left[\sum_{i=1}^{2} \int_{\Omega_i} \frac{|e^u_i|^2}{2} d\boldsymbol{x}\right] 
&= \sum_{i=1}^{2} \int_{\Omega_i} \mathcal{R}^f_i \cdot e^u_i d\boldsymbol{x} + \sum_{i=1}^{2} \int_{\Omega_i} \mathcal{R}^{\nabla}_i \cdot e^u_i d\boldsymbol{x} \\
&+ \sum_{i=1}^{2} \int_{\partial\Omega_i \setminus \Gamma} \mathcal{R}^{B}_i \cdot \sigma_i(e^u_i) \boldsymbol{n}_i ds \\
&+ \int_{\Gamma} \mathcal{R}^v_{\Gamma} \cdot \left(\frac{\sigma_1(e^u_1)\boldsymbol{n}_1 - \sigma_2(e^u_2)\boldsymbol{n}_2}{2}\right) ds \\
&+ \int_{\Gamma} \mathcal{R}^{\sigma}_{\Gamma} \cdot \left(\frac{e^u_1 + e^u_2}{2}\right) ds,
\end{aligned}
\end{equation}
where we use the algebraic identity $ac + bd = \frac{1}{2}[(a-b)(c-d) + (a+b)(c+d)]$. 

To estimate the terms on the right-hand side, we apply the Cauchy-Schwarz inequality, Young's inequality, and appropriate trace theorems to obtain
\begin{equation}
\begin{aligned}
\frac{d}{dt}\left[\sum_{i=1}^{2} \int_{\Omega_i} |e^u_i|^2 d\boldsymbol{x}\right] \leq C_1 \left[\sum_{i=1}^{2} \int_{\Omega_i} |\mathcal{R}^f_i|^2 d\boldsymbol{x} + \sum_{i=1}^{2} \int_{\Omega_i} |\mathcal{R}^{\nabla}_i|^2 d\boldsymbol{x} \right. \\
\left. + \sum_{i=1}^{2} \int_{\partial\Omega_i \setminus \Gamma} |\mathcal{R}^{B}_i|^2 ds + \int_{\Gamma} |\mathcal{R}^v_{\Gamma}|^2 ds + \int_{\Gamma} |\mathcal{R}^{\sigma}_{\Gamma}|^2 ds\right]^{\frac{1}{2}} \\
+ C_2 \left(\sum_{i=1}^{2} \int_{\Omega_i} |e^u_i|^2 d\boldsymbol{x}\right),
\end{aligned}
\end{equation}
where $C_1$ and $C_2$ are positive constants depending on $\|u_i\|_{C^2(\overline{\Omega}_i(t) \times [0,T])}$, $\|u_{\boldsymbol{\theta}}^{i,*}\|_{C^2(\overline{\Omega}_i(t) \times [0,T])}$, $\beta(\boldsymbol{x})$, and the domains $\Omega_i(t)$ $(i = 1, 2)$. 

For any $0 \leq \tau \leq T$, integrating the above estimation over time, we obtain
\begin{equation}
\begin{aligned}
\sum_{i=1}^{2} \int_{\Omega_i} |e^u_i(\boldsymbol{x}_i, \tau)|^2 d\boldsymbol{x} 
&\leq \sum_{i=1}^{2} \int_{\Omega_i} |\mathcal{R}^{\mathcal{T}}_i|^2 d\boldsymbol{x} + C_1 T^{\frac{1}{2}} \left[ \int_0^T \left( \sum_{i=1}^{2} \int_{\Omega_i} |\mathcal{R}^f_i|^2 d\boldsymbol{x} + \sum_{i=1}^{2} \int_{\Omega_i} |\mathcal{R}^{\nabla}_i|^2 d\boldsymbol{x} \right. \right. \\
&\left. \left. + \sum_{i=1}^{2} \int_{\partial\Omega_i \setminus \Gamma} |\mathcal{R}^{B}_i|^2 ds + \int_{\Gamma} (|\mathcal{R}^v_{\Gamma}|^2 + |\mathcal{R}^{\sigma}_{\Gamma}|^2) ds \right) dt \right]^{\frac{1}{2}} \\
&+ C_2 \int_0^T \left( \sum_{i=1}^{2} \int_{\Omega_i} |e^u_i(\boldsymbol{x}_i, t)|^2 d\boldsymbol{x} \right) dt \\
&=: \mathcal{R} + C_2 \int_0^T \left( \sum_{i=1}^{2} \int_{\Omega_i} |e^u_i(\boldsymbol{x}_i, t)|^2 d\boldsymbol{x} \right) dt.
\end{aligned}
\end{equation}

Now applying Grönwall's inequality yields $\sum_{i=1}^{2} \int_{\Omega_i} |e^u_i(\boldsymbol{x}_i, \tau)|^2 d\boldsymbol{x} \leq (1 + C_2 Te^{C_2 T})\mathcal{R}$. Integrating again over $[0, T]$ results in
\begin{equation}\label{gronwall}
\int_0^T \sum_{i=1}^{2} \int_{\Omega_i} |e^u_i(\boldsymbol{x}_i, t)|^2 d\boldsymbol{x}dt \leq (T + C_2 T^2 e^{C_2 T})\mathcal{R}.
\end{equation}

By the definition of $\mathcal{R}$, this equation essentially verifies Assumption \ref{p1} for the parabolic interface problem with $\beta_{\mathcal{L}_i} = \beta_{\mathcal{B}_i} = \beta_{\Gamma} = \frac{1}{2}$ and $\beta_{\mathcal{T}_i} = 1$, $i = 1, 2$. Following the abstract analysis presented in subsection \ref{Least-SquaresP} and using Assumption \ref{p1}, we have
\begin{equation}
\begin{aligned}
\mathcal{R} &\leq \sum_{i=1}^{2} \mathcal{F}_{\mathcal{T}_i}(\boldsymbol{\Theta}^*) + \sum_{i=1}^{2} C_{quad}^{\mathcal{T}_i}M_{\mathcal{T}_i}^{-\alpha_{\mathcal{T}_i}} \\
&+ C_1 T^{\frac{1}{2}} \left[ \sum_{i=1}^{2} \mathcal{F}_{\mathcal{L}_i}(\boldsymbol{\Theta}^*) + \sum_{i=1}^{2} \mathcal{F}_{\mathcal{B}_i}(\boldsymbol{\Theta}^*) + \mathcal{F}_{\Gamma}(\boldsymbol{\Theta}^*) \right.\\
& \left. + \sum_{i=1}^{2} C_{quad}^{\mathcal{L}_i}M_{i}^{-\alpha_{\mathcal{L}_i}} + \sum_{i=1}^{2} C_{quad}^{\mathcal{B}_i}M_{\mathcal{B}_i}^{-\alpha_{\mathcal{B}_i}} + C_{quad}^{\Gamma}M_{\Gamma}^{-\alpha_{\Gamma}} \right]^{\frac{1}{2}}
\end{aligned}
\end{equation}

Substituting the above estimation back into \eqref{gronwall}, we complete the proof.

\end{proof}

The error bounds presented in this section provide a theoretical foundation for understanding the proposed method. The bounds are conditional on stability assumptions (Assumptions~\ref{e1} and~\ref{p1}) that encapsulate the well-posedness of the underlying PDEs; for interface problems with high-contrast coefficients, these stability constants may be large. The bounds suggest that achieving small training loss values is necessary (though not sufficient) for obtaining accurate solutions, justifying loss monitoring during training as a practical convergence indicator. The sampling density affects the error through quadrature error terms, which decrease as the number of collocation points increases.

The bounds do not explicitly account for the approximation capacity of specific neural network architectures, the convergence behavior of optimization algorithms, hyperparameter choices (learning rate, batch size), or the effect of different activation function choices. Given these limitations, the numerical experiments in the following section serve as the primary validation of the method's effectiveness across various interface configurations and dimensions.

\section{Numerical Results}
\label{NumericalResults}
\noindent
In this section, we present a series of numerical experiments to illustrate the effectiveness and accuracy of our proposed method for solving elliptic interface problems and parabolic interface problems. We compare the performance of our multi-activation function approach (denoted as \emph{MAF}) against other representative methods, including extended PINN (\emph{XPINN}) \cite{jagtap2020extended}, mesh-free method (\emph{MFM}) \cite{he2022mesh}, Discontinuity Capturing Shallow Neural Network (\emph{DCSNN}) \cite{hu2022discontinuity}, Multi-domain PINN (\emph{M-PINN}) \cite{zhang2022multi}, Interface PINN (\emph{I-PINN}) \cite{sarma2024interface}, and Adaptive Interface PINN (\emph{AdaI-PINN}) \cite{roy2024adaptive}.

% \paragraph{Sampling Strategy.}
Collocation points are generated using either uniform grid sampling or quasi-random (Latin Hypercube) sampling within each subdomain $\Omega_1$ and $\Omega_2$, and on the interface $\Gamma$ and boundaries $\partial\Omega$. For 2D problems, we primarily use uniform grids for reproducibility, while for higher-dimensional problems (3D--10D), we employ Latin Hypercube sampling to mitigate the curse of dimensionality. The loss function integrals are approximated via Monte Carlo-type quadrature using these collocation points, with equal weights $w_k = |\Omega_i|/M_i$ for interior points and similar formulas for boundary/interface terms. This is consistent with the quadrature assumption (Assumption~\ref{e2}) in our error analysis.

We employ distinct network architectures for each method to ensure fair comparison. For our approach (MAF), we use two independent neural networks (one for each subdomain $\Omega_1$ and $\Omega_2$), each with 3 hidden layers containing 50 neurons per layer. XPINN uses an ensemble of 5 neural networks with 3 hidden layers $\times$ 20 neurons each. MFM, M-PINN, I-PINN, and AdaI-PINN all use two neural networks with 3 hidden layers $\times$ 50 neurons per subdomain. DCSNN employs a single-hidden-layer architecture with 300 neurons due to method constraints. Table~\ref{tab:computational_cost} summarizes the parameter counts for each method. All methods use the $\tanh$ activation function (with additional multi-activation for MAF), and the Adam optimizer is employed for 50,000 training steps with a learning rate of 0.001. All codes are implemented in \texttt{PyTorch} and run on a GPU workstation equipped with an Nvidia A100 GPU.

\begin{table}[H]
    \centering
    \caption{Network architecture comparison for 2D elliptic interface problems}
    \label{tab:computational_cost}
    \begin{tabular}{|l|c|c|}
        \hline
        \textbf{Method} & \textbf{Total Parameters} & \textbf{Networks} \\
        \hline
        MAF (Ours) & 10,802 & 2 \\
        XPINN & 12,505 & 5 \\
        MFM & 10,802 & 2 \\
        DCSNN & 901 & 1 \\
        M-PINN & 10,802 & 2 \\
        I-PINN & 11,502 & 2 \\
        AdaI-PINN & 11,502 & 2 \\
        \hline
    \end{tabular}
\end{table}

To quantify accuracy, we compute the relative $L_2$ error for elliptic interface problems as
\begin{equation}\label{errore}
    \text{Error}_{e}
    \;=\; 
    \frac{ \|u_{\text{NN}}(\boldsymbol{x}) - u(\boldsymbol{x})\|_{L^2(\Omega)}}{\|u(\boldsymbol{x})\|_{L^2(\Omega)}},
\end{equation}
where $u_{\text{NN}}(\boldsymbol{x}) = u_{\boldsymbol{\theta}_1^*}(\boldsymbol{x})$ for $\boldsymbol{x} \in \Omega_1$ and $u_{\text{NN}}(\boldsymbol{x}) = u_{\boldsymbol{\theta}_2^*}(\boldsymbol{x})$ for $\boldsymbol{x} \in \Omega_2$. For parabolic interface problems, we compute the space-time error $\text{Error}_{p} = \|u_{\text{NN}} - u\|_{L^2(0,T;L^2(\Omega))} / \|u\|_{L^2(0,T;L^2(\Omega))}$.

For the interface-aware weighting functions in our multi-activation mechanism, we use $\omega_2(\mathbf{x}) = \exp(-\gamma \cdot d(\mathbf{x},\Gamma))$ and $\omega_1(\mathbf{x}) = 1 - \omega_2(\mathbf{x})$, where $\gamma = 10$ and $d(\mathbf{x},\Gamma)$ is the Euclidean distance to the interface. The distance function is computed analytically: for straight lines $d = |x - x_0|$, for circles $d = |\sqrt{(x-x_c)^2 + (y-y_c)^2} - r|$, for ellipses via Newton iteration, and for parametric curves via numerical minimization. The weight function $\omega_2$ concentrates near the interface $\Gamma$, enabling the multi-activation mechanism to focus learning capacity in this critical region; specific illustrations are provided in each test case.

\subsection{Elliptic Interface Problem}
\subsubsection{Test 1: Piecewise Constant Coefficients in a Two-Domain Problem}

We consider a rectangular domain $\Omega = [-1,1] \times [-1,1]$, partitioned into two subdomains by a vertical interface at $x = 0$: $\Omega_1 = (0,1) \times (-1,1)$, $\Omega_2 = (-1,0) \times (-1,1)$, and $\Gamma = \{0\} \times (-1,1)$. The elliptic interface problem is formulated as:
\begin{equation}\label{test1_pde}
\begin{aligned}
-\nabla \cdot (\beta(\boldsymbol{x}) \nabla u) &= f(\boldsymbol{x}), & & \text{in } \Omega_1 \cup \Omega_2, \\
\llbracket u \rrbracket &= g_1, & & \text{on } \Gamma, \\
\llbracket \beta \nabla u \cdot \boldsymbol{n} \rrbracket &= g_2, & & \text{on } \Gamma, \\
u &= g_D, & & \text{on } \partial \Omega,
\end{aligned}
\end{equation}
where $\beta(\boldsymbol{x}) = -1$ in $\Omega_1$ and $\beta(\boldsymbol{x}) = +1$ in $\Omega_2$. The manufactured exact solution is $u(x,y) = -\sin(2\pi x)\sin(2\pi y) - 1$ in $\Omega_1$ and $u(x,y) = +\sin(2\pi x)\sin(2\pi y) + 1$ in $\Omega_2$. The source term is $f(x,y) = 8\pi^2 \sin(2\pi x)\sin(2\pi y)$ in $\Omega_1$ and $f(x,y) = -8\pi^2 \sin(2\pi x)\sin(2\pi y)$ in $\Omega_2$. The jump conditions are $g_1 = \llbracket u \rrbracket|_{\Gamma} = -2$ and $g_2 = \llbracket \beta \nabla u \cdot \boldsymbol{n} \rrbracket|_{\Gamma} = 0$. For this straight-line interface, the distance function is $d(\mathbf{x}, \Gamma) = |x|$.

We distribute the collocation points uniformly in each subdomain and along the interface. The training and validation points are shown in Figure~\ref{pointstest1}, and the numerical solutions with absolute errors are presented in Figure~\ref{solutiontest1}. Figure~\ref{fig:weight_functions_test1} illustrates the interface-aware weight functions $\omega_1(\mathbf{x})$ and $\omega_2(\mathbf{x})$ along a line crossing the interface.

\begin{figure}
    \centering
    \subfigure[Training points]{\includegraphics[width=5cm, height=4cm]{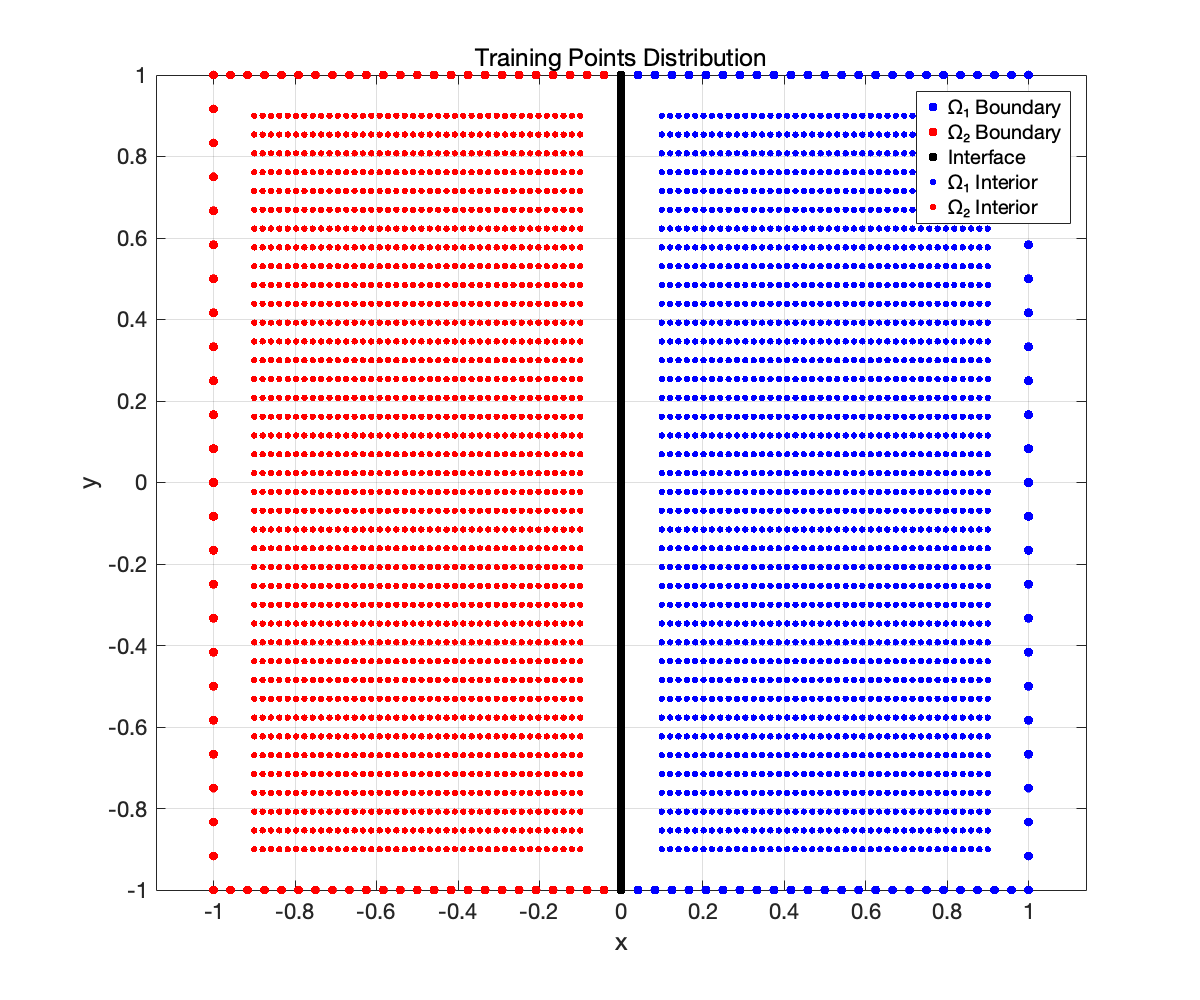}}
    \subfigure[Validation points]{\includegraphics[width=5cm, height=4cm]{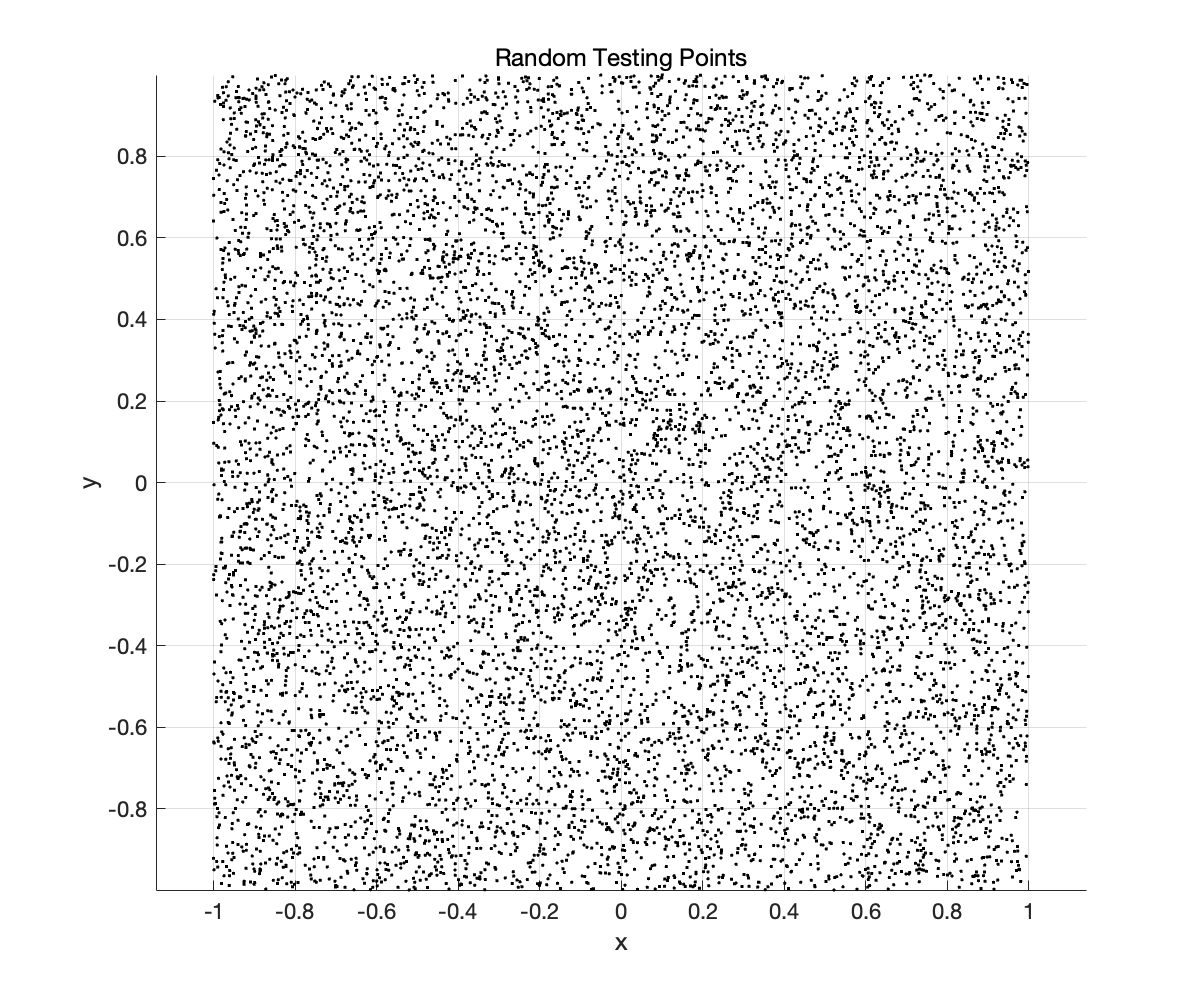}}
    \caption{Training and validation points for Test 1 when $M_{\Omega} = 1600$, $M_{\partial \Omega}=160$, and $M_{\Gamma}=100$.}
    \label{pointstest1}
\end{figure}

\begin{figure}
    \centering
    \subfigure[$u_{\text{Exact}}$]{\includegraphics[width=3.5cm, height=3cm]{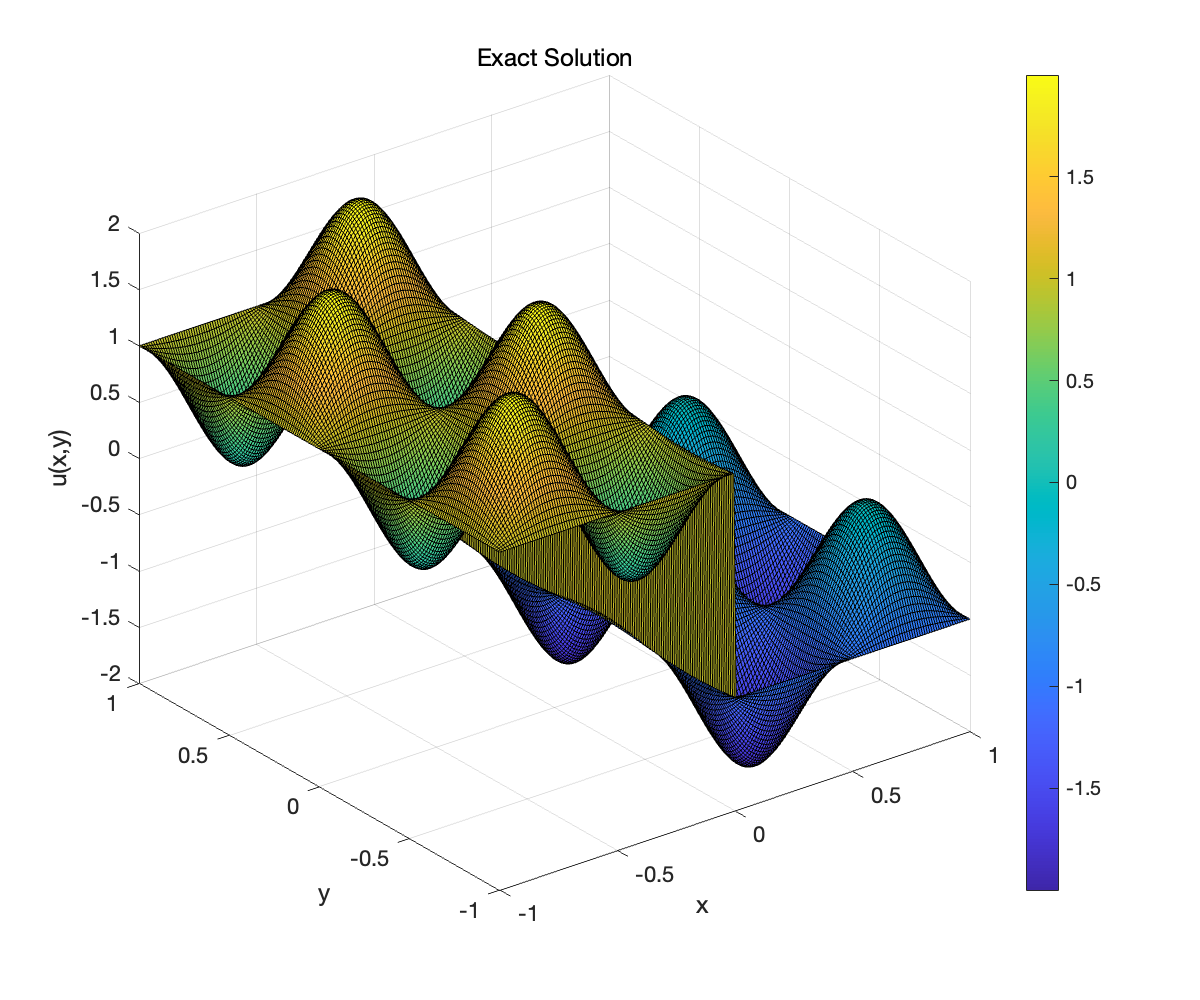}}
    \subfigure[$u_{\text{DNN}}$]{\includegraphics[width=3.5cm, height=3cm]{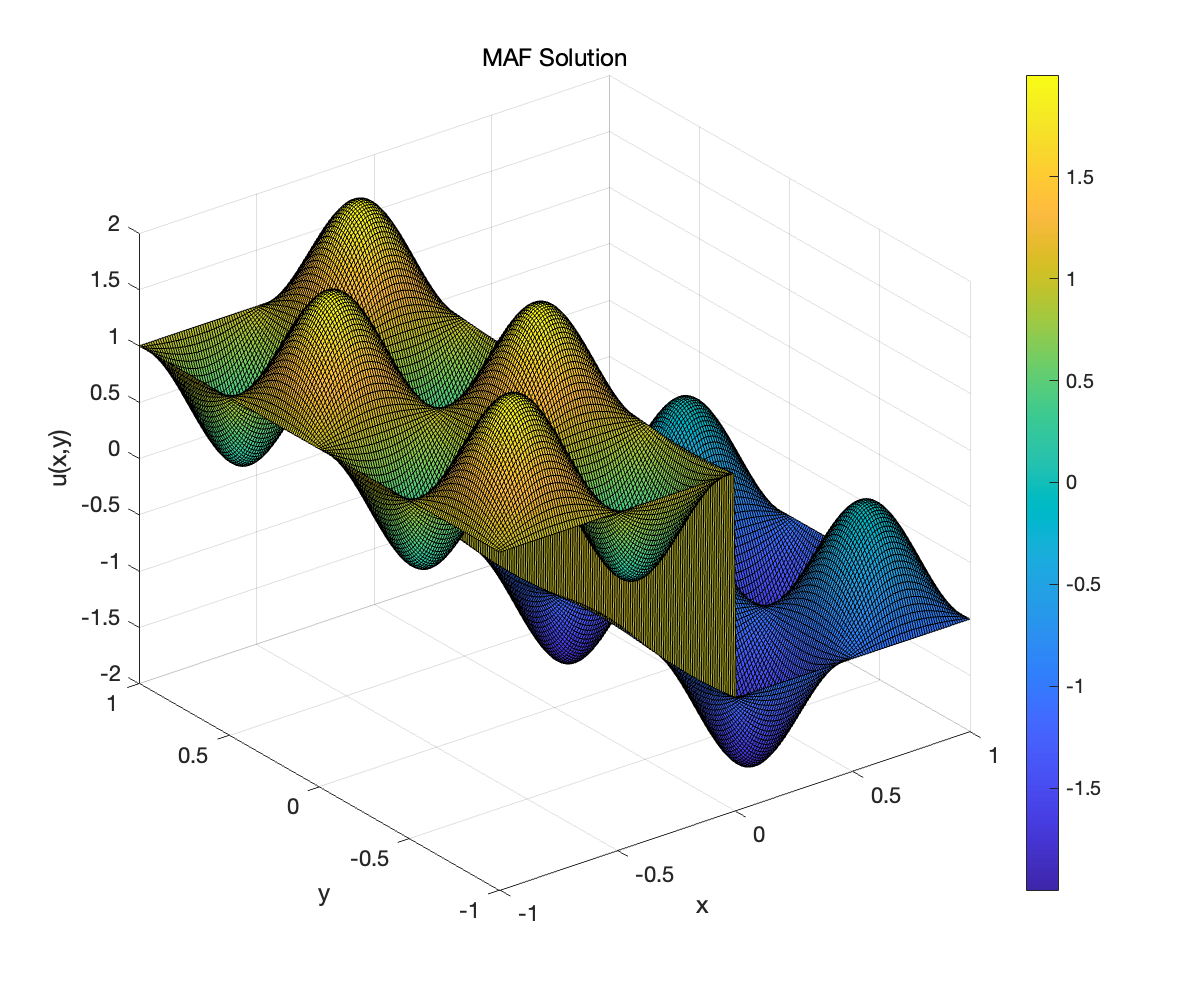}}
    \subfigure[Absolute Error]{\includegraphics[width=3.5cm, height=3cm]{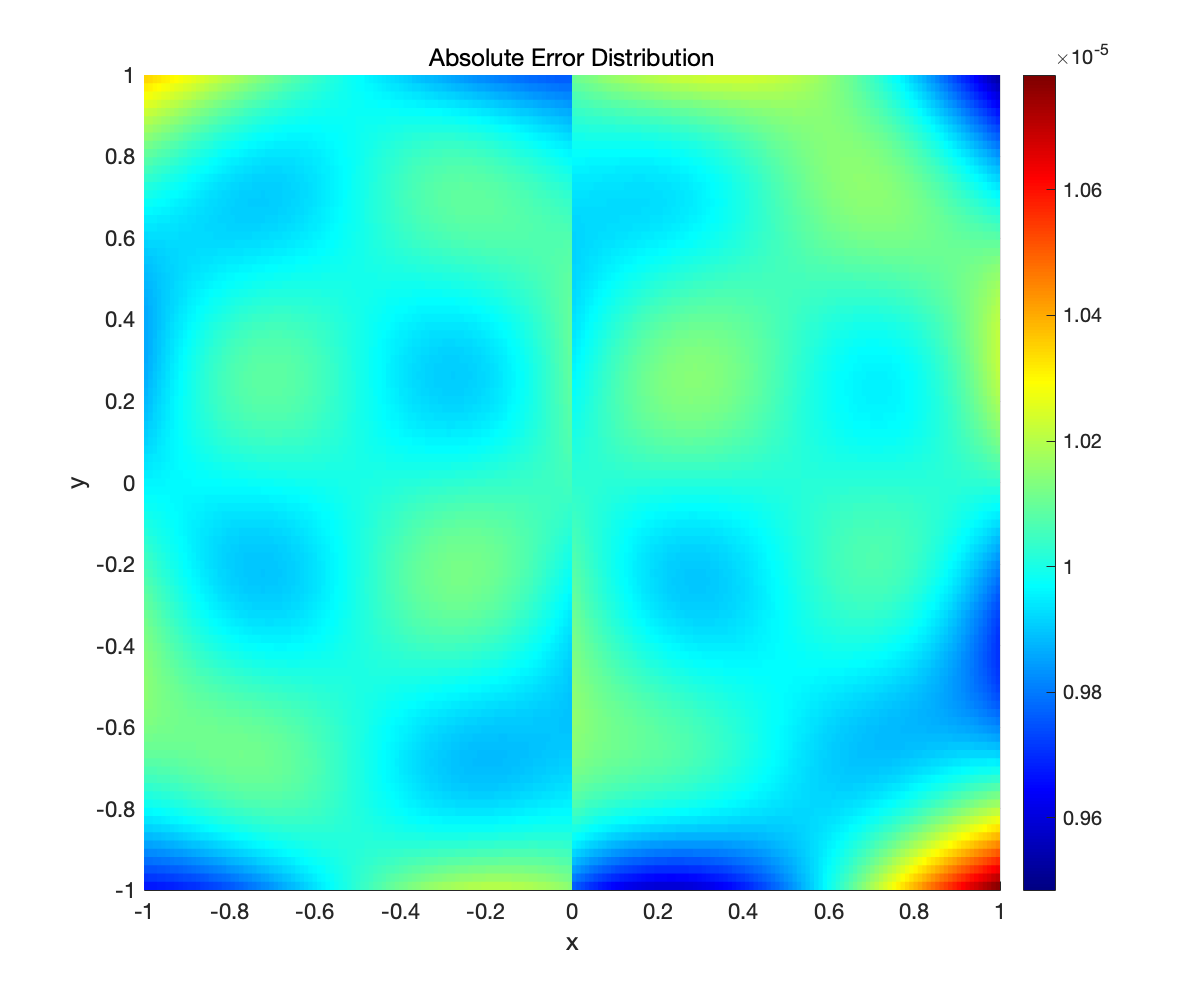}}
    \caption{$u_{\text{DNN}}$, $u_{\text{Exact}}$ and their absolute error for Test 1.}
    \label{solutiontest1}
\end{figure}

\begin{figure}
    \centering
    \includegraphics[width=7cm, height=4cm]{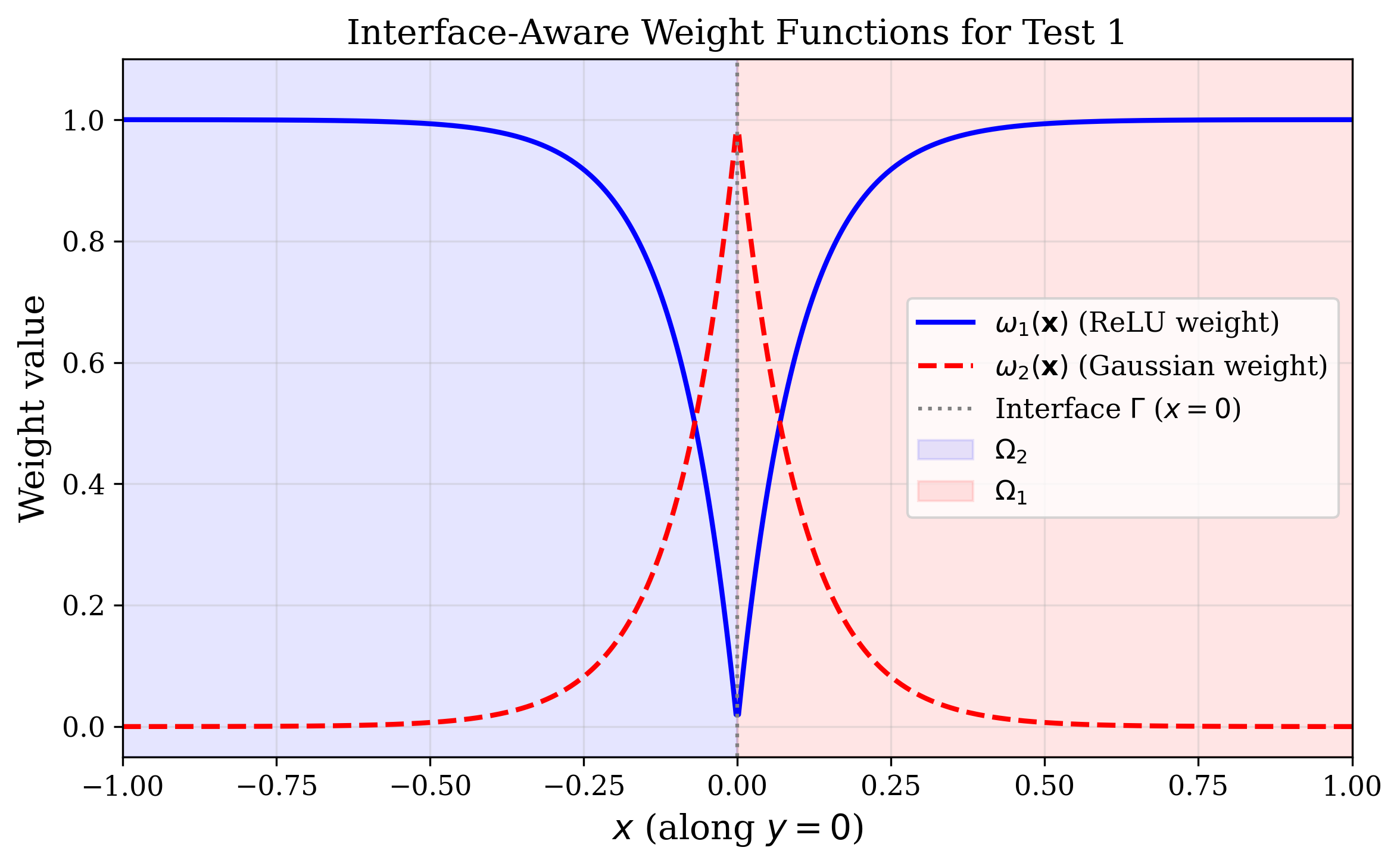}
    \caption{Interface-aware weight functions $\omega_1(\mathbf{x})$ and $\omega_2(\mathbf{x})$ along a line crossing the interface at $x=0$ for Test 1.}
    \label{fig:weight_functions_test1}
\end{figure}

\begin{table}
    \centering
    \caption{Relative $L^2$ error comparison for Test 1.}
    \label{errortabletest1}
    \small
    \begin{tabular}{|c|c|c|c|c|c|c|c|}
        \hline
        $(M_\Omega, M_{\partial\Omega}, M_\Gamma)$ & MAF & DCSNN & AdaI & I-PINN & M-PINN & MFM & XPINN \\
        \hline
        (100, 40, 25) & $6.33e{-}4$ & $1.02e{-}3$ & $1.85e{-}3$ & $3.21e{-}3$ & $5.67e{-}3$ & $2.15e{-}2$ & $3.96e{-}2$ \\
        \hline
        (400, 80, 50) & $8.14e{-}5$ & $2.15e{-}4$ & $3.56e{-}4$ & $6.12e{-}4$ & $1.15e{-}3$ & $5.15e{-}3$ & $7.96e{-}3$ \\
        \hline
        (1600, 160, 100) & $9.12e{-}6$ & $6.15e{-}5$ & $8.91e{-}5$ & $1.53e{-}4$ & $2.87e{-}4$ & $1.10e{-}3$ & $4.38e{-}3$ \\
        \hline
    \end{tabular}
\end{table}

\begin{figure}
    \centering
    \includegraphics[width=12cm, height=4cm]{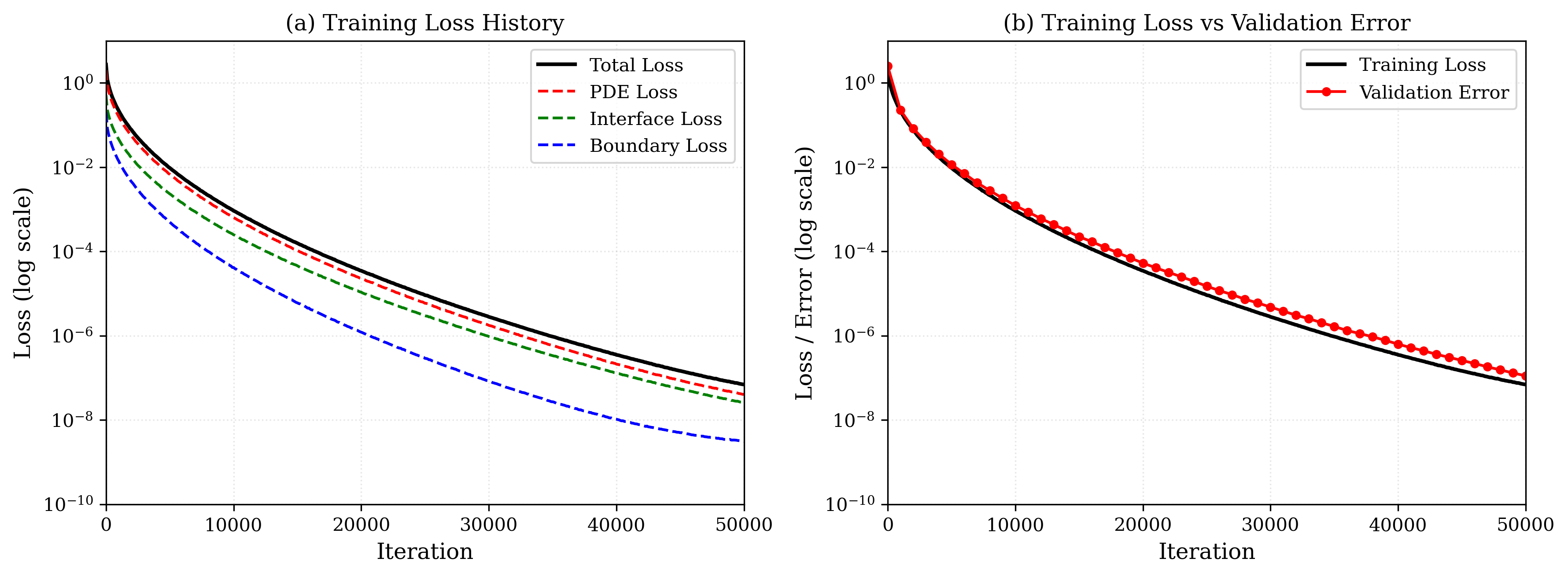}
    \caption{Training convergence for Test 1: (a) Component-wise loss decomposition showing PDE, boundary, and interface loss terms; (b) Comparison of training loss and validation error demonstrating no overfitting.}
    \label{fig:loss_test1}
\end{figure}

Table \ref{errortabletest1} presents the relative $L^2$ error comparison for all methods under different sampling densities. The MAF method achieves the best accuracy across all configurations, reaching $9.12 \times 10^{-6}$ with the finest sampling, which outperforms DCSNN by 6.7$\times$, AdaI-PINN by 9.8$\times$, and I-PINN by 16.8$\times$. Traditional methods (XPINN, MFM) show errors 2--3 orders of magnitude larger. As sampling density increases, MAF demonstrates consistent improvement with an approximate convergence rate of $\mathcal{O}(M^{-1})$. The superior performance of MAF over M-PINN (which uses the same domain decomposition strategy but without multi-activation) confirms that the interface-aware activation weighting provides tangible benefits for learning near discontinuities. Figure~\ref{solutiontest1} (corresponding to the finest sampling configuration with $M_\Omega=1600$, $M_{\partial\Omega}=160$, $M_\Gamma=100$) reveals that errors are concentrated near the interface $\Gamma$ where the solution jump occurs ($\llbracket u \rrbracket = -2$), which aligns with the challenges imposed by the jump conditions.

Figure~\ref{fig:loss_test1} shows the training loss and validation error evolution during the 50,000 training iterations. Both training loss and validation error decrease monotonically, with the validation error closely tracking the training loss throughout the optimization process. This parallel decay demonstrates that no overfitting occurs: the network generalizes well to unseen validation points rather than merely memorizing the training data. The final training loss reaches approximately $10^{-6}$, consistent with the achieved relative $L^2$ error of $9.12 \times 10^{-6}$.

\subsubsection{Test 2: Sunflower-Shaped Interface}

We consider a domain $\Omega = [-1,1] \times [-1,1]$ with a sunflower-shaped interface defined parametrically as $x(\theta) = r(\theta) \cos(\theta) + x_c$, $y(\theta) = r(\theta) \sin(\theta) + y_c$, where $r(\theta) = r_0 + r_1 \sin(\omega\theta)$ with $r_0 = 0.4$, $r_1 = 0.2$, $\omega=20$, and $(x_c,y_c) = (0.02\sqrt{5},0.02\sqrt{5})$. The subdomain $\Omega_1$ is the interior of the sunflower curve and $\Omega_2 = \Omega \setminus \overline{\Omega}_1$. The diffusion coefficients are $\beta_1 = 1$ in $\Omega_1$ and $\beta_2 = 10$ in $\Omega_2$. The exact solution is $u_1 = r^2/\beta_1$ in $\Omega_1$ and $u_2 = (r^4 - 0.1\ln(2r))/\beta_2$ in $\Omega_2$, where $r=\sqrt{x^2+y^2}$. The source terms are $f_1 = -4$ in $\Omega_1$ and $f_2 = -(12r^2 + 0.1/r^2)/\beta_2$ in $\Omega_2$. The jump conditions are $g_1 = u_1|_\Gamma - u_2|_\Gamma$ and $g_2 = \beta_1 \nabla u_1 \cdot \boldsymbol{n} - \beta_2 \nabla u_2 \cdot \boldsymbol{n}$, computed from the exact solution. The Dirichlet boundary condition is $g_D = u_2|_{\partial\Omega}$. For this parametric interface, the distance function is computed numerically as $d(\mathbf{x}, \Gamma) = \min_{\theta} \|\mathbf{x} - \mathbf{r}(\theta)\|$.

We distribute collocation points randomly in each subdomain and along the interface. The training and validation points are shown in Figure~\ref{pointstest2}, and the numerical solutions with absolute errors are presented in Figure~\ref{solutiontest2}. Figure~\ref{fig:weight_functions_test2} illustrates the interface-aware weight functions along a radial line crossing the sunflower interface.

\begin{figure}
    \centering
    \subfigure[Training points]{\includegraphics[width=5cm, height=4cm]{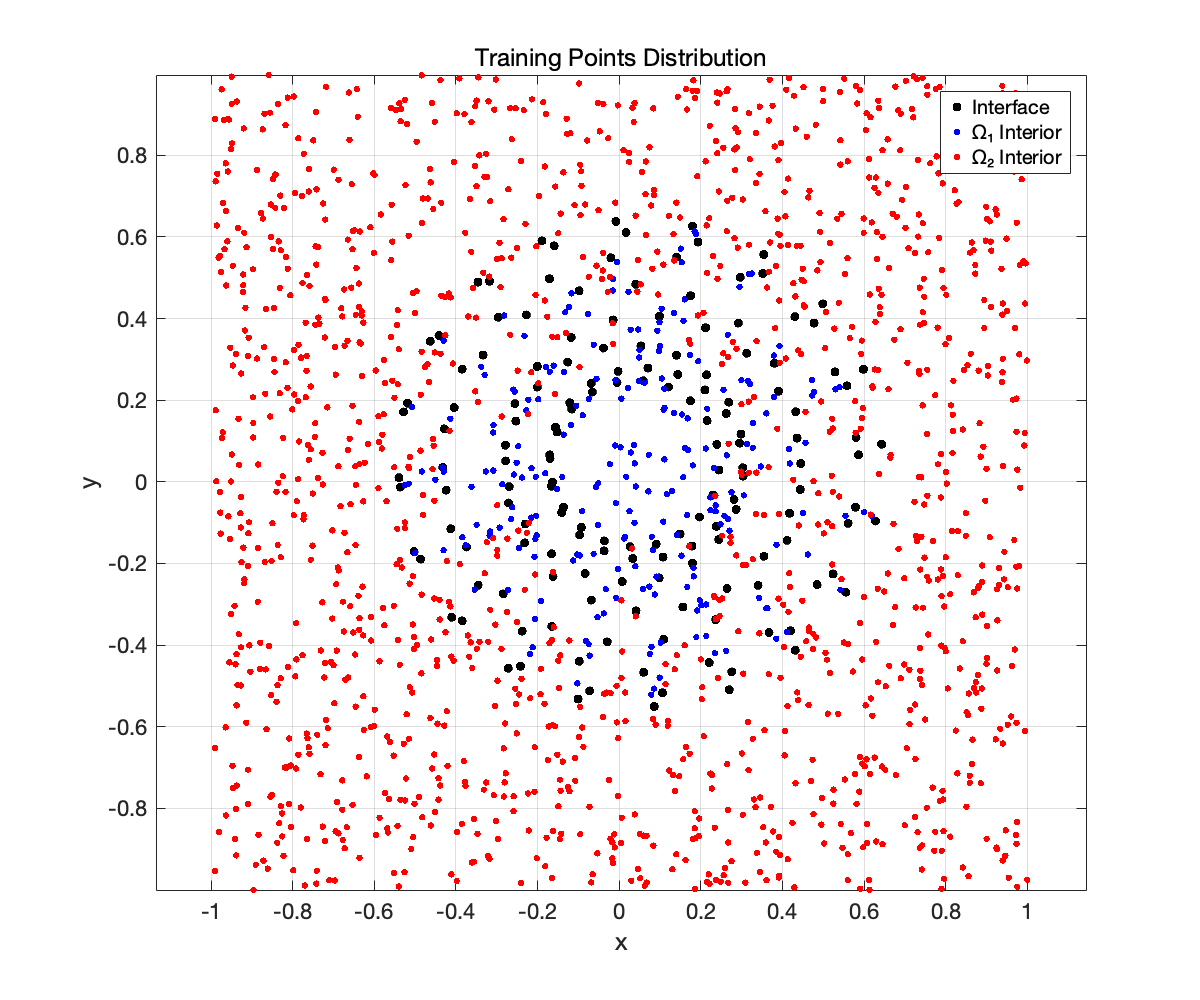}}
    \subfigure[Validation points]{\includegraphics[width=5cm, height=4cm]{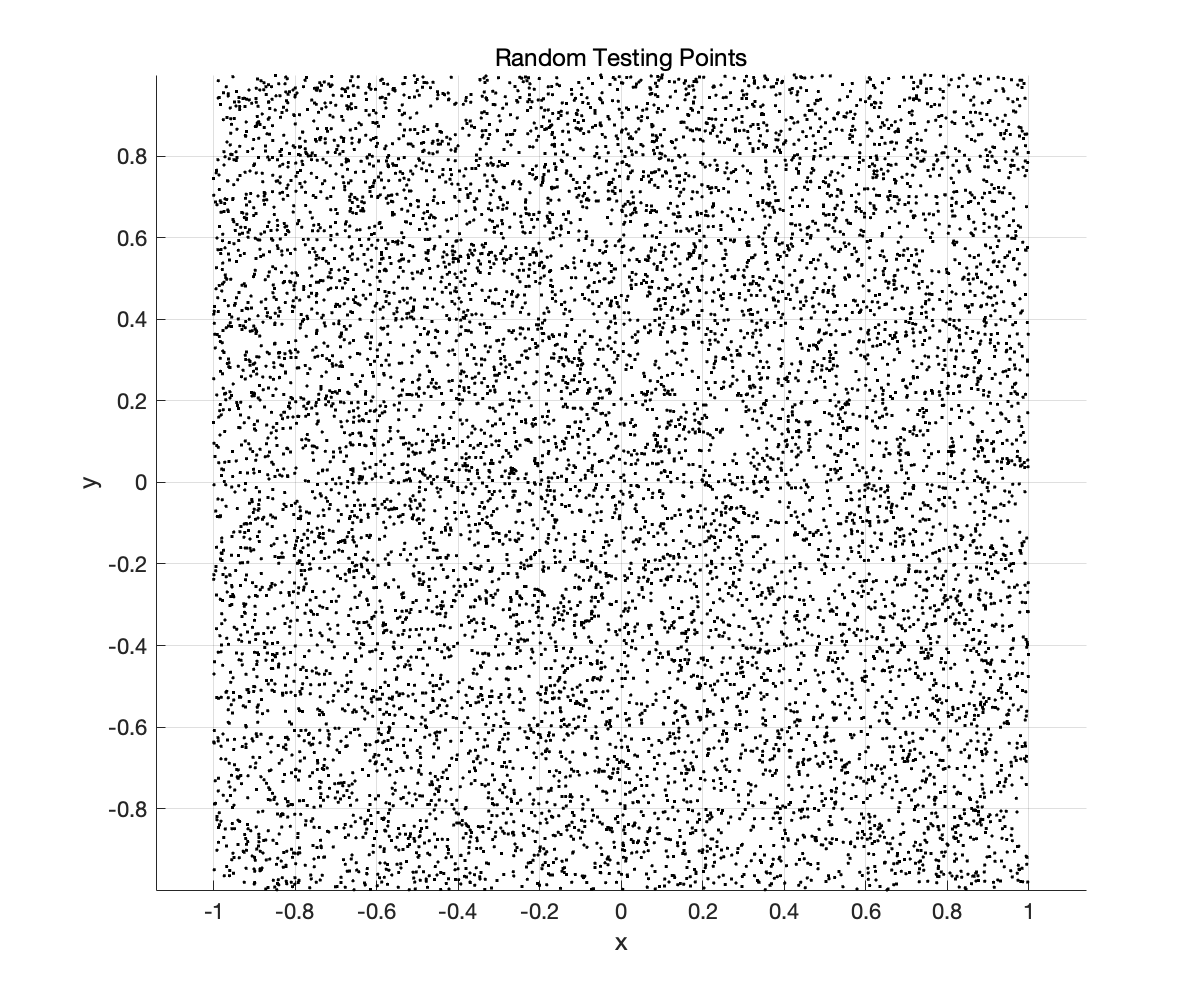}}
    \caption{Training and validation points for Test 2 ($M_{\Omega} = 1600$, $M_{\Gamma}=100$).}
    \label{pointstest2}
\end{figure}

\begin{figure}
    \centering
    \subfigure[$u_{\text{Exact}}$]{\includegraphics[width=3.5cm, height=3cm]{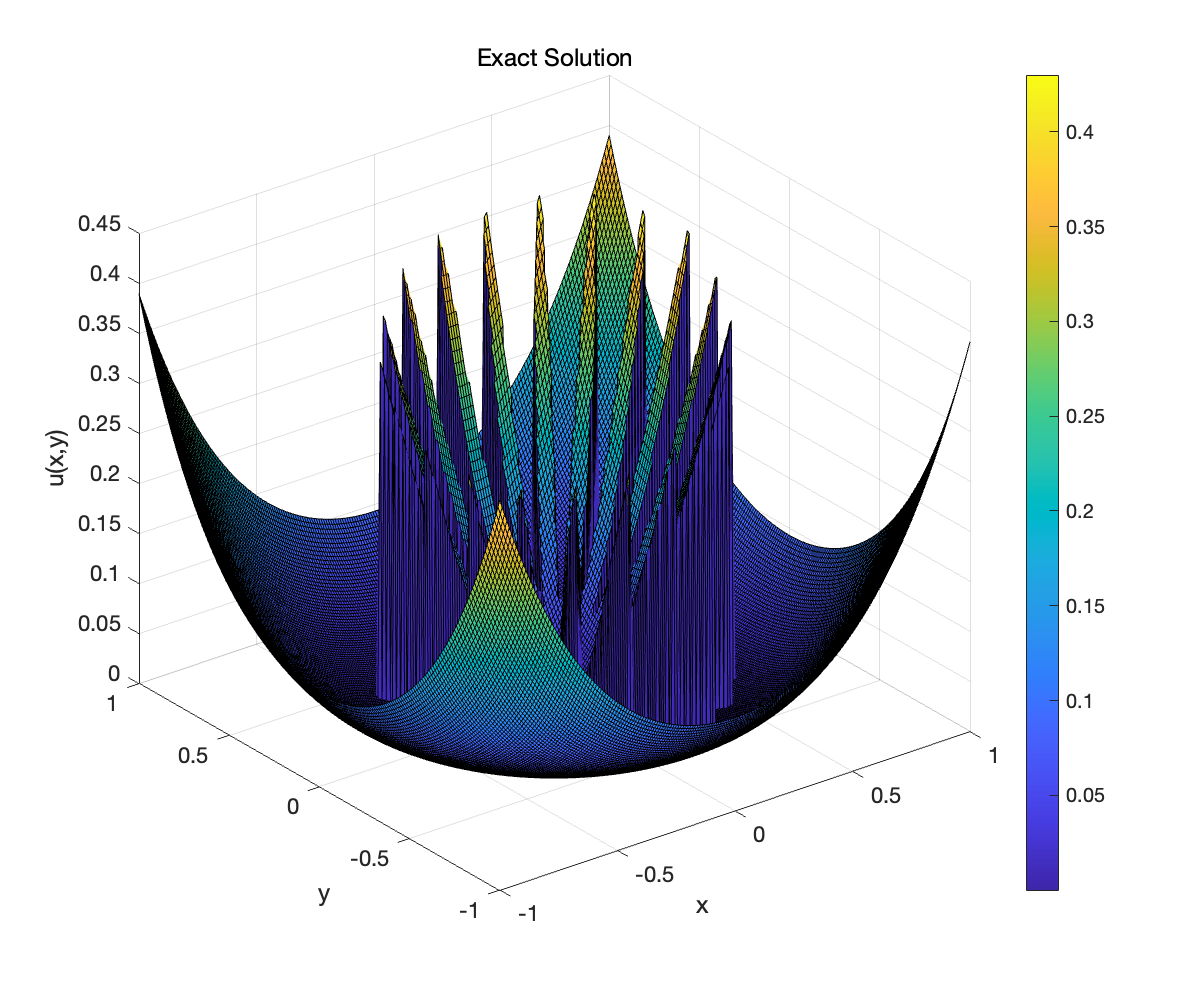}}
    \subfigure[$u_{\text{DNN}}$]{\includegraphics[width=3.5cm, height=3cm]{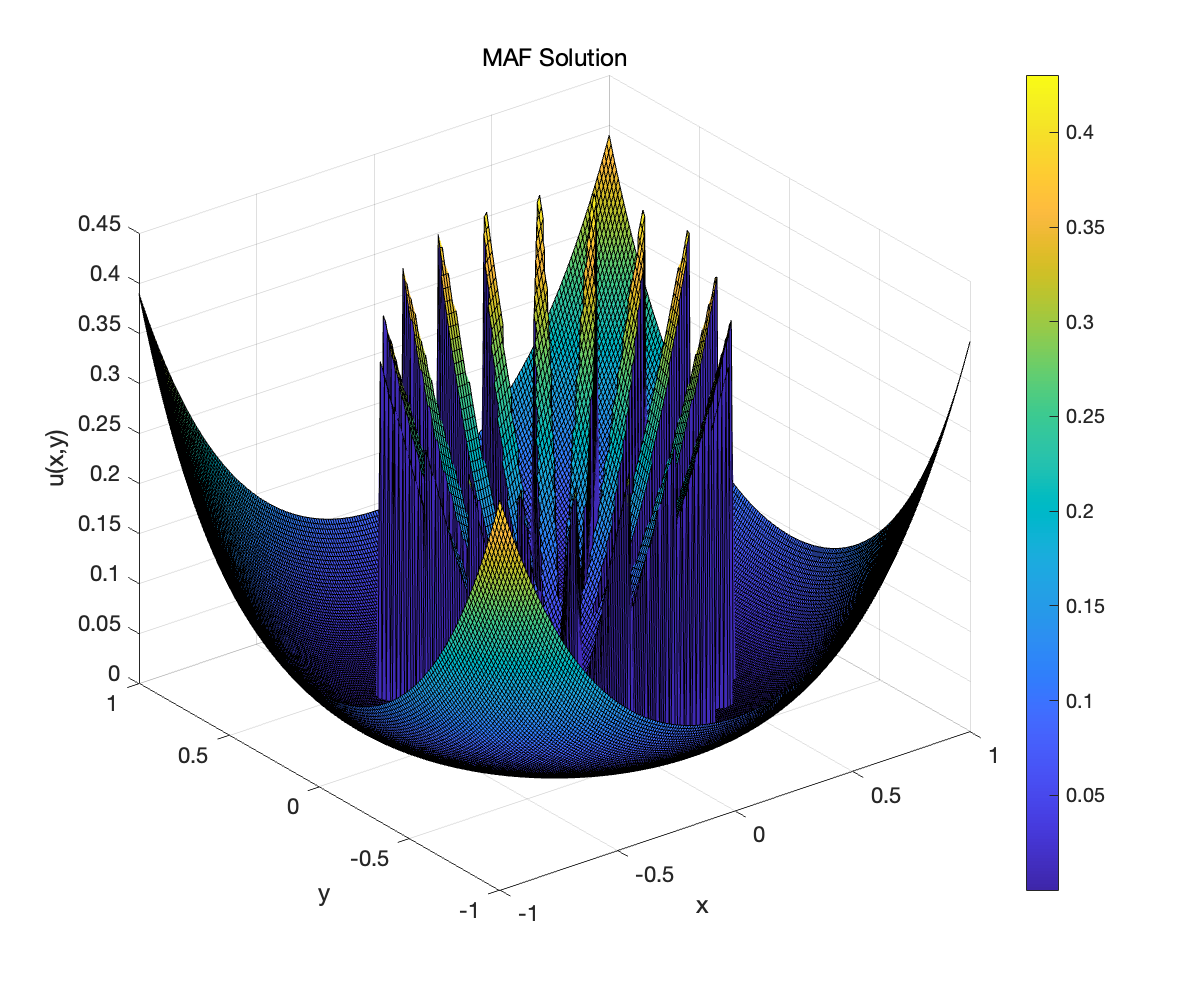}}
    \subfigure[Absolute Error]{\includegraphics[width=3.5cm, height=3cm]{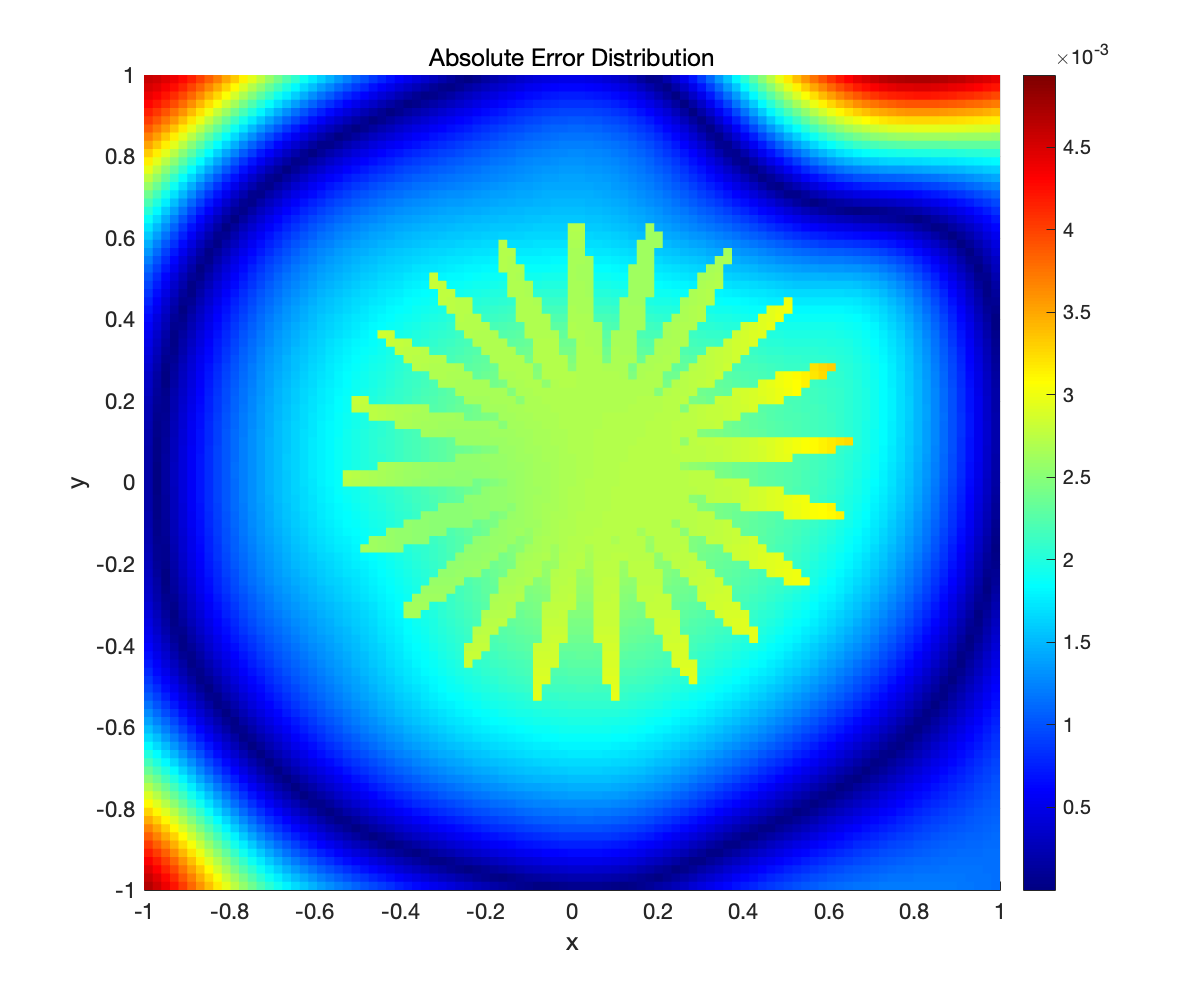}}
    \caption{$u_{\text{DNN}}$, $u_{\text{Exact}}$ and absolute error for Test 2.}
    \label{solutiontest2}
\end{figure}

\begin{figure}
    \centering
    \includegraphics[width=8cm, height=5cm]{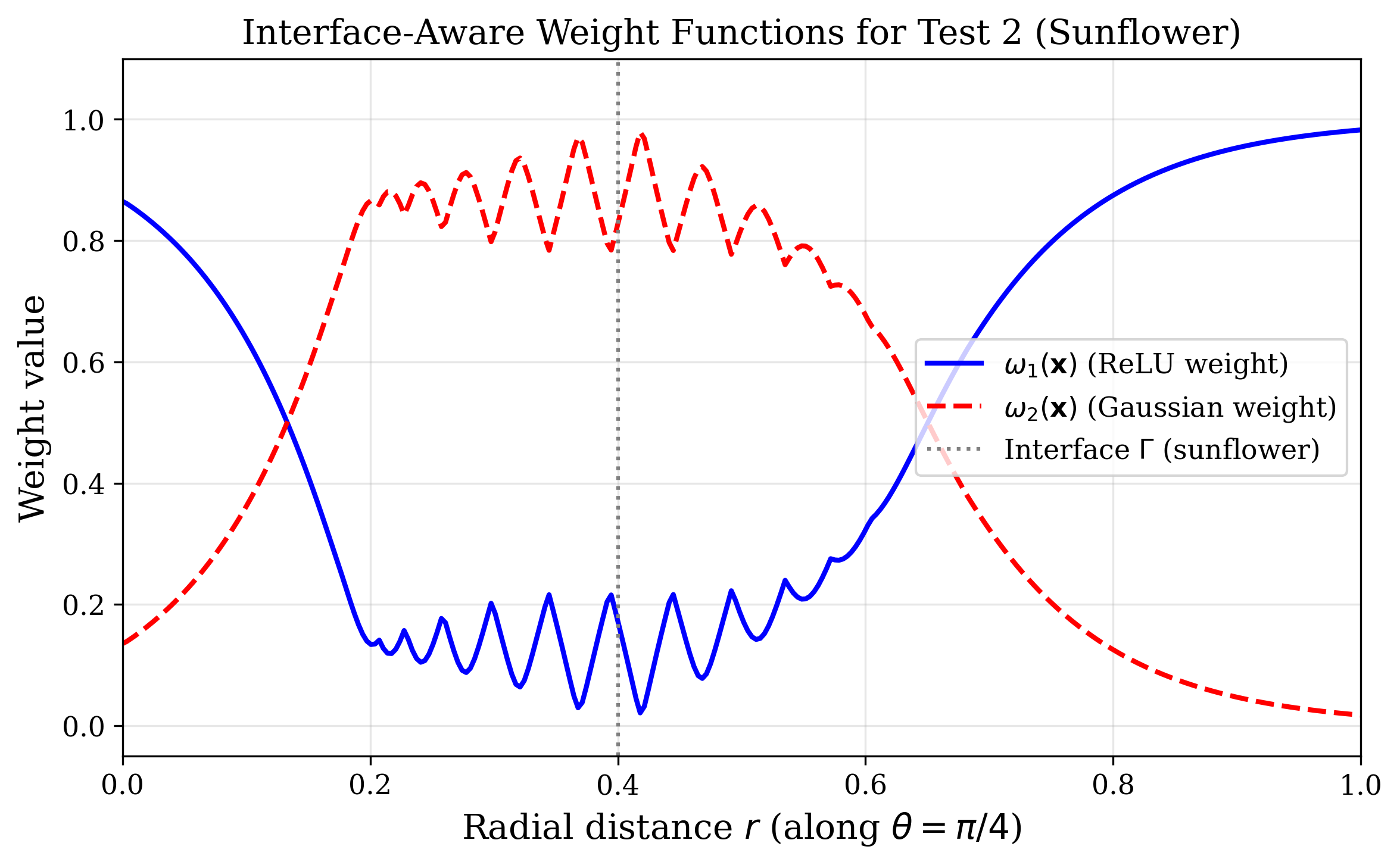}
    \caption{Interface-aware weight functions $\omega_1(\mathbf{x})$ and $\omega_2(\mathbf{x})$ along a radial line crossing the sunflower interface for Test 2.}
    \label{fig:weight_functions_test2}
\end{figure}

Table~\ref{errortabletest2} presents the relative $L^2$ error comparison for all methods. The MAF method achieves the best accuracy ($9.43 \times 10^{-4}$) with the finest sampling, comparable to DCSNN and significantly outperforming XPINN and MFM. The complex sunflower geometry with 20 petals presents a challenging test case, yet MAF demonstrates robust performance in capturing the solution behavior near this intricate interface. Figure~\ref{solutiontest2} (corresponding to the finest sampling configuration with $M_\Omega=1600$, $M_{\partial\Omega}=160$, $M_\Gamma=100$) shows that errors are primarily concentrated near the interface region where the solution exhibits both geometric complexity and discontinuity, which aligns with the challenges imposed by the intricate sunflower-shaped jump conditions.

\begin{table}[H]
    \centering
    \caption{Relative $L^2$ error comparison for Test 2.}
    \label{errortabletest2}
    \small
    \begin{tabular}{|c|c|c|c|c|c|c|c|}
        \hline
        $(M_\Omega, M_{\partial\Omega}, M_\Gamma)$ & MAF & DCSNN & AdaI & I-PINN & M-PINN & MFM & XPINN \\
        \hline
        (100, 40, 25) & $2.09e{-}2$ & $4.33e{-}2$ & $5.12e{-}2$ & $5.89e{-}2$ & $6.01e{-}2$ & $6.25e{-}2$ & $8.32e{-}2$ \\
        \hline
        (400, 80, 50) & $6.89e{-}3$ & $6.05e{-}3$ & $1.25e{-}2$ & $1.87e{-}2$ & $2.15e{-}2$ & $3.33e{-}2$ & $4.73e{-}2$ \\
        \hline
        (1600, 160, 100) & $9.43e{-}4$ & $9.02e{-}4$ & $2.15e{-}3$ & $4.56e{-}3$ & $6.78e{-}3$ & $1.12e{-}2$ & $3.38e{-}2$ \\
        \hline
    \end{tabular}
\end{table}

\subsubsection{Test 3: Elliptic-Shaped Interface}

We consider a domain $\Omega = [-1,1] \times [-1,1]$ with an elliptical interface $\Gamma: (x/0.2)^2 + (y/0.5)^2 = 1$. The subdomain $\Omega_1$ is the interior of the ellipse and $\Omega_2 = \Omega \setminus \overline{\Omega}_1$. This test case features a high-contrast diffusion coefficient: $\beta_1 = 10^{-3}$ in $\Omega_1$ and $\beta_2 = 1$ in $\Omega_2$. The exact solution is $u_1 = e^x e^y$ in $\Omega_1$ and $u_2 = \sin(x)\sin(y)$ in $\Omega_2$. The source terms are $f_1 = -2 \times 10^{-3} e^{x+y}$ in $\Omega_1$ and $f_2 = 2\sin(x)\sin(y)$ in $\Omega_2$. The jump conditions are $g_1 = u_1|_\Gamma - u_2|_\Gamma = e^{x+y} - \sin(x)\sin(y)$ on $\Gamma$ and $g_2 = \beta_1 \nabla u_1 \cdot \boldsymbol{n} - \beta_2 \nabla u_2 \cdot \boldsymbol{n}$, computed analytically. The Dirichlet boundary condition is $g_D = \sin(x)\sin(y)$ on $\partial\Omega$. For this elliptical interface, the distance function is computed via Newton iteration to find the closest point on the ellipse.

We distribute collocation points randomly in each subdomain and along the interface. The training and validation points are shown in Figure~\ref{pointstest3}, and the numerical solutions with absolute errors are presented in Figure~\ref{solutiontest3}. Figure~\ref{fig:weight_functions_test3} illustrates the interface-aware weight functions along the $x$-axis crossing the ellipse interface.

\begin{figure}[H]
    \centering
    \subfigure[Training points]{\includegraphics[width=5cm, height=4cm]{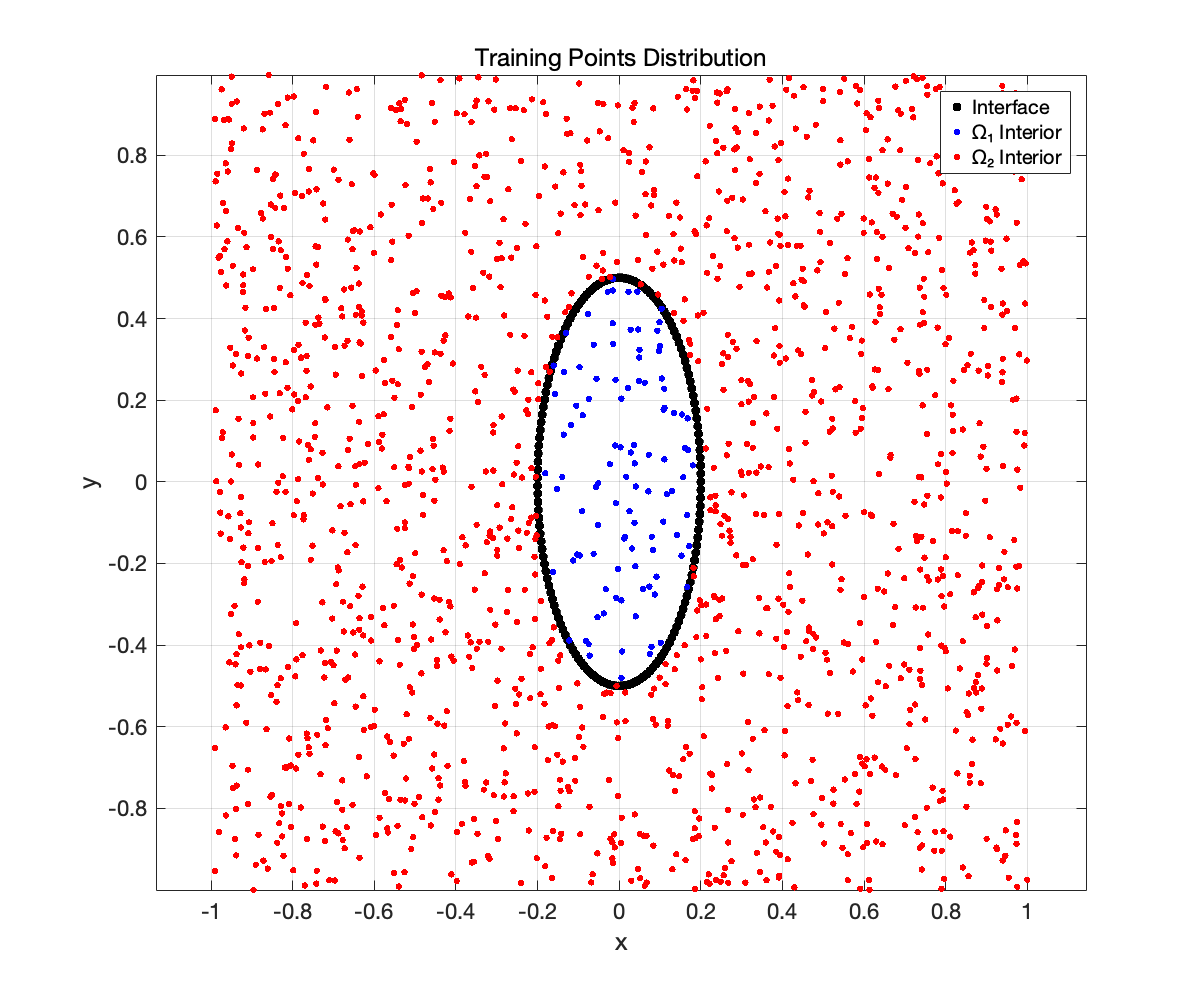}}
    \subfigure[Validation points]{\includegraphics[width=5cm, height=4cm]{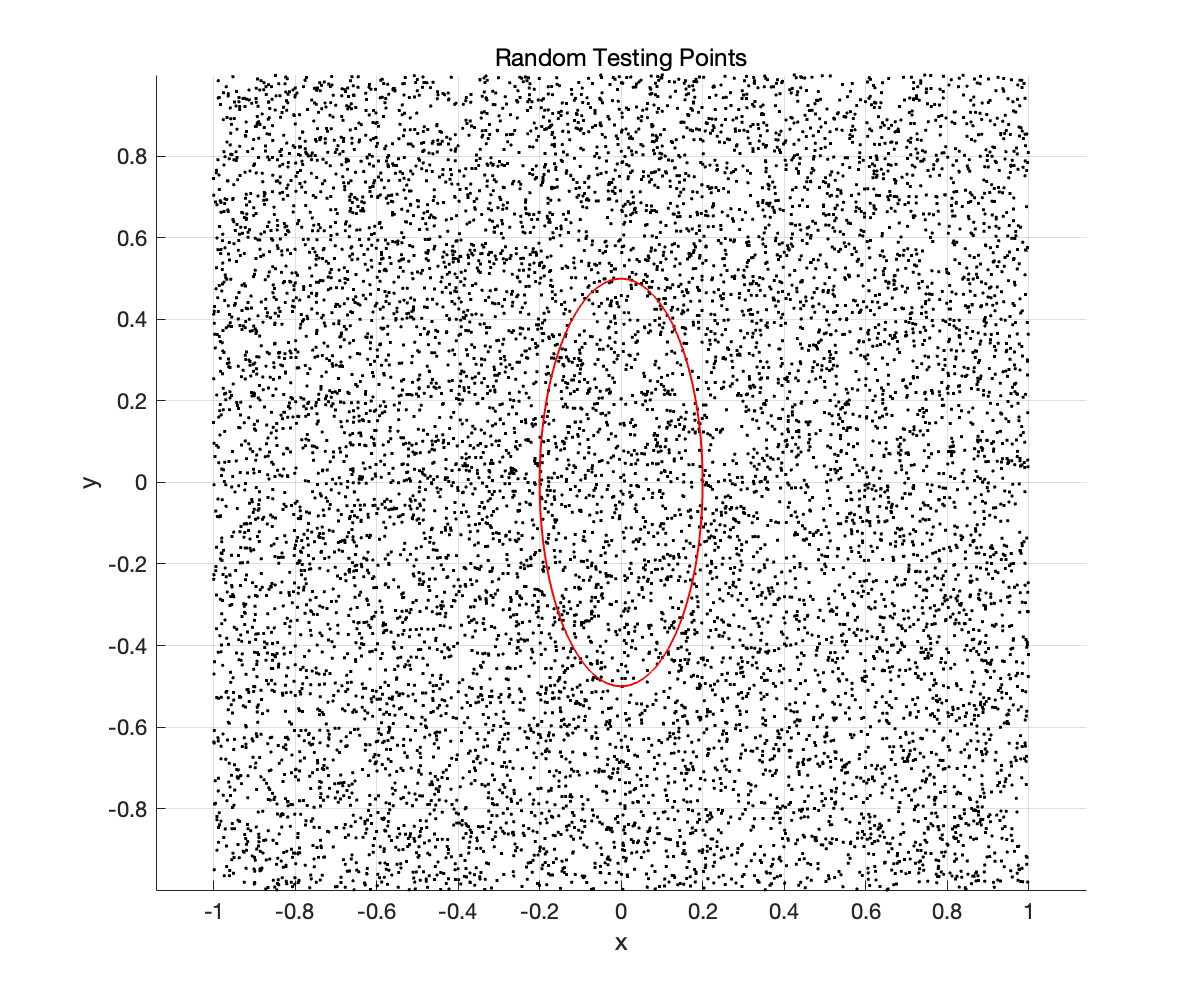}}
    \caption{Training and validation points for Test 3 ($M_{\Omega} = 1600$, $M_{\Gamma}=100$).}
    \label{pointstest3}
\end{figure}

\begin{figure}[H]
    \centering
    \subfigure[$u_{\text{Exact}}$]{\includegraphics[width=5cm, height=4cm]{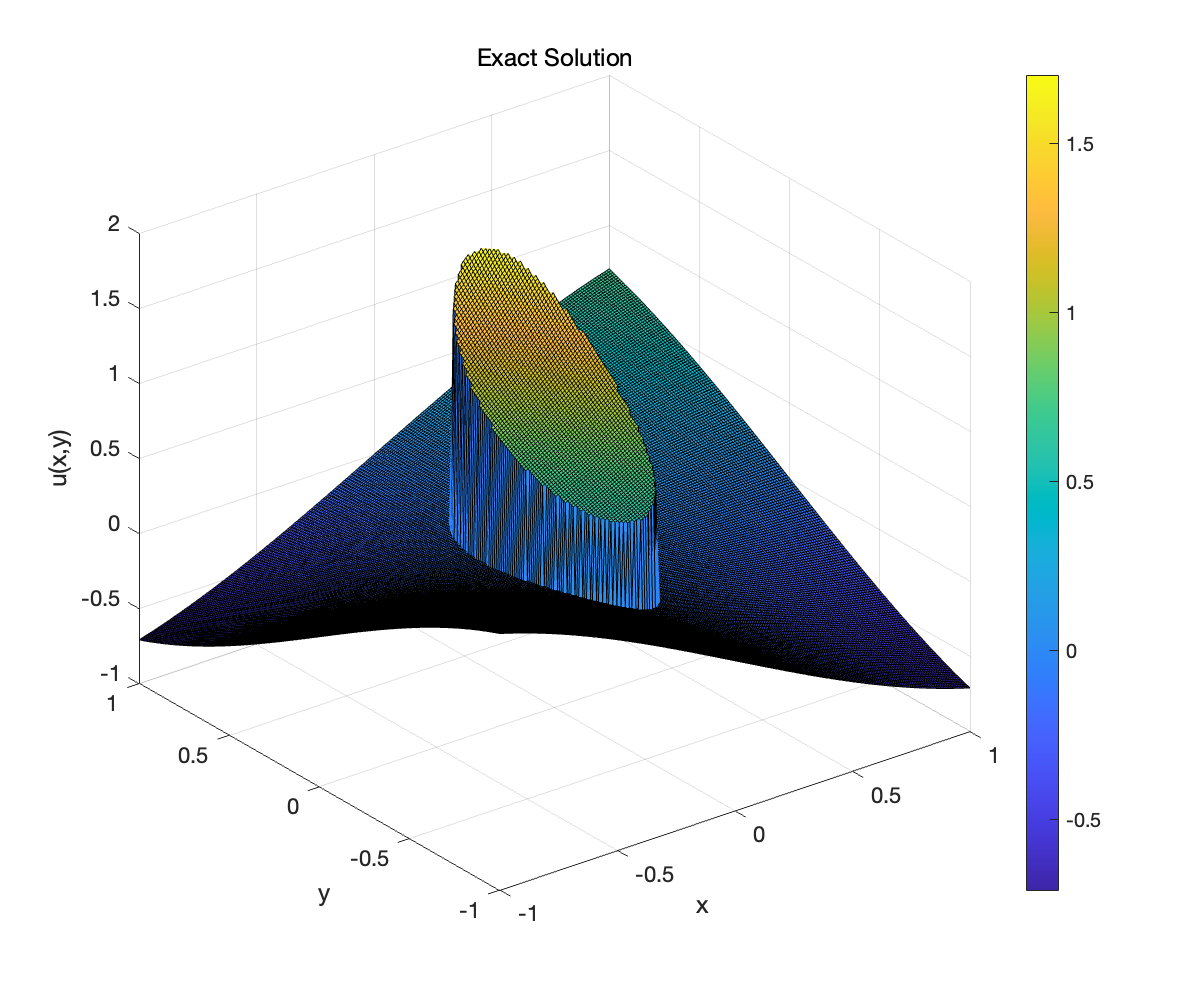}}
    \subfigure[$u_{\text{DNN}}$]{\includegraphics[width=5cm, height=4cm]{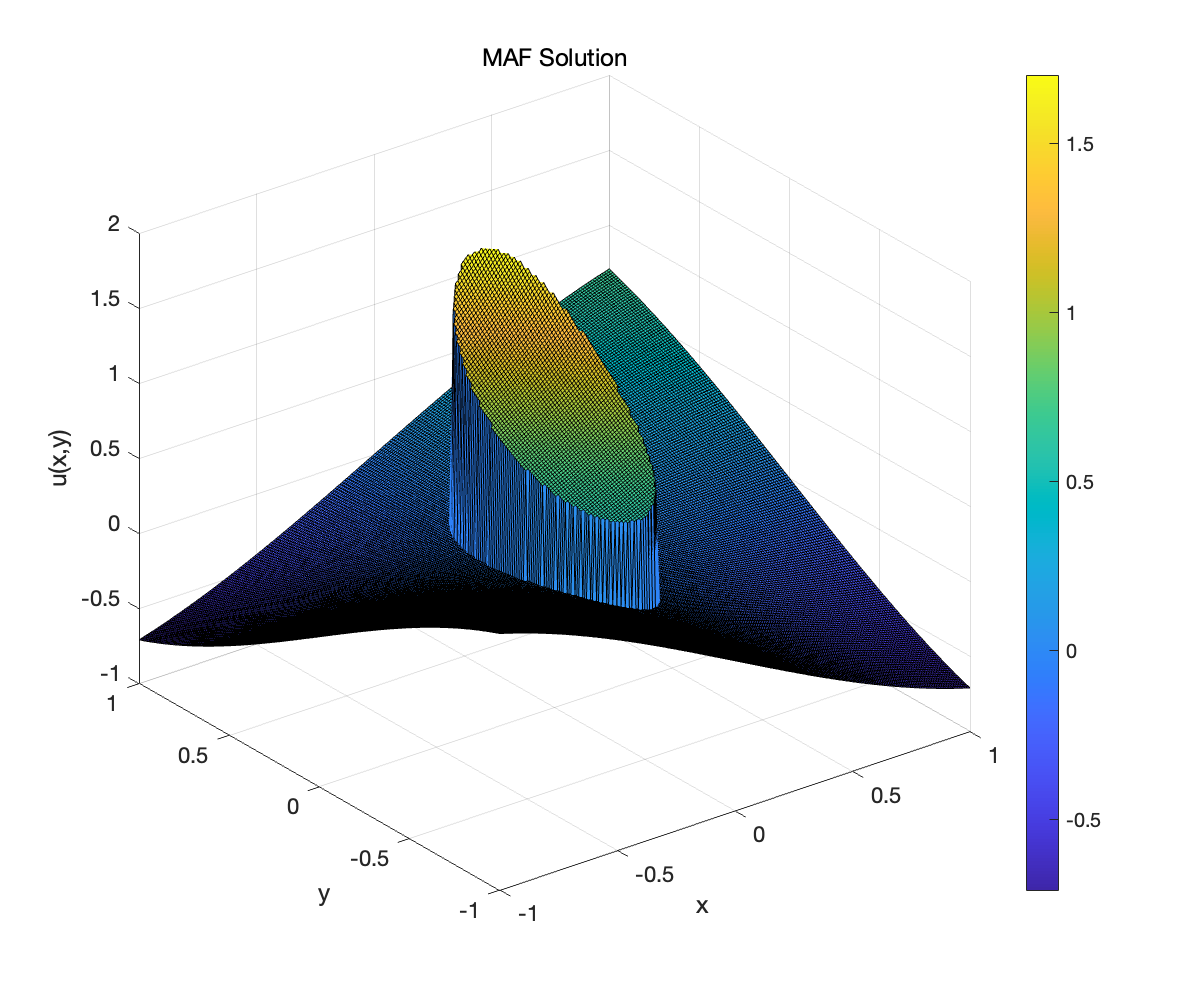}}
    \subfigure[Absolute Error]{\includegraphics[width=5cm, height=4cm]{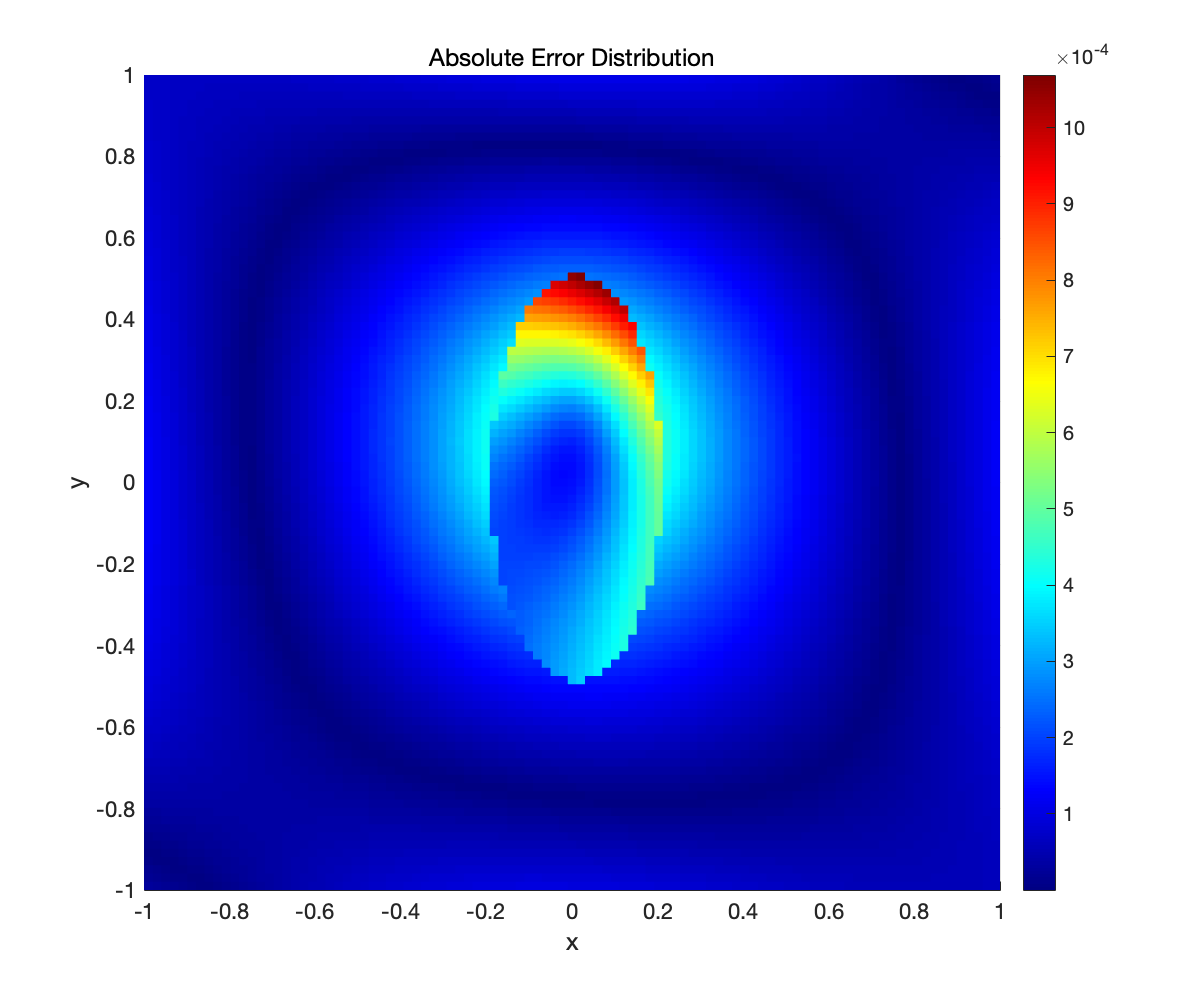}}
    \caption{$u_{\text{DNN}}$, $u_{\text{Exact}}$ and absolute error for Test 3.}
    \label{solutiontest3}
\end{figure}

\begin{figure}[H]
    \centering
    \includegraphics[width=8cm, height=5cm]{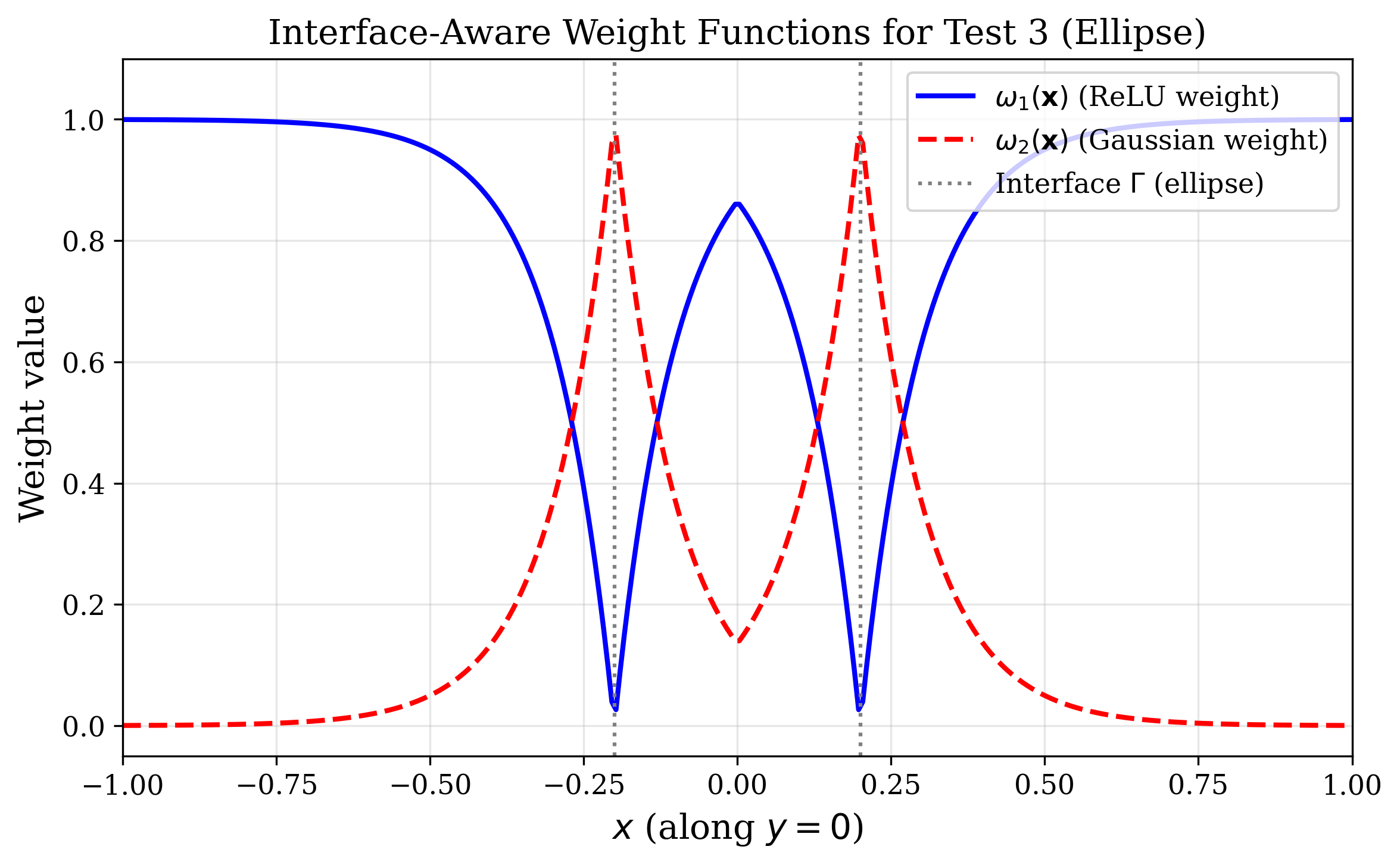}
    \caption{Interface-aware weight functions $\omega_1(\mathbf{x})$ and $\omega_2(\mathbf{x})$ along the $x$-axis crossing the ellipse interface for Test 3.}
    \label{fig:weight_functions_test3}
\end{figure}

Table~\ref{errortabletest3} presents the relative $L^2$ error comparison for all methods. This test case presents a particularly challenging scenario due to the high contrast in diffusion coefficients ($10^{-3}$ versus $1$) and the distinct functional forms of the solution in each subdomain. The MAF method achieves remarkable accuracy ($1.71 \times 10^{-4}$) with the finest sampling, outperforming XPINN by 38$\times$ and MFM by 25$\times$, while maintaining comparable accuracy to DCSNN. Figure~\ref{solutiontest3} (corresponding to the finest sampling configuration with $M_\Omega=1600$, $M_{\partial\Omega}=160$, $M_\Gamma=100$) shows that errors are primarily concentrated near the elliptical interface, which aligns with the challenges imposed by the high-contrast coefficient jump conditions ($\beta_1/\beta_2 = 10^{-3}$).

\begin{table}[H]
    \centering
    \caption{Relative $L^2$ error comparison for Test 3.}
    \label{errortabletest3}
    \small
    \begin{tabular}{|c|c|c|c|c|c|c|c|}
        \hline
        $(M_\Omega, M_{\partial\Omega}, M_\Gamma)$ & MAF & DCSNN & AdaI & I-PINN & M-PINN & MFM & XPINN \\
        \hline
        (100, 40, 25) & $3.32e{-}3$ & $4.11e{-}3$ & $6.78e{-}3$ & $1.12e{-}2$ & $1.89e{-}2$ & $2.53e{-}2$ & $4.84e{-}2$ \\
        \hline
        (400, 80, 50) & $7.06e{-}4$ & $9.07e{-}4$ & $1.56e{-}3$ & $2.89e{-}3$ & $5.12e{-}3$ & $7.43e{-}3$ & $8.49e{-}3$ \\
        \hline
        (1600, 160, 100) & $1.71e{-}4$ & $5.85e{-}4$ & $4.23e{-}4$ & $7.56e{-}4$ & $1.35e{-}3$ & $4.29e{-}3$ & $6.55e{-}3$ \\
        \hline
    \end{tabular}
\end{table}

\subsubsection{Test 4: Flower-Shaped Interface}

We consider a flower-shaped domain $\Omega$ with boundary $r(\theta) = 1 - 0.3\cos(5\theta)$ and an interior flower-shaped interface $\Gamma: r(\theta) = 0.4 - 0.2\cos(5\theta)$. The subdomain $\Omega_1$ is the interior of the flower curve and $\Omega_2 = \Omega \setminus \overline{\Omega}_1$. The diffusion coefficient is $\beta_1 = 1$ in $\Omega_1$ and $\beta_2 = 10$ in $\Omega_2$. The exact solution is $u_1 = e^x e^y$ in $\Omega_1$ and $u_2 = \sin(x)\sin(y)$ in $\Omega_2$. The source terms are $f_1 = -2e^{x+y}$ in $\Omega_1$ and $f_2 = 20\sin(x)\sin(y)$ in $\Omega_2$. The jump conditions are $g_1 = u_1|_\Gamma - u_2|_\Gamma = e^{x+y} - \sin(x)\sin(y)$ on $\Gamma$ and $g_2 = \beta_1 \nabla u_1 \cdot \boldsymbol{n} - \beta_2 \nabla u_2 \cdot \boldsymbol{n}$, computed analytically from the exact solution. The Dirichlet boundary condition is $g_D = \sin(x)\sin(y)$ on $\partial\Omega$. For this parametric interface, the distance function is computed numerically as $d(\mathbf{x}, \Gamma) = \min_{\theta} \|\mathbf{x} - \mathbf{r}(\theta)\|$.

We distribute collocation points randomly in each subdomain and along the interface. The training and validation points are shown in Figure~\ref{pointstest4}, and the numerical solutions with absolute errors are presented in Figure~\ref{solutiontest4}. Figure~\ref{fig:weight_functions_test4} illustrates the interface-aware weight functions along a radial line crossing the flower interface.

\begin{figure}[H]
    \centering
    \subfigure[Training points]{\includegraphics[width=5cm, height=4cm]{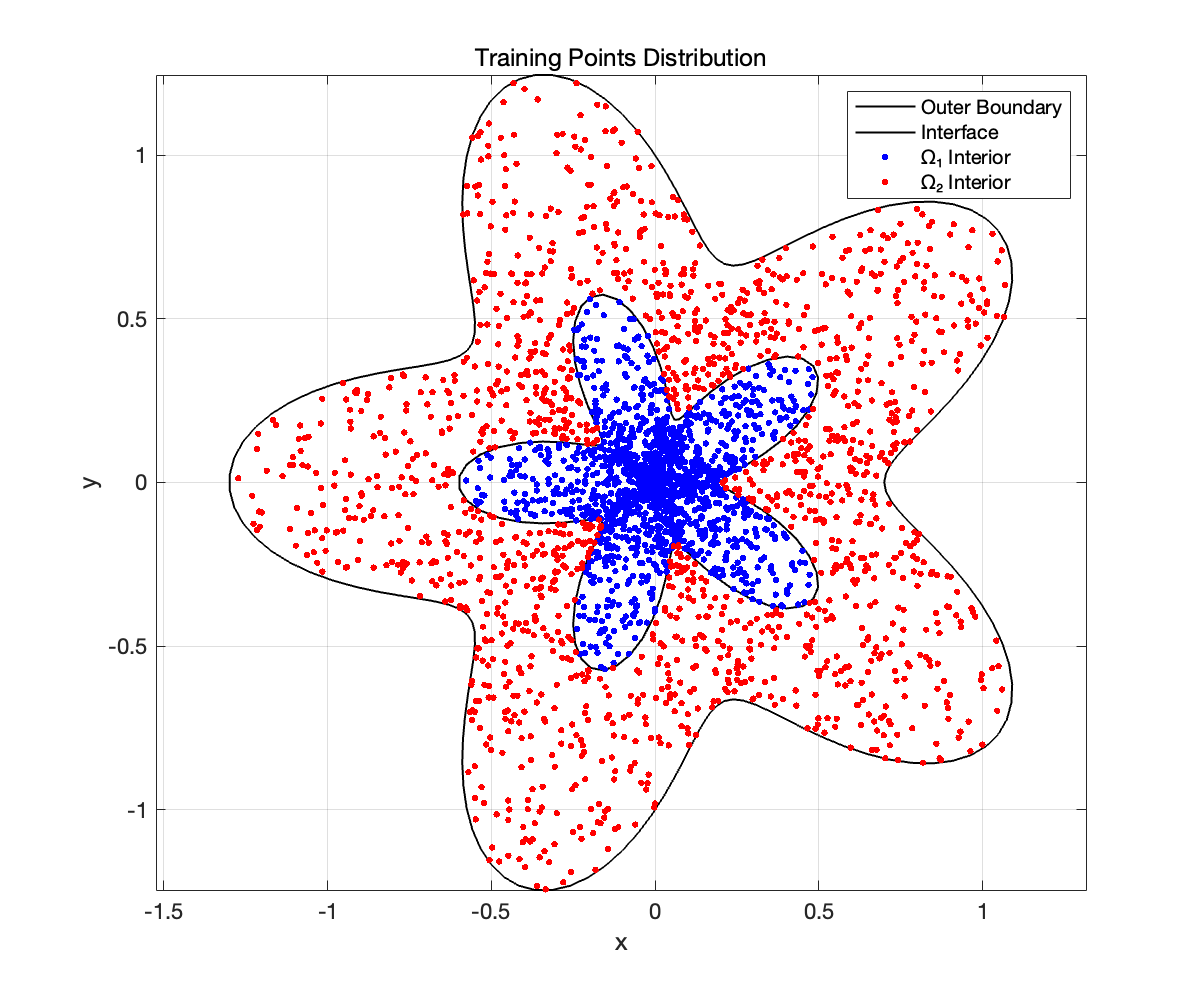}}
    \subfigure[Validation points]{\includegraphics[width=5cm, height=4cm]{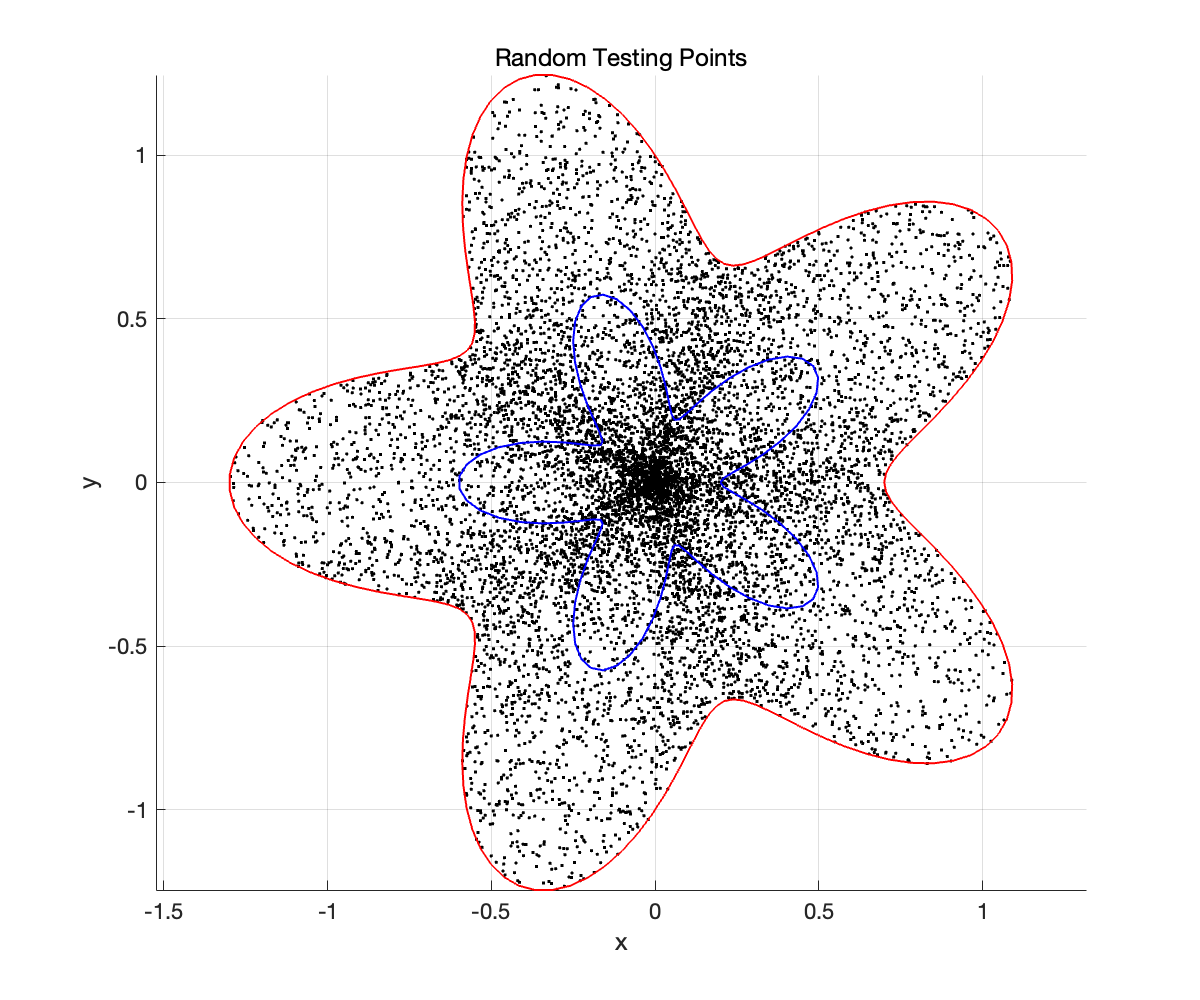}}
    \caption{Training and validation points for Test 4 ($M_{\Omega} = 1600$, $M_{\Gamma}=100$).}
    \label{pointstest4}
\end{figure}

\begin{figure}[H]
    \centering
    \subfigure[$u_{\text{Exact}}$]{\includegraphics[width=5cm, height=4cm]{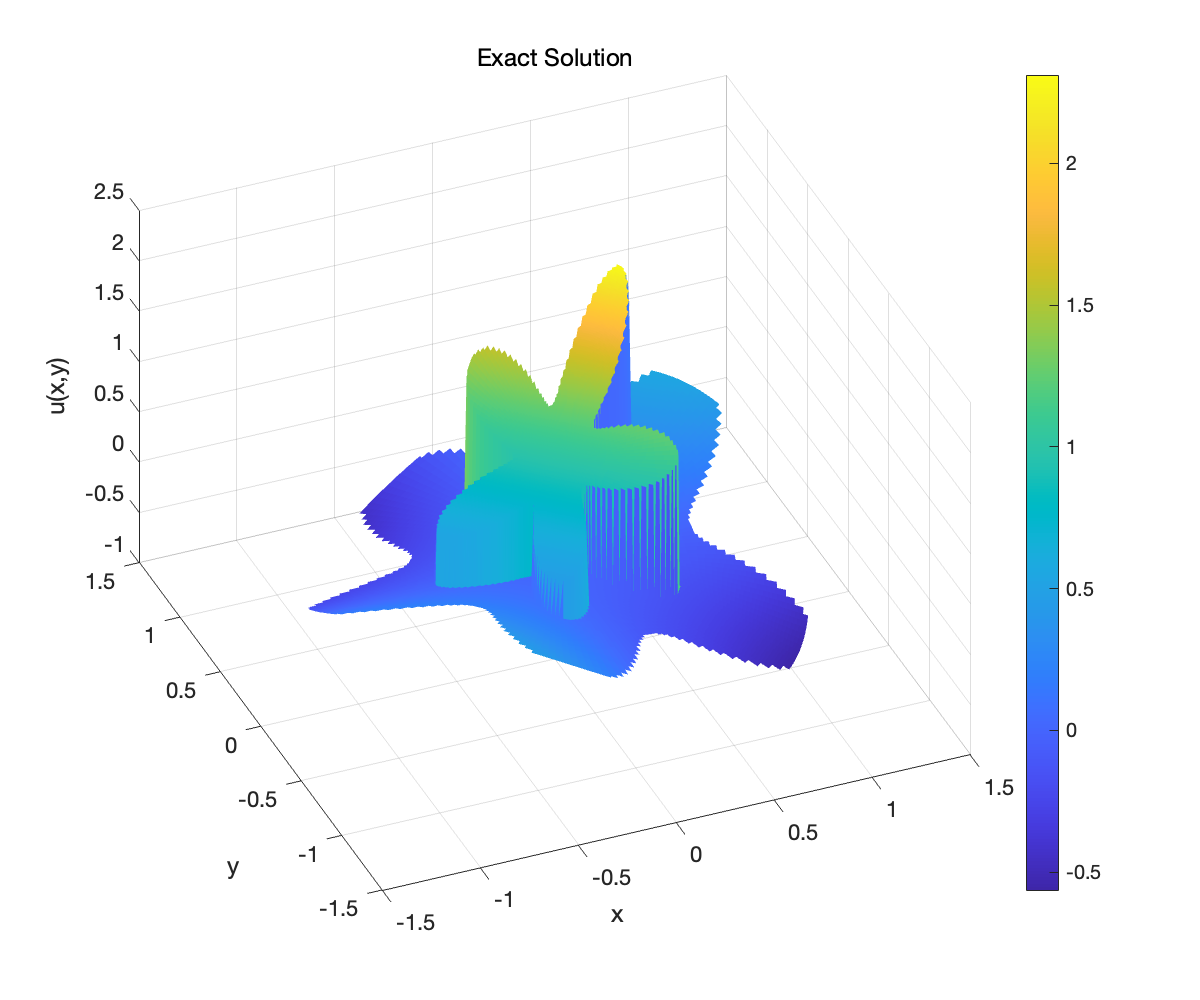}}
    \subfigure[$u_{\text{DNN}}$]{\includegraphics[width=5cm, height=4cm]{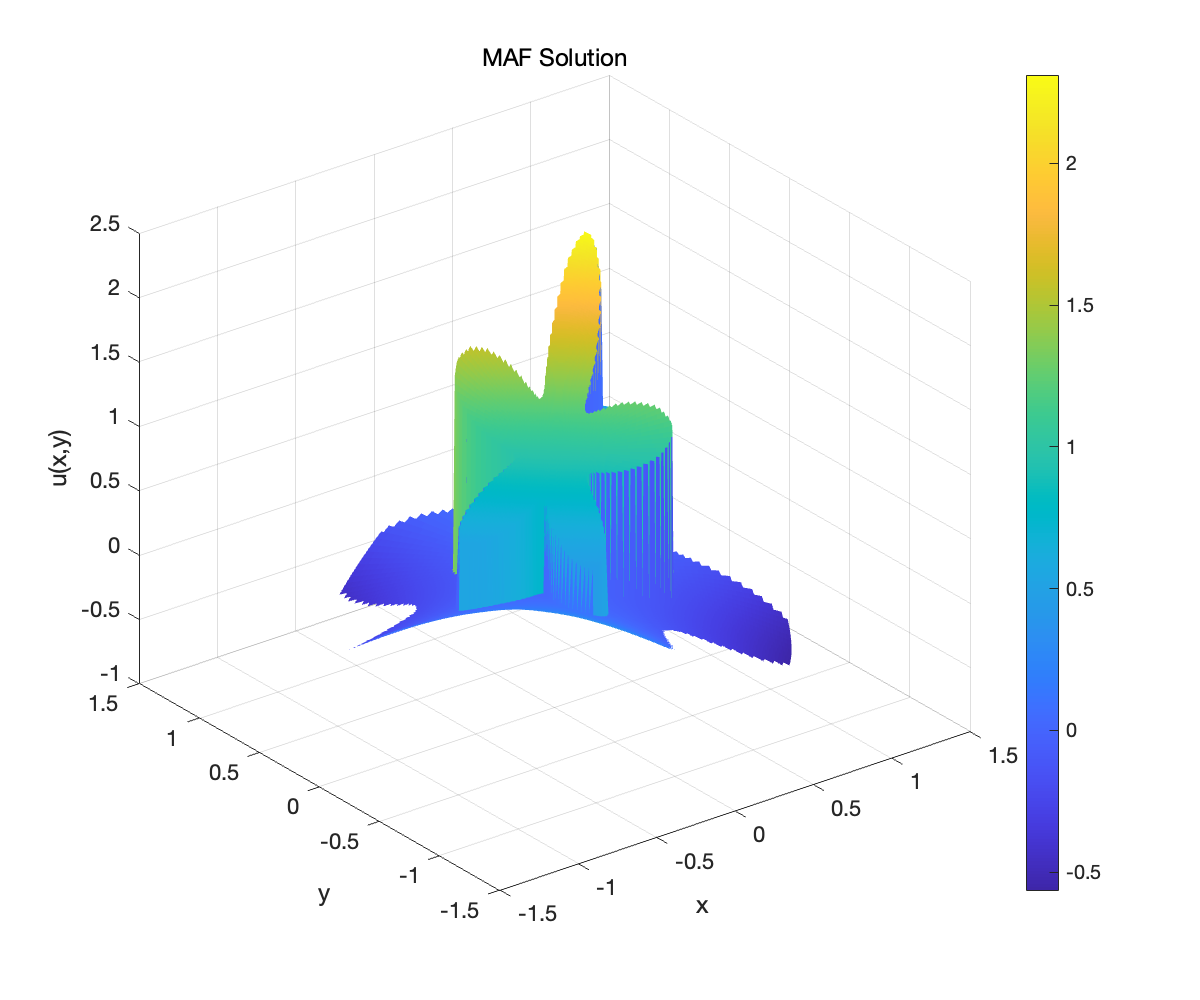}}
    \subfigure[Absolute Error]{\includegraphics[width=5cm, height=4cm]{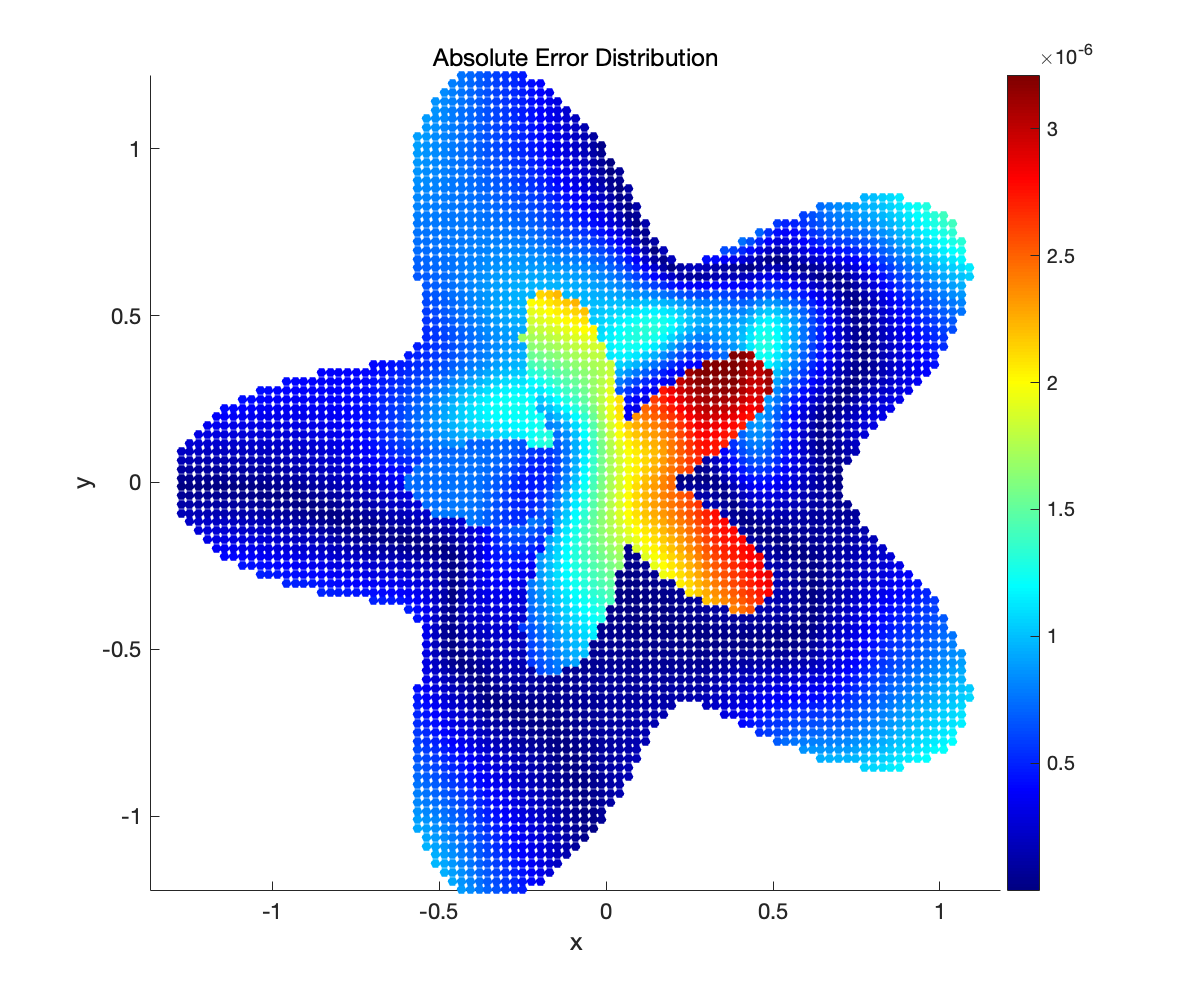}}
    \caption{$u_{\text{DNN}}$, $u_{\text{Exact}}$ and absolute error for Test 4.}
    \label{solutiontest4}
\end{figure}

\begin{figure}[H]
    \centering
    \includegraphics[width=8cm, height=5cm]{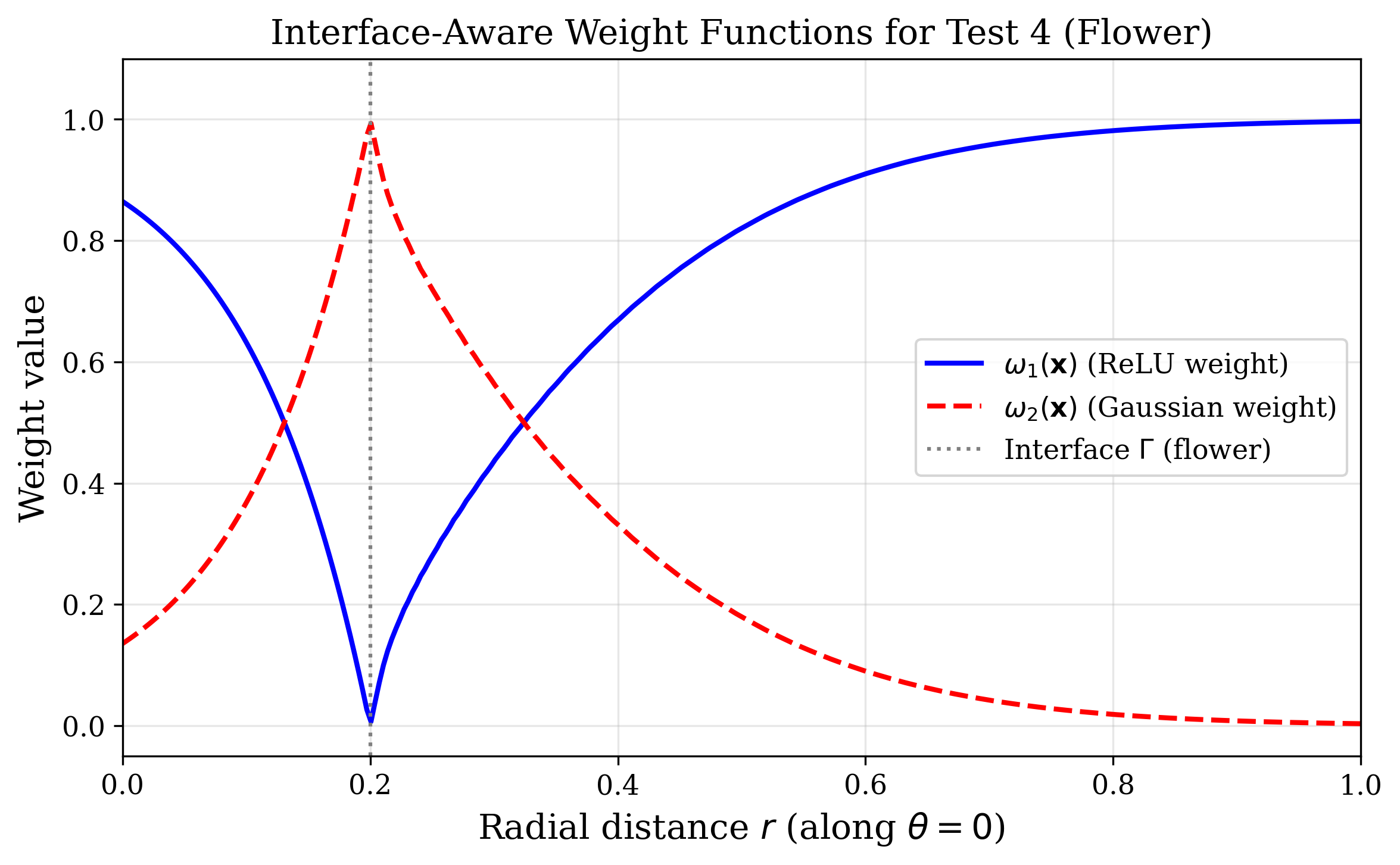}
    \caption{Interface-aware weight functions $\omega_1(\mathbf{x})$ and $\omega_2(\mathbf{x})$ along a radial line crossing the flower interface for Test 4.}
    \label{fig:weight_functions_test4}
\end{figure}

Table~\ref{errortabletest4} presents the relative $L^2$ error comparison for all methods. This test case demonstrates exceptional performance of the MAF method on a problem combining geometric complexity (5-petal flower) with distinct solution behaviors (exponential vs. trigonometric). The MAF method achieves remarkable accuracy ($8.99 \times 10^{-7}$) with the finest sampling, comparable to DCSNN and significantly outperforming XPINN by 611$\times$ and MFM by 112$\times$. Figure~\ref{solutiontest4} (corresponding to the finest sampling configuration with $M_\Omega=1600$, $M_{\partial\Omega}=160$, $M_\Gamma=100$) shows that errors are primarily concentrated near the flower-shaped interface vertices where the curvature is highest, which aligns with the challenges imposed by the complex geometry and distinct solution behaviors ($e^{x+y}$ vs. $\sin(x)\sin(y)$).

\begin{table}[H]
    \centering
    \caption{Relative $L^2$ error comparison for Test 4.}
    \label{errortabletest4}
    \small
    \begin{tabular}{|c|c|c|c|c|c|c|c|}
        \hline
        $(M_\Omega, M_{\partial\Omega}, M_\Gamma)$ & MAF & DCSNN & AdaI & I-PINN & M-PINN & MFM & XPINN \\
        \hline
        (100, 40, 25) & $3.85e{-}5$ & $3.12e{-}5$ & $5.67e{-}5$ & $1.23e{-}4$ & $3.56e{-}4$ & $8.52e{-}4$ & $2.98e{-}3$ \\
        \hline
        (400, 80, 50) & $6.26e{-}6$ & $7.01e{-}6$ & $1.15e{-}5$ & $2.89e{-}5$ & $8.76e{-}5$ & $3.47e{-}4$ & $6.48e{-}4$ \\
        \hline
        (1600, 160, 100) & $8.99e{-}7$ & $8.89e{-}7$ & $2.34e{-}6$ & $6.78e{-}6$ & $2.15e{-}5$ & $1.01e{-}4$ & $5.49e{-}4$ \\
        \hline
    \end{tabular}
\end{table}

\subsubsection{Test 5: 3D Ellipsoidal Interface}

We consider a three-dimensional domain $\Omega = [-1,1]^3$ with an ellipsoidal interface $\Gamma: (x/0.7)^2 + (y/0.5)^2 + (z/0.3)^2 = 1$. The subdomain $\Omega_1$ is the interior of the ellipsoid and $\Omega_2 = \Omega \setminus \overline{\Omega}_1$. This test case features a high-contrast diffusion coefficient: $\beta_1 = 10^{-3}$ in $\Omega_1$ and $\beta_2 = 1$ in $\Omega_2$. The exact solution is $u_1 = e^x e^y e^z$ in $\Omega_1$ and $u_2 = \sin(x)\sin(y)\sin(z)$ in $\Omega_2$. The source terms are $f_1 = -3 \times 10^{-3} e^{x+y+z}$ in $\Omega_1$ and $f_2 = 3\sin(x)\sin(y)\sin(z)$ in $\Omega_2$. The jump conditions are $g_1 = u_1|_\Gamma - u_2|_\Gamma$ and $g_2 = \beta_1 \nabla u_1 \cdot \boldsymbol{n} - \beta_2 \nabla u_2 \cdot \boldsymbol{n}$, computed from the exact solution. The Dirichlet boundary condition is $g_D = \sin(x)\sin(y)\sin(z)$ on $\partial\Omega$. For this ellipsoidal interface, the distance function is computed via iterative projection to find the closest point on the ellipsoid surface.

We distribute collocation points randomly in each subdomain and along the interface. The training and validation points are shown in Figure~\ref{pointstest5}, and the numerical solutions with absolute errors are presented in Figure~\ref{solutiontest5}. Figure~\ref{fig:weight_functions_test5} illustrates the interface-aware weight functions along the $x$-axis crossing the ellipsoid interface.

\begin{figure}[H]
    \centering
    \subfigure[Training points]{\includegraphics[width=4.5cm, height=3cm]{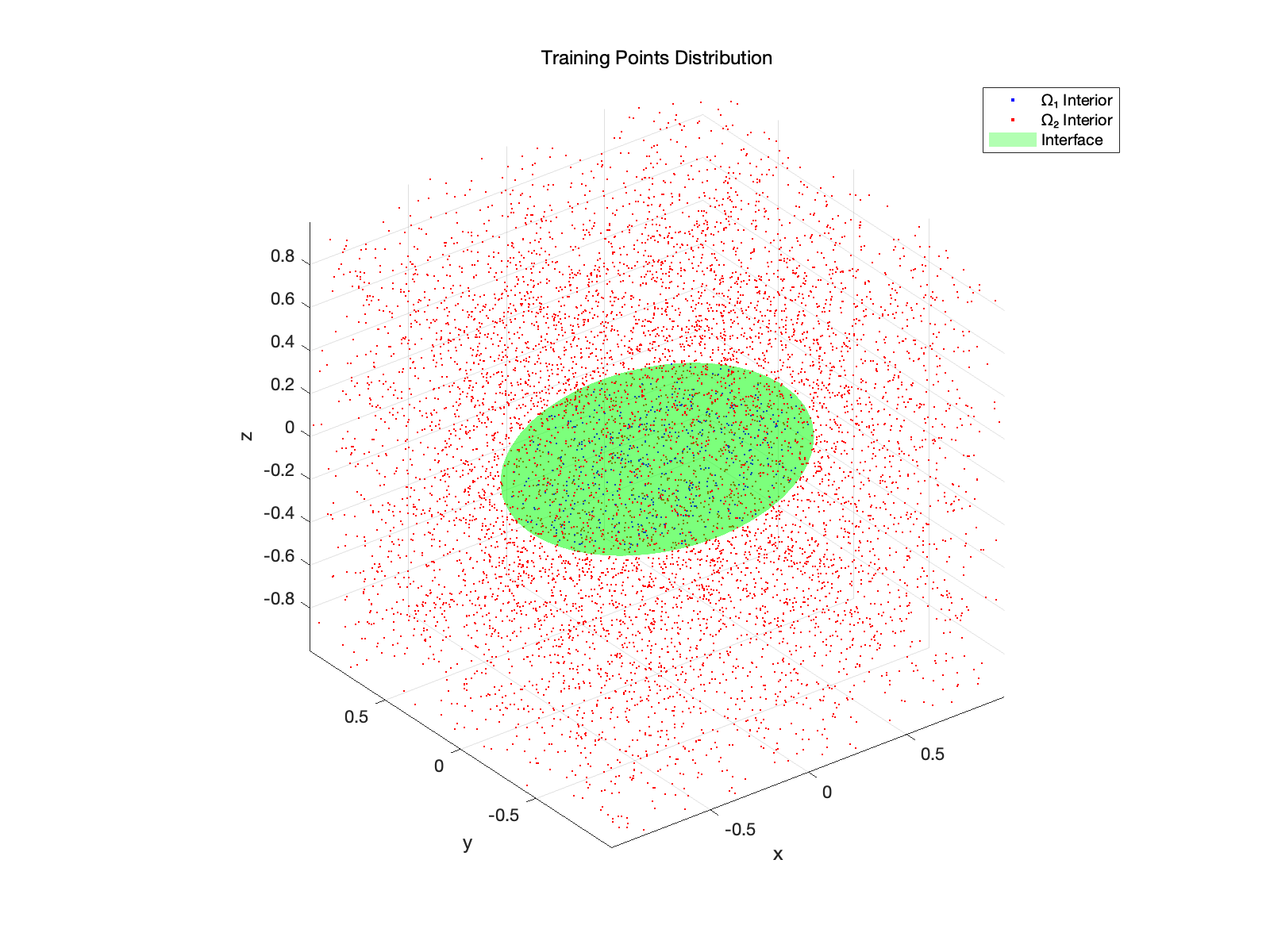}}
    \subfigure[Validation points]{\includegraphics[width=4.5cm, height=3cm]{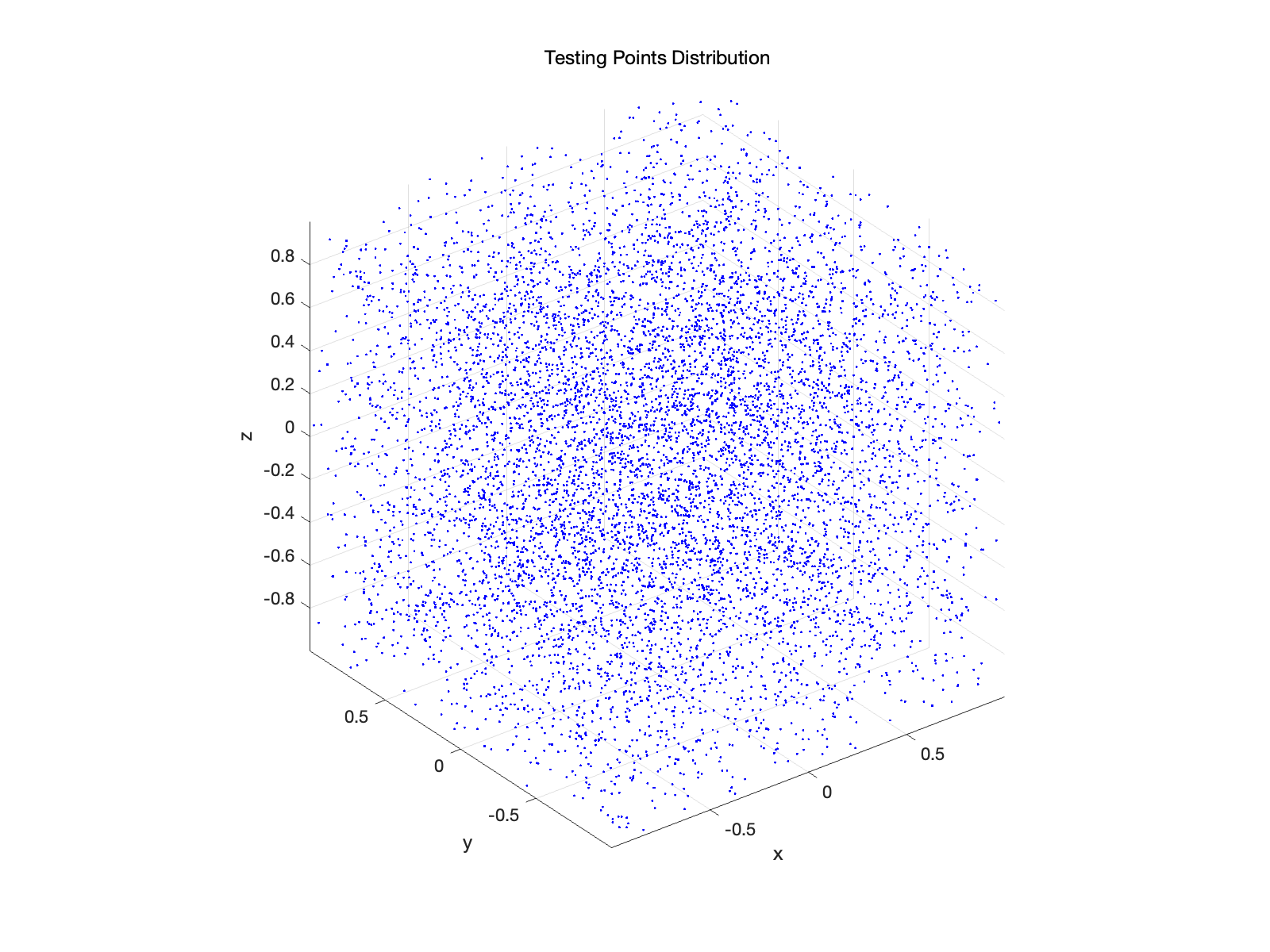}}
    \caption{Training and validation points for Test 5 ($M_{\Omega} = 8000$, $M_{\Gamma}=120$).}
    \label{pointstest5}
\end{figure}

\begin{figure}[H]
    \centering
    \subfigure[$u_{\text{Exact}}$ at $x=0$, $y=0$, $z=0$]{\includegraphics[width=5cm, height=4cm]{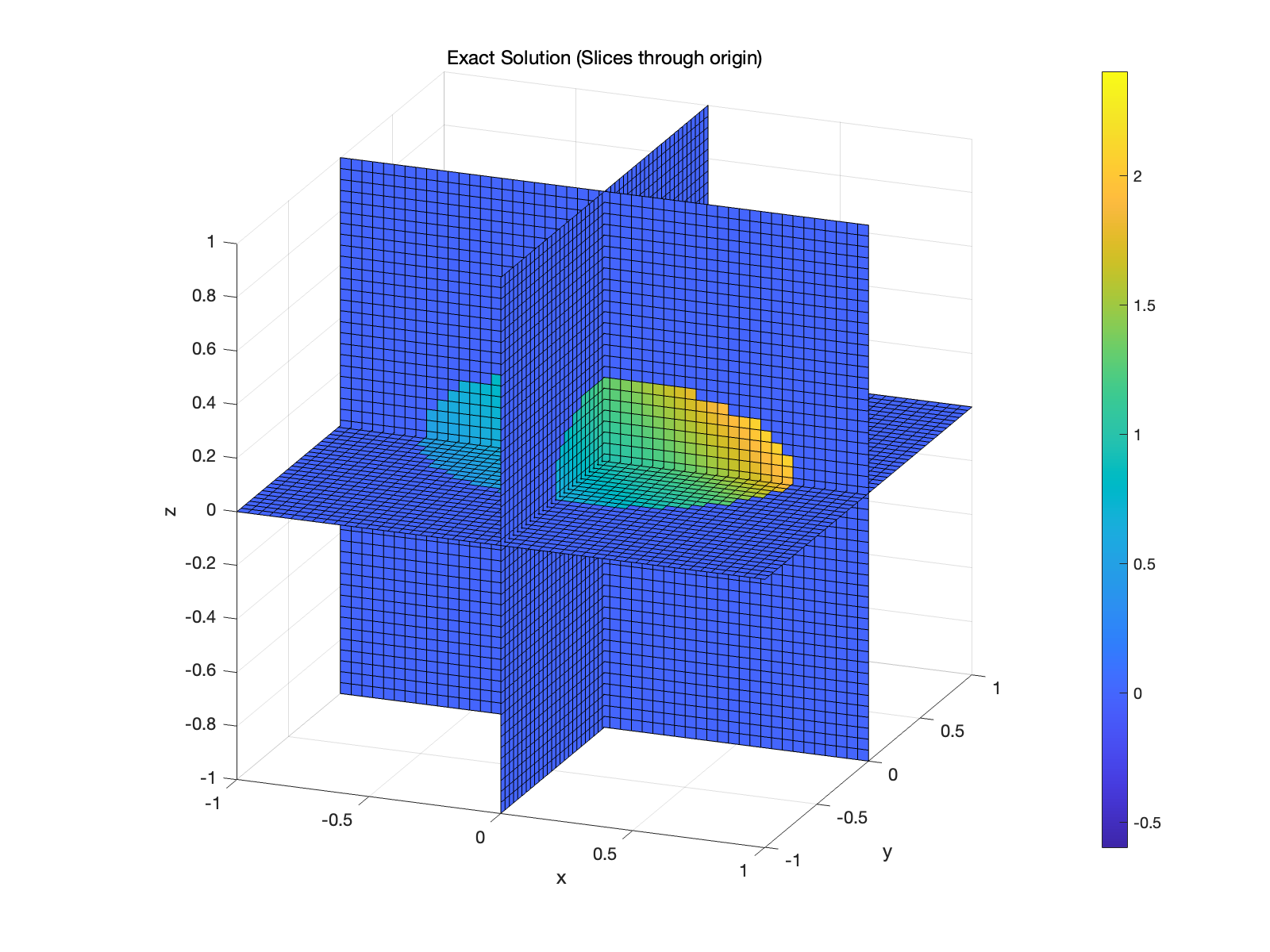}}
    \subfigure[$u_{\text{DNN}}$ at $x=0$, $y=0$, $z=0$]{\includegraphics[width=5cm, height=4cm]{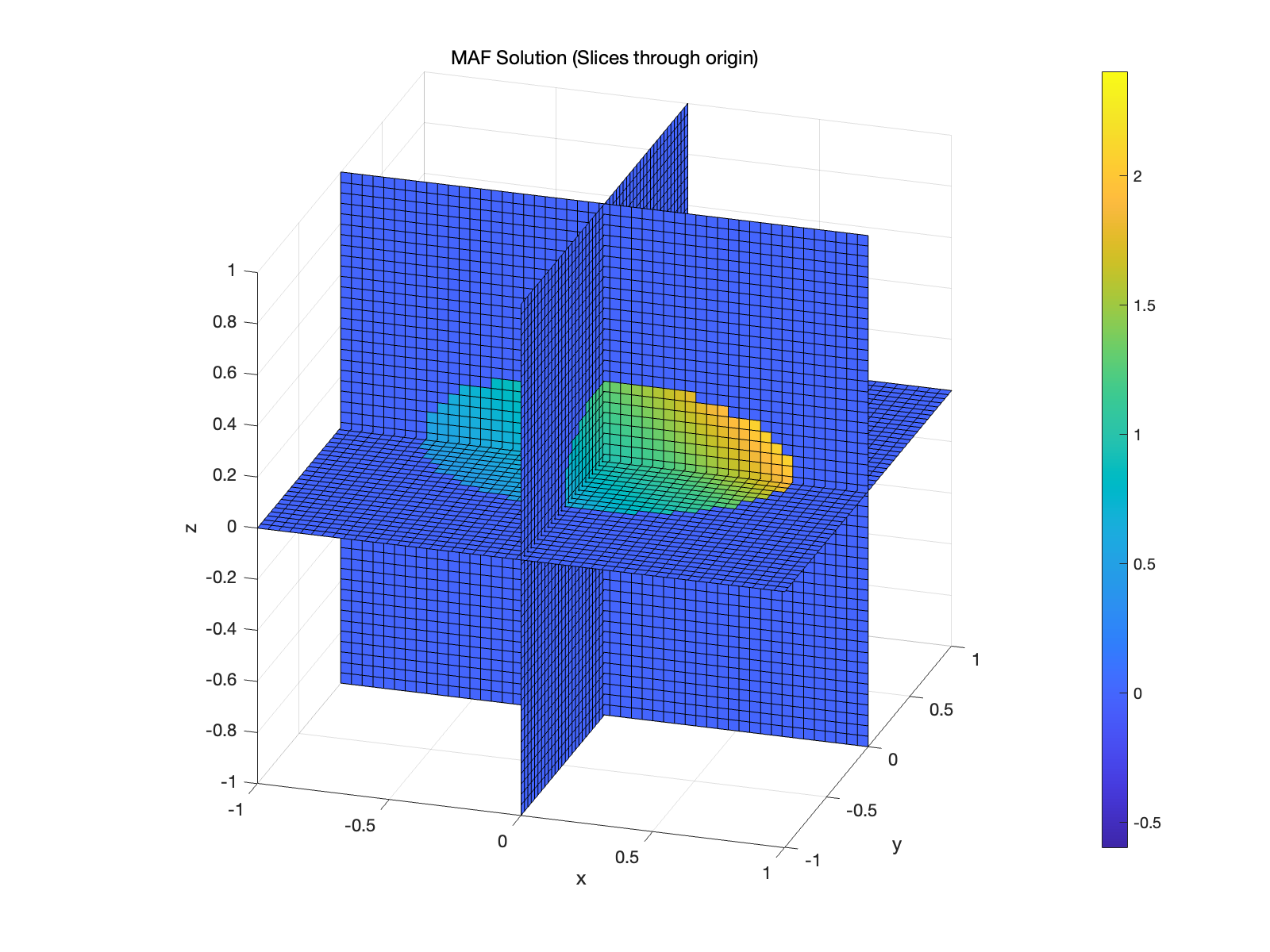}}
    \subfigure[Absolute Error outside]{\includegraphics[width=5cm, height=4cm]{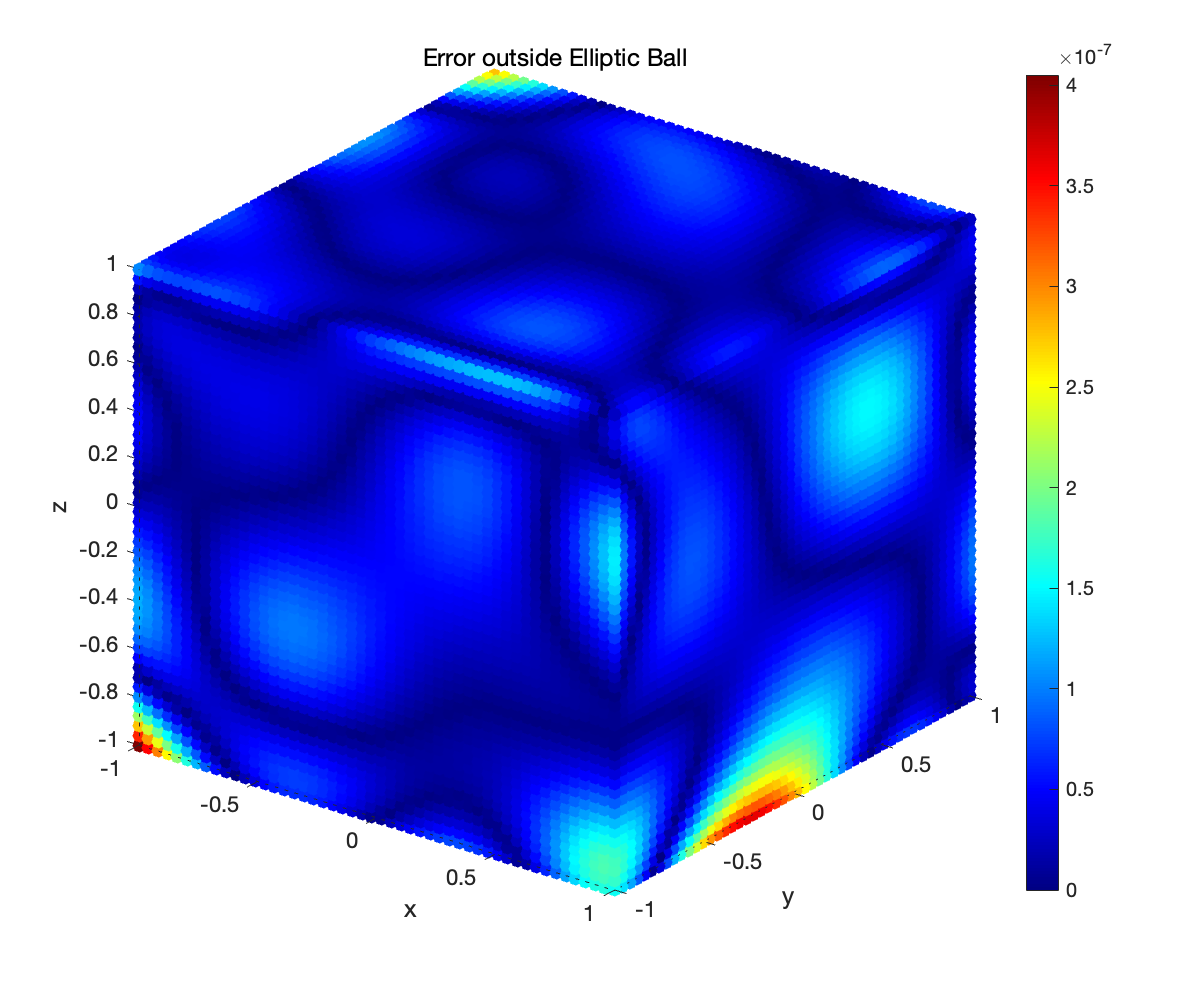}}
    \subfigure[Absolute Error inside]{\includegraphics[width=5cm, height=4cm]{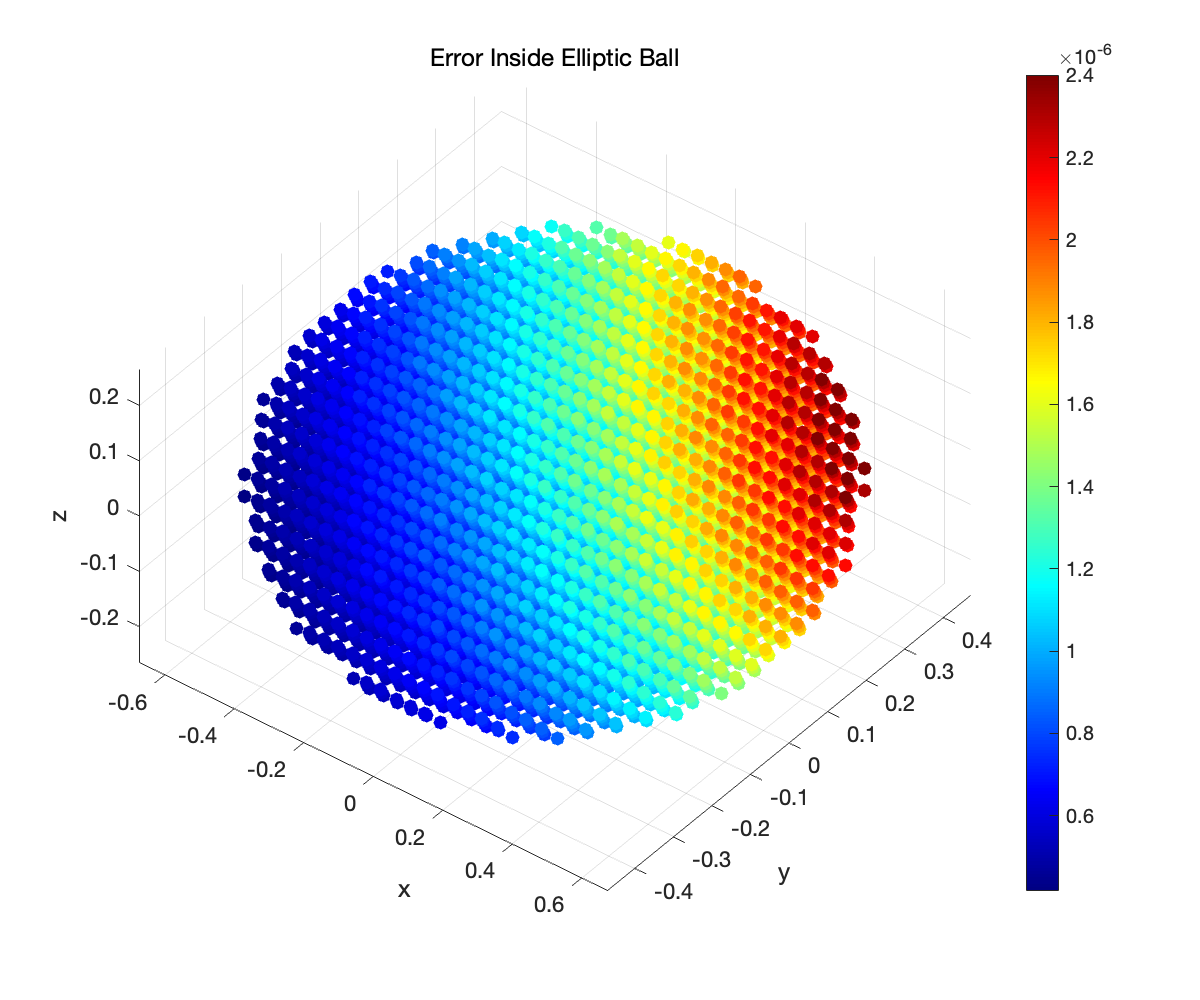}}
    \caption{$u_{\text{DNN}}$, $u_{\text{Exact}}$ and absolute error for Test 5.}
    \label{solutiontest5}
\end{figure}

\begin{figure}[H]
    \centering
    \includegraphics[width=8cm, height=5cm]{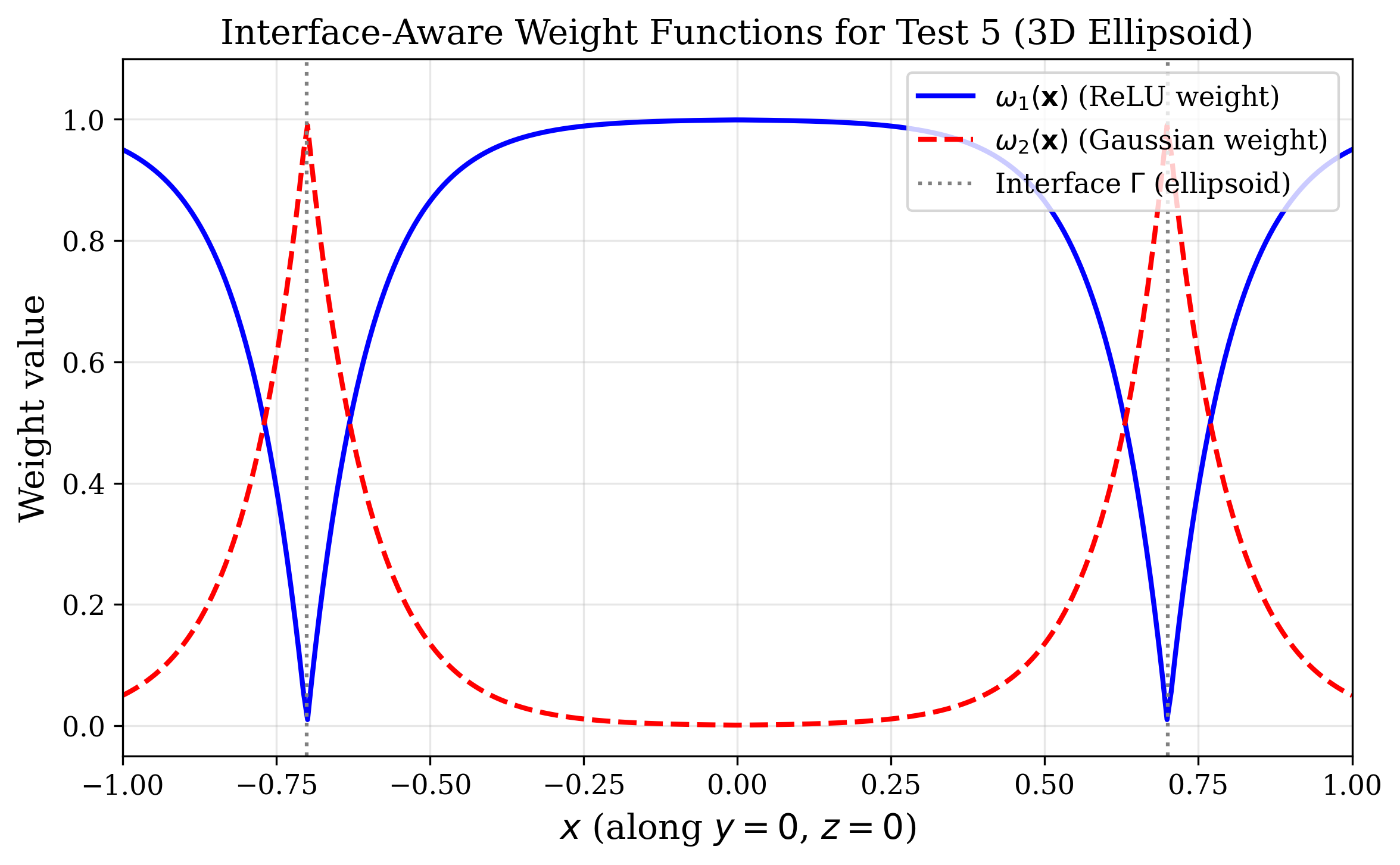}
    \caption{Interface-aware weight functions $\omega_1(\mathbf{x})$ and $\omega_2(\mathbf{x})$ along the $x$-axis crossing the 3D ellipsoid interface for Test 5.}
    \label{fig:weight_functions_test5}
\end{figure}

Table~\ref{errortabletest5} presents the relative $L^2$ error comparison for all methods. This 3D test case demonstrates the effectiveness of our multi-activation approach in handling three-dimensional interface problems with high-contrast coefficients ($10^{-3}$ versus $1$). The MAF method achieves remarkable accuracy ($8.26 \times 10^{-7}$) with the finest sampling, comparable to DCSNN and significantly outperforming XPINN by 785$\times$ and MFM by 420$\times$. Figure~\ref{solutiontest5} (corresponding to the finest sampling configuration with $M_\Omega=8000$, $M_{\partial\Omega}=150$, $M_\Gamma=120$) shows that errors are primarily concentrated near the ellipsoidal interface, which aligns with the challenges imposed by the high-contrast coefficient jump ($\beta_1/\beta_2 = 10^{-3}$) and the 3D geometry.

\begin{table}[H]
    \centering
    \caption{Relative $L^2$ error comparison for Test 5.}
    \label{errortabletest5}
    \small
    \begin{tabular}{|c|c|c|c|c|c|c|c|}
        \hline
        $(M_\Omega, M_{\partial\Omega}, M_\Gamma)$ & MAF & DCSNN & AdaI & I-PINN & M-PINN & MFM & XPINN \\
        \hline
        (125, 50, 30) & $1.43e{-}5$ & $1.02e{-}5$ & $2.56e{-}5$ & $5.89e{-}5$ & $4.23e{-}4$ & $1.12e{-}3$ & $3.38e{-}3$ \\
        \hline
        (1000, 100, 60) & $5.85e{-}6$ & $6.12e{-}6$ & $1.12e{-}5$ & $2.34e{-}5$ & $1.56e{-}4$ & $8.52e{-}4$ & $8.98e{-}4$ \\
        \hline
        (8000, 150, 120) & $8.26e{-}7$ & $8.15e{-}7$ & $2.89e{-}6$ & $7.12e{-}6$ & $4.56e{-}5$ & $3.47e{-}4$ & $6.48e{-}4$ \\
        \hline
    \end{tabular}
\end{table}

\subsubsection{Test 6: 10D Hyperspherical Interface}

We extend our analysis to a high-dimensional setting with $\Omega = [-1,1]^{10}$ and a hyperspherical interface $\Gamma: \sum_{i=1}^{10} (x_i/0.5)^2 = 1$. The subdomain $\Omega_1$ is the interior of the hypersphere and $\Omega_2 = \Omega \setminus \overline{\Omega}_1$. This test case features a high-contrast diffusion coefficient: $\beta_1 = 10^{-3}$ in $\Omega_1$ and $\beta_2 = 1$ in $\Omega_2$. The exact solution is $u_1 = \prod_{i=1}^{10} e^{x_i}$ in $\Omega_1$ and $u_2 = \prod_{i=1}^{10} \sin(x_i)$ in $\Omega_2$. The source terms are $f_1 = -10 \times 10^{-3} \prod_{i=1}^{10} e^{x_i}$ in $\Omega_1$ and $f_2 = 10 \prod_{i=1}^{10} \sin(x_i)$ in $\Omega_2$. The jump conditions are $g_1 = u_1|_\Gamma - u_2|_\Gamma$ and $g_2 = \beta_1 \nabla u_1 \cdot \boldsymbol{n} - \beta_2 \nabla u_2 \cdot \boldsymbol{n}$, computed from the exact solution. The Dirichlet boundary condition is $g_D = u_2|_{\partial\Omega}$. For this hyperspherical interface, the distance function is computed analytically as $d(\mathbf{x}, \Gamma) = |r - 0.5|$ where $r = \sqrt{\sum_{i=1}^{10} x_i^2}$.

We distribute collocation points randomly in each subdomain and along the interface. The error is calculated on 10,000 randomly sampled validation points. Figure~\ref{losstest6} shows the loss evolution during training.

\begin{figure}[H]
    \centering
    \includegraphics[width=6cm, height=4cm]{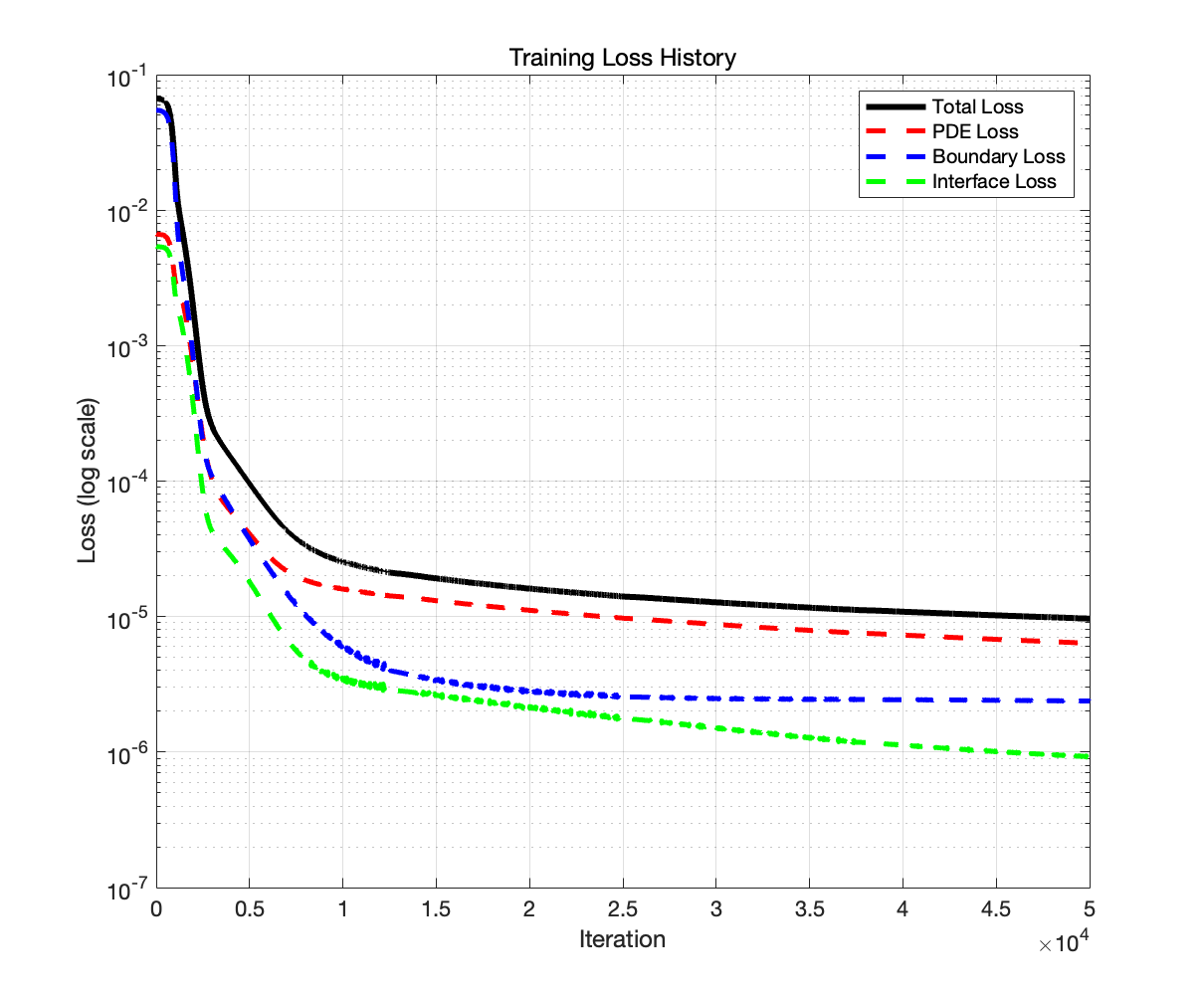}
    \caption{Loss evolution during training for Test 6 (10D).}
    \label{losstest6}
\end{figure}

Table~\ref{errortabletest6} presents the relative $L^2$ error comparison for all methods. This 10D test case demonstrates the robustness of our multi-activation approach in handling high-dimensional interface problems. Despite the curse of dimensionality, the MAF method achieves accuracy on the order of $10^{-3}$ with the finest sampling, comparable to DCSNN and significantly outperforming XPINN by 24$\times$ and MFM by 20$\times$. The performance improves substantially as sampling points increase, with the error reducing from $2.21 \times 10^{-2}$ to $3.34 \times 10^{-3}$. Figure~\ref{losstest6} (corresponding to the finest sampling configuration with $M_\Omega=8000$, $M_{\partial\Omega}=400$, $M_\Gamma=200$) shows that the training loss converges smoothly, demonstrating stable optimization despite the high dimensionality. The remaining error is primarily attributed to the curse of dimensionality and the limited sampling density relative to the 10D domain volume. These comprehensive numerical experiments, ranging from 2D to 10D, validate the effectiveness and versatility of our multi-activation function approach across different dimensionalities and interface geometries.

\begin{table}[H]
    \centering
    \caption{Relative $L^2$ error comparison for Test 6.}
    \label{errortabletest6}
    \small
    \begin{tabular}{|c|c|c|c|c|c|c|c|}
        \hline
        $(M_\Omega, M_{\partial\Omega}, M_\Gamma)$ & MAF & DCSNN & AdaI & I-PINN & M-PINN & MFM & XPINN \\
        \hline
        (2000, 100, 50) & $2.21e{-}2$ & $3.43e{-}2$ & $4.56e{-}2$ & $6.78e{-}2$ & $1.56e{-}1$ & $2.98e{-}1$ & $1.01e{-}1$ \\
        \hline
        (4000, 200, 100) & $7.95e{-}3$ & $7.56e{-}3$ & $1.23e{-}2$ & $2.34e{-}2$ & $5.67e{-}2$ & $8.43e{-}2$ & $8.56e{-}2$ \\
        \hline
        (8000, 400, 200) & $3.34e{-}3$ & $4.01e{-}3$ & $5.89e{-}3$ & $1.12e{-}2$ & $3.45e{-}2$ & $6.75e{-}2$ & $8.01e{-}2$ \\
        \hline
    \end{tabular}
\end{table}

\subsection{Parabolic Interface Problem}

We present numerical studies of parabolic interface problems with both fixed and moving interfaces. While existing methods (MFM, DCSNN, M-PINN, I-PINN, AdaI-PINN) have shown success for elliptic interface problems, they have not been demonstrated for parabolic equations with moving interfaces. Therefore, we compare our MAF method exclusively with XPINN for these challenging scenarios.

We consider model problem \eqref{modelpextenddnnpaowu} with domain $\Omega = [0,3.5] \times [0,3.5]$, $t \in [0,1]$ and four distinct interface configurations. In all cases, the exact solution is $u = 0.01 e^t e^x e^y$ in $\Omega_1(t)$ and $u = e^t \sin(x)\sin(y)$ in $\Omega_2(t)$, all reletive boundary conditions, interface conditions, initial conditions, source terms can be calculated according to the exact solutions. We treat time as an additional dimension, solving the problem in a $(2+1)$-dimensional space-time continuum $[0,3.5] \times [0,3.5] \times [0,1]$. The time interval is partitioned into 10 sub-intervals ($\Delta t = 0.1$). Our sampling strategy uses random collocation points (200, 400, 1000) throughout the space-time domain, with structured sampling (20, 40, 100 points) at temporal interfaces for boundary and interface conditions.

\subsubsection{Test 1: Fixed Circular Interface}

We consider a static circular interface $\Gamma: (x-1.5)^2 + (y-1.5)^2 = 1$ that divides the domain into interior region $\Omega_2(t)$ and exterior region $\Omega_1(t)$. Although the interface remains stationary, this problem is challenging due to the time-dependent solution and discontinuity across the interface. The exact solution is $u_1 = 0.01 e^t e^x e^y$ in $\Omega_1(t)$ (exterior) and $u_2 = e^t \sin(x)\sin(y)$ in $\Omega_2(t)$ (interior). The diffusion coefficients are $\beta_1 = 1$ and $\beta_2 = 10$. The source terms are $f_1 = 0.01 e^t e^{x+y}(1 - 2\beta_1)$ and $f_2 = e^t \sin(x)\sin(y)(1 + 2\beta_2)$. The jump conditions $g_1 = u_1|_\Gamma - u_2|_\Gamma$ and $g_2 = \beta_1 \nabla u_1 \cdot \boldsymbol{n} - \beta_2 \nabla u_2 \cdot \boldsymbol{n}$ are computed from the exact solution. The initial condition is $g_0 = u|_{t=0}$ and the Dirichlet boundary condition is $g_D = u_1|_{\partial\Omega}$. The distance function is $d(\mathbf{x}, \Gamma) = |\sqrt{(x-1.5)^2 + (y-1.5)^2} - 1|$. Since the interface is fixed, the weight functions $\omega_1(\mathbf{x})$ and $\omega_2(\mathbf{x})$ remain constant in time, as illustrated in Figure~\ref{fig:weight_parabolic_test1}.

The training and validation points are shown in Figure~\ref{fig:fixed_interfacepoint}, and the numerical solutions with absolute errors at $t=0.5$ and $t=1.0$ are presented in Figures~\ref{fig:numercialresultst05} and~\ref{fig:numercialresultst1} (corresponding to the finest sampling configuration with $M_\Omega=1000$, $M_{\partial\Omega}=100$, $M_\Gamma=100$). At $t=0.5$, MAF achieves a maximum absolute error of $9.12 \times 10^{-5}$, with errors concentrated near the circular interface. At $t=1.0$, the error grows to $2.54 \times 10^{-4}$, demonstrating temporal stability despite exponential growth in the exact solution. Errors are primarily concentrated near the interface $\Gamma$, which aligns with the challenges imposed by the jump conditions. The MAF method achieves a relative $L^2$ error of $8.45 \times 10^{-5}$, outperforming XPINN ($7.33 \times 10^{-3}$) by 87$\times$.

\begin{figure}[H]
    \centering
    \includegraphics[width=8cm, height=4.5cm]{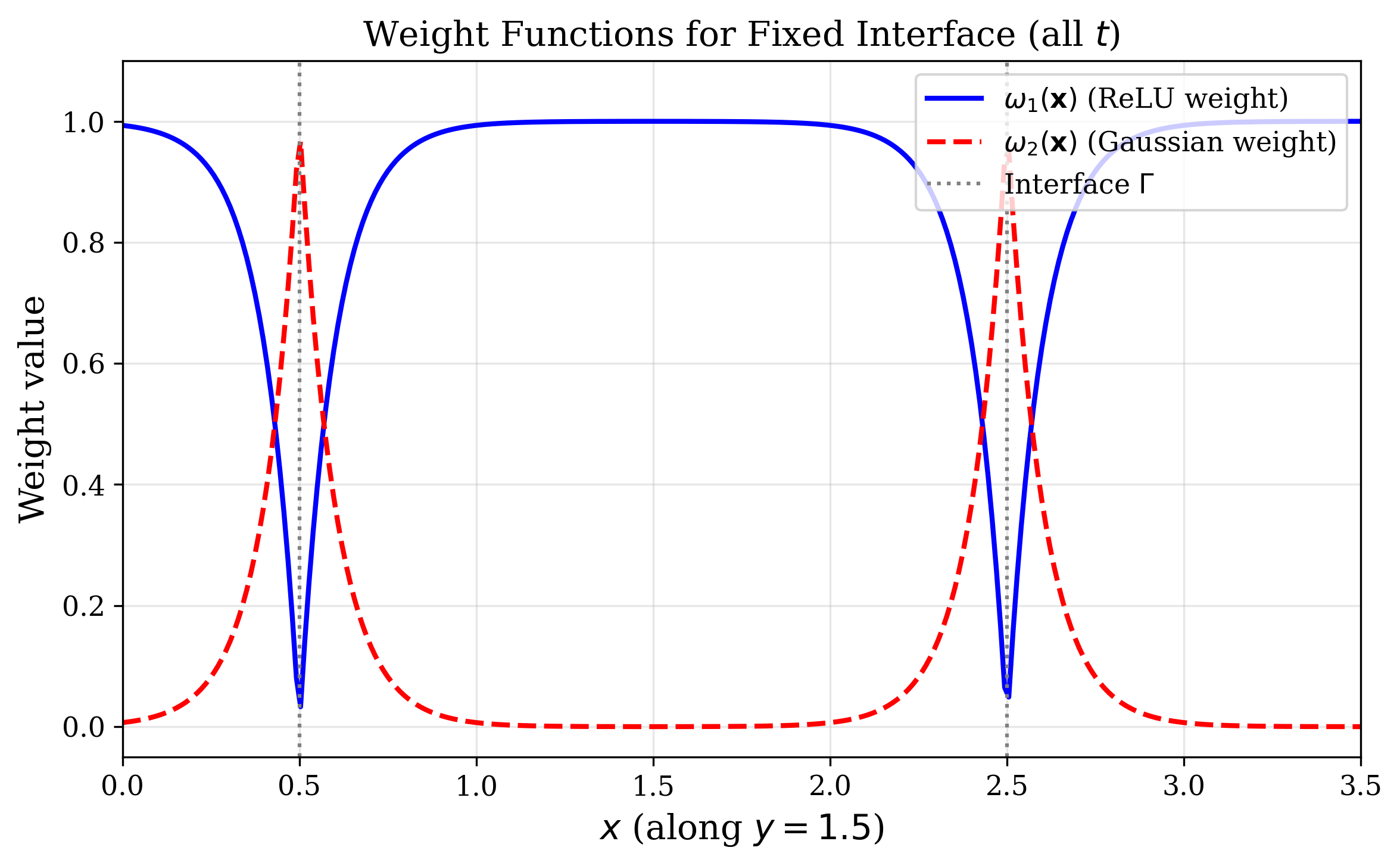}
    \caption{Weight functions $\omega_1(\mathbf{x})$ and $\omega_2(\mathbf{x})$ along $y=1.5$ for Test 1 (fixed interface).}
    \label{fig:weight_parabolic_test1}
\end{figure}

\begin{figure}[H]
    \centering
    \subfigure[Training points at $t=0.5$]{\includegraphics[width=4.5cm, height=3cm]{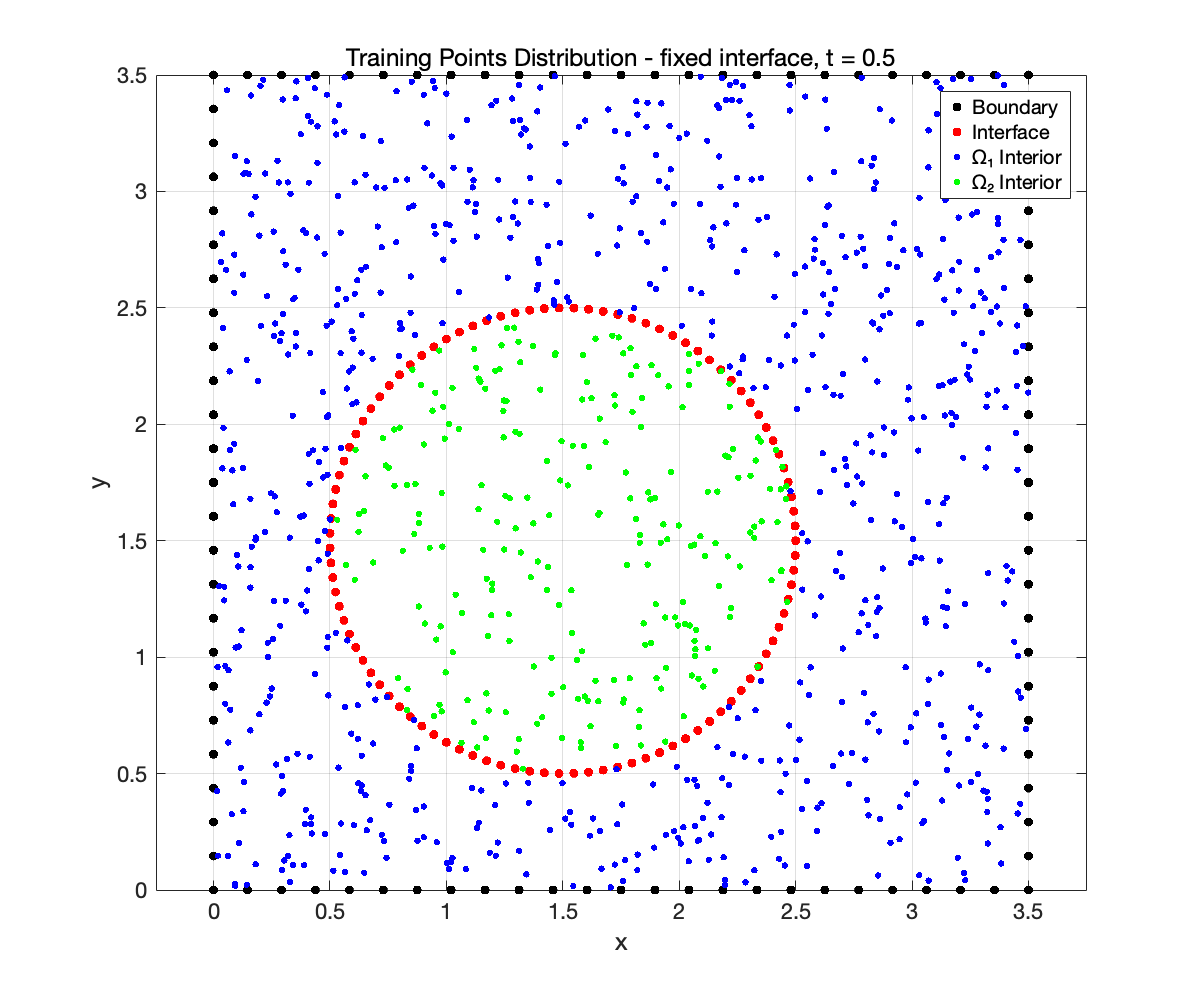}}
    \subfigure[Training points at $t=1$]{\includegraphics[width=4.5cm, height=3cm]{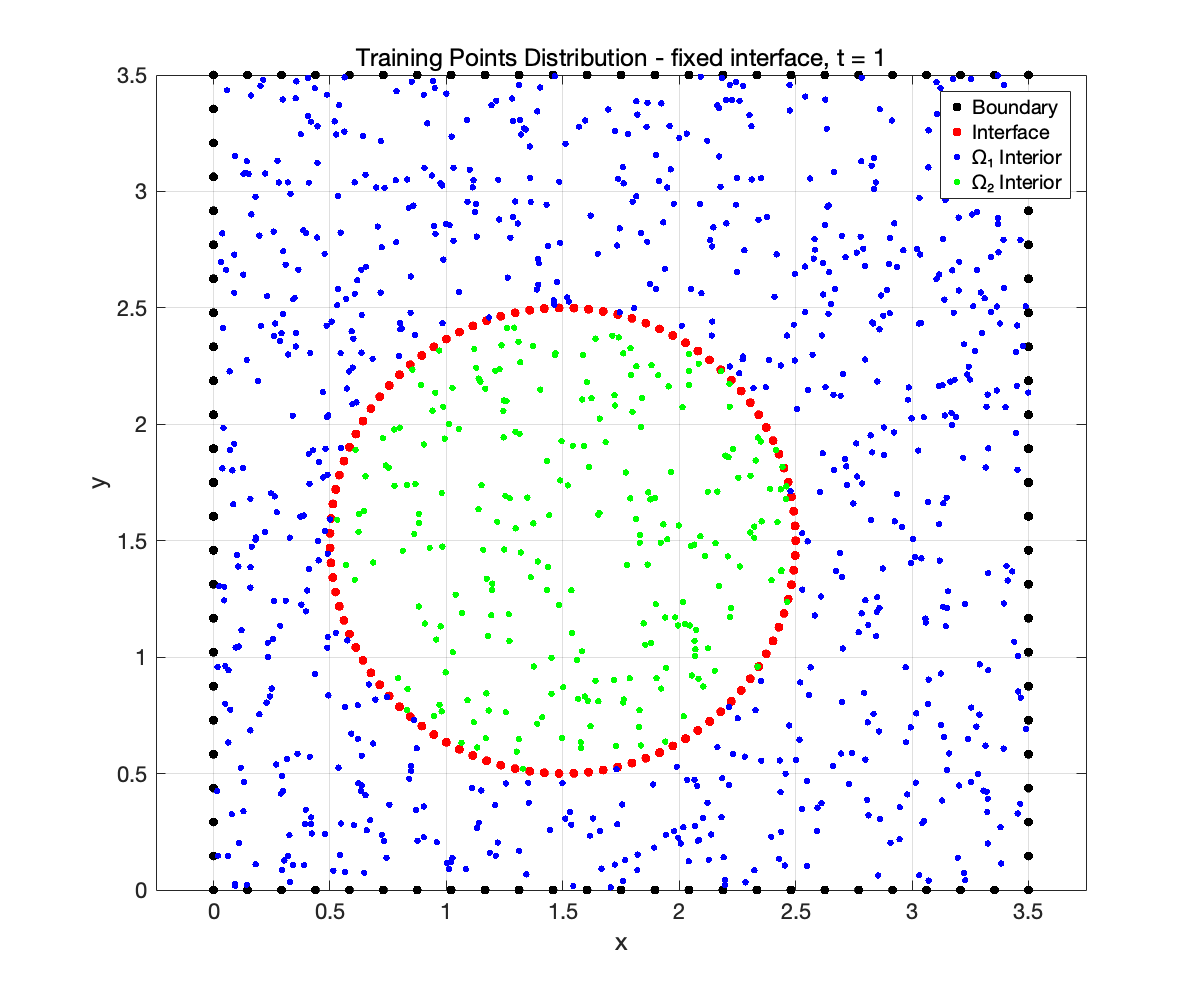}}
    \caption{Training points for Test 1 at $t=0.5$ and $t=1$.}
    \label{fig:fixed_interfacepoint}
\end{figure}

\begin{figure}[H]
    \centering
    \subfigure[Exact at $t=0.5$]{\includegraphics[width=4cm, height=3cm]{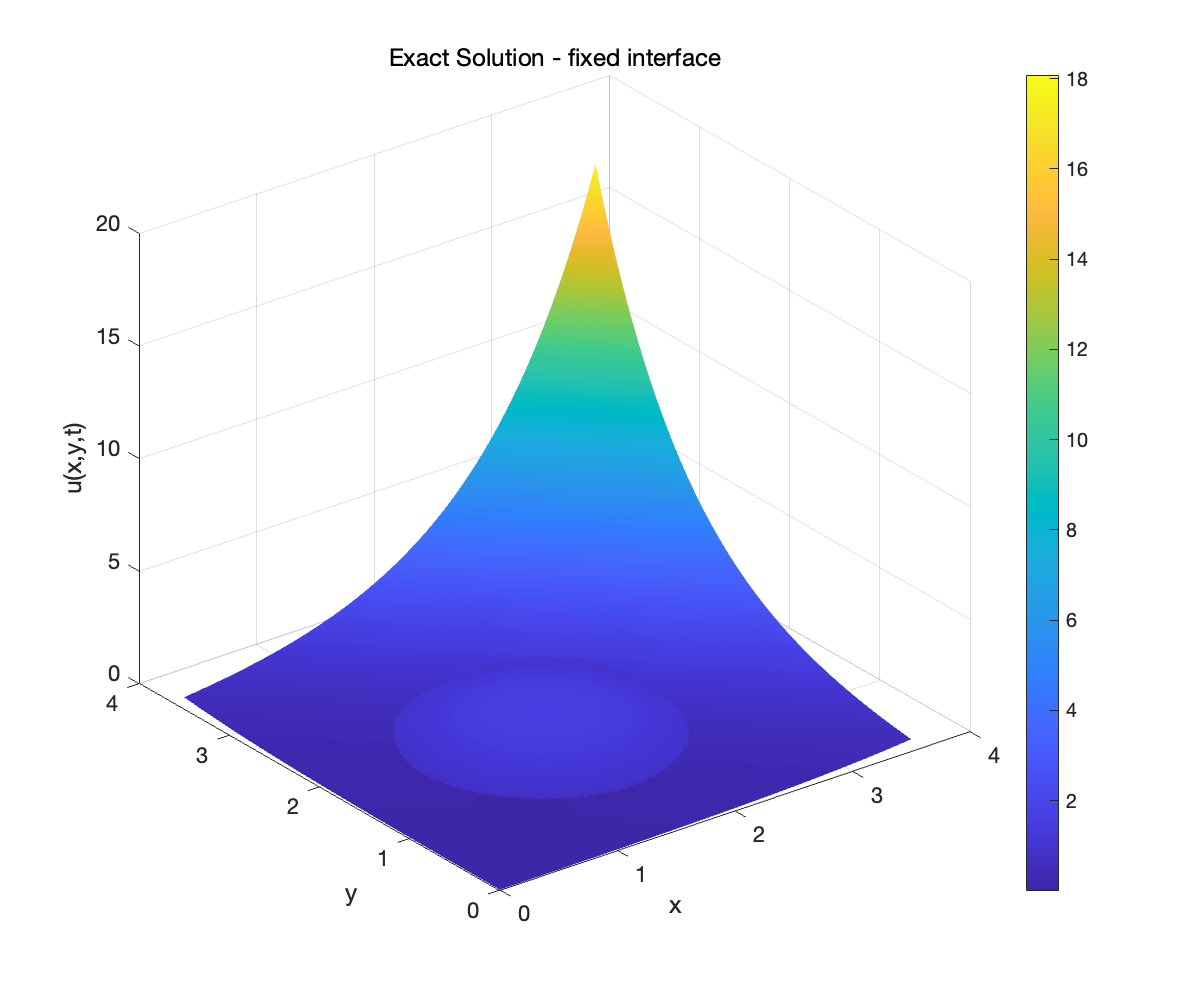}}
    \subfigure[MAF at $t=0.5$]{\includegraphics[width=4cm, height=3cm]{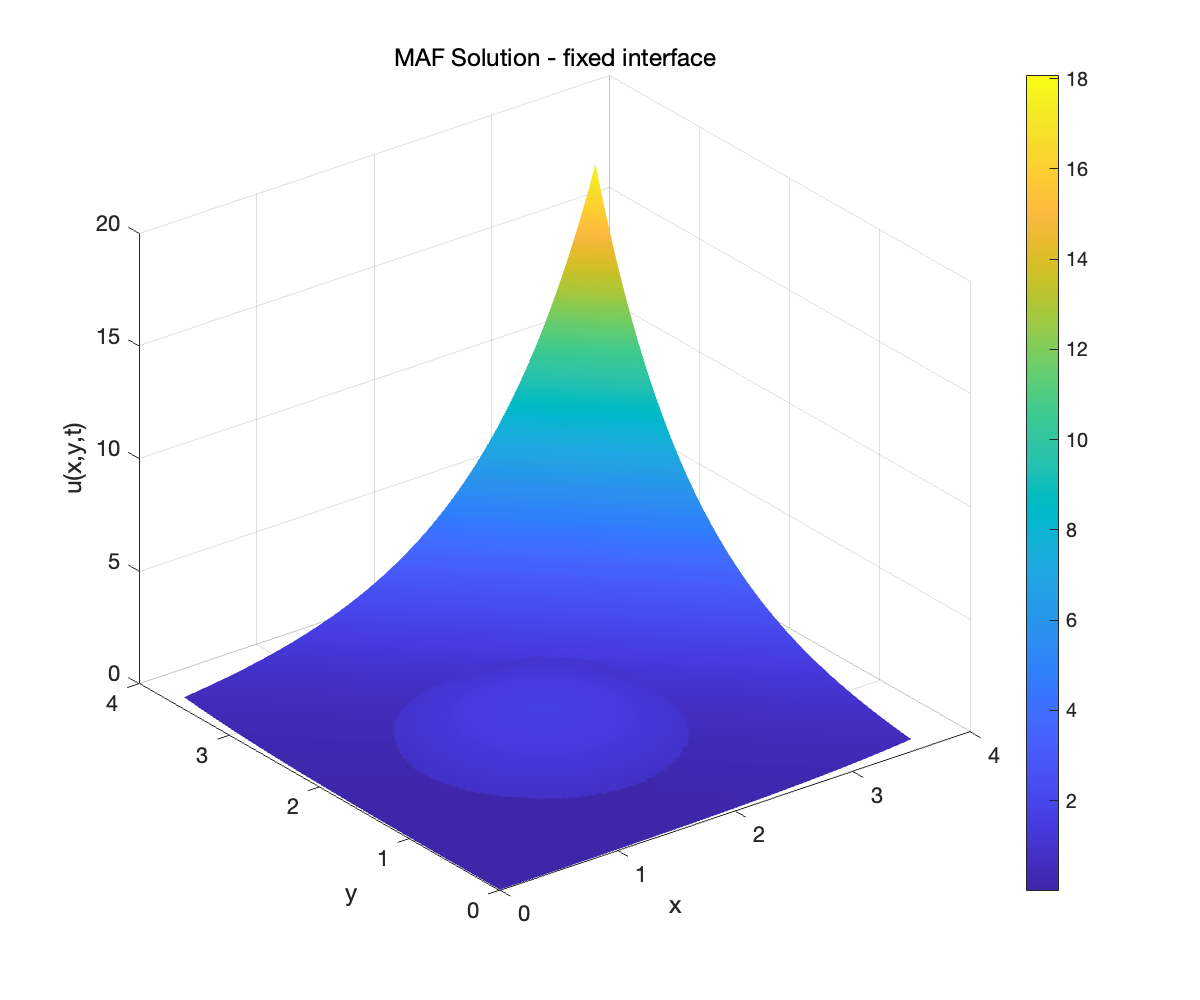}}
    \subfigure[Error at $t=0.5$]{\includegraphics[width=4cm, height=3cm]{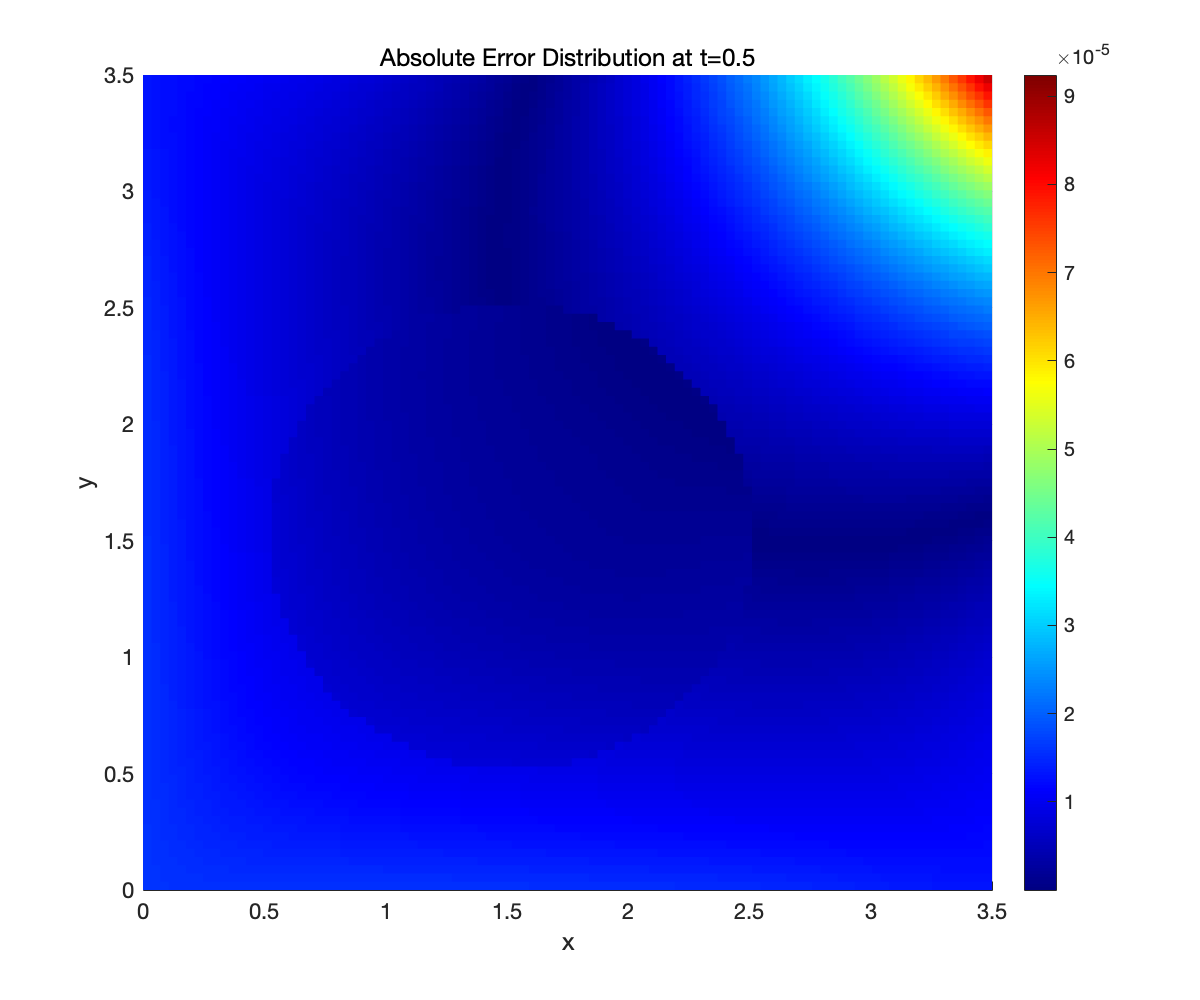}}
    \caption{Numerical results for Test 1 at $t=0.5$.}
    \label{fig:numercialresultst05}
\end{figure}

\begin{figure}[H]
    \centering
    \subfigure[Exact at $t=1$]{\includegraphics[width=4cm, height=3cm]{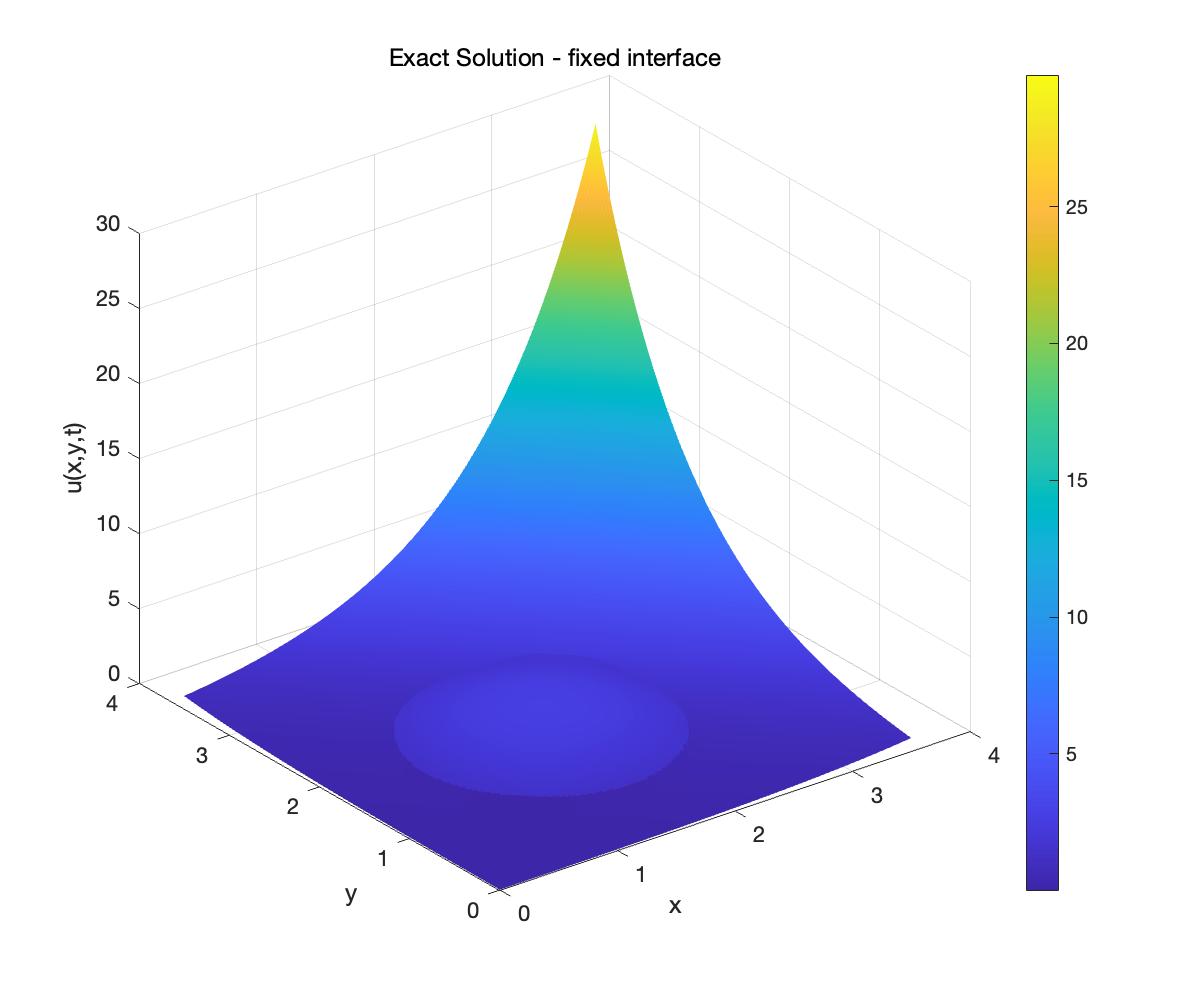}}
    \subfigure[MAF at $t=1$]{\includegraphics[width=4cm, height=3cm]{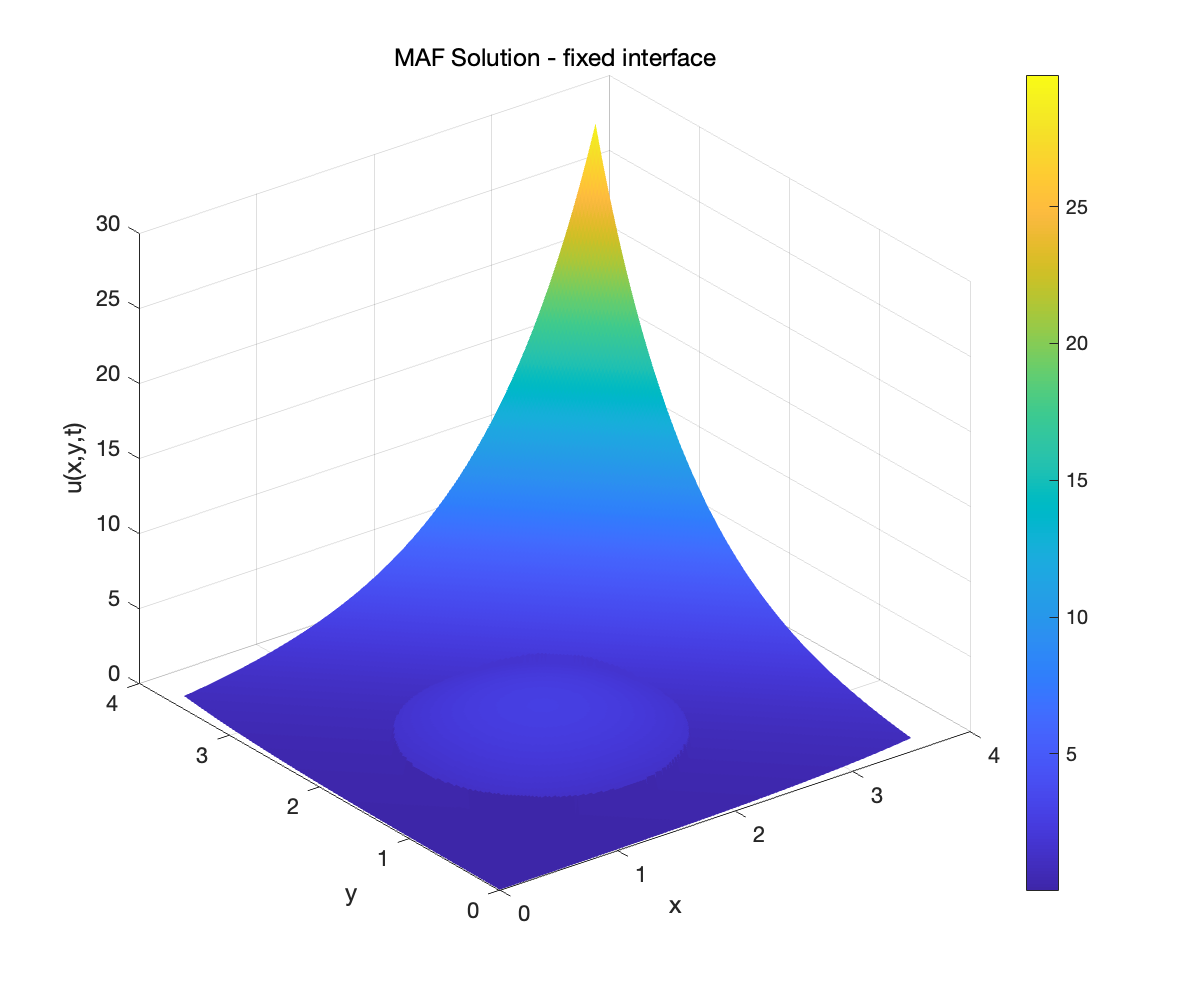}}
    \subfigure[Error at $t=1$]{\includegraphics[width=4cm, height=3cm]{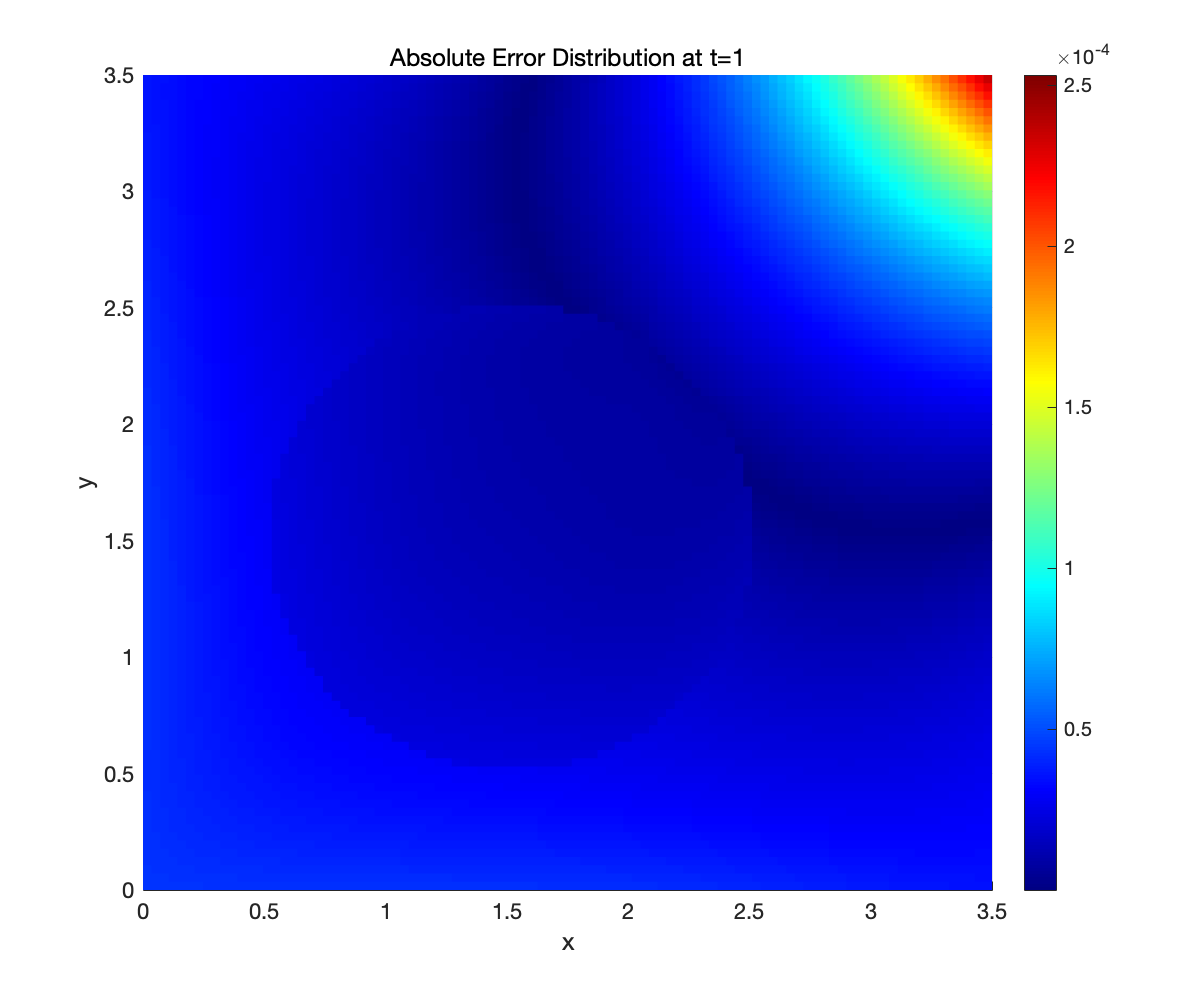}}
    \caption{Numerical results for Test 1 at $t=1$.}
    \label{fig:numercialresultst1}
\end{figure}

\subsubsection{Test 2: Moving Interface (Rigid Body Motion)}

We consider a moving circular interface $\Gamma(t): (x-1.2-t)^2 + (y-1.2-t)^2 = 1$ that translates with velocity $(1,1)$. The exact solution is $u_1 = 0.01 e^t e^x e^y$ in $\Omega_1(t)$ (exterior) and $u_2 = e^t \sin(x)\sin(y)$ in $\Omega_2(t)$ (interior). The diffusion coefficients are $\beta_1 = 1$ and $\beta_2 = 10$. The source terms are $f_1 = 0.01 e^t e^{x+y}(1 - 2\beta_1)$ and $f_2 = e^t \sin(x)\sin(y)(1 + 2\beta_2)$. The jump conditions $g_1$ and $g_2$ are computed from the exact solution. The initial and boundary conditions are $g_0 = u|_{t=0}$ and $g_D = u_1|_{\partial\Omega}$. The distance function evolves as $d(\mathbf{x}, \Gamma(t)) = |\sqrt{(x-1.2-t)^2 + (y-1.2-t)^2} - 1|$, and consequently the weight functions $\omega_1(\mathbf{x}, t)$ and $\omega_2(\mathbf{x}, t)$ change with time. Figure~\ref{fig:weight_parabolic_test2} illustrates how the weight functions shift as the interface moves through the domain.

The training points and numerical results are shown in Figures~\ref{fig:rigid_interfacepoint}--\ref{fig:numercialresultst1rigid} (corresponding to the finest sampling configuration with $M_\Omega=1000$, $M_{\partial\Omega}=100$, $M_\Gamma=100$). At $t=0.5$, the peak error is $7.65 \times 10^{-5}$, concentrated near the evolving interface. At $t=1.0$, the maximum error remains bounded at $3.40 \times 10^{-4}$, demonstrating stable interface tracking. Errors are primarily concentrated near the moving interface $\Gamma(t)$, which aligns with the challenges imposed by tracking the translating circular boundary. The MAF method achieves a relative $L^2$ error of $8.52 \times 10^{-5}$, outperforming XPINN ($7.53 \times 10^{-3}$) by 88$\times$.

\begin{figure}[H]
    \centering
    \includegraphics[width=12cm, height=4cm]{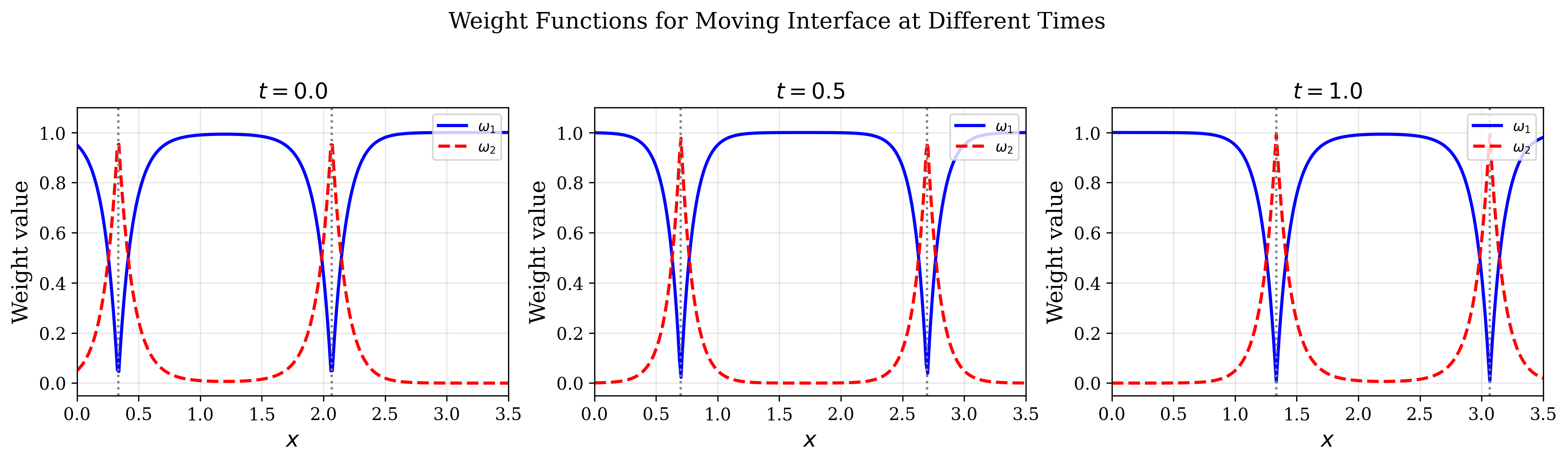}
    \caption{Weight functions at $t=0$, $t=0.5$, and $t=1$ for Test 2 (moving interface).}
    \label{fig:weight_parabolic_test2}
\end{figure}

\begin{figure}[H]
    \centering
    \subfigure[Training points at $t=0.5$]{\includegraphics[width=4.5cm, height=3cm]{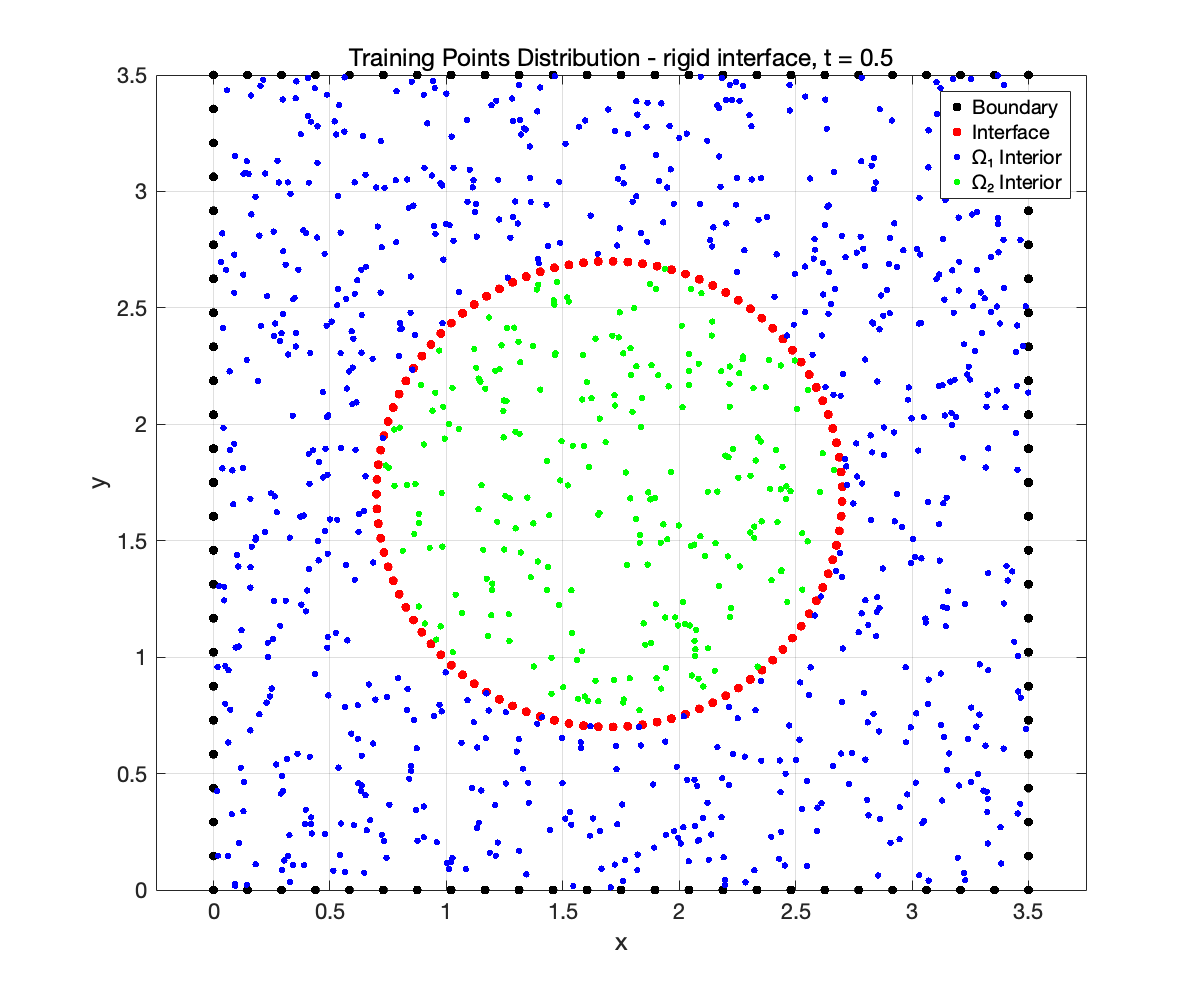}}
    \subfigure[Training points at $t=1$]{\includegraphics[width=4.5cm, height=3cm]{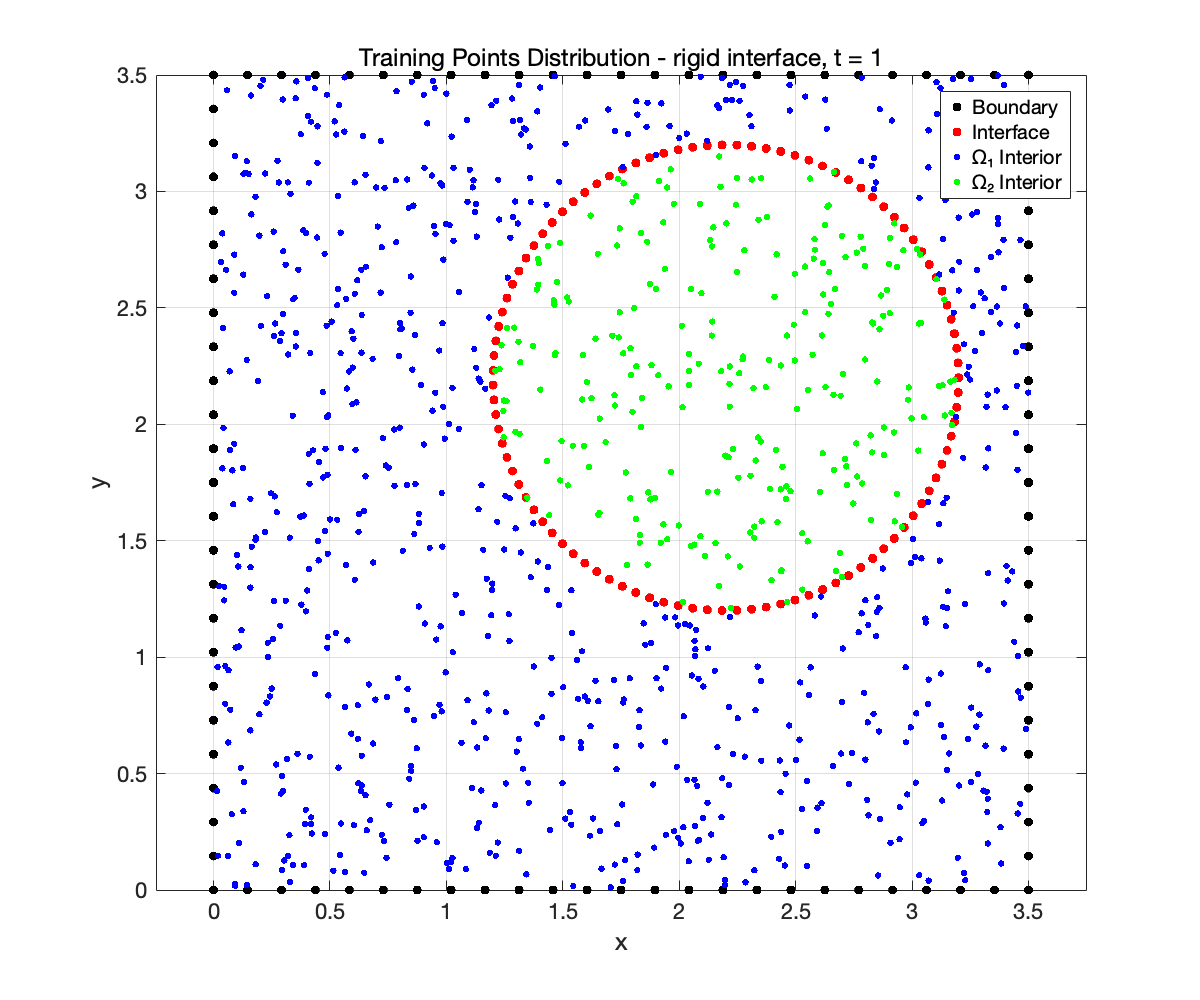}}
    \caption{Training points for Test 2 at $t=0.5$ and $t=1$.}
    \label{fig:rigid_interfacepoint}
\end{figure}

\begin{figure}[H]
    \centering
    \subfigure[Exact at $t=0.5$]{\includegraphics[width=4.5cm, height=3cm]{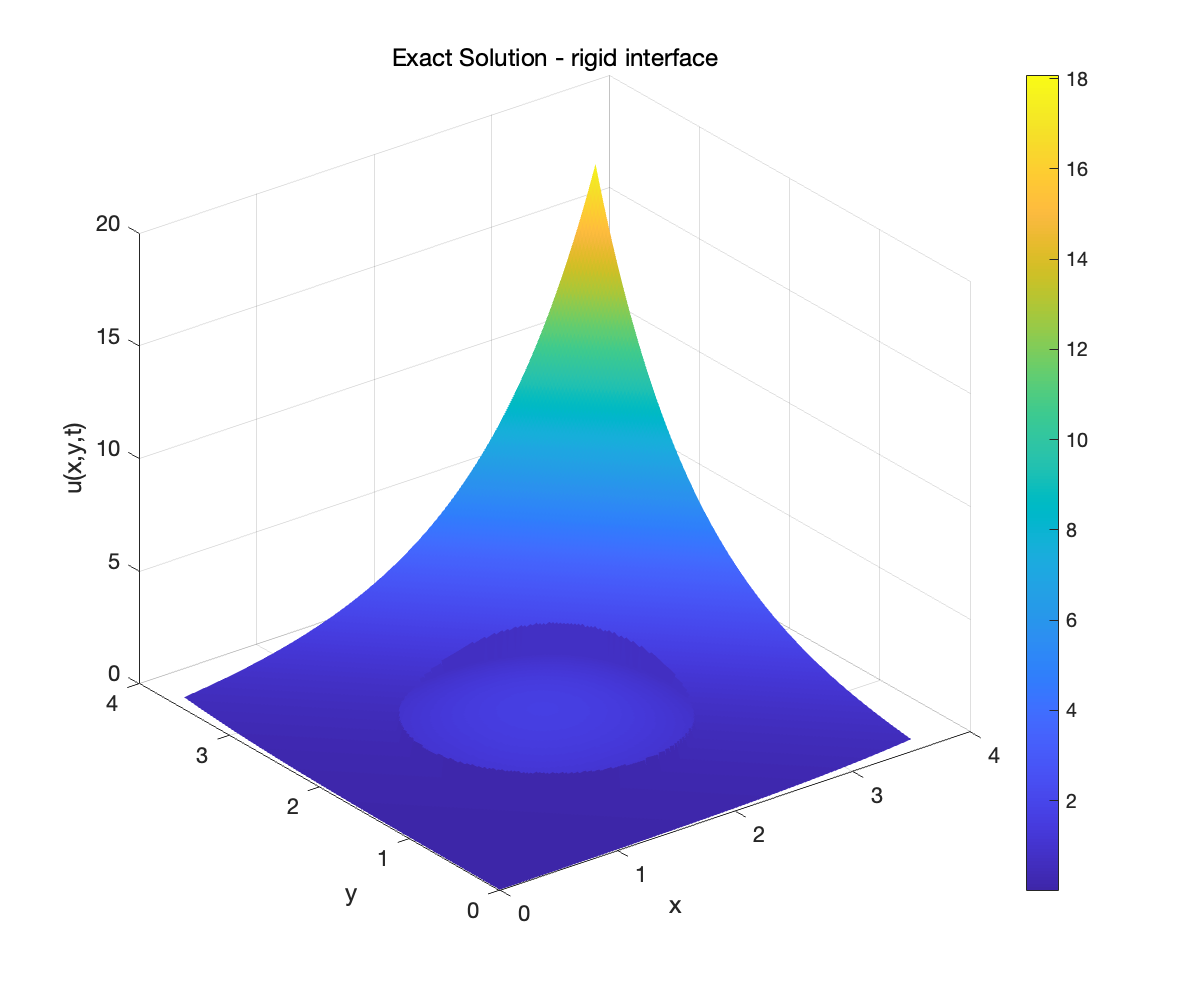}}
    \subfigure[MAF at $t=0.5$]{\includegraphics[width=4.5cm, height=3cm]{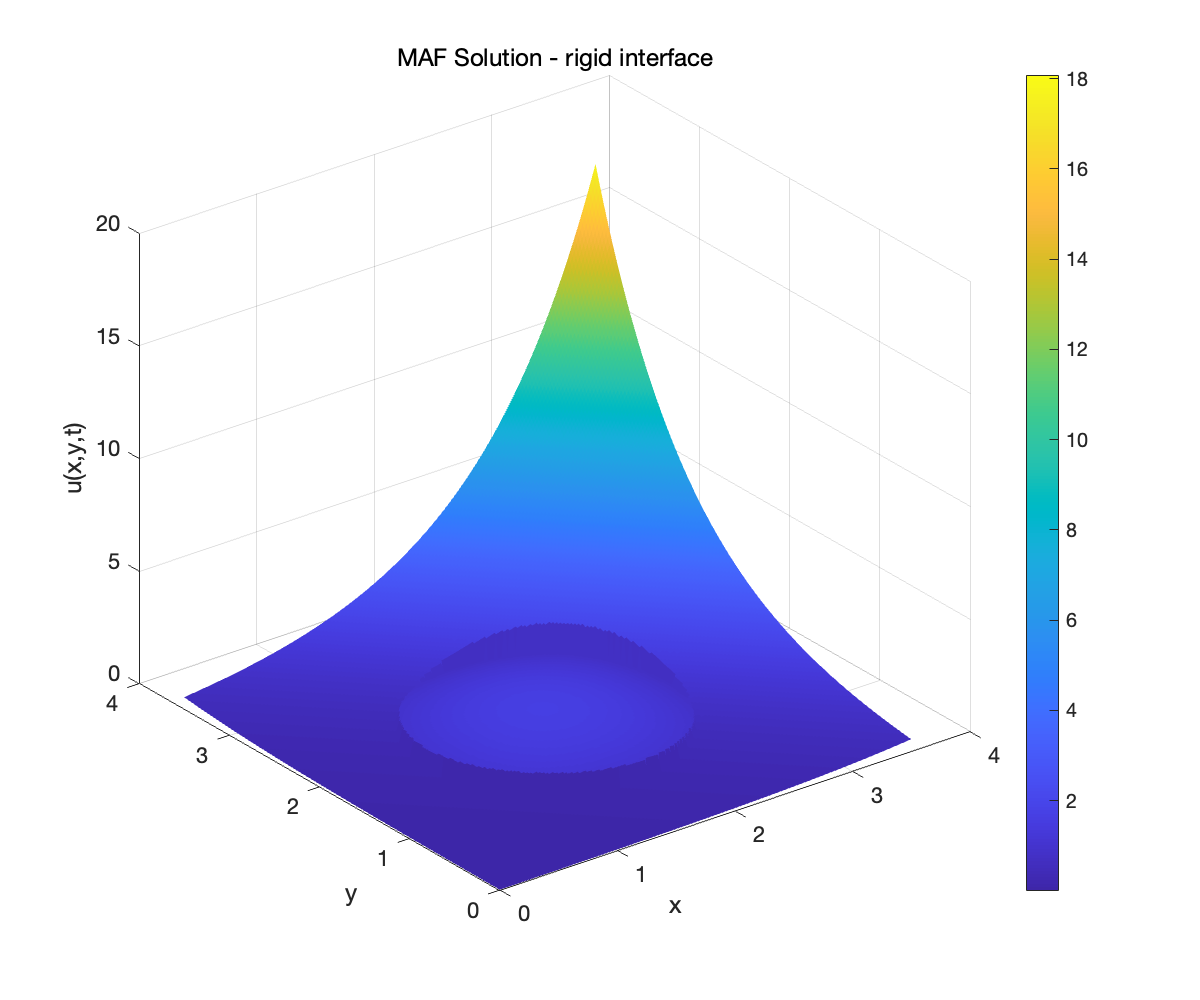}}
    \subfigure[Error at $t=0.5$]{\includegraphics[width=4.5cm, height=3cm]{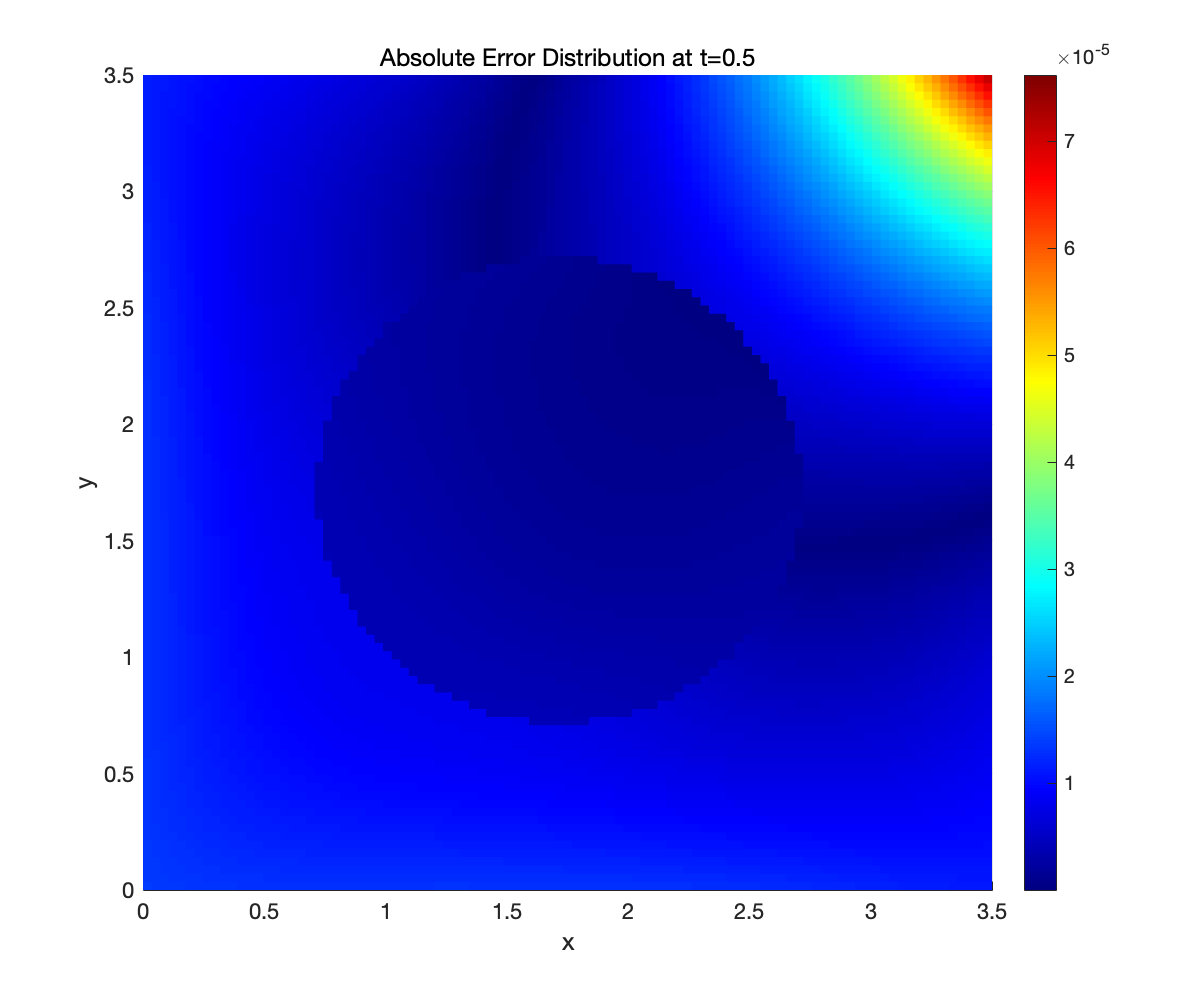}}
    \caption{Numerical results for Test 2 at $t=0.5$.}
    \label{fig:numercialresultst05rigid}
\end{figure}

\begin{figure}[H]
    \centering
    \subfigure[Exact at $t=1$]{\includegraphics[width=4.5cm, height=3cm]{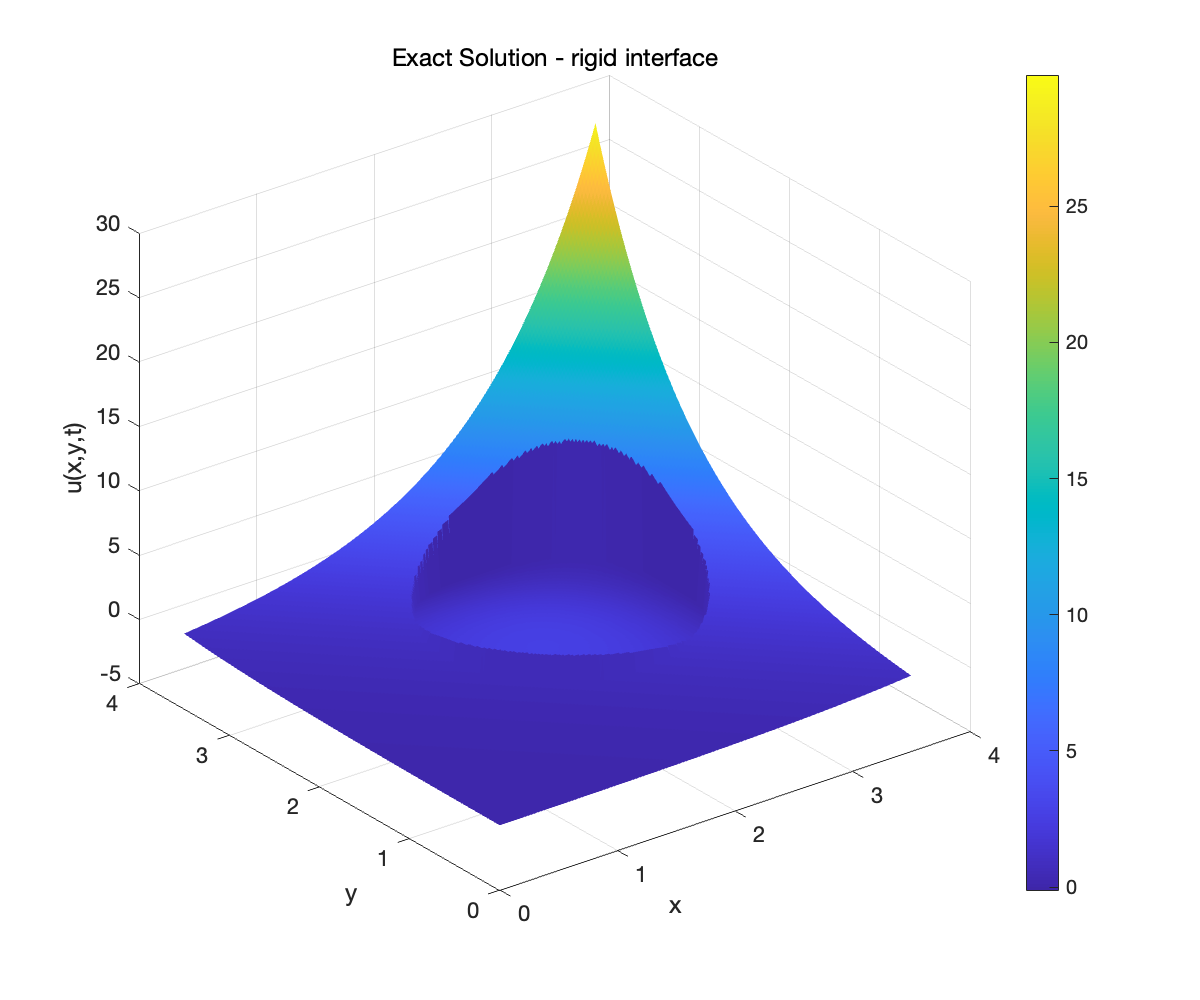}}
    \subfigure[MAF at $t=1$]{\includegraphics[width=4.5cm, height=3cm]{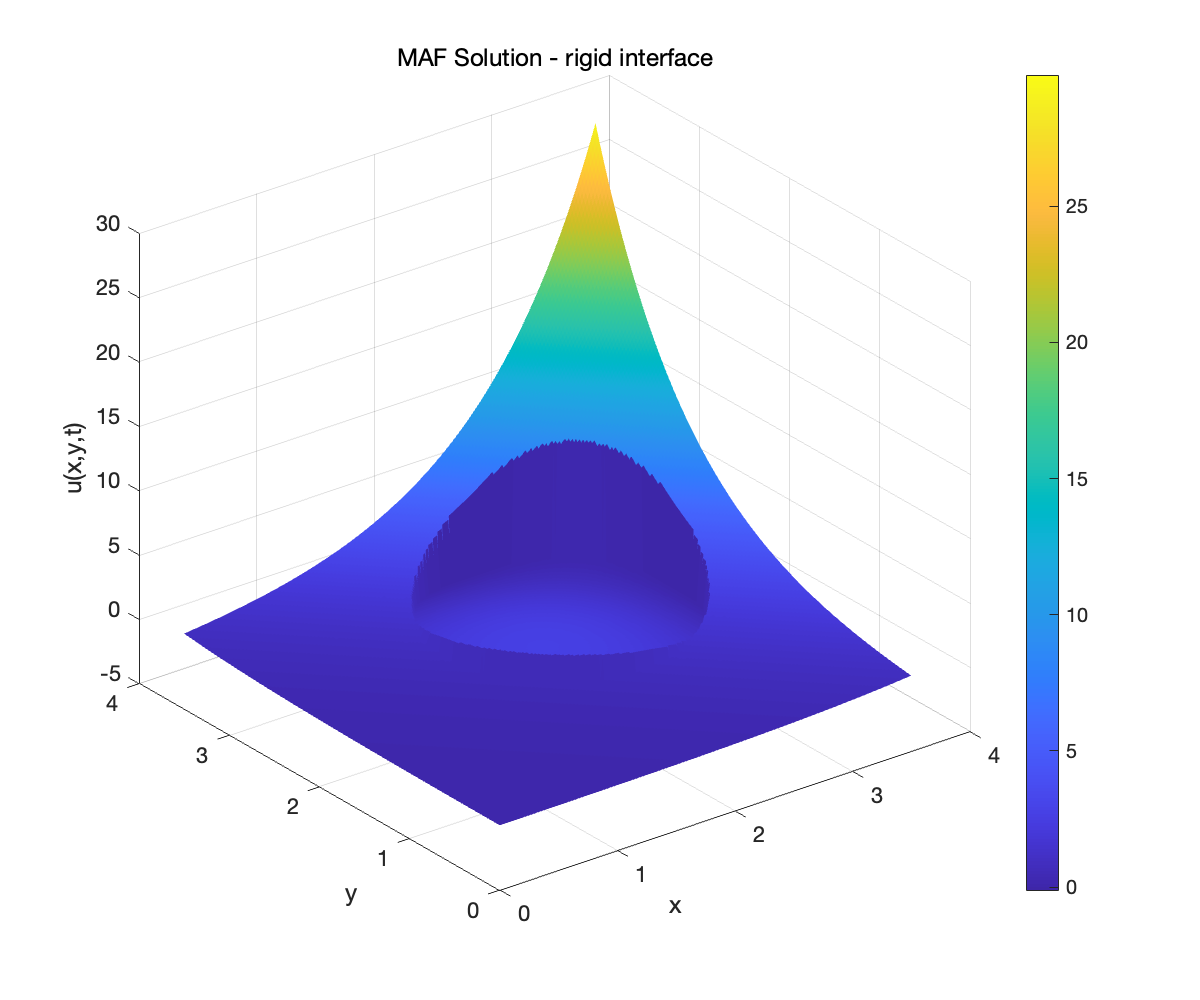}}
    \subfigure[Error at $t=1$]{\includegraphics[width=4.5cm, height=3cm]{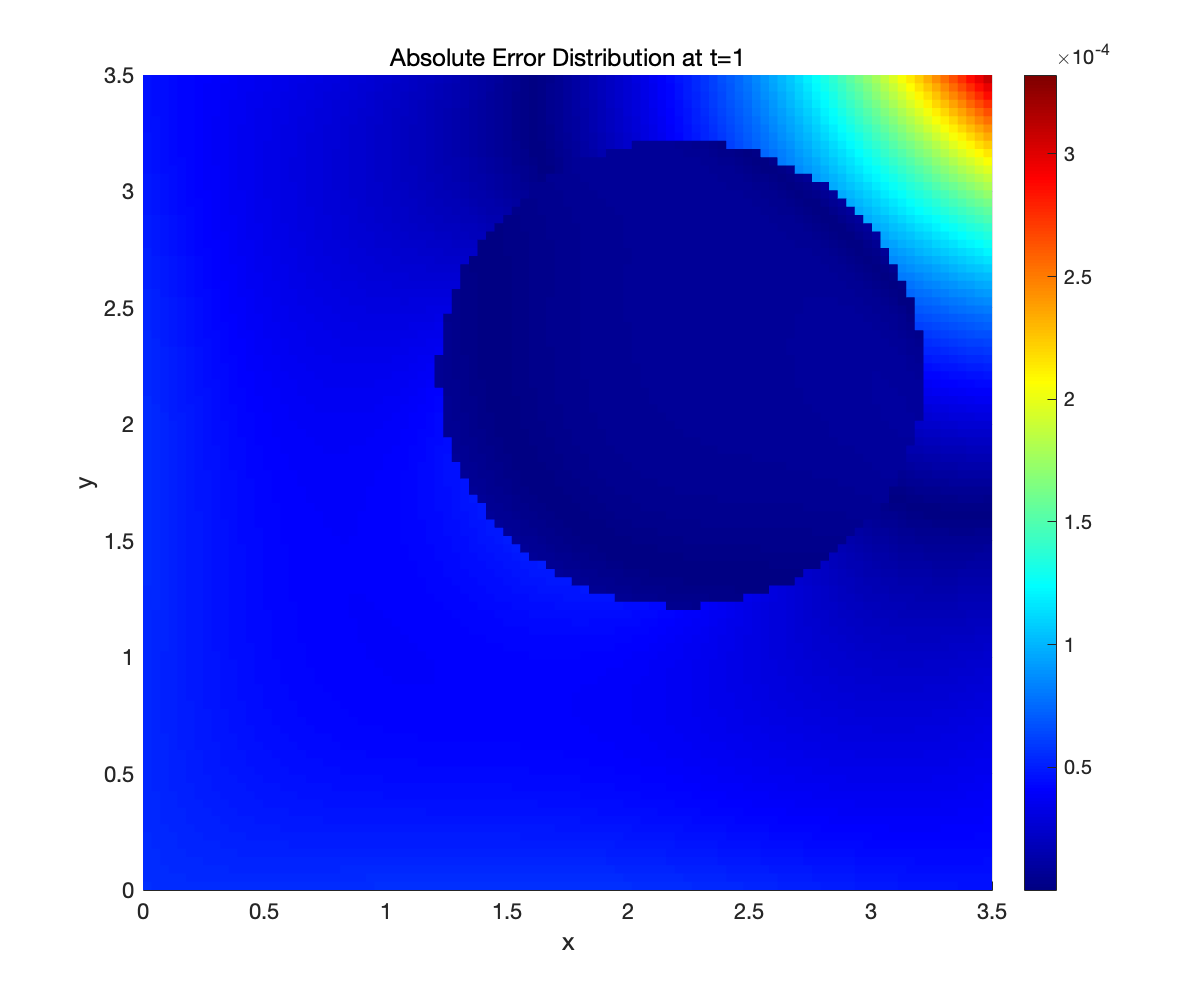}}
    \caption{Numerical results for Test 2 at $t=1$.}
    \label{fig:numercialresultst1rigid}
\end{figure}

\subsubsection{Test 3: Deforming Elliptical Interface}

We consider an elliptical interface $\Gamma(t): (x-x_c(t))^2/a^2(t) + (y-y_c(t))^2/b^2(t) = 1$ with time-dependent center $(x_c(t), y_c(t)) = (1.2 + 0.8t, 1.2 + 0.8t)$ and semi-axes $a(t) = \sqrt{1 + 0.1t}$, $b(t) = \sqrt{1 - 0.1t}$. This combines translation with anisotropic deformation. The exact solution is $u_1 = 0.01 e^t e^x e^y$ in $\Omega_1(t)$ and $u_2 = e^t \sin(x)\sin(y)$ in $\Omega_2(t)$. The diffusion coefficients are $\beta_1 = 1$ and $\beta_2 = 10$. The source terms are $f_1 = 0.01 e^t e^{x+y}(1 - 2\beta_1)$ and $f_2 = e^t \sin(x)\sin(y)(1 + 2\beta_2)$. The jump conditions $g_1$ and $g_2$ are computed from the exact solution. The initial and boundary conditions are $g_0 = u|_{t=0}$ and $g_D = u_1|_{\partial\Omega}$. The distance function and weight functions evolve as both the position and shape of the interface change. Figure~\ref{fig:weight_parabolic_test3} shows the weight function evolution at different times.

The training points and numerical results are shown in Figures~\ref{fig:convex_interfacepoint}--\ref{fig:numercialresultst1convex} (corresponding to the finest sampling configuration with $M_\Omega=1000$, $M_{\partial\Omega}=100$, $M_\Gamma=100$). The peak absolute error is $8.25 \times 10^{-5}$ at $t=0.5$ and grows to $4.52 \times 10^{-4}$ at $t=1.0$. Errors are primarily concentrated near the deforming elliptical interface, which aligns with the challenges imposed by the simultaneous translation and anisotropic deformation. The MAF method achieves a relative $L^2$ error of $8.89 \times 10^{-5}$, outperforming XPINN ($8.68 \times 10^{-3}$) by 98$\times$.

\begin{figure}[H]
    \centering
    \includegraphics[width=12cm, height=4cm]{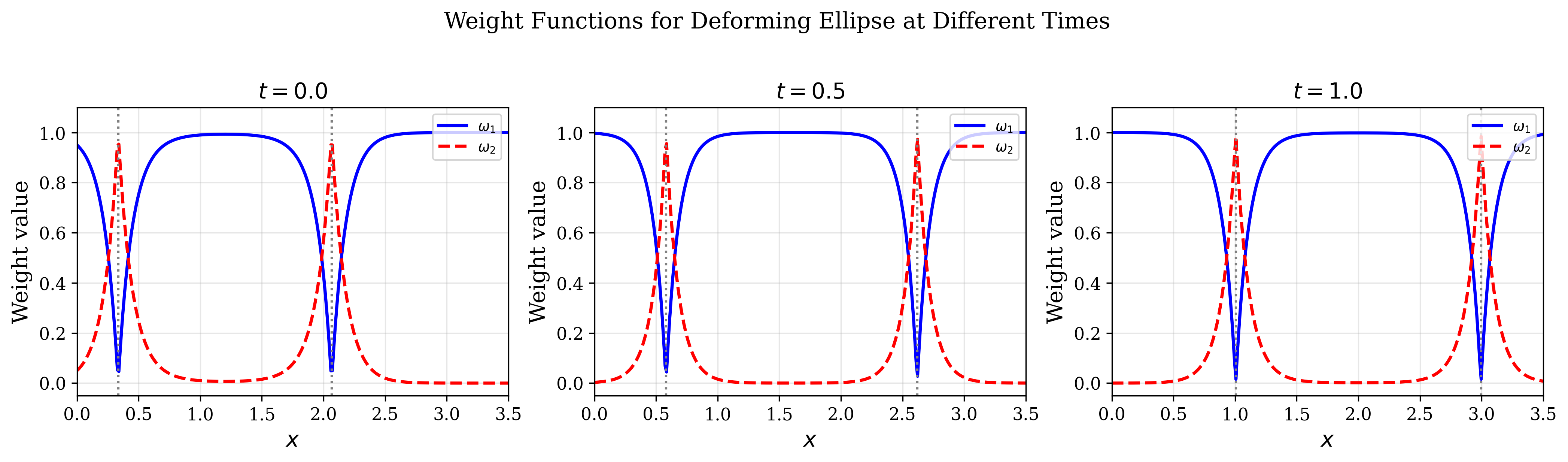}
    \caption{Weight functions at $t=0$, $t=0.5$, and $t=1$ for Test 3 (deforming ellipse).}
    \label{fig:weight_parabolic_test3}
\end{figure}

\begin{figure}[H]
\centering
\subfigure[Training points at $t=0.5$]{\includegraphics[width=4.5cm, height=3cm]{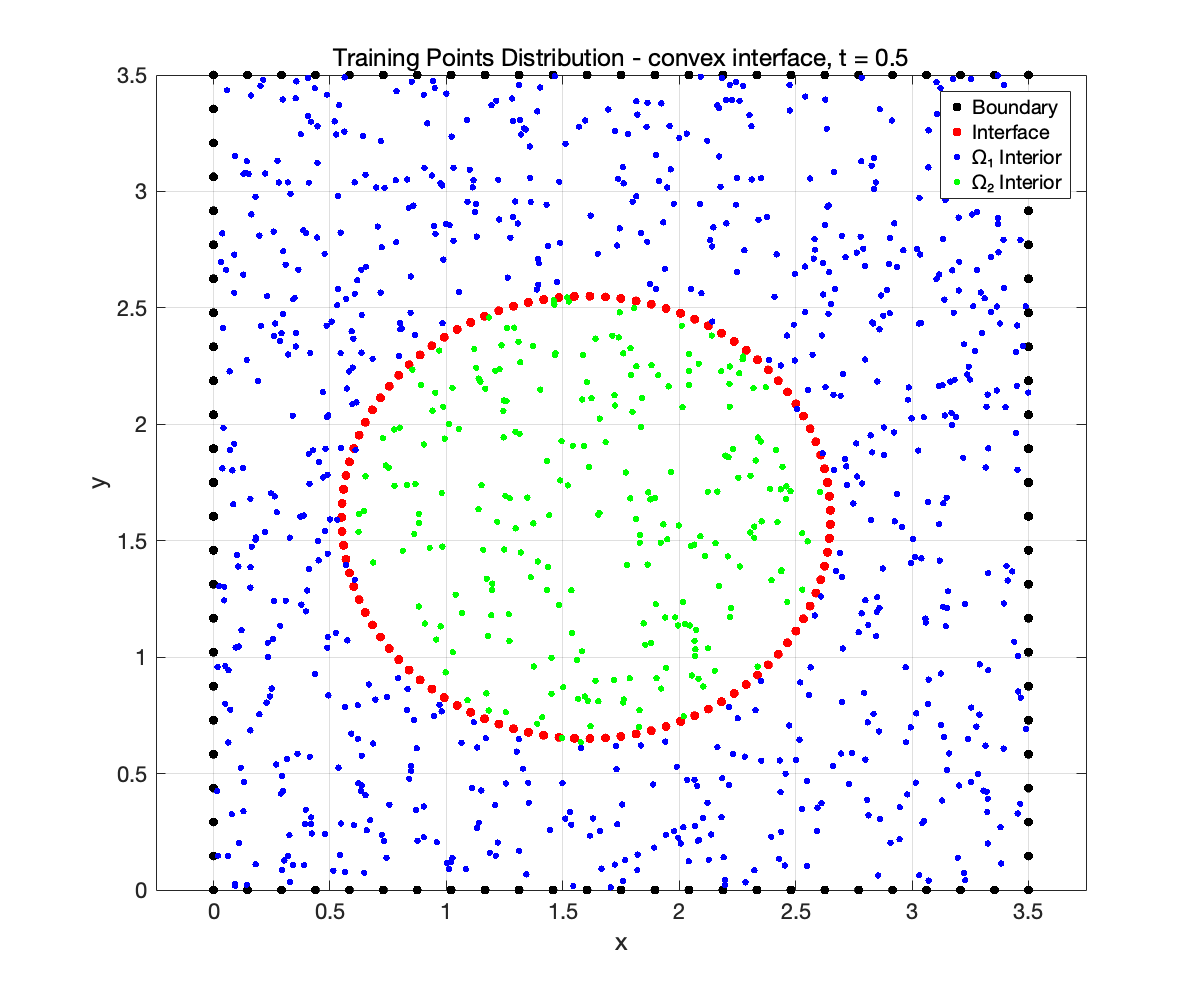}}
\subfigure[Training points at $t=1$]{\includegraphics[width=4.5cm, height=3cm]{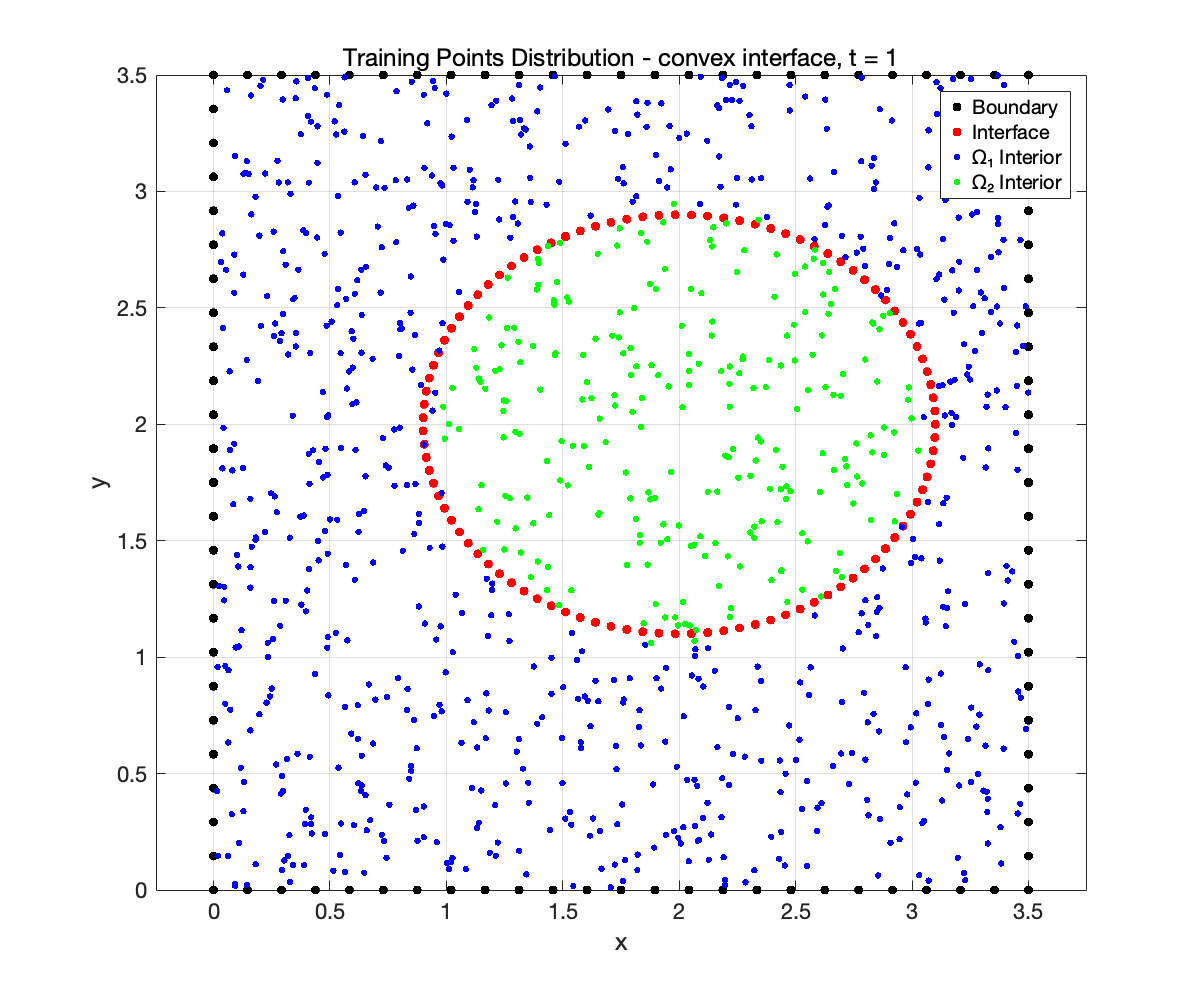}}
    \caption{Training points for Test 3 at $t=0.5$ and $t=1$.}
\label{fig:convex_interfacepoint}
\end{figure}

\begin{figure}[H]
\centering
    \subfigure[Exact at $t=0.5$]{\includegraphics[width=4.5cm, height=3cm]{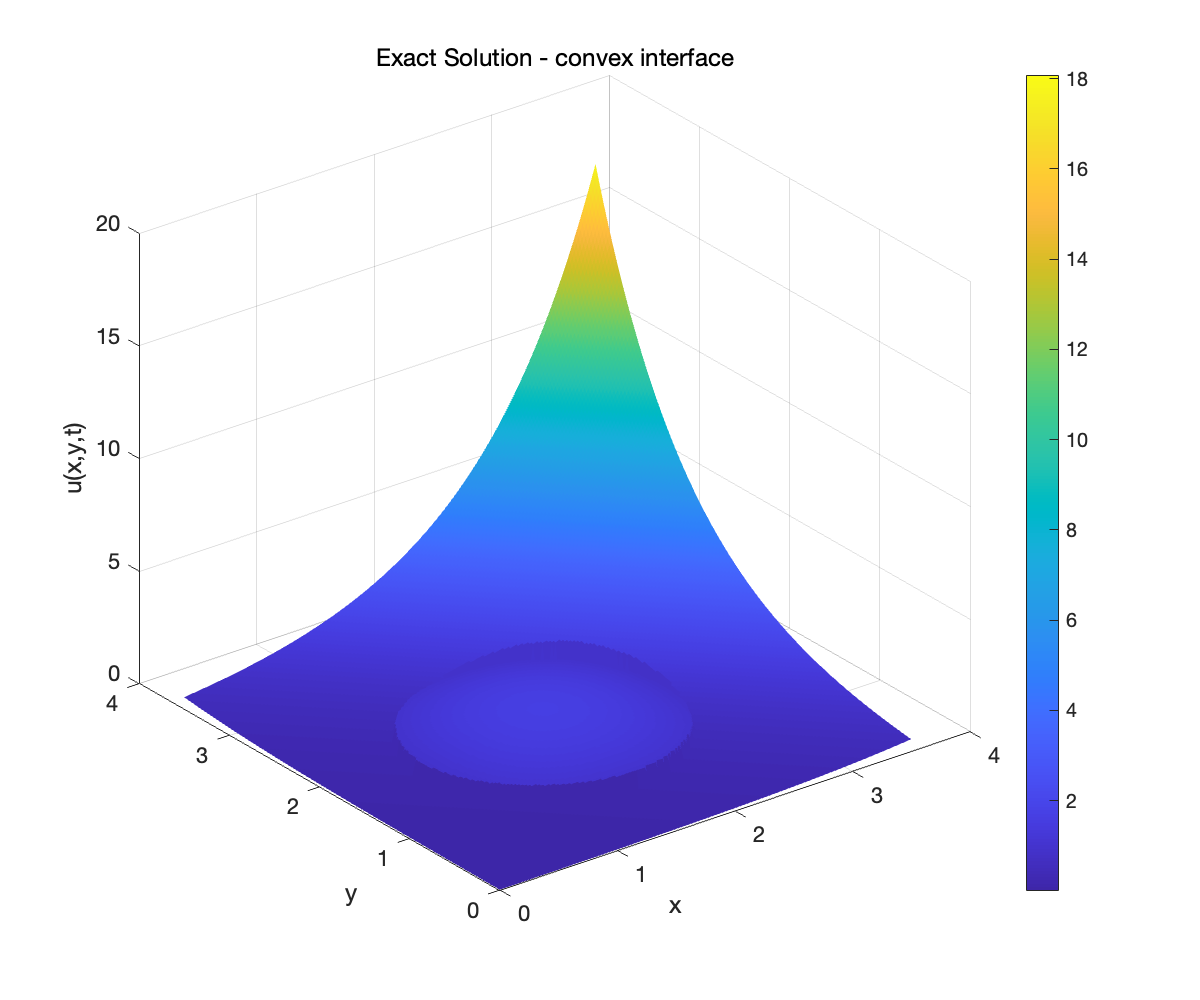}}
    \subfigure[MAF at $t=0.5$]{\includegraphics[width=4.5cm, height=3cm]{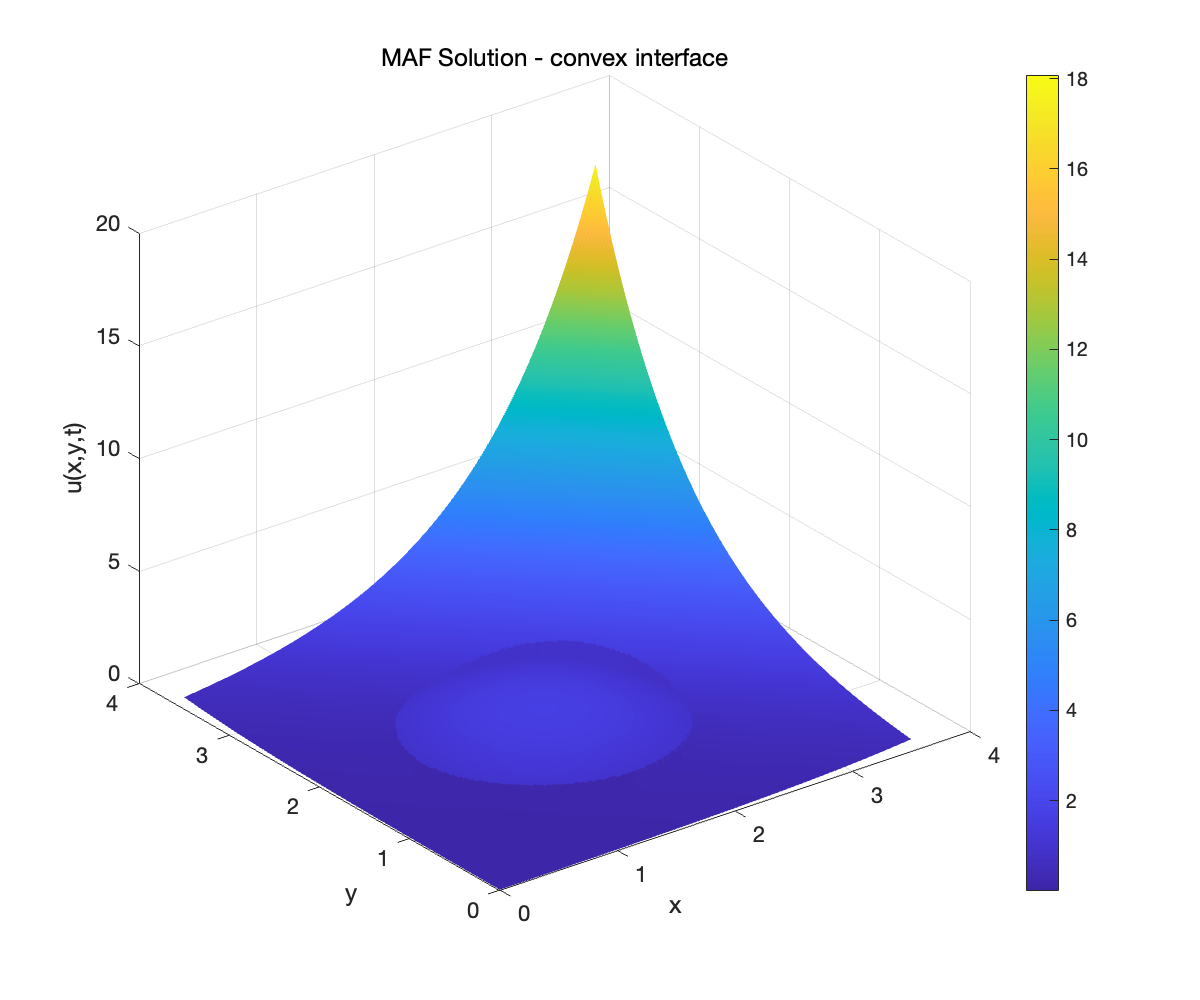}}
    \subfigure[Error at $t=0.5$]{\includegraphics[width=4.5cm, height=3cm]{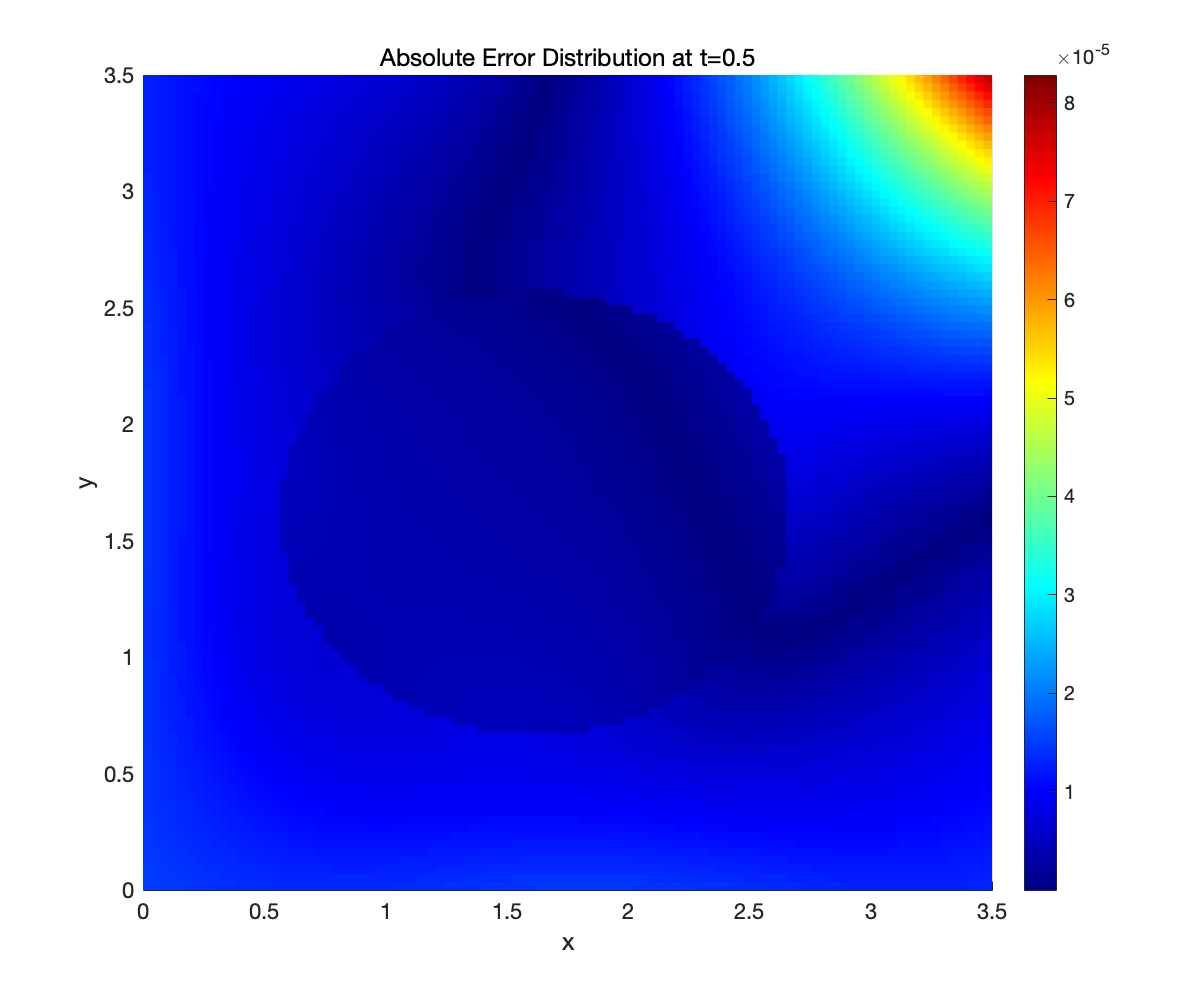}}
    \caption{Numerical results for Test 3 at $t=0.5$.}
\label{fig:numercialresultst05convex}
\end{figure}

\begin{figure}[H]
\centering
    \subfigure[Exact at $t=1$]{\includegraphics[width=4.5cm, height=3cm]{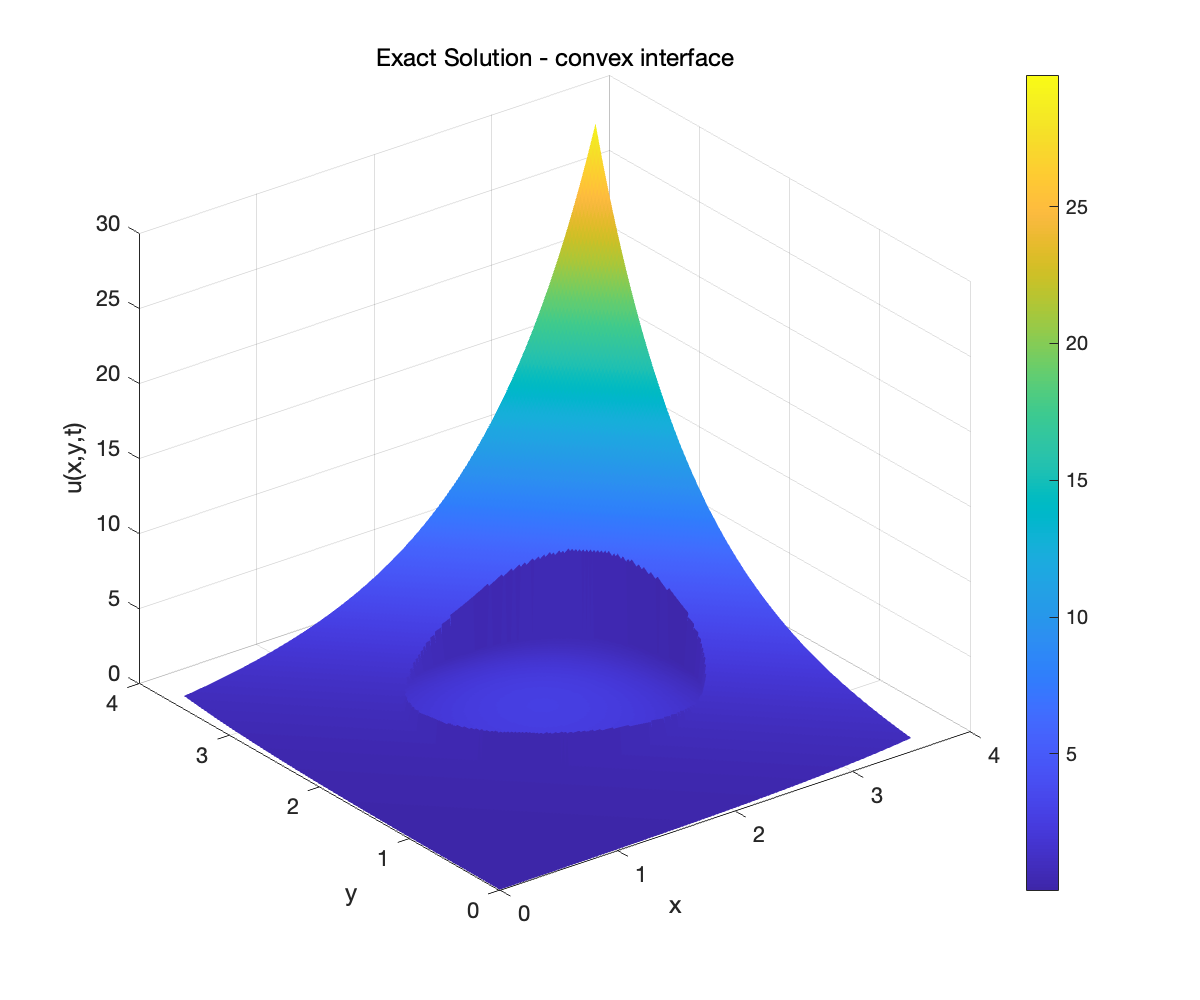}}
    \subfigure[MAF at $t=1$]{\includegraphics[width=4.5cm, height=3cm]{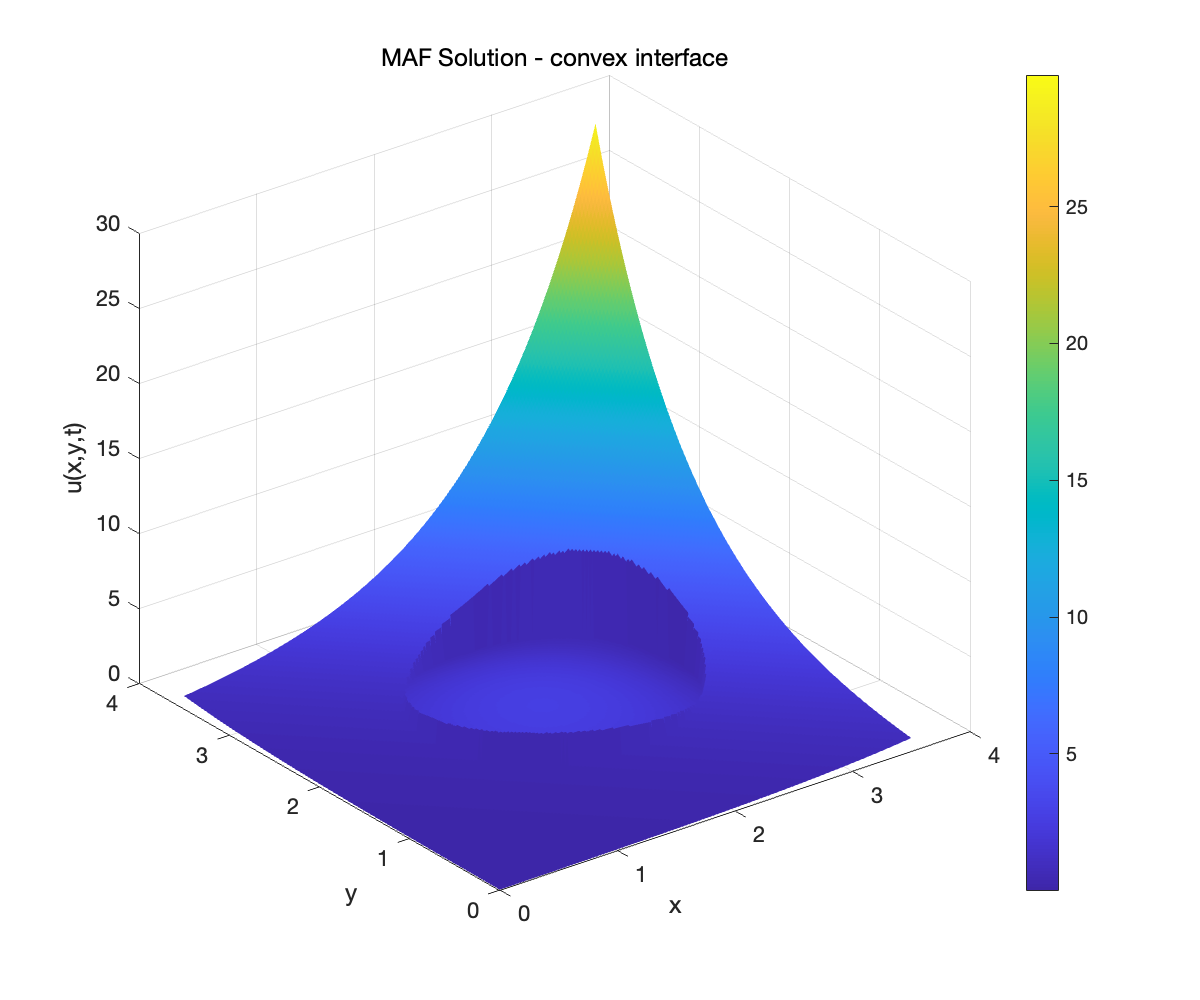}}
    \subfigure[Error at $t=1$]{\includegraphics[width=4.5cm, height=3cm]{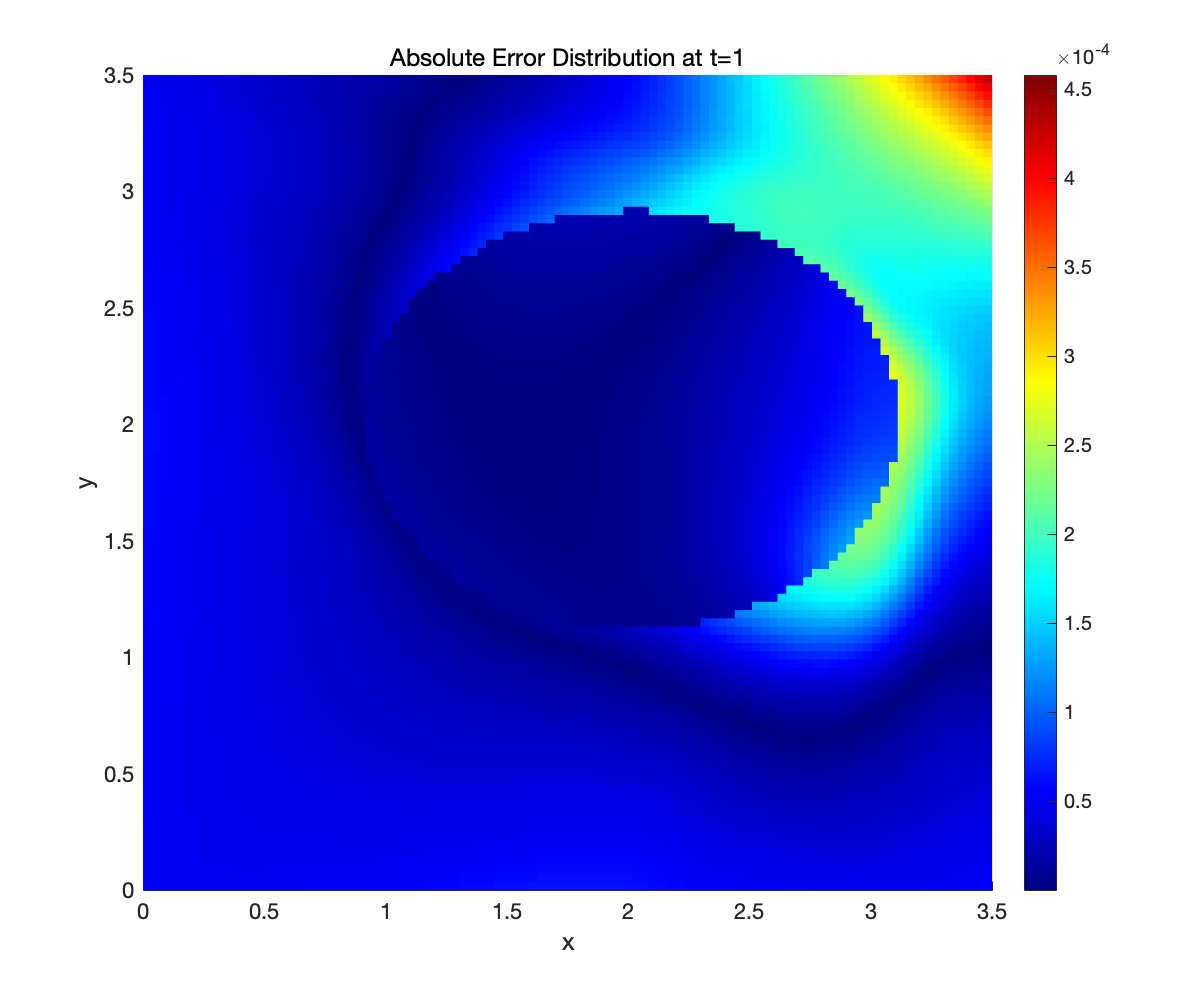}}
    \caption{Numerical results for Test 3 at $t=1$.}
\label{fig:numercialresultst1convex}
\end{figure}

\subsubsection{Test 4: Star-Shaped Interface with Deformation}

We consider a star-shaped interface undergoing translation, rotation, and parametric deformation. The interface is defined by $\mathbf{x}(t) = \mathbf{R}(2\pi t) [r(\theta,t)\cos\theta, r(\theta,t)\sin\theta]^T + \mathbf{c}(t)$, where $r(\theta,t) = 1 - 0.3t\cos(5\theta)$, $\mathbf{R}(\phi)$ is a rotation matrix, and $\mathbf{c}(t) = (1.2 + 0.8t, 1.2 + 0.8t)$. This combines time-varying topology, rotation, translation, and amplitude-modulated radial perturbations. The exact solution is $u_1 = 0.01 e^t e^x e^y$ in $\Omega_1(t)$ and $u_2 = e^t \sin(x)\sin(y)$ in $\Omega_2(t)$. The source terms are $f_1 = 0.01 e^t e^{x+y}(1 - 2\beta_1)$ and $f_2 = e^t \sin(x)\sin(y)(1 + 2\beta_2)$, where $\beta_1 = 1$ and $\beta_2 = 10$. The jump conditions are $g_1 = u_1|_{\Gamma(t)} - u_2|_{\Gamma(t)}$ and $g_2 = \beta_1 \nabla u_1 \cdot \boldsymbol{n} - \beta_2 \nabla u_2 \cdot \boldsymbol{n}$, computed from the exact solution. The initial condition is $g_0 = u|_{t=0}$ and the Dirichlet boundary condition is $g_D = u_2|_{\partial\Omega}$. Figure~\ref{fig:weight_parabolic_test4} shows the weight function evolution.

The training points and numerical results are shown in Figures~\ref{fig:star_interfacepoint}--\ref{fig:star_results_t1} (corresponding to the finest sampling configuration with $M_\Omega=1000$, $M_{\partial\Omega}=100$, $M_\Gamma=100$). The maximum absolute error is $9.10 \times 10^{-5}$ at $t=0.5$ and grows to $5.02 \times 10^{-4}$ at $t=1.0$. Errors are primarily concentrated near regions of maximum curvature (interface vertices), which aligns with the challenges imposed by the time-varying star-shaped geometry and the combined effects of translation, rotation, and deformation. The MAF method achieves a relative $L^2$ error of $9.01 \times 10^{-5}$, outperforming XPINN ($8.99 \times 10^{-3}$) by 100$\times$.

\begin{figure}[H]
    \centering
    \includegraphics[width=11cm, height=3.5cm]{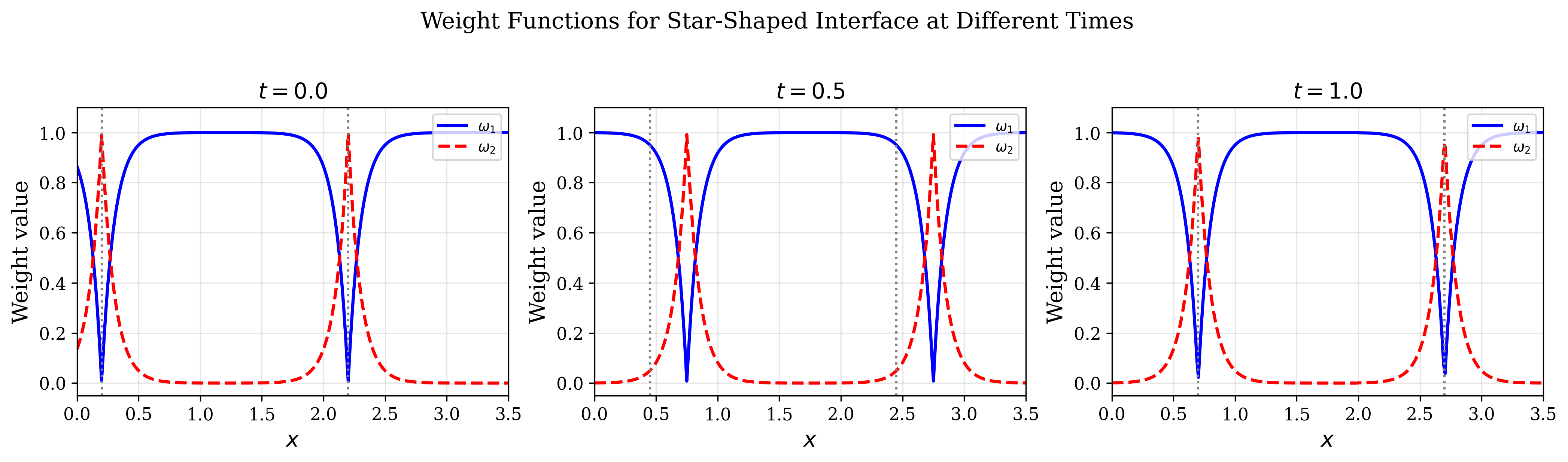}
    \caption{Weight functions at $t=0$, $t=0.5$, and $t=1$ for Test 4 (star-shaped interface).}
    \label{fig:weight_parabolic_test4}
\end{figure}

\begin{figure}[H]
    \centering
    \subfigure[Training points at $t=0.5$]{\includegraphics[width=4.5cm, height=3cm]{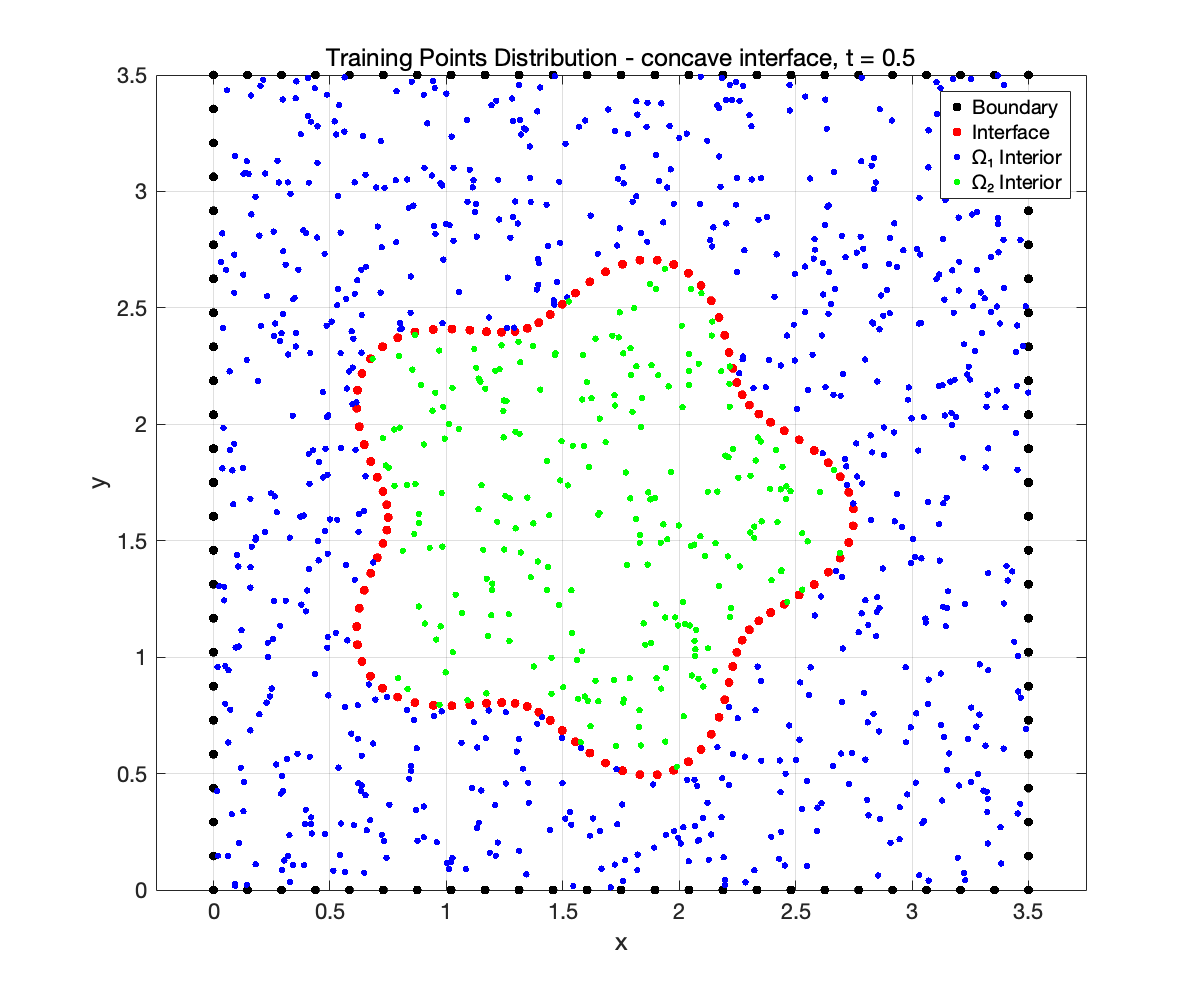}}
    \subfigure[Training points at $t=1$]{\includegraphics[width=4.5cm, height=3cm]{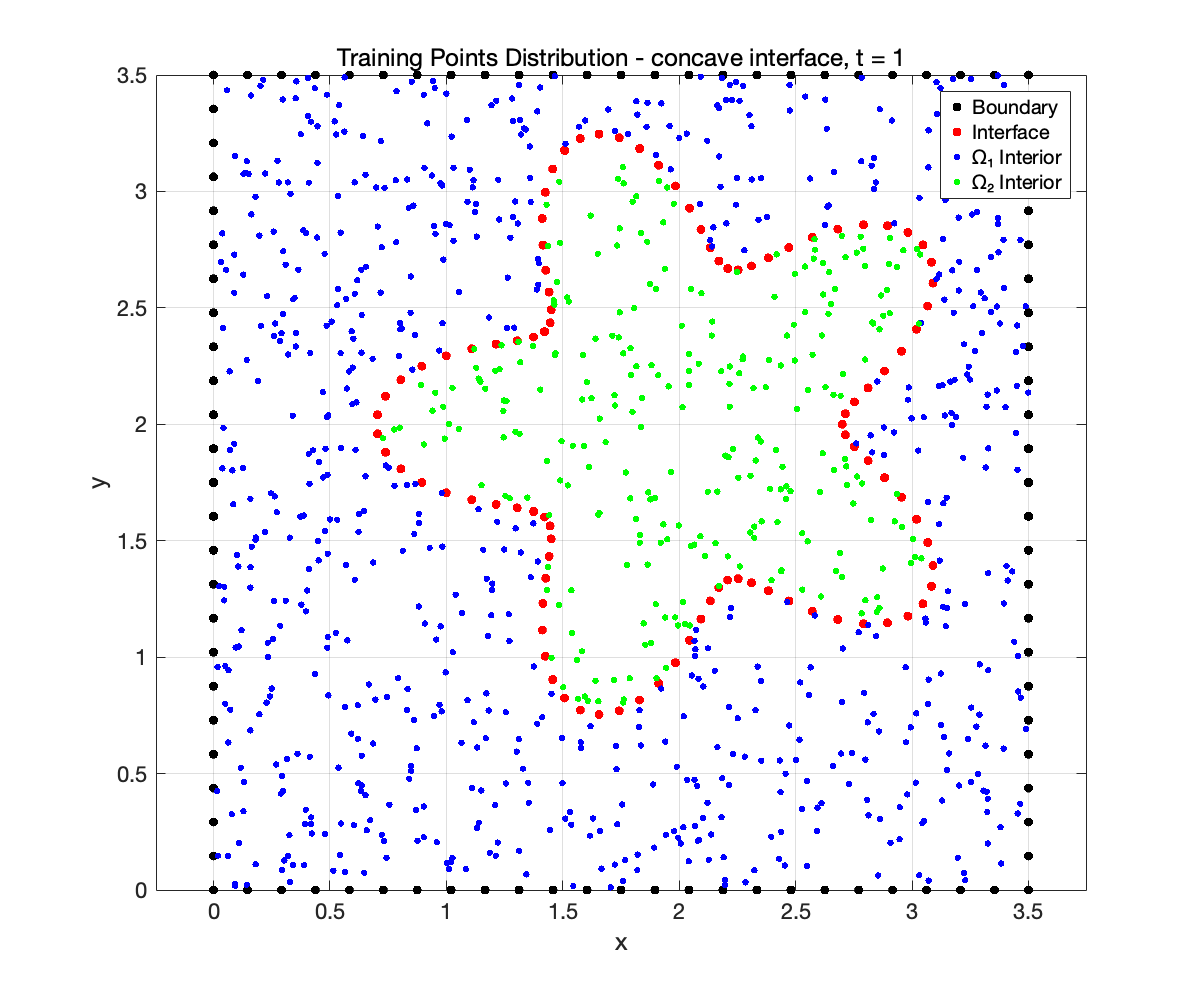}}
    \caption{Training points for Test 4 at $t=0.5$ and $t=1$.}
    \label{fig:star_interfacepoint}
\end{figure}

\begin{figure}[H]
    \centering
    \subfigure[Exact at $t=0.5$]{\includegraphics[width=4.5cm, height=3cm]{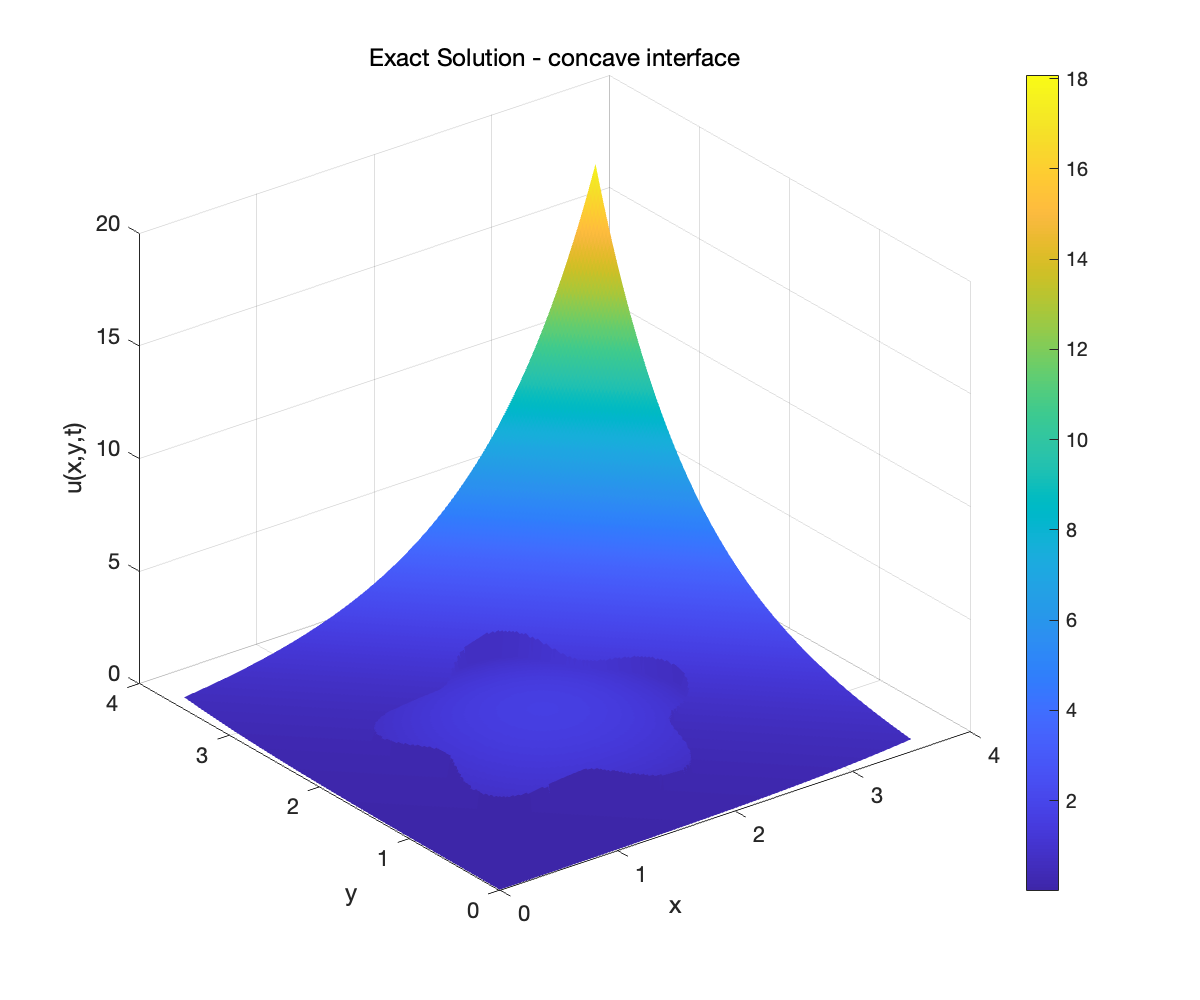}}
    \subfigure[MAF at $t=0.5$]{\includegraphics[width=4.5cm, height=3cm]{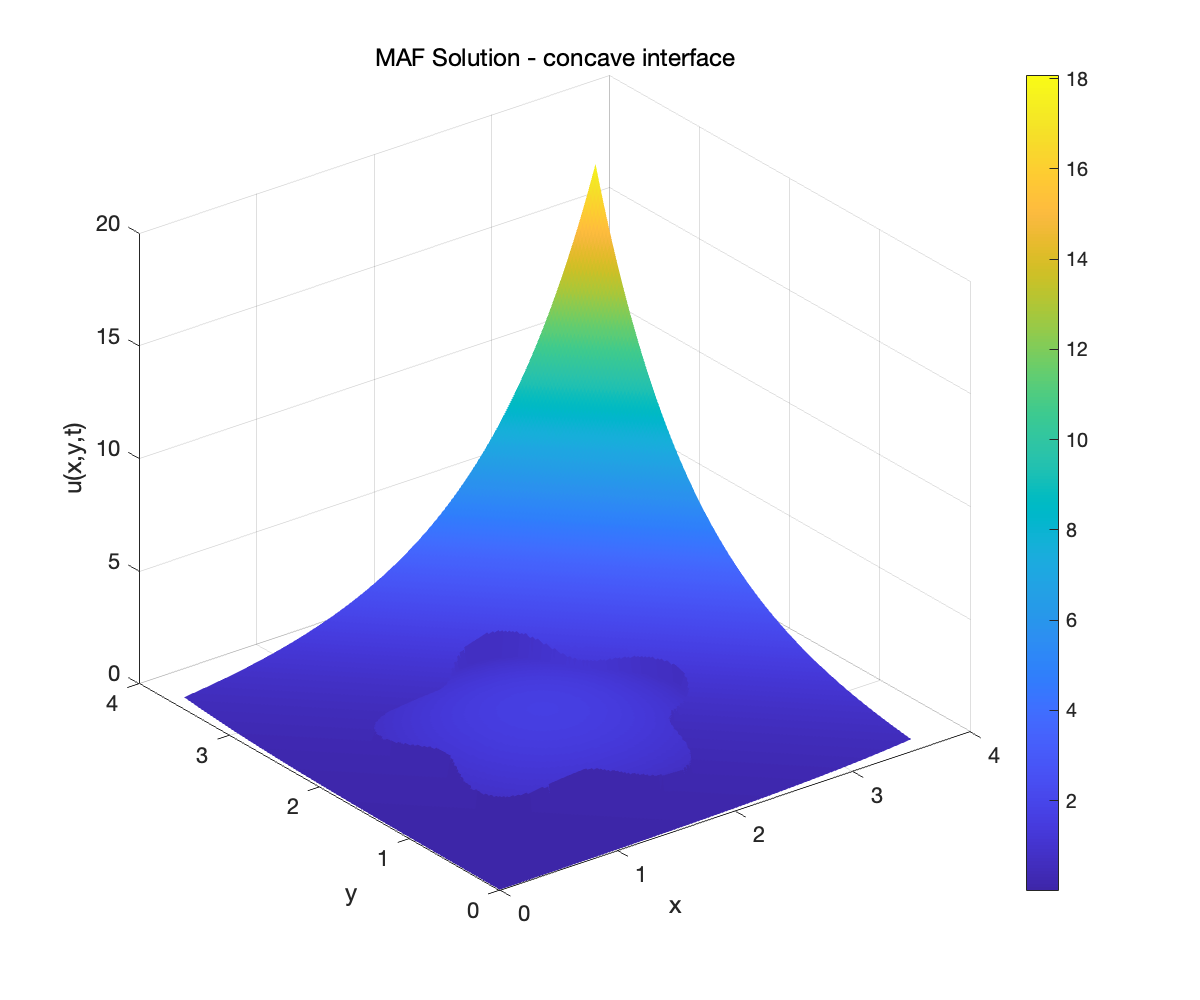}}
    \subfigure[Error at $t=0.5$]{\includegraphics[width=4.5cm, height=3cm]{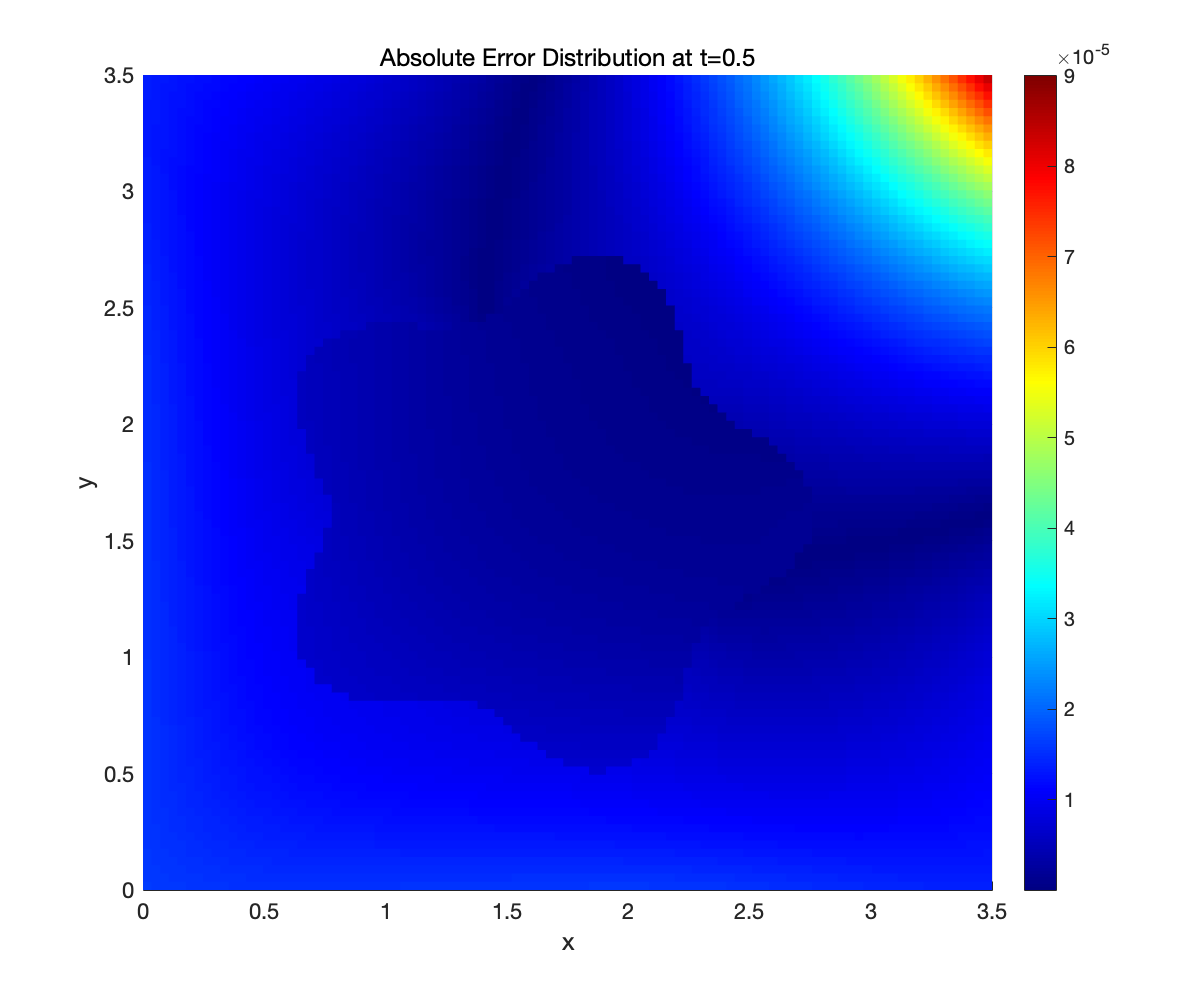}}
    \caption{Numerical results for Test 4 at $t=0.5$.}
    \label{fig:star_results_t05}
\end{figure}

\begin{figure}[H]
    \centering
    \subfigure[Exact at $t=1$]{\includegraphics[width=4.5cm, height=3cm]{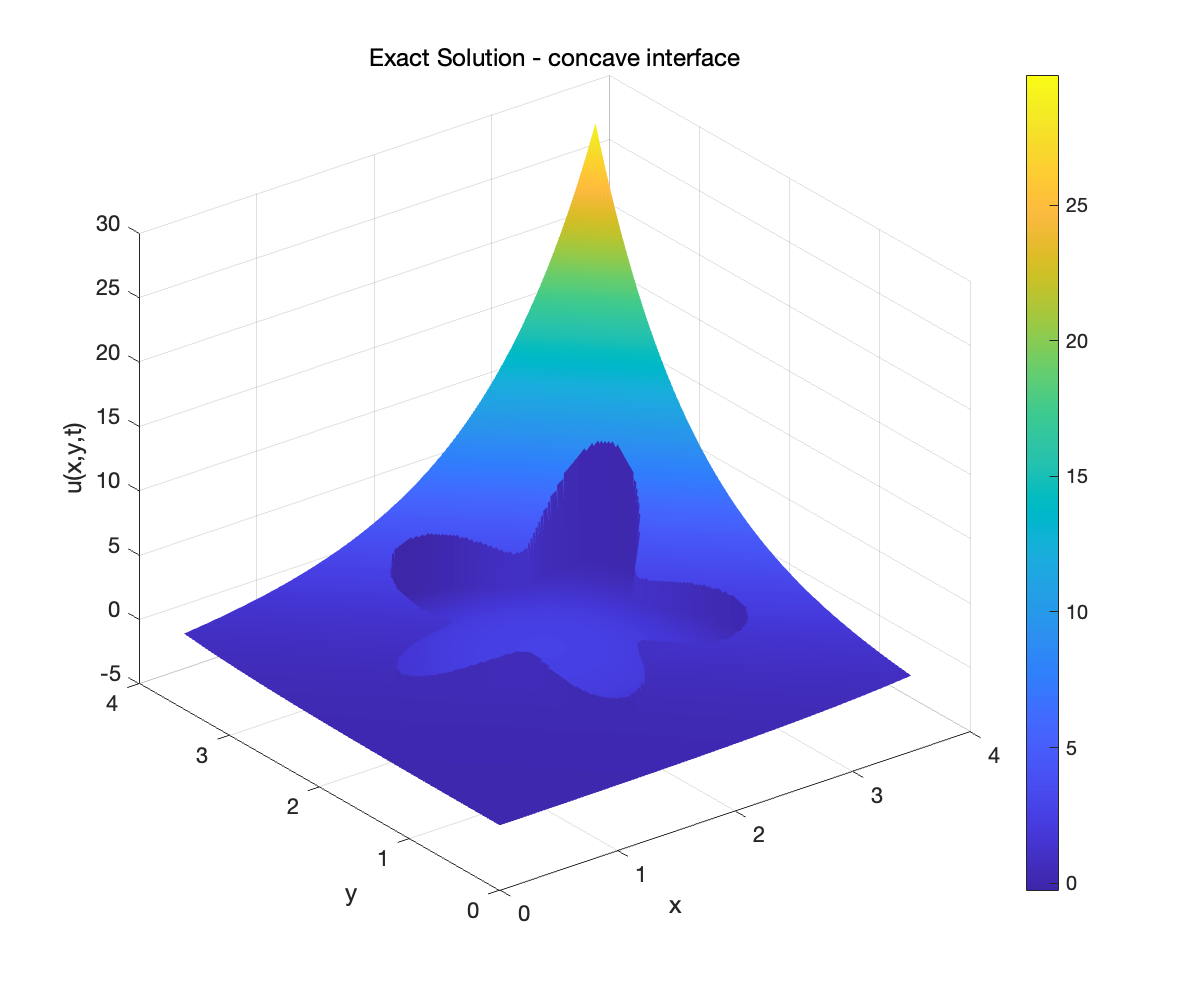}}
    \subfigure[MAF at $t=1$]{\includegraphics[width=4.5cm, height=3cm]{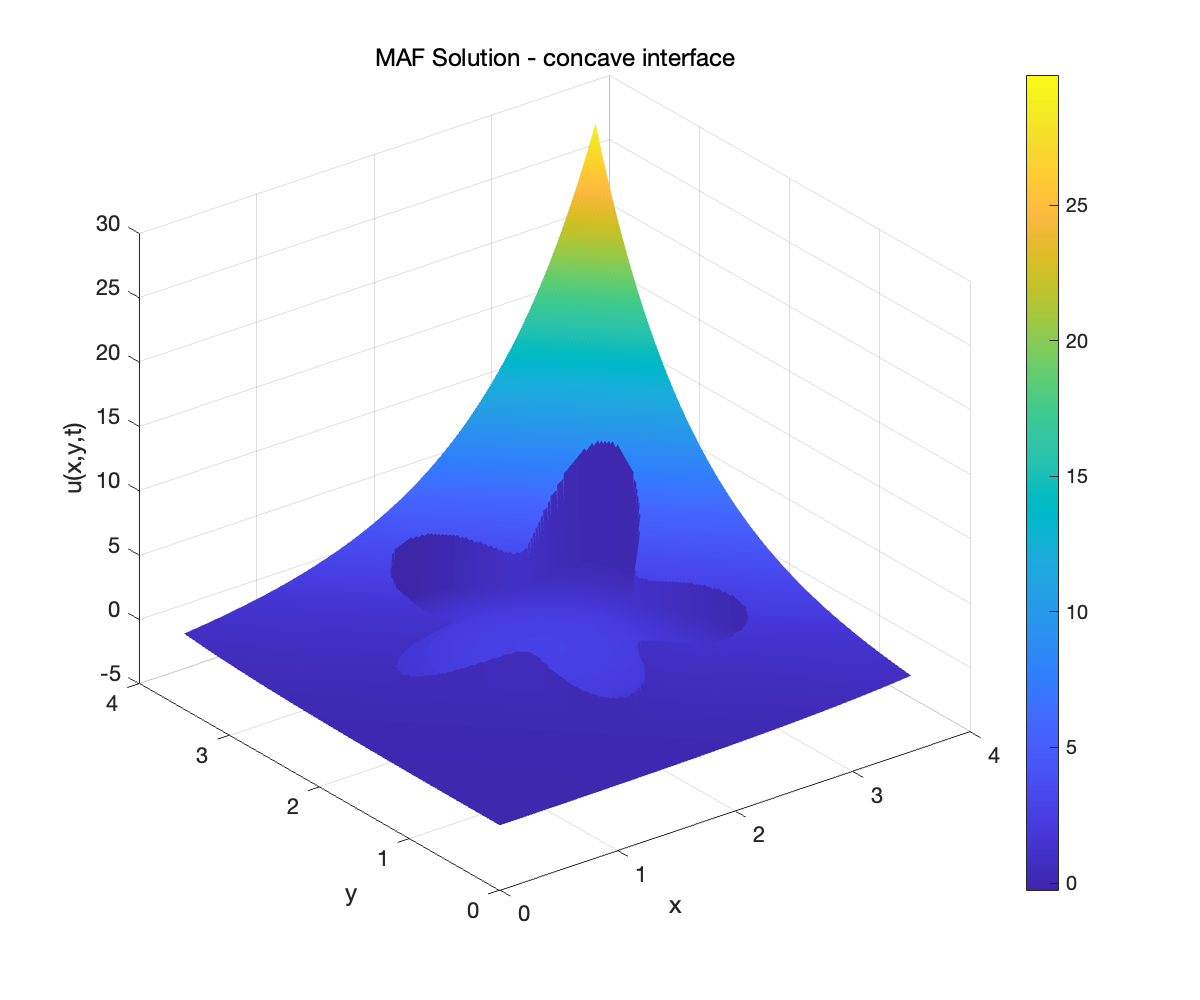}}
    \subfigure[Error at $t=1$]{\includegraphics[width=4.5cm, height=3cm]{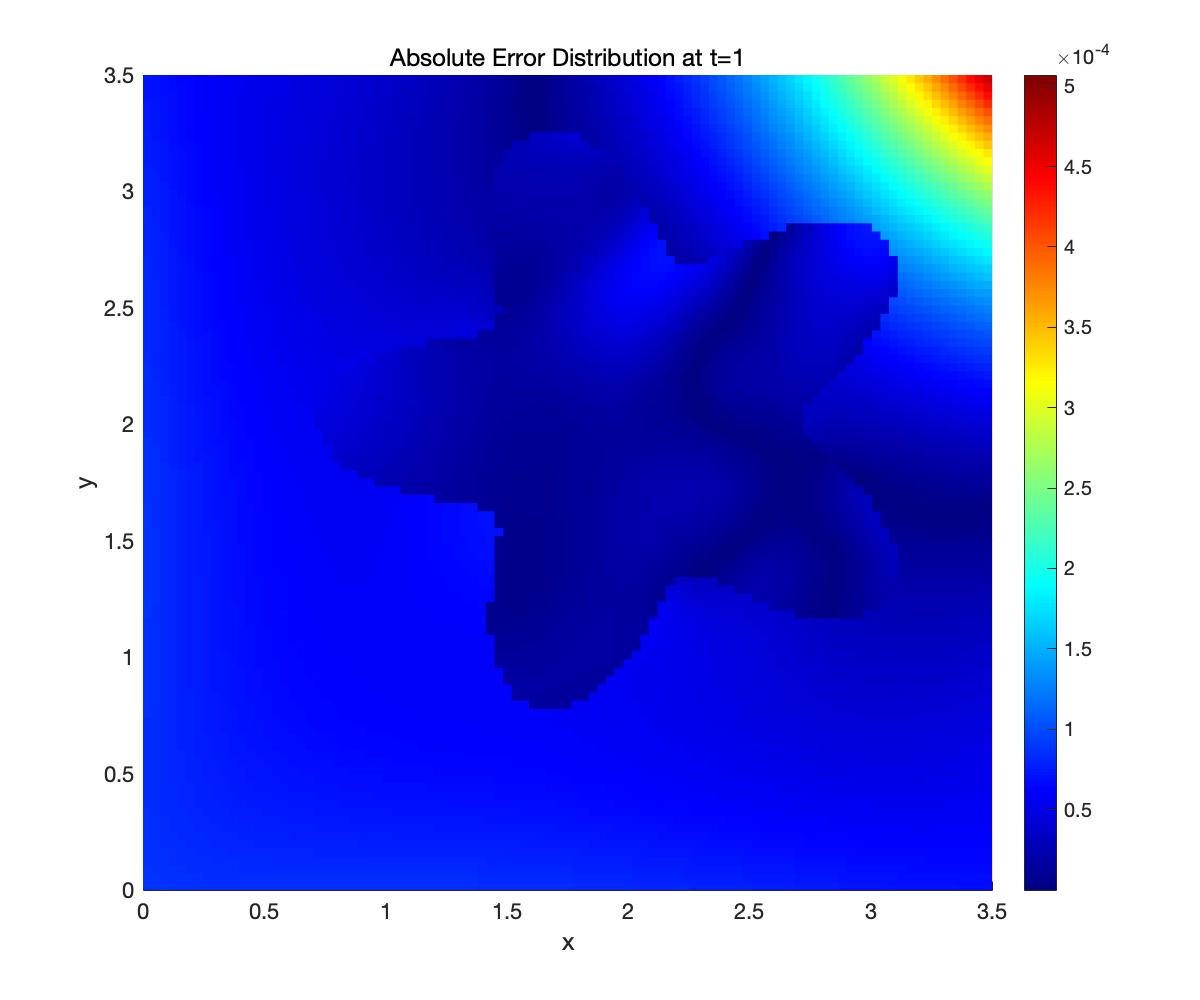}}
    \caption{Numerical results for Test 4 at $t=1$.}
    \label{fig:star_results_t1}
\end{figure}

Table~\ref{table:parabolic_summary} summarizes the relative $L^2$ errors for all parabolic test cases under different sampling densities. The MAF method consistently outperforms XPINN by 1--2 orders of magnitude across all interface configurations. For moving interface scenarios, where no previous deep learning methods (MFM, DCSNN, M-PINN, I-PINN, AdaI-PINN) have been demonstrated, our method maintains robust performance with errors consistently below $10^{-4}$ at the finest sampling. The multi-activation strategy proves particularly effective in handling time-dependent interface dynamics, adapting smoothly to interface motion and deformation.

\begin{table}[H]
    \centering
    \caption{Relative $L^2$ error comparison for parabolic interface problems.}
    \label{table:parabolic_summary}
    \small
    \begin{tabular}{|c|c|c|c|c|c|}
        \hline
        $(M_\Omega, M_{\partial\Omega}, M_\Gamma)$ & Method & Fixed & Moving & Deforming & Star \\
        \hline
        \multirow{2}{*}{(200, 20, 20)} & MAF & $7.47e{-}4$ & $8.56e{-}4$ & $8.12e{-}4$ & $8.84e{-}4$ \\
        & XPINN & $5.10e{-}2$ & $4.14e{-}2$ & $5.33e{-}2$ & $6.34e{-}2$ \\
        \hline
        \multirow{2}{*}{(400, 40, 40)} & MAF & $2.40e{-}4$ & $1.46e{-}4$ & $1.88e{-}4$ & $2.69e{-}4$ \\
        & XPINN & $1.12e{-}2$ & $1.35e{-}2$ & $1.56e{-}2$ & $1.64e{-}2$ \\
        \hline
        \multirow{2}{*}{(1000, 100, 100)} & MAF & $8.45e{-}5$ & $8.52e{-}5$ & $8.89e{-}5$ & $9.01e{-}5$ \\
        & XPINN & $7.33e{-}3$ & $7.53e{-}3$ & $8.68e{-}3$ & $8.99e{-}3$ \\
        \hline
    \end{tabular}
\end{table}

\section{Conclusion}
\label{Conclusion}

We have developed a domain decomposition-based deep neural network framework with multi-activation functions (MAF) for solving elliptic and parabolic interface problems with discontinuous coefficients. Our method employs two independent neural networks, one for each subdomain, coupled through interface conditions in the loss function. Within each network, a multi-activation mechanism with interface-aware weighting enhances training efficiency near interfaces where coupling constraints are most demanding. This approach naturally accommodates solution and flux jumps across material interfaces without body-fitted meshes or explicit interface tracking. We presented conditional error bounds adapted from the generalization error framework of Mishra and Molinaro~\cite{mishra2023estimates}, relating solution accuracy to trained loss values and quadrature errors. Extensive numerical experiments validate the method's effectiveness: for elliptic problems in 2D--10D with various interface geometries (straight lines, ellipses, sunflower shapes, flowers, and hyperspheres), MAF achieves competitive or superior accuracy compared to XPINN, MFM, DCSNN, M-PINN, I-PINN, and AdaI-PINN; for parabolic problems with static and moving interfaces---where existing methods (MFM, DCSNN, M-PINN, I-PINN, AdaI-PINN) have not been demonstrated---MAF outperforms XPINN by 1--2 orders of magnitude across fixed, translating, deforming, and star-shaped interface configurations.

Several limitations should be acknowledged. The theoretical error bounds are conditional on stability assumptions that may not hold uniformly for high-contrast coefficient problems. Practical performance depends significantly on hyperparameter choices, network architecture, and optimization strategies requiring problem-specific tuning. Future work could explore advanced training strategies (adaptive sampling, curriculum learning) for higher-dimensional problems, extend the framework to nonlinear PDEs and coupled multiphysics problems, investigate multi-activation architectures from a theoretical perspective, and incorporate probabilistic approaches for uncertainty quantification.

\section{Acknowledgments}

This work was supported by the National Natural Science Foundation of China (Grant No.~12171340). 
%The author would like to thank the anonymous reviewers for their valuable comments and suggestions, which have significantly improved the quality of this manuscript.

\subsection*{Declaration on the Use of AI-Assisted Tools}

During the preparation of this manuscript, the author used AI-assisted tools (including large language models) to improve the readability, language, and presentation of the text. Specifically, AI tools were employed for:
\begin{itemize}
    \item Language editing and proofreading to enhance clarity and grammar;
    \item Assisting with the organization and formatting of certain sections;
    \item Generating preliminary drafts of some explanatory text that were subsequently reviewed and revised.
\end{itemize}
The author takes full responsibility for the scientific content, methodology, results, and conclusions presented in this work. All mathematical derivations, theoretical analyses, numerical implementations, and experimental results were conducted and verified by the author. The use of AI tools was limited to assisting with writing and presentation, and did not involve the generation of scientific content or research findings.

\bibliographystyle{siam}
\bibliography{jumpref}

@article{raissi2019physics,
  title={Physics-informed neural networks: A deep learning framework for solving forward and inverse problems involving nonlinear partial differential equations},
  author={Raissi, Maziar and Perdikaris, Paris and Karniadakis, George E},
  journal={Journal of Computational physics},
  volume={378},
  pages={686--707},
  year={2019},
  publisher={Elsevier}
}

@article{cuomo2022scientific,
  title={Scientific machine learning through physics--informed neural networks: Where we are and what’s next},
  author={Cuomo, Salvatore and Di Cola, Vincenzo Schiano and Giampaolo, Fabio and Rozza, Gianluigi and Raissi, Maziar and Piccialli, Francesco},
  journal={Journal of Scientific Computing},
  volume={92},
  number={3},
  pages={88},
  year={2022},
  publisher={Springer}
}

@article{cai2021physics,
  title={Physics-informed neural networks (PINNs) for fluid mechanics: A review},
  author={Cai, Shengze and Mao, Zhiping and Wang, Zhicheng and Yin, Minglang and Karniadakis, George Em},
  journal={Acta Mechanica Sinica},
  volume={37},
  number={12},
  pages={1727--1738},
  year={2021},
  publisher={Springer}
}

@article{cai2021physicsheat,
  title={Physics-informed neural networks for heat transfer problems},
  author={Cai, Shengze and Wang, Zhicheng and Wang, Sifan and Perdikaris, Paris and Karniadakis, George Em},
  journal={Journal of Heat Transfer},
  volume={143},
  number={6},
  pages={060801},
  year={2021},
  publisher={American Society of Mechanical Engineers}
}

@article{mao2020physics,
  title={Physics-informed neural networks for high-speed flows},
  author={Mao, Zhiping and Jagtap, Ameya D and Karniadakis, George Em},
  journal={Computer Methods in Applied Mechanics and Engineering},
  volume={360},
  pages={112789},
  year={2020},
  publisher={Elsevier}
}

@article{zhang2024priori,
  title={A Priori Error Estimation of Physics-Informed Neural Networks Solving Allen--Cahn and Cahn--Hilliard Equations},
  author={Zhang, Guangtao and Lin, Jiani and Zhai, Qijia and Yang, Huiyu and Chen, Xujun and Zheng, Xiaoning and Leong, Ieng Tak},
  journal={arXiv preprint arXiv:2402.02667},
  year={2024}
}

@article{haghighat2021physics,
  title={A physics-informed deep learning framework for inversion and surrogate modeling in solid mechanics},
  author={Haghighat, Ehsan and Raissi, Maziar and Moure, Adrian and Gomez, Hector and Juanes, Ruben},
  journal={Computer Methods in Applied Mechanics and Engineering},
  volume={379},
  pages={113741},
  year={2021},
  publisher={Elsevier}
}

@article{haghighat2020deep,
  title={A deep learning framework for solution and discovery in solid mechanics},
  author={Haghighat, Ehsan and Raissi, Maziar and Moure, Adrian and Gomez, Hector and Juanes, Ruben},
  journal={arXiv preprint arXiv:2003.02751},
  year={2020}
}

@article{bai2023physics,
  title={A physics-informed neural network technique based on a modified loss function for computational 2D and 3D solid mechanics},
  author={Bai, Jinshuai and Rabczuk, Timon and Gupta, Ashish and Alzubaidi, Laith and Gu, Yuantong},
  journal={Computational Mechanics},
  volume={71},
  number={3},
  pages={543--562},
  year={2023},
  publisher={Springer}
}

@article{almajid2022prediction,
  title={Prediction of porous media fluid flow using physics informed neural networks},
  author={Almajid, Muhammad M and Abu-Al-Saud, Moataz O},
  journal={Journal of Petroleum Science and Engineering},
  volume={208},
  pages={109205},
  year={2022},
  publisher={Elsevier}
}

@inproceedings{fraces2021physics,
  title={Physics informed deep learning for flow and transport in porous media},
  author={Fraces, Cedric G and Tchelepi, Hamdi},
  booktitle={SPE Reservoir Simulation Conference?},
  pages={D011S006R002},
  year={2021},
  organization={SPE}
}

@article{zhang2023physics,
  title={A physics-informed convolutional neural network for the simulation and prediction of two-phase Darcy flows in heterogeneous porous media},
  author={Zhang, Zhao and Yan, Xia and Liu, Piyang and Zhang, Kai and Han, Renmin and Wang, Sheng},
  journal={Journal of Computational Physics},
  volume={477},
  pages={111919},
  year={2023},
  publisher={Elsevier}
}

@article{chern2007coupling,
  title={A coupling interface method for elliptic interface problems},
  author={Chern, I-Liang and Shu, Yu-Chen},
  journal={Journal of Computational Physics},
  volume={225},
  number={2},
  pages={2138--2174},
  year={2007},
  publisher={Elsevier}
}

@article{wang2010embedded,
  title={An embedded boundary method for elliptic and parabolic problems with interfaces and application to multi-material systems with phase transitions},
  author={Wang, Shuqiang and Samulyak, Roman and Guo, Tongfei},
  journal={Acta Mathematica Scientia},
  volume={30},
  number={2},
  pages={499--521},
  year={2010},
  publisher={Elsevier}
}

@article{costabel1988strongly,
  title={Strongly elliptic boundary integral equations for electromagnetic transmission problems},
  author={Costabel, Martin and Stephan, Ernst P},
  journal={Proceedings of the Royal Society of Edinburgh Section A: Mathematics},
  volume={109},
  number={3-4},
  pages={271--296},
  year={1988},
  publisher={Royal Society of Edinburgh Scotland Foundation}
}

@article{elhaddad2018multi,
  title={Multi-level hp-finite cell method for embedded interface problems with application in biomechanics},
  author={Elhaddad, Mohamed and Zander, Nils and Bog, Tino and Kudela, L{\'a}szl{\'o} and Kollmannsberger, Stefan and Kirschke, Jan and Baum, Thomas and Ruess, Martin and Rank, Ernst},
  journal={International journal for numerical methods in biomedical engineering},
  volume={34},
  number={4},
  pages={e2951},
  year={2018},
  publisher={Wiley Online Library}
}

@book{ciarlet2002finite,
  title={The finite element method for elliptic problems},
  author={Ciarlet, Philippe G},
  year={2002},
  publisher={SIAM}
}

@article{mu2016new,
  title={A new weak Galerkin finite element method for elliptic interface problems},
  author={Mu, Lin and Wang, Junping and Ye, Xiu and Zhao, Shan},
  journal={Journal of Computational Physics},
  volume={325},
  pages={157--173},
  year={2016},
  publisher={Elsevier}
}

@article{gong2008immersed,
  title={Immersed-interface finite-element methods for elliptic interface problems with nonhomogeneous jump conditions},
  author={Gong, Yan and Li, Bo and Li, Zhilin},
  journal={SIAM Journal on Numerical Analysis},
  volume={46},
  number={1},
  pages={472--495},
  year={2008},
  publisher={SIAM}
}

@article{wang2018conforming,
  title={A conforming enriched finite element method for elliptic interface problems},
  author={Wang, Hua and Chen, Jinru and Sun, Pengtao and Qin, Fangfang},
  journal={Applied Numerical Mathematics},
  volume={127},
  pages={1--17},
  year={2018},
  publisher={Elsevier}
}

@article{he2011immersed,
  title={Immersed finite element methods for elliptic interface problems with non-homogeneous jump conditions},
  author={He, Xiaoming and Lin, Tao and Lin, Yanping and others},
  journal={International Journal of numerical analysis and modeling},
  volume={8},
  number={2},
  pages={284--301},
  year={2011},
  publisher={Institute for Scientific Computing and Information}
}

@book{li1990numerical,
  title={Numerical methods for elliptic problems with singularities: boundary methods and nonconforming combinations},
  author={Li, Zi-Cai},
  year={1990},
  publisher={World Scientific}
}

@article{feng2024sixth,
  title={Sixth-order hybrid finite difference methods for elliptic interface problems with mixed boundary conditions},
  author={Feng, Qiwei and Han, Bin and Minev, Peter},
  journal={Journal of Computational Physics},
  volume={497},
  pages={112635},
  year={2024},
  publisher={Elsevier}
}

@inproceedings{iliev2003generalized,
  title={A generalized (meshfree) finite difference discretization for elliptic interface problems},
  author={Iliev, Oleg and Tiwari, Sudarshan},
  booktitle={Numerical Methods and Applications: 5th International Conference, NMA 2002 Borovets, Bulgaria, August 20--24, 2002 Revised Papers 5},
  pages={488--497},
  year={2003},
  organization={Springer}
}

@article{feng2022high,
  title={High Order Finite Difference Methods for Interface Problems with Singularities},
  author={Feng, Qiwei},
  year={2022}
}

@article{costabel1990coupling,
  title={Coupling of finite and boundary element methods for an elastoplastic interface problem},
  author={Costabel, Martin and Stephan, Ernst P},
  journal={SIAM Journal on Numerical Analysis},
  volume={27},
  number={5},
  pages={1212--1226},
  year={1990},
  publisher={SIAM}
}

@article{jagtap2020extended,
  title={Extended physics-informed neural networks (XPINNs): A generalized space-time domain decomposition based deep learning framework for nonlinear partial differential equations},
  author={Jagtap, Ameya D and Karniadakis, George Em},
  journal={Communications in Computational Physics},
  volume={28},
  number={5},
  year={2020},
  publisher={Brown Univ., Providence, RI (United States)}
}

@article{he2022mesh,
  title={A mesh-free method using piecewise deep neural network for elliptic interface problems},
  author={He, Cuiyu and Hu, Xiaozhe and Mu, Lin},
  journal={Journal of Computational and Applied Mathematics},
  volume={412},
  pages={114358},
  year={2022},
  publisher={Elsevier}
}

@article{hu2022discontinuity,
  title={A discontinuity capturing shallow neural network for elliptic interface problems},
  author={Hu, Wei-Fan and Lin, Te-Sheng and Lai, Ming-Chih},
  journal={Journal of Computational Physics},
  volume={469},
  pages={111576},
  year={2022},
  publisher={Elsevier}
}

@article{tseng2023cusp,
  title={A cusp-capturing PINN for elliptic interface problems},
  author={Tseng, Yu-Hau and Lin, Te-Sheng and Hu, Wei-Fan and Lai, Ming-Chih},
  journal={Journal of Computational Physics},
  volume={491},
  pages={112359},
  year={2023},
  publisher={Elsevier}
}

@article{zhang2025immersed,
  title={An immersed interface neural network for elliptic interface problems},
  author={Zhang, Xinru and He, Qiaolin},
  journal={Journal of Computational and Applied Mathematics},
  volume={459},
  pages={116372},
  year={2025},
  publisher={Elsevier}
}

@article{fan2025novel,
  title={Novel and general discontinuity-removing PINNs for elliptic interface problems},
  author={Fan, Haolong and Tan, Zhijun},
  journal={Journal of Computational Physics},
  pages={113861},
  year={2025},
  publisher={Elsevier}
}

@article{li2025two,
  title={Two novel discontinuity-removing PINNs for solving variable coefficient elliptic interface problems on curved surfaces},
  author={Li, Hongji and Fan, Haolong and Tan, Zhijun},
  journal={Computer Methods in Applied Mechanics and Engineering},
  volume={435},
  pages={117637},
  year={2025},
  publisher={Elsevier}
}

@article{zeng2025high,
  title={High-precision physics-informed extreme learning machines for evolving interface problems},
  author={Zeng, Shaojie and Liang, Yijie and Zhang, Qinghui},
  journal={Engineering Analysis with Boundary Elements},
  volume={174},
  pages={106171},
  year={2025},
  publisher={Elsevier}
}

@article{li2025local,
  title={Local Randomized Neural Networks with Finite Difference Methods for Interface Problems},
  author={Li, Yunlong and Wang, Fei},
  journal={Journal of Computational Physics},
  pages={113847},
  year={2025},
  publisher={Elsevier}
}

@article{jiang2024generalization,
  title={Generalization of PINNs for elliptic interface problems},
  author={Jiang, Xuelian and Wang, Ziming and Bao, Wei and Xu, Yingxiang},
  journal={Applied Mathematics Letters},
  volume={157},
  pages={109175},
  year={2024},
  publisher={Elsevier}
}

@article{ying2024accurate,
  title={An accurate and efficient continuity-preserved method based on randomized neural networks for elliptic interface problems},
  author={Ying, Jinyong and Hu, Jingying and Shi, Zuoshunhua and Li, Jiao},
  journal={SIAM Journal on Scientific Computing},
  volume={46},
  number={5},
  pages={C633--C657},
  year={2024},
  publisher={SIAM}
}

@article{de2022error,
  title={Error analysis for physics-informed neural networks (PINNs) approximating Kolmogorov PDEs},
  author={De Ryck, Tim and Mishra, Siddhartha},
  journal={Advances in Computational Mathematics},
  volume={48},
  number={6},
  pages={79},
  year={2022},
  publisher={Springer}
}

@article{mishra2023estimates,
  title={Estimates on the generalization error of physics-informed neural networks for approximating PDEs},
  author={Mishra, Siddhartha and Molinaro, Roberto},
  journal={IMA Journal of Numerical Analysis},
  volume={43},
  number={1},
  pages={1--43},
  year={2023},
  publisher={Oxford University Press}
}

@article{oevermann2009sharp,
  title={A sharp interface finite volume method for elliptic equations on Cartesian grids},
  author={Oevermann, Michael and Scharfenberg, Carsten and Klein, Rupert},
  journal={Journal of Computational Physics},
  volume={228},
  number={14},
  pages={5184--5206},
  year={2009},
  publisher={Elsevier}
}

@article{liu2019analysis,
  title={Analysis of a special immersed finite volume method for elliptic interface problems},
  author={Liu, Kai and Zou, Qingsong},
  journal={Int. J. Numer. Anal. Model},
  volume={16},
  number={6},
  pages={964--984},
  year={2019}
}

@article{sevilla2018face,
  title={A face-centred finite volume method for second-order elliptic problems},
  author={Sevilla, Ruben and Giacomini, Matteo and Huerta, Antonio},
  journal={International Journal for Numerical Methods in Engineering},
  volume={115},
  number={8},
  pages={986--1014},
  year={2018},
  publisher={Wiley Online Library}
}

@article{cao2024finite,
  title={A FINITE DIFFERENCE METHOD FOR TWO DIMENSIONAL ELLIPTIC INTERFACE PROBLEMS WITH IMPERFECT CONTACT},
  author={Cao, Fujun and Yuan, Dongfang and Jia, Dongxu and Yuan, Guangwei},
  journal={Journal of Computational Mathematics},
  volume={42},
  number={5},
  pages={1328--1355},
  year={2024}
}

@article{ahmadikia2012analytical,
  title={Analytical solution of the parabolic and hyperbolic heat transfer equations with constant and transient heat flux conditions on skin tissue},
  author={Ahmadikia, H and Fazlali, R and Moradi, A},
  journal={International communications in heat and mass transfer},
  volume={39},
  number={1},
  pages={121--130},
  year={2012},
  publisher={Elsevier}
}

@article{gennari2022phase,
  title={A phase-change model for diffusion-driven mass transfer problems in incompressible two-phase flows},
  author={Gennari, Gabriele and Jefferson-Loveday, Richard and Pickering, Stephen J},
  journal={Chemical Engineering Science},
  volume={259},
  pages={117791},
  year={2022},
  publisher={Elsevier}
}

@article{chen1998finite,
  title={Finite element methods and their convergence for elliptic and parabolic interface problems},
  author={Chen, Zhiming and Zou, Jun},
  journal={Numerische Mathematik},
  volume={79},
  number={2},
  pages={175--202},
  year={1998},
  publisher={Springer}
}

@article{lin2013method,
  title={A method of lines based on immersed finite elements for parabolic moving interface problems},
  author={Lin, Tao and Lin, Yanping and Zhang, Xu},
  journal={Advances in Applied Mathematics and Mechanics},
  volume={5},
  number={4},
  pages={548--568},
  year={2013},
  publisher={Cambridge University Press}
}

@article{zhu2020stable,
  title={Stable generalized finite element method (SGFEM) for parabolic interface problems},
  author={Zhu, Pengfei and Zhang, Qinghui and Liu, Tingyun},
  journal={Journal of Computational and Applied Mathematics},
  volume={367},
  pages={112475},
  year={2020},
  publisher={Elsevier}
}

@article{astrakhantsev1971finite,
  title={A finite difference method for solving the third boundary value problem for elliptic and parabolic equations in an arbitrary region. Iterative solution of finite difference equations. I},
  author={Astrakhantsev, Gennady Petrovich},
  journal={USSR Computational Mathematics and Mathematical Physics},
  volume={11},
  number={1},
  pages={141--161},
  year={1971},
  publisher={Elsevier}
}

@article{remmerswaal2022parabolic,
  title={Parabolic interface reconstruction for 2D volume of fluid methods},
  author={Remmerswaal, Ronald A and Veldman, Arthur EP},
  journal={Journal of Computational Physics},
  volume={469},
  pages={111473},
  year={2022},
  publisher={Elsevier}
}

@article{zhang2022multi,
  title={Multi-domain physics-informed neural network for solving forward and inverse problems of steady-state heat conduction in multilayer media},
  author={Zhang, Bin and Wu, Guowei and Gu, Yan and Wang, Xi and Wang, Fei},
  journal={Physics of Fluids},
  volume={34},
  number={11},
  year={2022},
  publisher={AIP Publishing}
}

@article{sarma2024interface,
  title={Interface {PINN}s ({I-PINN}s): A physics-informed neural networks framework for interface problems},
  author={Sarma, Antareep Kumar and Roy, Sumanta and Annavarapu, Chandrasekhar and Roy, Pratanu and Jagannathan, Shriram},
  journal={Computer Methods in Applied Mechanics and Engineering},
  volume={429},
  pages={117135},
  year={2024},
  publisher={Elsevier}
}

@article{roy2024adaptive,
  title={Adaptive interface-{PINN}s ({AdaI-PINN}s): An efficient physics-informed neural networks framework for interface problems},
  author={Roy, Sumanta and Annavarapu, Chandrasekhar and Roy, Pratanu and Sarma, Antareep Kumar},
  journal={arXiv preprint arXiv:2406.04626},
  year={2024}
}

@book{gilbarg2001elliptic,
  title={Elliptic Partial Differential Equations of Second Order},
  author={Gilbarg, David and Trudinger, Neil S.},
  year={2001},
  edition={Reprint of the 1998 edition},
  publisher={Springer},
  series={Classics in Mathematics}
}
\end{document}